\documentclass[preprint,3p]{elsarticle}

\makeatletter
\def\ps@pprintTitle{%
\let\@oddhead\@empty
\let\@evenhead\@empty
\def\@oddfoot{\reset@font\hfil\thepage\hfil}
\let\@evenfoot\@oddfoot
}
\makeatother


\usepackage{amssymb}
\usepackage{latexsym}

\usepackage{url}
\usepackage{xcolor}
\definecolor{newcolor}{rgb}{.8,.349,.1}

\usepackage{amsmath}
\usepackage{ulem}
\usepackage{booktabs}
\usepackage{multirow}
\usepackage{epstopdf}

\usepackage{caption}
\usepackage{subcaption}
\captionsetup{font=footnotesize}
\captionsetup[sub]{font=footnotesize}

\usepackage{amsthm}
\theoremstyle{remark}
\newtheorem*{remark}{Remarks}
\theoremstyle{plain}
\newtheorem{proposition}{Proposition}
\newtheorem{corollary}{Corollary}


\DeclareMathOperator*{\argmin}{arg\,min\,}

\let\min\relax

\DeclareMathOperator*{\min}{min\,}


\begin{document}


\begin{frontmatter}

\title{On the stability of projection-based model order reduction for convection-dominated laminar and turbulent flows}

\author[1]{Sebastian Grimberg\corref{cor1}}
\cortext[cor1]{Corresponding author.}
\ead{sjg@stanford.edu}
\author[1,2,3]{Charbel Farhat}
\ead{cfarhat@stanford.edu}
\author[1]{Noah Youkilis}
\ead{nbyouk@stanford.edu}

\address[1]{Department of Aeronautics and Astronautics, Stanford University, Stanford, CA 94305, U.S.A.}
\address[2]{Department of Mechanical Engineering, Stanford University, Stanford, CA 94305, U.S.A.}
\address[3]{Institute for Computational and Mathematical Engineering, Stanford University, Stanford, CA 94305, U.S.A.}



\begin{abstract}
\small
In the literature on projection-based nonlinear model order reduction for fluid dynamics problems, it is often claimed that due to modal truncation, a projection-based reduced-order model (PROM) does not resolve 
the dissipative regime of the turbulent energy cascade and therefore is numerically unstable. Efforts at addressing this claim have ranged from attempting to model the effects of the truncated modes to 
enriching the classical subspace of approximation in order to account for the truncated phenomena. This paper challenges this claim. Exploring the relationship between projection-based model order 
reduction and semi-discretization and using numerical evidence from three relevant flow problems, it argues in an orderly manner that the real culprit behind most if not all reported numerical 
instabilities of PROMs for turbulence and convection-dominated turbulent flow problems is the Galerkin framework that has been used for constructing the PROMs. The paper also shows that alternatively, 
a Petrov-Galerkin framework can be used to construct numerically stable PROMs for convection-dominated laminar as well as turbulent flow problems that are numerically stable and accurate, without 
resorting to additional closure models or tailoring of the subspace of approximation. It also shows that such alternative PROMs deliver significant speedup factors.
\end{abstract}



\end{frontmatter}


\section{Introduction}
\label{sec:intro}

The construction of stable, accurate, and computationally efficient nonlinear projection-based reduced-order models (PROMs) for the numerical simulation of turbulence and turbulent flows has been a topic of recent interest in the model order reduction community.
The convection-dominated nature of such problems and the presence of multiscale phenomena over large spatial and temporal ranges pose issues for traditional projection-based model order reduction (PMOR) methods, due to the large Kolmogorov $n$-width \cite{pinkus1985} of the solution manifold associated with the high-dimensional model (HDM).
The consequence is that a large number of modes may be required in order for the PROM to maintain solution accuracy, which in turn limits computational efficiency, or the PROM may suffer a severe loss of information due to modal truncation.

It is often suggested that the truncation of the higher-order modes in the construction of a PROM for turbulent flow problems leads to its numerical instability.
The argument provided for this instability is as follows: Following the Kolmogorov hypotheses \cite{kolmogorov1941}, a high-Reynolds number turbulent flow is characterized by an energy cascade 
in which the large, energy-containing flow scales break down and transfer energy to progressively smaller scales until the length scales are small enough, the viscous forces are dominant, and turbulence
kinetic energy is dissipated \cite{pope2000}.
Typically, it is first argued that the modes used to construct the PROM are analogous to their Fourier space counterparts, which has been verified using numerical evidence for certain problems \cite{couplet2003}.
Then, it is concluded that by truncating the higher-order, low-energy modes in the subspace approximation, the PROM loses the ability to resolve the dissipation range of the 
turbulent energy spectrum. This implies the inability of the PROM to sufficiently dissipate energy, leads to excessive energy levels, and eventually to instability.

However, much of the numerical evidence provided to support the aforementioned {\it physical} argument does not consider {\it numerical} factors involved in construction of the PROM that contribute to 
instability, even for nonturbulent flow problems. This work seeks to address this issue in light of the aforementioned physical argument for PROM instability.

\subsection{Problem of interest and literature review}

Let $\mathbf{u}(t; \boldsymbol{\mu})$ denote the solution of a parametric, time-dependent, partial-differential equation (PDE) semi-discretized by an HDM of dimension $N$, where 
$\boldsymbol{\mu} \in \mathcal{P} \subset \mathbb{R}^p$ is a parameter vector of dimension $p$ and $\mathcal{P}$ denotes the bounded parameter space of interest. In the traditional framework for PMOR,
$\mathbf{u}(t; \boldsymbol{\mu})$ is approximated in a low-dimensional affine subspace of dimension $n$, with $n \ll N$. That is,
\begin{equation}
\label{eqn:subspace}
\mathbf{u}(t; \boldsymbol{\mu}) \approx \mathbf{u}_0 + \mathbf{V} \mathbf{y}(t; \boldsymbol{\mu})
\end{equation}
where the affine offset $\mathbf{u}_0 \in \mathbb{R}^N$ and the {\it right} reduced-order basis (ROB) $\mathbf{V} \in \mathbb{R}^{N \times n}$ define the subspace approximation and are for now assumed to be time- and parameter-independent, and $\mathbf{y}(t; \boldsymbol{\mu}) \in \mathbb{R}^n$ is the vector of reduced (or generalized) coordinates.
Note that the preceding expressions and remainder of the discussion in this paper are easily extended to the case where the HDM is formulated in the complex plane $\mathbb{C}$, by replacing each instance of $\mathbb{R}$ by $\mathbb{C}$.

For problems characterized by a large Kolmogorov $n$-width of the HDM-based solution manifold, popular methods for constructing the right ROB $\mathbf{V}$, such as the proper orthogonal decomposition (POD) 
method of snapshots \cite{sirovich1987} and the reduced-basis method \cite{prudhomme2002}, will exhibit slow convergence. Hence, such methods will result in a dimension $n$ of $\mathbf{V}$ that quickly 
approaches $O(N)$ in order to attain a high degree of accuracy in the subspace approximation (\ref{eqn:subspace}). It follows that PMOR methods of this type must anticipate that, for the sake of 
computational efficiency and therefore maintaining $n \ll N$, the PROM will always incur a degree of underresolution by sacrificing some degree of accuracy in the interest of achieving real-time 
(or near real-time) performance.

Alternatively, several PMOR methods that modify the traditional subspace approximation (\ref{eqn:subspace}) have been proposed to break Kolmogorov $n$-width barriers in order to increase the accuracy of 
the PROM for a given dimensionality. These include: partitioning the state- or parameter-space using multiple local, piecewise affine subspace approximations instead of a single global approximation 
\cite{dihlmann2011, amsallem2012, peherstorfer2014}; transforming the PROM solution basis to improve the affine approximation for convection-dominated flow problems 
\cite{ohlberger2013, gerbeau2014, cagniart2018, nair2019}; and, more recently, adopting convolutional autoencoders to determine low-dimensional nonlinear manifolds that replace the traditional affine 
subspace approximation of the HDM solution space \cite{lee2018}.
Even with such improved frameworks for obtaining low-dimensional approximations of the HDM-based solution manifold however, some amount of truncation of the physical scales captured by the HDM 
is bound to happen for complex problems of industrial or scientific interest.
Thus, the focus of this work is on the implications of the truncation that results from approximating the HDM solution using a low-dimensional subspace, regardless of the method selected for 
constructing the approximation.

Given the aforementioned argument for PROM instability and the above observation regarding the necessity of physical scale truncation in constructing the PROM subspace approximation, many stabilization 
methods have been proposed to remedy observed instability when constructing PROMs for turbulent flow problems. Such methods include attempts to model the effects of truncated modes at the PROM level via 
closure models \cite{aubry1988, wang2012, iliescu2014, osth2014,  benosman2017, rebollo2017, stabile2019}, and enriching the modes of the subspace approximation in order to better account for 
truncated phenomena \cite{bergmann2009, balajewicz2012, balajewicz2013, balajewicz2016, akkari2019}. The primary downsides of these approaches is that they can negatively impact PROM accuracy, for example, 
by destroying the PROM {\it consistency} property via the introduction of modeled closure terms, or resorting to modifications of the PROM approximation subspace which are often chosen to minimize 
the error associated with approximating the HDM-based solution manifold. The notion of consistency of a PROM approximation introduced in \cite{carlberg2011} states that a PROM is consistent if, 
when constructed using a right ROB $\mathbf{V}$ that perfectly captures the HDM solution manifold over the desired parameter space, for example by collecting snapshots at every time-step of the simulation performed using the HDM and 
constructing $\mathbf{V}$ without data compression, the PROM introduces no additional error in the solution of the problem over the same parameter space.

Perhaps a more fundamental issue, however, is that the large body of literature that demonstrates numerically the need for the aforementioned PROM stabilization methods mainly considers as a starting 
point the ``standard'' PROM, which is based on a Galerkin projection (and the $\ell^2$ inner product) of the HDM in order to construct the PROM. That is, the {\it left} ROB, 
$\mathbf{W} \in \mathbb{R}^{N \times n}$, employed in the construction of reduced vectors of the form $\mathbf{W}^T \mathbf{b}\in \mathbf{R}^{n}$ and reduced tangent matrices of the form 
$\mathbf{W}^T \mathbf{A} \mathbf{V} \in \mathbf{R}^{n \times n}$, where $\mathbf{b} \in \mathbb{R}^N$, $\mathbf{A} \in \mathbb{R}^{N \times N}$ and the superscript $T$ designates the transpose
are given by the HDM, is chosen as $\mathbf{W} = \mathbf{V}$.
This is unlike in the more general Petrov-Galerkin framework where $\mathbf{W} \neq \mathbf{V}$. In addition to enforcing the uniqueness of the approximate solution $\mathbf{y}(t; \boldsymbol{\mu})$,
the left ROB $\mathbf{W}$, which is also a reduced-order {\it test} basis corresponding to the reduced-order {\it trial} basis $\mathbf{V}$, enforces more general constraints for residual orthogonality 
that can be used to maintain some desirable numerical qualities in the PROM. The residual orthogonality constraints enforced by the Galerkin projection are appropriate for a large range of problems, for 
example, when the HDM is characterized by symmetric positive definite (SPD) tangent operators as in many structural dynamics and solid mechanics problems. However, the same cannot be said for 
convection-dominated flow problems involving nonsymmetric operators, in which case the flexibility of a different left subspace provided by the Petrov-Galerkin framework is well-known to be useful for
constructing optimal projectors for PMOR \cite{carlberg2011, buithanh2008}. For example for first-order linear time-invariant (LTI) systems, a method was presented in \cite{amsallem2012_2} to construct 
the left ROB $\mathbf{W}$ of the Petrov-Galerkin projection such that the corresponding linear PROM is guaranteed to satisfy the Lyapunov stability criterion for LTI systems and thus is asymptotically 
stable. Alternatively, replacing the standard $\ell^2$ inner product associated with the Galerkin projection with appropriate stabilizing alternatives has been proposed for the linearized Euler 
equations \cite{barone2009} as well as the compressible Navier-Stokes equations \cite{rowley2004}. Notably, by recognizing that the PROM is defined by the {\it pair} $(\mathbf{W}, \mathbf{V})$ as 
opposed to by $\mathbf{V}$ alone, it has been demonstrated that PROMs can be constructed in a manner appropriate for the problem at hand, without destroying their consistency property or modifying 
their approximation subspace. However, while the suitability of a Petrov-Galerkin approach for addressing stability issues of PMOR for convection-dominated viscous flow problems has been highlighted in 
\cite{carlberg2013}, it has not yet been demonstrated in the literature for truly turbulent flow problems.

Finally, it is important to note that the instability of Galerkin PROMs has been extensively documented for flow problems which do not involve turbulence, such as inviscid flows modeled by the
linearized Euler equations \cite{amsallem2012_2, barone2009} and laminar flows modeled by the incompressible as well as compressible Navier-Stokes equations \cite{rowley2004, rempfer2000}.
It has also been highlighted for Reynolds-averaged Navier-Stokes (RANS) models where no part of the turbulent energy spectrum is resolved by the spatial discretization 
\cite{carlberg2011, carlberg2017}. Collectively, this body of evidence suggests that the physical argument previously described to explain the instability of PROMs of turbulent flows, which has been 
justified to date using numerical examples involving Galerkin PROMs only, is strictly incorrect. The observed instabilities can be attributed instead to the reliance on a Galerkin projection in the 
presence of convection-dominated phenomena. Indeed, it is the numerical properties of the PROM that govern its behavior because the PROM, as well as the underlying HDM, only seek to {\it approximate} the 
physics of the strong-form of the equations. Thus, explaining a numerical behavior using physics-based arguments only is not necessarily justifiable.

\subsection{Contributions of this paper}

As alluded to in the previous section, the main objective of this paper is to demonstrate that the stability issues reported by many authors for PROMs of turbulent flows are likely not due to a loss 
of resolution in the approximation subspace, but instead to the reliance on the Galerkin framework for performing PMOR for convection-dominated flow problems. To this end, the remainder of this paper is 
organized as follows.

Section \ref{sec:semid} seeks to establish a connection between PMOR and other spatial discretization methods, in order to motivate Petrov-Galerkin approaches for the reduction of 
convection-dominated high-dimensional flow models. Section \ref{sec:pg} reviews a specific Petrov-Galerkin framework for nonlinear PMOR and advocates it as a better alternative to the standard
Galerkin framework for constructing PROMs for convection-dominated turbulent flow problems. In particular, this section emphasizes the construction of an appropriate left ROB $\mathbf{W}$. 
Section \ref{sec:preandhyper} addresses the issue of achieving computational efficiency for nonlinear Petrov-Galerkin PROMs, in order to introduce hyperreduction and demonstrate next that a 
hyperreduced Petrov-Galerkin PROM (HPROM) for turbulent flows can maintain the numerical stability achieved by its underlying PROM. Finally, Section \ref{sec:examples} reports on several
applications that: flaunt the numerical stability of Petrov-Galerkin HPROMs for convection-dominated laminar as well as turbulent flows, including in the presence of severe modal truncation;
demonstrate the instability of Galerkin PROMs and HPROMs, even for laminar but otherwise convection-dominated flows; and highlight on the other hand the stability of Galerkin PROMs and HPROMs
for diffusion-dominated turbulent flows. Hence, these examples support the conclusions formulated in Section \ref{sec:conclude}, namely, that the stability issues reported in the literature for the
PMOR of convection-dominated turbulent flow problems are not due to a loss of resolution induced by modal truncation, but to the Galerkin approach chosen for constructing in all reported cases
the PROMs.

\section{Spatial discretization of convection-dominated flow problems}
\label{sec:semid}

\subsection{Model order reduction as a semi-discretization method}

In the context of obtaining numerical solutions for PDEs, traditional spatial discretization methods reduce the infinite-dimensional, continuous solution space for the PDE of interest to a 
finite-dimensional subspace which, in the context of PMOR, represents the HDM-based solution manifold. Consider the classical Ritz method \cite{rayleigh1877, ritz1909}, which seeks to approximate the 
infinite-dimensional, exact solution of the continuous PDE by expanding an approximate solution using a set of $N$ {\it global} basis functions specified {\it a priori} -- that is, before any 
significant knowledge of the exact solution is known. This global, $N$-dimensional test basis and the corresponding global trial basis are then substituted as finite-dimensional approximations to 
the function spaces in the variational principle governing the problem of interest, which results in an HDM of dimension $N$.

In PMOR, the $N$-dimensional HDM solution manifold, referenced by the affine offset $\mathbf{u}_0$, is then further approximated using a basis of dimension $n$ spanned by the columns of $\mathbf{V}$.
The key difference with the Ritz procedure is that in this case, the basis functions spanning the approximation subspace, while still global, are constructed {\it a posteriori} -- that is, after some 
knowledge about the HDM-based solution manifold is acquired. For this reason, PMOR methods can be said to contain elements of machine learning, and more specifically, physics-based machine learning.
The low-dimensional test basis and the corresponding reduced-order trial basis spanned by the columns of the left ROB $\mathbf{W}$ are applied in typical Ritz fashion to the HDM to construct a 
correspondnig PROM of dimension $n$: first, by substituting the low-dimensional solution approximation; and then, by enforcing the orthogonality of the resulting residual to each element of the trial 
basis. This view of PMOR as another layer of semi-discretization applied to the HDM is important: it suggests that experience in developing stable and accurate spatial discretization methods for 
convection-dominated flow problems should be leveraged when constructing PROMs for such problems.

\subsection{Semi-discretization methods for convection-dominated flow problems}

In the context of the finite element (FE) method, it is well-understood that for convection-dominated flow problems governed by the advection-diffusion equation, the Euler equations, or 
the Navier-Stokes equations, the application of a standard continuous polynomial Galerkin FE formulation often produces solutions that are polluted by spurious oscillations. Such a pollution is 
recognized as a symptom of numerical instability and can, especially in the presence of nonlinearities, lead to the divergence of the solution process. In such cases, convection-dominated refers to large 
P\'eclet or Reynolds numbers and pollution arises when the semi-discretization is characterized by {\it element} P\'eclet or Reynolds numbers that exceed 1. For example, the error analysis of the 
Galerkin approximation based on local polynomial shape functions of order $k$ applied to the solution of the advection-diffusion equation yields
\begin{equation*}
\lVert \nabla e \rVert_{L^2(\Omega)} = O~\left( (1+Pe_h)\, h^k\right)
\end{equation*}
where $e$ is the approximation error measured with respect to the exact solution, and $h$ denotes the mesh element size \cite{hughes1987}. The appearance of $Pe_h$, the element P\'eclet number based on 
the reference length $h$, in the error constant reveals that convergence is inhibited for convection-dominated problems where $Pe_h$ is large. This behavior largely corresponds to the loss of the ``best 
approximation'' property of Galerkin FE methods, due to the nonsymmetry of the matrices associated with the semi-discretized convective terms.

Various stabilized FE methods have been developed in order to avoid excessive mesh refinement during the solution of convection-dominated flow problems, and therefore obtain numerically stable, 
oscillation-free solutions on coarse meshes. Among these methods, the most popular is the streamline-upwind/Petrov-Galerkin (SUPG) method \cite{brooks1982}. Its development has contributed to the 
understanding of the deficiencies of the standard Galerkin framework for convection-dominated flow problems, as well as the significance of the alternative Petrov-Galerkin framework for developing 
oscillation-free semi-discretization methods for these problems.

Interestingly, the earlier development of stabilized finite difference and finite volume semi-discretizations methods had followed a similar path. The idea of adding numerical dissipation in order to 
suppress spurious oscillations that arise when spatial derivatives in problems containing strong gradients and shocks are approximated by central differences dates back to 1950 \cite{vonneumann1950}.
(The reader is reminded that linear Galerkin FE semi-discretizations lead to second-order central-difference approximations of derivative operators, and thus their behavior is analogous to that 
discussed above in the context of FE semi-discretization methods). Schemes for computational fluid dynamics (CFD) have regularly relied on suitable numerical dissipation to eliminate spurious oscillations in the presence of convective 
phenomena without undermining solution accuracy \cite{jameson1981, boris1973, vanleer1979, shu1988}. Given that it is well-understood today that even in the absence of attempts at resolving turbulent 
scales, semi-discretizing convection-dominated flow problems using a Galerkin-type framework leads to solution instability, using a Galerkin approach for performing PMOR for convection-dominated HDMs
can be expected to lead to numerical instability.

\section{Petrov-Galerkin model order reduction}
\label{sec:pg}

In this section, Petrov-Galerkin frameworks for nonlinear PMOR are reviewed. In particular, the ability to utilize the left ROB $\mathbf W$ in order to provide optimal approximations of the HDM is 
presented, and the relationship between Petrov-Galerkin and Galerkin PROMs as well as their optimality are discussed.

Consider the $\boldsymbol{\mu}$-parametric, first-order, $N$-dimensional, semi-discrete HDM
\begin{equation}
\label{eqn:hdm}
\begin{split}
\mathbf{M}(\boldsymbol{\mu}) \, \dot{\mathbf{u}}(t; \boldsymbol{\mu}) + \mathbf{f}(\mathbf{u}(t; \boldsymbol{\mu}); \boldsymbol{\mu}) &= 0 \\
\mathbf{u}(0; \boldsymbol{\mu}) &= \mathbf{u}^0(\boldsymbol{\mu})
\end{split}
\end{equation}
where $t$ denotes time and the dot its derivative, $\mathbf{M}(\boldsymbol{\mu}) \in \mathbb{R}^{N \times N}$ is a parametric SPD mass matrix, and 
$\mathbf{f}(\mathbf{u}(t; \boldsymbol{\mu}); \boldsymbol{\mu}) \in \mathbb{R}^{N}$ is the nonlinear function resulting from the semi-discretization of the convective, diffusive, and source terms of a
convection-diffusion PDE. Assume that problem \eqref{eqn:hdm} is discretized by an implicit scheme. In this case, this nonlinear problem can be transformed at each $k$-th computational time-step into 
the residual form
\begin{equation}
\label{eqn:hdmres}
\mathbf{r}\left(\mathbf{u}(t^{\, k+1}; \boldsymbol{\mu}); \boldsymbol{\mu}\right) = \mathbf{M}(\boldsymbol{\mu}) \, \dot{\mathbf{u}}(t^{\, k+1}; \boldsymbol{\mu}) + 
	\mathbf{f}\left(\mathbf{u}(t^{\,k+1}; \boldsymbol{\mu}); \boldsymbol{\mu}\right) = 0
\end{equation}
where $t^{\,k+1} = t^{\,k} + \Delta t$, and $\Delta t$ is the time-step size.

In \eqref{eqn:hdmres} above, the specific relationship between $\mathbf{u}(t^{\,k+1}; \boldsymbol{\mu})$ and its time-derivative $\dot{\mathbf{u}}(t^{\,k+1}; \boldsymbol{\mu})$ is given by the 
chosen time-discretization method. It takes on a different form, for example, if the time-discretization is based on a linear multistep scheme such as a backward differentiation formula (BDF), or an 
implicit Runge-Kutta scheme. In either case however, the nonlinear system of equations to be solved at each time-step or implicit Runge-Kutta stage can be written in the form given in (\ref{eqn:hdmres}).

One approach for constructing a nonlinear Petrov-Galerkin PROM associated with the HDM (\ref{eqn:hdm}) consists in substituting the subspace approximation (\ref{eqn:subspace}) into the residual equation 
(\ref{eqn:hdmres}), and squaring the resulting overdetermined nonlinear system of equations by projecting it onto an appropriate left ROB $\mathbf W$ as follows
\begin{equation}
\label{eqn:promres}
\mathbf{W}^T \mathbf{r}\left(\mathbf{u}_0 + \mathbf{V}\mathbf{y}(t^{\,k+1}; \boldsymbol{\mu}); \boldsymbol{\mu}\right) = 0
\end{equation}
An alternative approach for constructing a nonlinear Petrov-Galerkin PROM solution $\mathbf{y}(t^{\,k+1}; \boldsymbol{\mu})$ at time-instance $t^{\,k+1}$ is to substitute (\ref{eqn:subspace}) into
(\ref{eqn:hdmres}) but then solve the following residual minimization problem
\begin{equation}
\label{eqn:promresmin} 
	\mathbf{y}(t^{\,k+1}; \boldsymbol{\mu}) = \argmin_{\mathbf{x} \, \in \, \mathbb{R}^{n}} \big\lVert \mathbf{r}\left(\mathbf{u}_0 + \mathbf{V}\mathbf{x}(t^{\,k+1}; \boldsymbol{\mu});
\boldsymbol{\mu}\right) \big\rVert_{\boldsymbol{\Theta}}^2
\end{equation}
where $\boldsymbol{\Theta} \in \mathbb{R}^{N \times N}$ is an SPD matrix defining the norm $\|\mathbf{x}\|_{\boldsymbol{\Theta}} = \sqrt{\mathbf{x}^T \boldsymbol{\Theta} \mathbf{x}}$.
In this case, the PROM solution is found by minimizing the $\boldsymbol{\Theta}$-norm of the nonlinear HDM residual over the approximation solution subspace.

There are two notable settings in which the two nonlinear PROMs (\ref{eqn:promres}) and (\ref{eqn:promresmin}) are equivalent. From \eqref{eqn:promresmin}, it follows that in each of these
settings, the Petrov-Galerkin PROM defined by the pair of ROBs $(\mathbf{W}, \mathbf{V})$ is {\it optimal} in the sense that it produces a vector of reduced coordinates that is at each time-step 
solution of an HDM residual minimization problem. The two settings can be described as follows:
\begin{enumerate}
\item The case where the Jacobian of the HDM residual
\begin{equation}
\label{eqn:jac}
\mathbf{J}\left(\mathbf{u}_0 + \mathbf{V}\mathbf{y}(t^{\,k+1}; \boldsymbol{\mu}); \boldsymbol{\mu}\right) = \frac{\partial \mathbf{r}}{\partial \mathbf{u}}\left(\mathbf{u}_0 + \mathbf{V}\mathbf{y}(t^{\,k+1}; \boldsymbol{\mu}); \boldsymbol{\mu}\right) \in \mathbb{R}^{N \times N}
\end{equation}
is SPD. In this case, solving (\ref{eqn:promresmin}) with $\boldsymbol{\Theta} = \mathbf{J}^{-1}\left(\mathbf{u}_0 + \mathbf{V}\mathbf{y}(t^{\,k+1}; \boldsymbol{\mu}); \boldsymbol{\mu}\right)$, which is 
also SPD, is provably equivalent to performing a Galerkin projection ($\mathbf{W} = \mathbf{V}$) and solving (\ref{eqn:promres}).
\item For HDMs associated with convection-diffusion problems however, $\mathbf{J}\left(\mathbf{u}_0 + \mathbf{V}\mathbf{y}(t^{\,k+1}; \boldsymbol{\mu}); \boldsymbol{\mu}\right)$ is not in general SPD. For 
such HDMs, $\mathbf{J}^{-1}\left(\mathbf{u}_0 + \mathbf{V}\mathbf{y}(t^{\,k+1}; \boldsymbol{\mu}); \boldsymbol{\mu}\right)$ does not define a norm, and therefore the solution of (\ref{eqn:promres}) 
obtained via a Galerkin projection does not satisfy any optimality property. In this case, one can employ instead the Petrov-Galerkin projection based on the left ROB 
$\mathbf{W} = \boldsymbol{\Psi} \mathbf{J}\left(\mathbf{u}_0 + \mathbf{V}\mathbf{y}(t^{\,k+1}; \boldsymbol{\mu}); \boldsymbol{\mu}\right) \mathbf{V}$, where $\boldsymbol{\Psi}$ is any SPD matrix, 
which makes solving the algebraic problem \eqref{eqn:promres} provably equivalent to solving the minimization problem (\ref{eqn:promresmin}).
\end{enumerate}
The equivalence conditions outlined above are proved in \ref{app:proofnl}. Here, it is noted that because the objective function of the minimization problem (\ref{eqn:promresmin}) is generally 
nonconvex, these equivalence conditions hold provided that the method and initial guess chosen for solving problem (\ref{eqn:promres}) and problem (\ref{eqn:promresmin}) ensure convergence to the same 
local minimum. 

The conditions for equivalence between the two approaches \eqref{eqn:promres} and \eqref{eqn:promresmin} for nonlinear PMOR are important because they reveal that when a PROM is 
built using a left ROB $\mathbf{W}$ that meets one of the two criteria outlined above, it delivers a solution for which the $\boldsymbol{\Theta}$-norm of the HDM residual decreases monotonically 
when the dimension $n$ of the subspace approximation is increased by appending new vectors to the right ROB $\mathbf{V}$. This is because in this case, the minimization occurs over a larger subspace of 
$\mathbb{R}^N$.

It is also useful to consider the solution of the nonlinear problem (\ref{eqn:promres}), for example, by the Newton-Raphson method. At each $p$-th iteration, this method generates a linearized
PROM of the form
\begin{equation}
\label{eqn:MISS1}
\mathbf{W}^T \mathbf{J}^{(p)} \mathbf{V} \boldsymbol{\Delta} \mathbf{y}^{(p)} = -\mathbf{W}^T \mathbf{r}^{(p)}
\end{equation}
where $\mathbf{J}^{(p)} = \mathbf{J}\left(\mathbf{u}_0 + \mathbf{V}\mathbf{y}^{(p)}(\boldsymbol{\mu}); \boldsymbol{\mu}\right)$, and 
$\mathbf{r}^{(p)} = \mathbf{r}\left(\mathbf{u}_0 + \mathbf{V} \mathbf{y}^{(p)}(\boldsymbol{\mu}); \boldsymbol{\mu}\right)$. Indeed, both previously described settings for PMOR
and nonlinear residual minimization lead to the minimization of the error ${e}^{(p)} = \mathbf{V}\Delta\mathbf{y}^{(p)} - \boldsymbol{\Delta}\mathbf{u}^{(p)}$ associated with the reduced-order 
approximation of the search direction $\boldsymbol{\Delta}\mathbf{u}^{(p)}$, where $\boldsymbol{\Delta}\mathbf{u}^{(p)}$ is the solution of the underlying linearized HDM 
\begin{equation}
\label{eqn:MISS2}
\mathbf{J}^{(p)} \boldsymbol{\Delta} \mathbf{u}^{(p)} = -\mathbf{r}^{(p)}
\end{equation}
and to the following results:
\begin{enumerate}
\item If the Jacobian matrix $\mathbf{J}^{(p)}$ is SPD and therefore defines a norm, choosing $\mathbf{W} = \mathbf{V}$ for the solution at each iteration $p$ of problem \eqref{eqn:MISS1} --
that is, constructing a Galerkin PROM of the linearized HDM \eqref{eqn:MISS2} -- leads to minimization of the step direction error
\begin{equation*}
\Delta \mathbf{y}^{(p)} = \argmin_{\mathbf{x} \, \in \, \mathbb{R}^{n}} \left \lVert \mathbf{V}\mathbf{x} - \boldsymbol{\Delta}\mathbf{u}^{(p)} \right \rVert_{\mathbf{J}^{(p)}}^2
\end{equation*}
\item Alternatively, constructing a Petrov-Galerkin PROM for \eqref{eqn:MISS2} with $\mathbf{W} = \boldsymbol{\Psi} \mathbf{J}^{(p)} \mathbf{V}$, where $\boldsymbol{\Psi}$ is any SPD matrix, leads to
\begin{equation*}
\Delta \mathbf{y}^{(p)} = \argmin_{\mathbf{x} \, \in \, \mathbb{R}^{n}} \left \lVert \mathbf{V}\mathbf{x} - \boldsymbol{\Delta}\mathbf{u}^{(p)} \right \rVert_{{\mathbf{J}^{(p)}}^T \boldsymbol{\Psi} 
\mathbf{J}^{(p)}}^2
\end{equation*}
which is valid even when the Jacobian matrix $\mathbf{J}^{(p)}$ is not SPD. Recognizing that upon convergence $\mathbf{y}^{(p)} = \mathbf{y}(t^{\,k+1}; \boldsymbol{\mu})$, it follows that
the iteration-dependent left ROB $\mathbf{W} = \boldsymbol{\Psi} \mathbf{J}^{(p)}\mathbf{V}$ is equivalent to the left ROB 
$\mathbf{W} = \boldsymbol{\Psi} \mathbf{J}\left(\mathbf{u}_0 + \mathbf{V}\mathbf{y}(t^{\,k+1}; \boldsymbol{\mu}); \boldsymbol{\mu}\right) \mathbf{V}$ introduced above in the context of the nonlinear 
residual minimization.
\end{enumerate}
The above results are proven in \ref{app:prooflin}. 

If the HDM (\ref{eqn:hdm}) is linear, these results can be used to bound the error of the constructed PROM, 
$\mathbf{e}(t^{\,k+1}; \boldsymbol{\mu}) = \mathbf{V}\mathbf{y}(t^{\,k+1}; \boldsymbol{\mu}) - \mathbf{u}(t^{\,k+1}; \boldsymbol{\mu})$. In this case, the Galerkin 
projection minimizes the error $\mathbf{e}(t^{\,k+1},\boldsymbol{\mu})$ in the $\mathbf{J}(\boldsymbol{\mu})$-norm when $\mathbf{J}(\boldsymbol{\mu})$ is SPD, while the Petrov-Galerkin 
projection minimizes this error in the $\mathbf{J}(\boldsymbol{\mu})^T \boldsymbol{\Psi} \mathbf{J}(\boldsymbol{\mu})$-norm. PROMs based on this Petrov-Galerkin formulation with 
$\boldsymbol{\Psi} = \mathbf{I}$, where $\mathbf I$ denotes the identity matrix, were studied in \cite{buithanh2008} for steady, linear problems.

For nonlinear HDMs of the form given in \eqref{eqn:hdm}, choosing $\boldsymbol{\Theta} = \mathbf{I}$ leads to the least-squares Petrov-Galerkin (LSPG) method introduced in \cite{carlberg2011}.
This method is characterized by the iteration-dependent left ROB
\begin{equation*}
\mathbf{W} = \mathbf{J}\left(\mathbf{u}_0 + \mathbf{V}\mathbf{y}(t^{\,k+1}; \boldsymbol{\mu}); \boldsymbol{\mu}\right) \mathbf{V}
\end{equation*}
and the minimization of the HDM residual (\ref{eqn:hdmres}) in the $\ell^2$-norm. In this case, the solution of the nonlinear system (\ref{eqn:promres}) produces identical iterates to those encountered when solving the nonlinear least-squares problem (\ref{eqn:promresmin}) using the Gauss-Newton method.

In \cite{abgrall2018}, PROMs were constructed based on the minimization of the HDM residual in the $\ell^1$-norm, without any connection to a Petrov-Galerkin approach. Here, a connection is established
to illustrate how general the Petrov-Galerkin approach can be. To this end, consider the matrix $\boldsymbol{\Theta}$ with diagonal entries defined as follows
\begin{equation*}
\boldsymbol{\Theta}_{ii}(\mathbf{x}; \boldsymbol{\mu}) = 
\begin{cases}
	\displaystyle \frac{1}{\big\lvert \mathbf{r}_i(\mathbf{u}_0 + \mathbf{V}\mathbf{x}; \boldsymbol{\mu}) \big\rvert}, &\textrm{if} \, \, \mathbf{r}_i(\mathbf{u}_0 + \mathbf{V}\mathbf{x}; \boldsymbol{\mu}) \neq 0 \\[14pt]
\displaystyle 1, & \textrm{otherwise} \\
\end{cases} \qquad \qquad i = 1, \ldots, N
\end{equation*}
where $\mathbf{r}_i(\mathbf{u}_0 + \mathbf{V}\mathbf{x}; \boldsymbol{\mu})$ denotes the $i$-th entry of the vector $\mathbf{r}(\mathbf{u}_0 + \mathbf{V}\mathbf{x}; \boldsymbol{\mu})$.
Clearly $\boldsymbol{\Theta}$ is SPD.
Given this definition, the objective function of the minimization problem (\ref{eqn:promresmin}) becomes
\begin{equation*}
\begin{split}
\big\lVert \mathbf{r}(\mathbf{u}_0 + \mathbf{V}\mathbf{x}; \boldsymbol{\mu}) \big\rVert_{\boldsymbol{\Theta}}^2 &= \sum_{i=1}^N \boldsymbol{\Theta}_{ii} \mathbf{r}_i(\mathbf{u}_0 + \mathbf{V}\mathbf{x}; \boldsymbol{\mu})^2 \\
&= \sum_{i=1}^N \big\lvert \mathbf{r}_i(\mathbf{u}_0 + \mathbf{V}\mathbf{x}; \boldsymbol{\mu}) \big\rvert \\
&= \big\lVert \mathbf{r}(\mathbf{u}_0 + \mathbf{V}\mathbf{x}; \boldsymbol{\mu}) \big\rVert_{1}
\end{split}
\end{equation*}
Thus, there exists an SPD matrix $\boldsymbol{\Theta}$ for which minimizing the $\ell^1$-norm of the HDM residual subject to the subspace approximation (\ref{eqn:subspace}) is equivalent to solving the 
$\boldsymbol{\Theta}$-weighted residual minimization problem (\ref{eqn:promresmin}), which in view of the equivalence conditions discussed above is equivalent to the Petrov-Galerkin PMOR
method (\ref{eqn:promres}). Note that the idea of a solution-dependent weighting metric $\boldsymbol{\Theta}$ has also been extensively leveraged in the development of the iteratively reweighted 
least squares (IRLS) algorithms for general $\ell^{p}$-minimization problems \cite{daubechies2010}.

In summary, for convection-dominated flow problems which in general involve nonsymmetric Jacobian matrices resulting from the semi-discretization of the convective terms, the superiority of the 
Petrov-Galerkin projection approach results from the optimality of the equivalent residual minimization approach. It is also noted that the concept of residual minimization for the purpose of achieving
a stable HDM is prevalent in FE developments -- for example, in the Galerkin/least-squares and least-squares FE methods \cite{hughes1989, bochev2009}. The extensive success of Galerkin PROMs for 
elliptic and other problems involving SPD Jacobian matrices, accompanied by the numerical evidence of their instability and inaccuracy for problems that are neither elliptic nor characterized by
SPD Jacobian matrices, suggests that the connection of a PROM approach to proper residual minimization is important for preserving numerical stability and accuracy. Just like Petrov-Galerkin 
semi-discretization methods can be designed to address numerical issues arising from the the numerical solution of convection-dominated problems by standard Galerkin semi-discretization methods,
Petrov-Galerkin PMOR approaches equipped with appropriate left ROBs can be designed to remedy the loss of residual minimization properties arising from the PMOR of convection-dominated HDMs using
standard Galerkin projections.

\section{Pre-computation and hyperreduction for accelerating the computation of reduced-order quantities}
\label{sec:preandhyper}

Whether taking the approach of building and solving the nonlinear algebraic system (\ref{eqn:promres}) or the nonlinear residual minimization problem (\ref{eqn:promresmin}), the computational complexity
of solving the resulting nonlinear PROM scales in general with the size of the underlying HDM $N$, even though the search space for computing the optimal solution $\mathbf{y}(t^{\,k+1}; \boldsymbol{\mu})$
is of dimension $n \ll N$. This is due to the need to compute at each nonlinear iteration within each time-step the reduced-order vectors 
$\mathbf{W}^T \mathbf{r}(\mathbf{u}_0 + \mathbf{V}\mathbf{y}; \boldsymbol{\mu})$ and reduced-order matrices $\mathbf{W}^T \mathbf{J}(\mathbf{u}_0 + \mathbf{V}\mathbf{y}; \boldsymbol{\mu}) \mathbf{V}$,
for various values of $\mathbf{y}$ and $\boldsymbol{\mu}$. Typically, these computations are performed by first computing the high-dimensional quantities 
$\mathbf{r}(\mathbf{u}_0 + \mathbf{V}\mathbf{y}; \boldsymbol{\mu})$ and $\mathbf{J}(\mathbf{u}_0 + \mathbf{V}\mathbf{y}; \boldsymbol{\mu}) \mathbf{V}$ then multiplying the results by $\mathbf{W}^T$,
which leads to computational complexities that scale as $O(Nn)$ and $O(N^2n)$, respectively.

There are two common approaches for mitigating the computational bottleneck highlighted above and achieving real-time or near real-time execution of the constructed nonlinear PROM: pre-computation; and 
hyperreduction. This section briefly overviews each of them, particularly in the context of the problems of interest to this work. For further details on this topic, the reader is referred to 
\cite{farhat2020}.

\subsection{Exact pre-computation-based approach}
\label{sec:precomp}

When the nonlinearity of the semi-discrete HDM (\ref{eqn:hdm}) is polynomial in the state $\mathbf{u}(t; \boldsymbol{\mu})$, an {\it exact} approach for resolving the aforementioned computational 
bottlenecks is possible and thoroughly described in \cite{farhat2020}. The approach involves decomposing the computation into offline and online parts. In the offline part, the quantities responsible for
the scaling of the computations with the dimension $N$ of the HDM are pre-computed. In the online part, the pre-computed quantities are exploited to reconstruct the desired reduced-order vectors and 
matrices with a computational complexity that scales only with the dimension $n \ll N$ of the PROM -- and therfore in real-time or near real-time. This approach is commonly employed for HDMs based on 
the semi-discretization of the incompressible Navier-Stokes equations, where for piecewise linear semi-discretizations, the nonlinearity in the semi-discrete state vector $\mathbf{u}(t; \boldsymbol{\mu})$
is quadratic. Note that in the parametric nonlinear setting, the aforementioned pre-computations must be performed for each queried parameter point $\boldsymbol{\mu}^* \in \mathcal{P}$, which could 
restrict the computational feasibility of this approach due to a prohibitively expensive offline part.

In the Petrov-Galerkin framework described in Section \ref{sec:pg}, the left ROB $\mathbf{W}$ depends by construction on the Jacobian matrix 
$\mathbf{J}(\mathbf{u}_0 + \mathbf{V}\mathbf{y}; \boldsymbol{\mu})$ (\ref{eqn:jac}) of the HDM. Given that the HDM residual (\ref{eqn:hdmres}) is typically a polynomial function of $\mathbf{u}$ of 
degree $d$ that is semi-discretization-dependent, its Jacobian is a polynomial function of $\mathbf{u}$ of degree $d-1$. 
Thus, the Petrov-Galerkin PROM (\ref{eqn:promres}) will include a polynomial nonlinearity in $\mathbf{y}$ of degree $2d-1$ that can be treated by pre-computations.
Furthermore after implicit temporal discretization is performed, the solution of the nonlinear system of equations (\ref{eqn:promres}) or that of the nonlinear residual minimization problem 
(\ref{eqn:promresmin}) requires the construction of the Jacobian of the PROM, which
can be written as
\begin{equation*}
\mathbf{W}^T \mathbf{J}(\mathbf{u}_0 + \mathbf{V}\mathbf{y}; \boldsymbol{\mu}) \mathbf{V}
\end{equation*}
and therefore is a polynomial function in $\mathbf{y}$ of degree $2d-2$: its computation can also be treated using the exact offline-online pre-computation-based approach.

\begin{remark}
The pre-computation-based approach introduces no additional approximations in order to evaluate the projected reduced-order quantities of interest in a computationally efficient manner. 
For this reason, it is desirable. It is by far the most prevalent approach, when it is applicable. For convection-diffusion flow problems, its applicability is limited however to the case of
incompressible flows. Even then, there are still many instances where the HDM contains a nonpolynomial dependence on the semi-discrete state vector $\mathbf{u}(t; \boldsymbol{\mu})$ and therefore
the  pre-computation-based approach is inapplicable. For example, spatial discretizations that introduce nonlinear numerical dissipation often destroy an otherwise polynomial dependence 
in $\mathbf{u}(t; \boldsymbol{\mu})$ and therefore lead to arbitrarily nonlinear HDMs. Moreover, an HDM may exhibit nonpolynomial nonlinearities depending on the choice of the state variables -- for
example, when the compressible Navier-Stokes equations are expressed using conservative variables. In some cases, lifting transformations may exist to transform a nonlinear HDM where the nonlinearity
in $\mathbf{u}(t; \boldsymbol{\mu})$ is nonpolynomial into an equivalent HDM to which the pre-computation-based approach can be applied \cite{kramer2019} for accelerating the computation of the 
reduced-order quantities. However, lifting and related approaches often lead to the introduction of a prohibitive number of auxiliary state variables and have not yet been successfully demonstrated
for large-scale CFD models. Finally, it should be noted that even when all nonlinearities in $\mathbf{u}(t; \boldsymbol{\mu})$ are polynomial of degree $d$, the offline part of the pre-computation-based 
approach is in practice computationally unaffordable for $d \ge 3$.
\end{remark}

\subsection{Hyperreduction approach}
\label{sec:hyper}

In the general nonlinear case where the pre-computation-based approach is not applicable due to any of the reasons outlined in Section \ref{sec:precomp}, an alternative approach known as hyperreduction 
may be used to accelerate the computation of the reduced-order quantities associated with a nonlinear PROM. This approach introduces a second layer of approximations that enables the evaluation of 
reduced-order vectors and matrices with a computational complexity that scales only with the size $n$ of the PROM -- that is, independently from the size $N$ of the HDM. The second layer of approximations
transforms the PROM into a hyperreduced PROM (HPROM) that delivers the sought-after overall computational speed-up, while still remaining accurate relative to the underlying PROM and HDM.

In this paper, hyperreduction is performed whenever needed, and unless otherwise specified, using a version of the energy-conserving sampling and weighting (ECSW) method \cite{farhat2014, farhat2015} 
that was recently adapted to Petrov-Galerkin PROMs \cite{grimberg2020}. ECSW is a member of the {\it project-then-approximate} class of hyperreduction methods, which seeks to directly approximate the 
projected reduced-order quantities $\mathbf{W}^T \mathbf{r}(\mathbf{u}_0 + \mathbf{V}\mathbf{y}; \boldsymbol{\mu})$ as well as $\mathbf{W}^T \mathbf{J}(\mathbf{u}_0 + 
\mathbf{V}\mathbf{y}; \boldsymbol{\mu}) \mathbf{V}$ and therefore avoid the construction of the high-dimensional quantities $\mathbf{r}(\mathbf{u}_0 + \mathbf{V}\mathbf{y}; \boldsymbol{\mu})$ and 
$\mathbf{J}(\mathbf{u}_0 + \mathbf{V}\mathbf{y}; \boldsymbol{\mu}) \mathbf{V}$, as well as the ensuing multiplication by $\mathbf{W}^T$.
First introduced in \cite{farhat2014} in the context of FE-based nonlinear PROMs for second-order dynamical systems, ECSW was shown in \cite{farhat2015} to preserve the Lagrangian structure associated 
with Hamilton's principle for second-order dynamical systems, and thus to preserve the numerical stability properties of the associated time-dependent HDMs.

ECSW and other project-then-approximate hyperreduction methods originated as an alternative to the {\it approximate-then-project} class of methods, which shares origins with the gappy POD method 
\cite{everson1995} developed in the context of image reconstruction. Methods of this type, including the well-known discrete empirical interpolation method (DEIM) \cite{chaturantabut2010}
and Gauss-Newton with approximated tensors (GNAT) method \cite{carlberg2011}, have had a longer history of successful applications to the hyperreduction of Galerkin as well as Petrov-Galerkin PROMs,
including those constructed for large-scale, RANS-modeled turbulent flow problems of industrial relevance \cite{carlberg2013}. However, there is substantially less evidence to date of successful 
applications of either class of hyperreduction methods to scale-resolving turbulent flow problems modeled using large eddy simulation (LES) or direct numerical simulation (DNS) methods.

In this work, ECSW is the hyperreduction method of choice for two main reasons. First, ECSW offers practical advantages over gappy POD-based approximate-then-project hyperreduction methods that are
described in \cite{grimberg2020}. From a user perspective, it enables the construction of HPROMs in a more streamlined fashion that requires less specification of problem-dependent parameters.
Second, given the successful demonstrations of ECSW for large-scale laminar and RANS flow simulations reported in \cite{grimberg2020}, it is desired here to provide further demonstration of ECSW 
for the hyperreduction of LES or DNS PROMs. 

For all practical details related to the implementation of ECSW for the hyperreduction of the Petrov-Galerkin PROMs presented in this work, the reader is referred to \cite{grimberg2020}.
Here, the emphasis is placed on the effects of the additional approximations introduced by the hyperreduction process on the numerical stability and accuracy of nonlinear PROMs
for convection-dominated laminar and turbulent flows.

\section{Numerical examples}
\label{sec:examples}

This section discusses the numerical stability of state-of-the-art PROMs, and where appropriate, corresponding HPROMs, constructed for three different CFD applications {\it designed to 
support the arguments made in this paper}. While the parametric aspect of a PROM is important for its relevance to a given application as well as its potential for wall-clock (or CPU) time reduction,
it is unrelated to the numerical stability aspects discussed in this paper. For this reason, in order to keep the discussion focused on numerial stability, and for the sake of simplicity, all three aforementioned CFD applications
are nonparametric. Hence, all PROMs and HPROMs presented herein are constructed/trained for a single parameter point $\boldsymbol \mu \in {\mathcal P}$. For a demonstration of the performance of similar 
PROMs and HPROMs for parametric CFD applications, the reader can consult, for example, \cite{zahr2015, washabaugh2016}.

In the first application, the performances of Galerkin and Petrov-Galerkin CFD PROMs and HPROMs are contrasted for a laminar flow problem, and numerical evidence about the instability of Galerkin PROMs for convection-dominated 
problems, even in the absence of turbulence, is presented. The purpose of this example application is to solidify the connection between the numerical instability of a Galerkin PROM and 
convection-dominated phenomena, demonstrate a weakness in the turbulent energy spectrum argument on PROM instability, as well as demonstrate the numerical stability of a Petrov-Galerkin framework 
for the PMOR of convection-dominated CFD problems. The second application example considers the construction of a PROM for a DNS of a fully turbulent flow problem, in the absence of a mean convective 
velocity. Finally, the third example demonstrates the successful construction of PROMs and HPROMs for the LES of a turbulent flow problem. In all cases, it is shown that stable, accurate, and 
computationally efficient Petrov-Galerkin PROMs or HPROMs are attainable for various classes of difficult CFD problems without sacrificing the notion of PROM consistency, or modifying the classical
design of an approximation subspace. Lastly, the approximations introduced for transforming PROMs into HPROMs in order to achieve computational efficiency are shown to preserve numerical stability 
without sacrificing accuracy.

All Petrov-Galerkin PROMs discussed in this section are constructed using the LSPG method \cite{carlberg2011, carlberg2013}. Whenever hyprereduction is required, it is performed using the
finite volume adaptation of ECSW described in \cite{grimberg2020} and equipped with the parallelized Lawson and Hanson nonnegative least-squares solver developed in \cite{chapman2017}.

In all cases, the accuracy of a PROM- or HPROM-based simulation relative to its counterpart HDM-based computation is assessed over the time-interval $[0, t_{max}]$ for a selection of scalar
quantities of interest (QoIs) using the relative error metric
\begin{equation*}
\mathbb{RE}_{Q} = \frac{\displaystyle \sqrt{\sum_{t \in P} (Q(t) - \widetilde{Q}(t))^2}}{\displaystyle \sqrt{\sum_{t \in P} Q(t)^2}} \times 100 \%
\end{equation*}
{%
\def\OldComma{,}%
\catcode`\,=13%
\def,{%
	\ifmmode%
	\OldComma\discretionary{}{}{}%
\else%
	\OldComma%
\fi%
}%
Here, $Q(t)$ and $\widetilde{Q}(t)$ are the predicted values of a QoI at time-instance $t$ using the HDM and PROM/HPROM of interest, respectively, and $P$ is the set of time-instances sampled
for computing the relative error $\mathbb{RE}_{Q}$ -- that is, $P = \big\{t \in \{0, \Delta s_{\mathbb{RE}}, 2 \Delta s_{\mathbb{RE}}, \ldots \} \mid t \leq t_{max}\big\}$ where 
$\Delta s_{\mathbb{RE}}$ is the sampling time-interval for the evaluation of $\mathbb{RE}_{Q}$, and $t_{max}$ is the final time-instance of the simulation time-interval.%
}
In each PROM- or HPROM-based simulation, the computation of $\widetilde{Q}(t)$ is performed by first reconstructing at each time-instance $t \in P$ the high-dimensional solution 
$\mathbf{u}(t; \boldsymbol{\mu}) = \mathbf{u}_0+\mathbf{V}\mathbf{y}(t; \boldsymbol{\mu})$, and then computing the QoI using the same procedure as in the 
case of the HDM-based simulation.

Finally, it is noted that all computations reported below are performed using a parallel Linux cluster where each compute node is equipped with two six-core Intel Xeon Gold 6128 processors running at 
3.40 GHz and 192 GB of memory, and the network fabric is Mellanox EDR 100G Infiniband.

\subsection{Computation of a laminar flow over a circular cylinder}
\label{sec:cylinder}

The first example focuses on the problem of predicting the two-dimensional, compressible, laminar flow over a right circular cylinder at $Re = 100$, and the free-stream Mach number $M_\infty = 0.2$.
At this Reynolds number, the flow is laminar: after a transient startup phase, it exhibits a K\'arm\'an vortex street. The purpose of this example is to demonstrate that even in the absence of turbulence,
convective phenomena can challenge the numerical stability of Galerkin PROMs and HPROMs when the constructed subspace approximation (\ref{eqn:subspace}) is rather ``coarse''. It is offered as 
a supplementary numerical evidence to that reported in \cite{carlberg2011, carlberg2017}.

\subsubsection{High-dimensional model}

For this problem, the CFD HDM is constructed by semi-discretizing the nondimensional form of the three-dimensional (3D), compressible Navier-Stokes equations using a mixed finite volume/finite element method.  
Specifically, the convective fluxes are approximated using a third-order, upwind, vertex-based finite volume method \cite{anderson1986, farhat1993}, and the diffusive fluxes are semi-discretized by
a piecewise linear Galerkin FE scheme.

The computational domain, shown in Figure \ref{fig:cylinder}(a), is chosen as a disk of diameter $40D$, where $D$ denotes the diameter of the circular cylinder. It is discretized by a 
one-element-thick unstructured mesh with $98,140$ vertices and $284,700$ tetrahedral elements, resulting in an HDM of dimension $N = 490,700$.
Symmetry boundary conditions are applied on the spanwise faces of the computational domain to ensure that the resulting flow is two-dimensional (2D), and a no-slip adiabatic wall boundary condition 
is applied on the surface of the cylinder. The chosen mesh resolution leads to good agreement between the computed flow solution and other numerical as well as experimental studies of this problem 
reported in \cite{wieselsberger1922, henderson1995}.

Time-discretization is performed using a second-order diagonally implicit Runge-Kutta (DIRK) scheme, and a fixed, nondimensional time-step $\Delta t = 1 \times 10^{-1}$. For the mesh outlined above,
this time-step corresponds to a CFL number of approximately $7,600$. In this case, the DIRK scheme offers improved nonlinear stability over the more commonly used multistep BDF schemes, even though 
both of their second-order variants are $L$-stable. In particlular, it allows a larger time-step. The Butcher tableau for this DIRK scheme is given in \ref{app:dirk}. The nonlinear system resulting at 
each Runge-Kutta stage is solved by a Newton-Krylov method equipped with GMRES as the linear solver, and the restricted additive Schwartz preconditioner based on an ILU(1) factorization \cite{cai1998}.

For all simulations reported herein, the same initial flow condition is applied and computed as follows. Using the HDM described above, the flow is impulsively started from a uniform 
state, then time-integrated until the onset of vortex shedding. 

All simulations are performed in the nondimensional time-interval $t \in [0, 200]$. At approximately $t = 100$, the HDM-based flow solution becomes periodic. 
For this relatively small-scale computational problem, the HDM-based simulation requires 65.4 hours wall-clock time on a single core of a compute node of the Linux cluster specified above.
Figure \ref{fig:cylinder}(b) shows a snapshot of the flow vorticity solution computed at the end of the simulation -- that is, at $t = 200$ -- using the HDM.

\begin{figure}[h!]
	\centering
	\begin{subfigure}[c]{0.275\textwidth}%
	\centering
	\includegraphics[width=\linewidth]{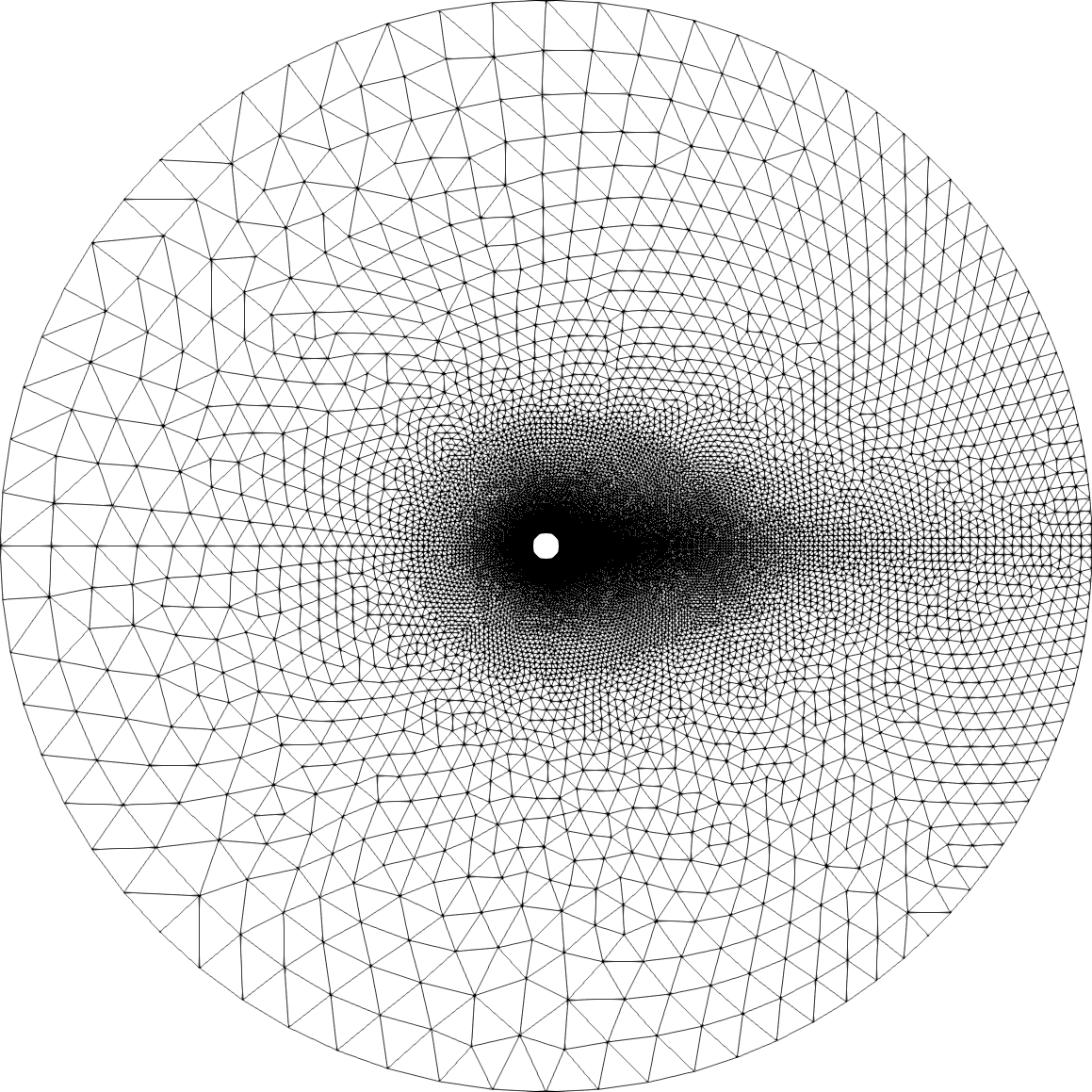}
	\caption{}
	\end{subfigure}%
	\hspace{1em}
	\begin{subfigure}[c]{0.60\textwidth}%
	\centering
	\includegraphics[width=\linewidth]{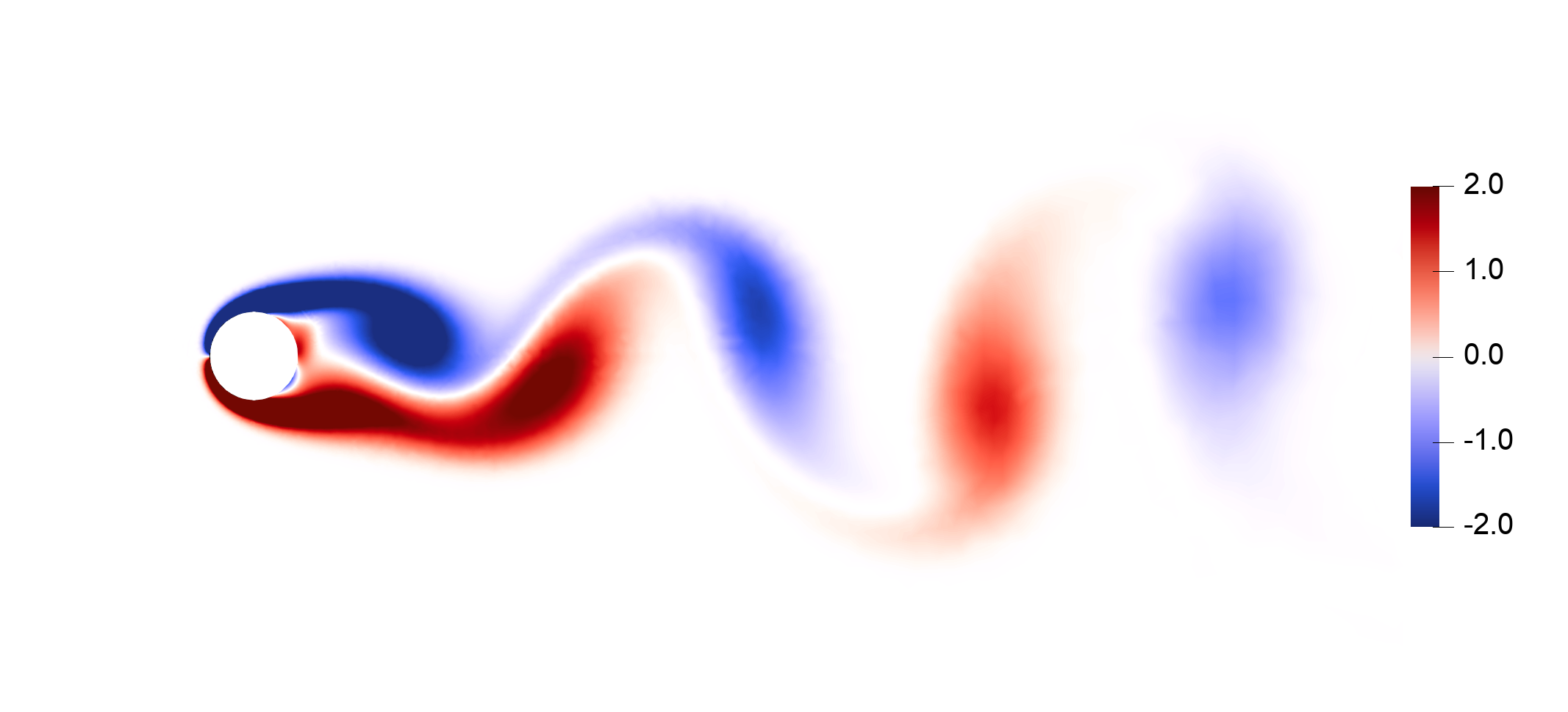}
	\caption{}
	\end{subfigure}%
	\caption{2D flow problem over a right circular cylinder: (a) Discretized computational domain; (b) HDM-based vorticity solution snapshot at $t = 200$.}
	\label{fig:cylinder}
\end{figure}

\subsubsection{Projection-based reduced-order models with hyperreduction}

HDM-based solution snapshots are collected in the first nondimensional subinterval $[0,150]$ at the sampling rate defined by $\Delta s = 2 \times 10^{-1}$. Hence, the second nondimensional 
subinterval $[150, 200]$ is used here to test the ability of the constructed PROMs and HPROMs to accurately predict the behavior of the solution outside of the trained time-interval, when the
solution happens to remain periodic with the same amplitude and frequency. The offset of the affine subspace approximation $\mathbf{u}_0$ is chosen as the initial condition of the problem, and the 
singular value decomposition (SVD) method is employed to compress the 751 collected solution snapshots after normalization and construct three right ROBs of dimension $n = 20$, $n = 35$, and $n = 55$.
These choices of $n$ lead to ROBs that capture $99.9\%$, $99.99\%$, and $99.999\%$ of the energy of the singular values of the snapshot matrix, respectively. Using these ROBs, both Galerkin and 
Petrov-Galerkin PROMs are constructed. For computational efficiency, the left ROB associated with the Petrov-Galerkin projection is computed only once per time-step, then re-used for all Gauss-Newton 
iterations associated with that single time-step.

In this case, the nonlinearity of the resulting semi-discrete equations defining the HDM is not low-order polynomial. Thus, hyperreduction is required in order to achieve the expected computational 
speed-up over the performance of the HDM.
To this end, the training snapshots for ECSW are collected at half the frequency mentioned above for constructing the right ROBs -- that is, a subset of 376 of the aforementioned collected solution
snapshots is used to train in the nondimensional time-interval $[0,150]$ the reduced mesh computed by ECSW -- and the training tolerance is set to $\varepsilon = 1\times10^{-2}$.

All constructed PROMs and HPROMs are time-discretized using the same second-order DIRK scheme as the HDM, and the same nondimensional time-step $\Delta t = 1 \times 10^{-1}$.
As in the case of the HDM-based simulation, all PROM- and HPROM-based simulations are performed on a single core of one of the Linux cluster's compute nodes.

For this problem, the QoIs chosen for assessing the accuracy of the constructed PROMs and HPROMs are the time-dependent lift and drag coefficients $c_L$ and $c_D$, as well as the streamwise velocity 
$v_x$ and pressure $p$, computed at a probe located $5D$ downstream from the cylinder's trailing edge. The nondimensional sampling time-interval for computing the relative errors is 
chosen to be $\Delta s_{\mathbb{RE}} = \Delta t = 1 \times 10^{-1}$.

\paragraph{Galerkin reduced-order models}

Figures \ref{fig:cylinderliftgal} and \ref{fig:cylinderdraggal} compare the time-histories of the lift and drag coefficients computed using the HDM and Galerkin PROMs and HPROMs for each dimension $n$ of the right ROB.
The time-histories of the streamwise velocity and pressure computed at the probe location are compared in Figures \ref{fig:cylinderprobevxgal} and \ref{fig:cylinderprobepgal}.
It is first observed that while the HDM-based solution reaches a limit cycle and becomes periodic after the transient startup phase, all Galerkin PROMs exhibit a drift away from this limit cycle which is most noticeable in the $c_D$ and $v_x$ time-histories.
While increasing the dimension $n$ decreases the rate of drifting and increases accuracy, even the case $n = 55$ exhibits this undesirable behavior.
Though the HDM is numerically stabilized using upwinding, it does not lead to numerical stability of the associated Galerkin PROM. This supports the argument that suitable approximation methods 
must be applied at each modeling stage, whether from the continuous level to the HDM, or from the HDM to the PROM.

\begin{figure}[h!]
	\centering
	\begin{subfigure}[c]{0.3\textwidth}%
	\centering
	\includegraphics[width=\linewidth]{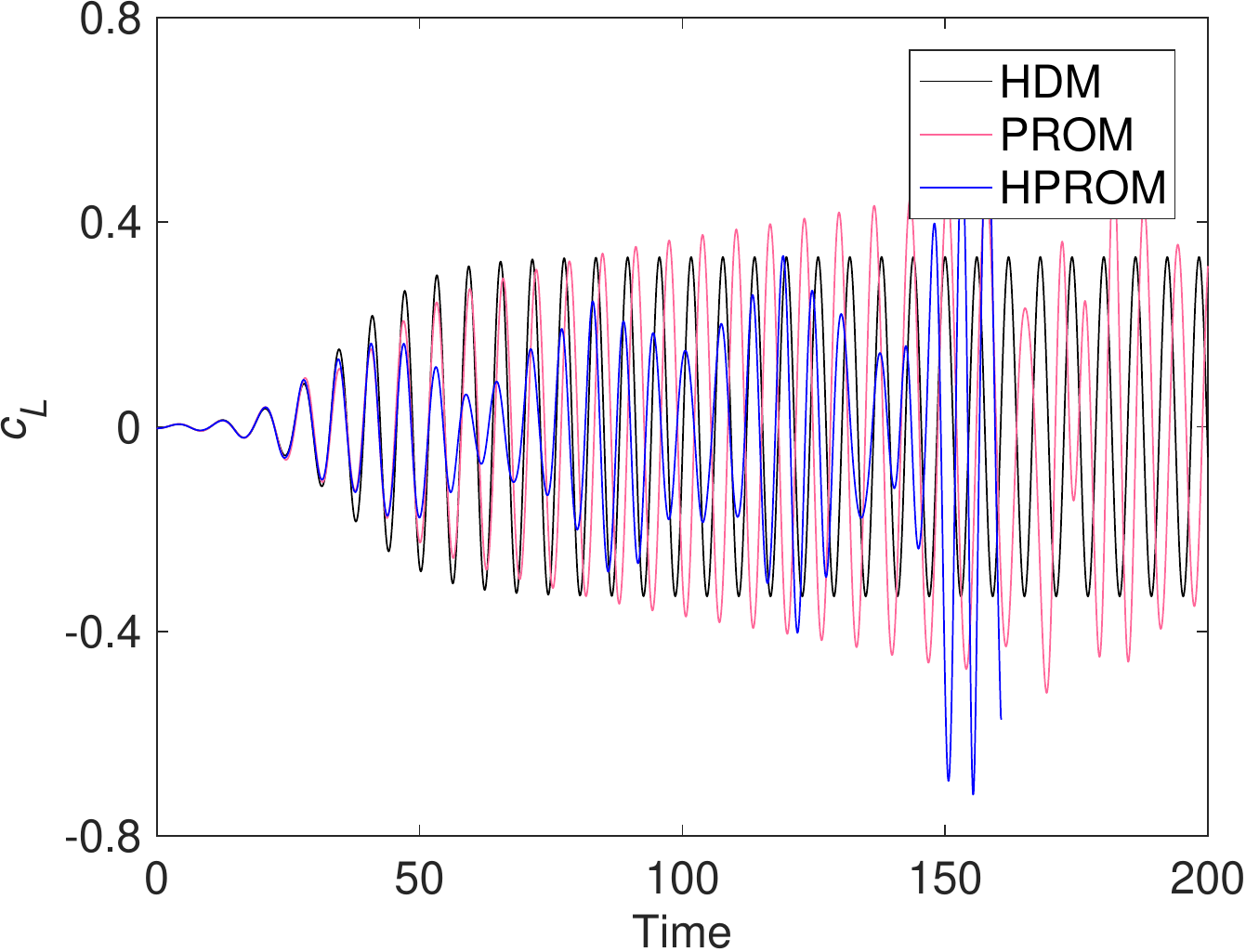}
	\caption{Galerkin, $n = 20$}
	\end{subfigure}%
	\hspace{1em}
	\begin{subfigure}[c]{0.3\textwidth}%
	\centering
	\includegraphics[width=\linewidth]{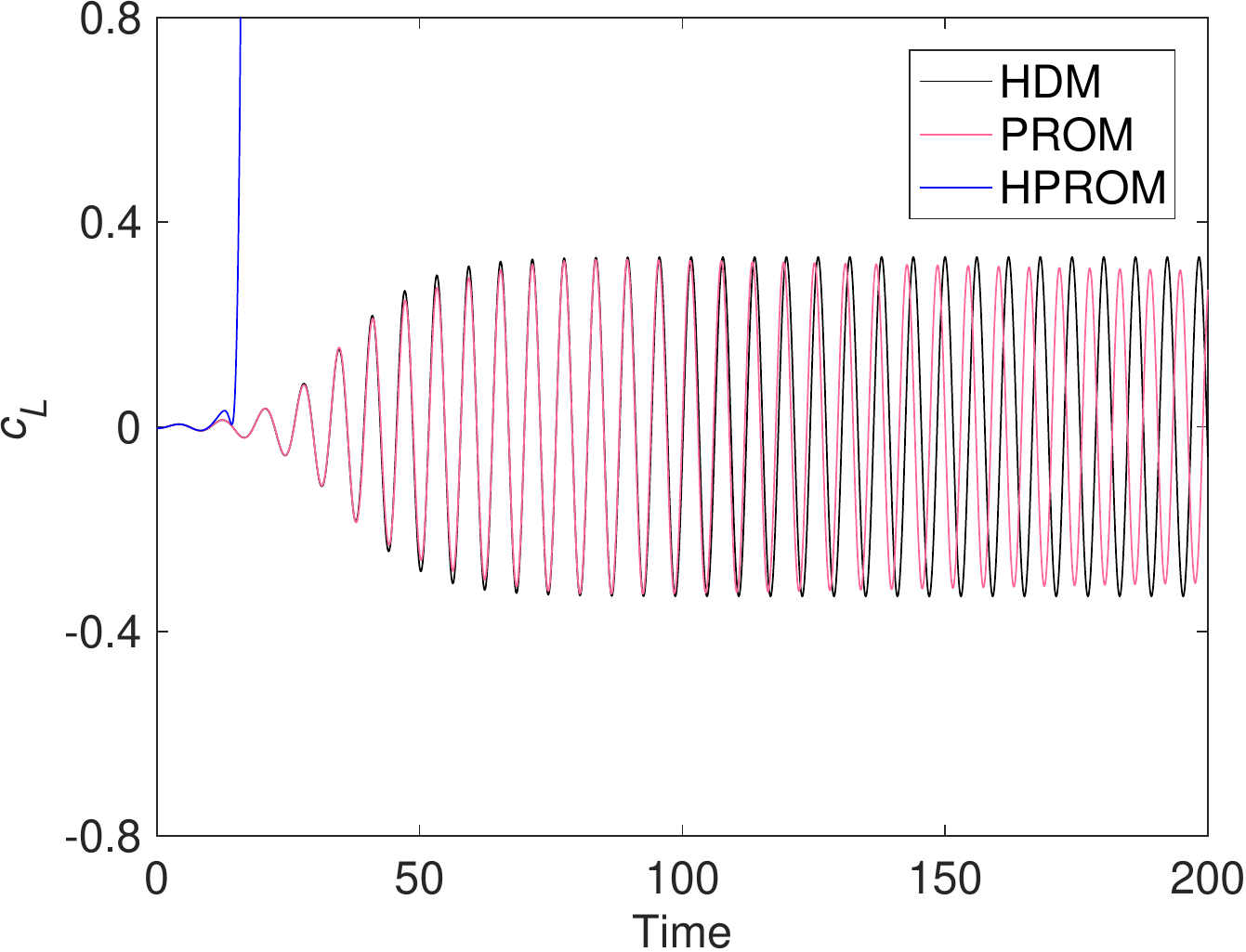}
	\caption{Galerkin, $n = 35$}
	\end{subfigure}%
	\hspace{1em}
	\begin{subfigure}[c]{0.3\textwidth}%
	\centering
	\includegraphics[width=\linewidth]{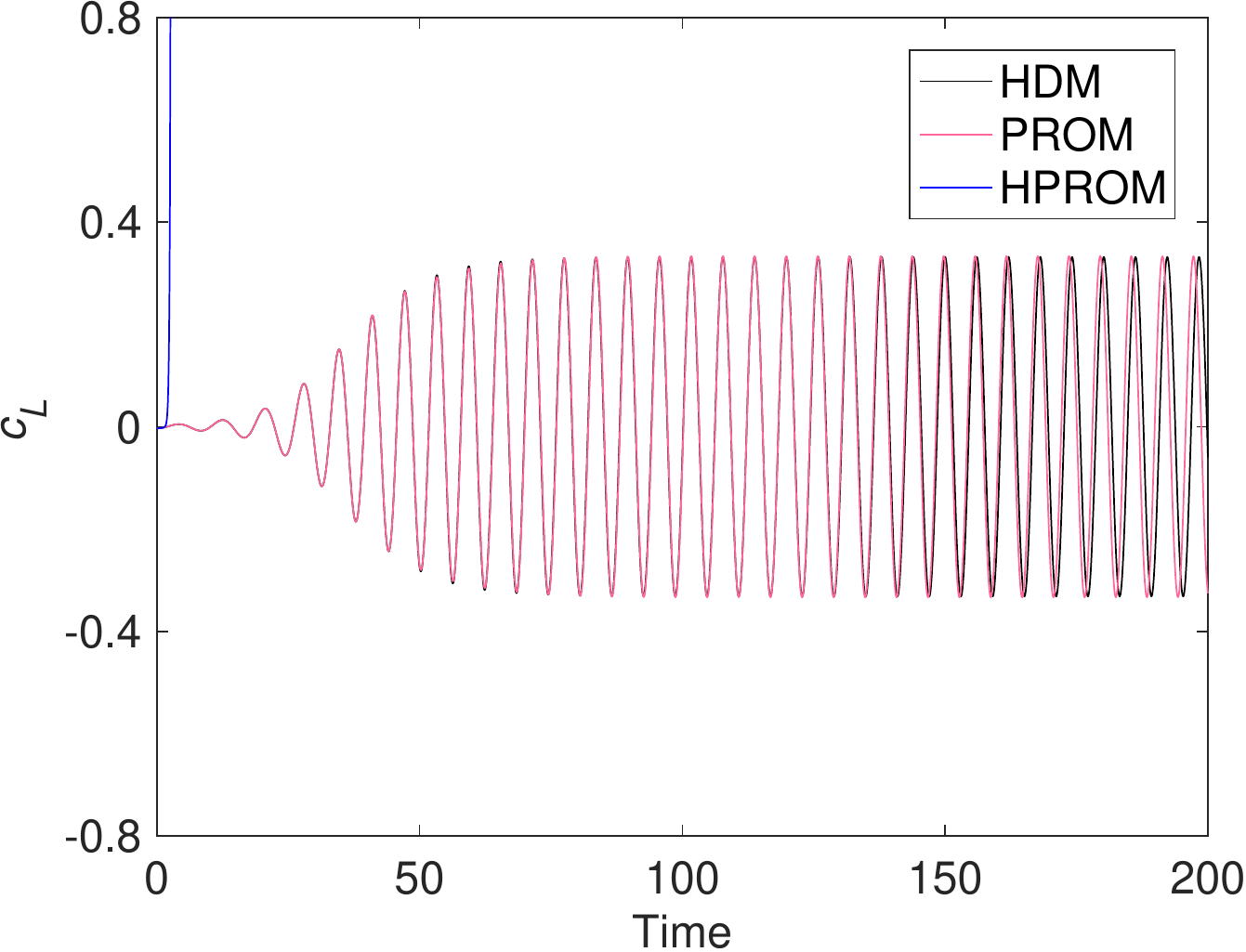}
	\caption{Galerkin, $n = 55$}
	\end{subfigure}%
	\caption{2D flow over a right circular cylinder: Time-histories of the lift coefficient computed using the HDM and Galerkin PROMs and HPROMs.}
	\label{fig:cylinderliftgal}
\end{figure}

\begin{figure}[h!]
	\centering
	\begin{subfigure}[c]{0.3\textwidth}%
	\centering
	\includegraphics[width=\linewidth]{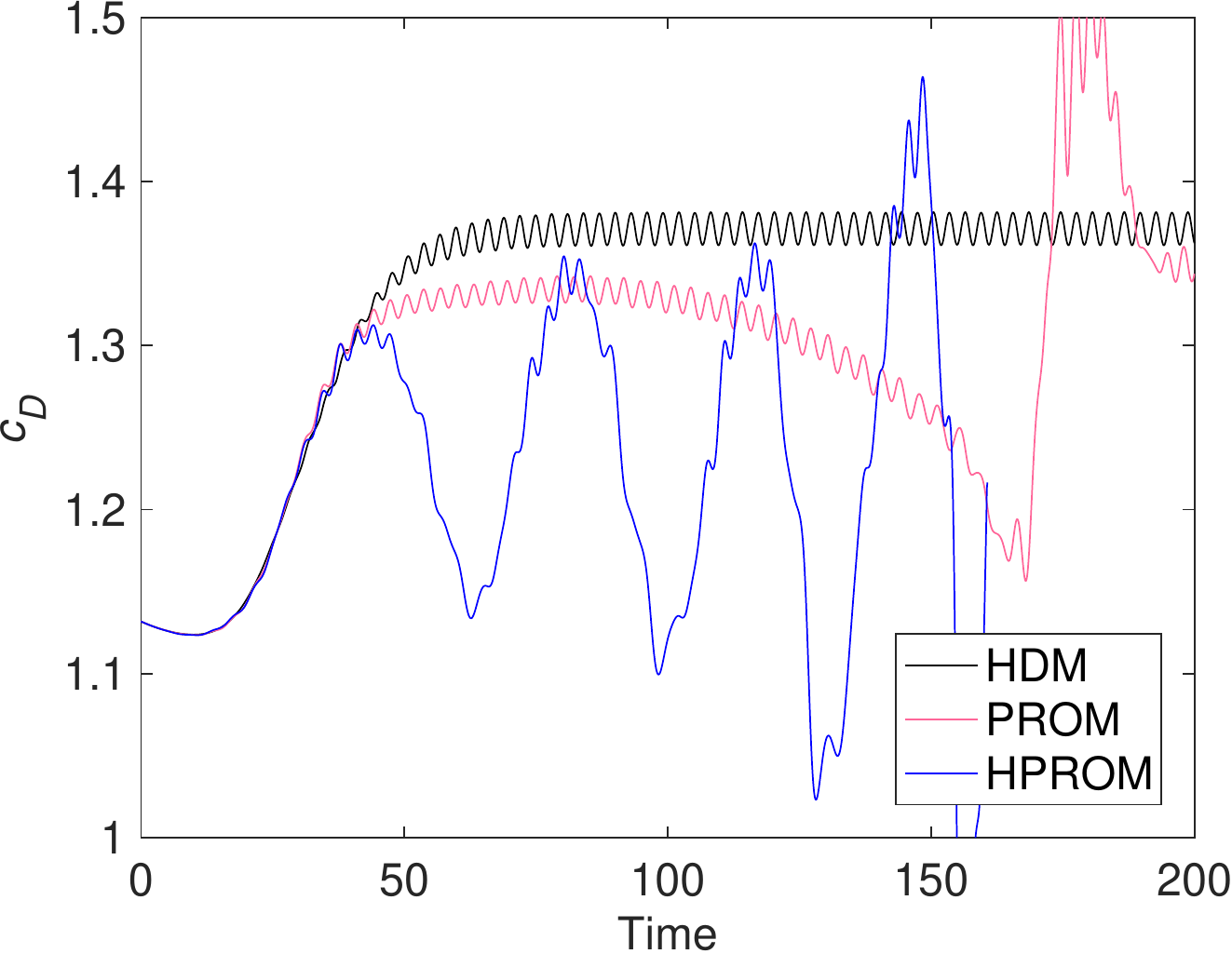}
	\caption{Galerkin, $n = 20$}
	\end{subfigure}%
	\hspace{1em}
	\begin{subfigure}[c]{0.3\textwidth}%
	\centering
	\includegraphics[width=\linewidth]{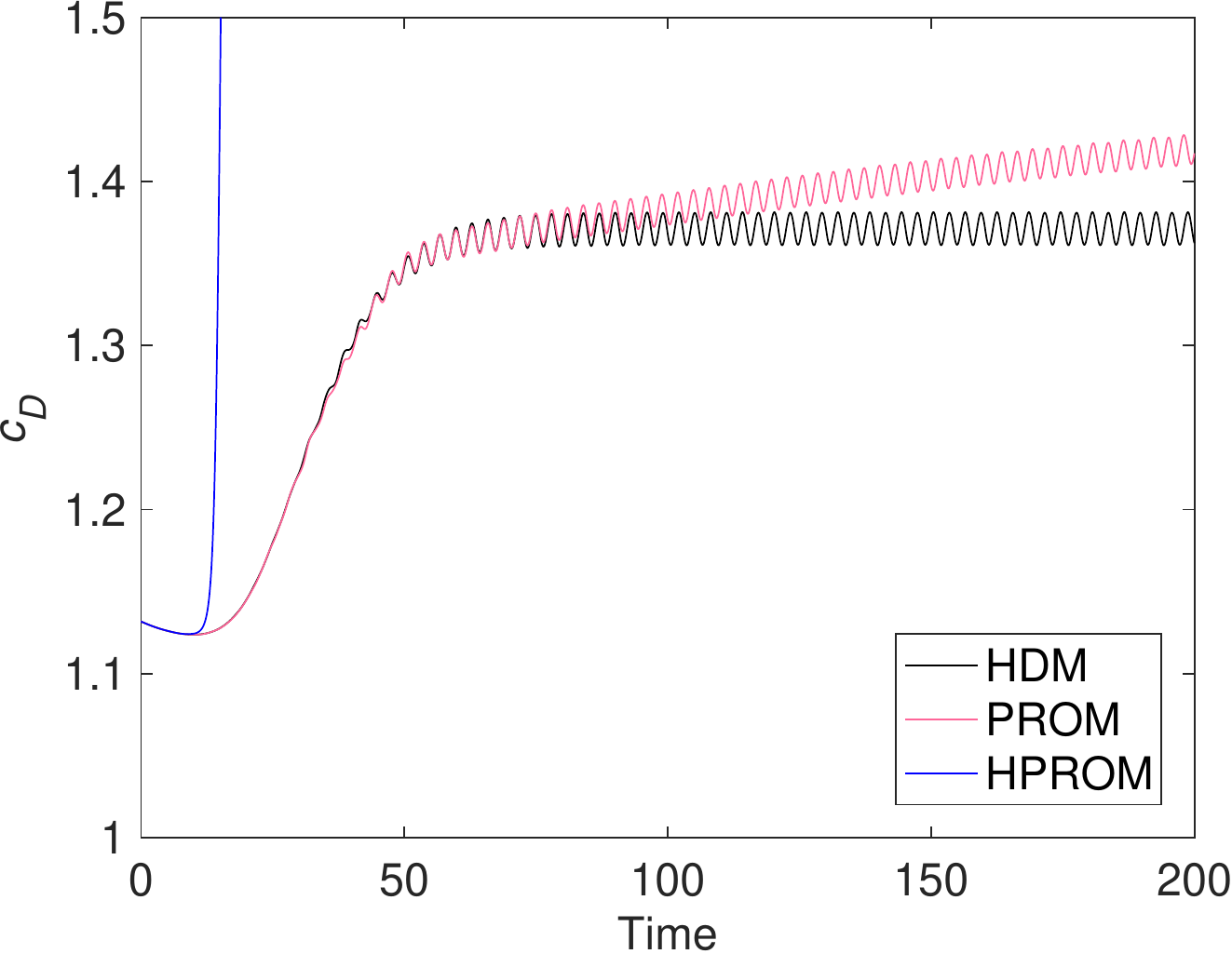}
	\caption{Galerkin, $n = 35$}
	\end{subfigure}%
	\hspace{1em}
	\begin{subfigure}[c]{0.3\textwidth}%
	\centering
	\includegraphics[width=\linewidth]{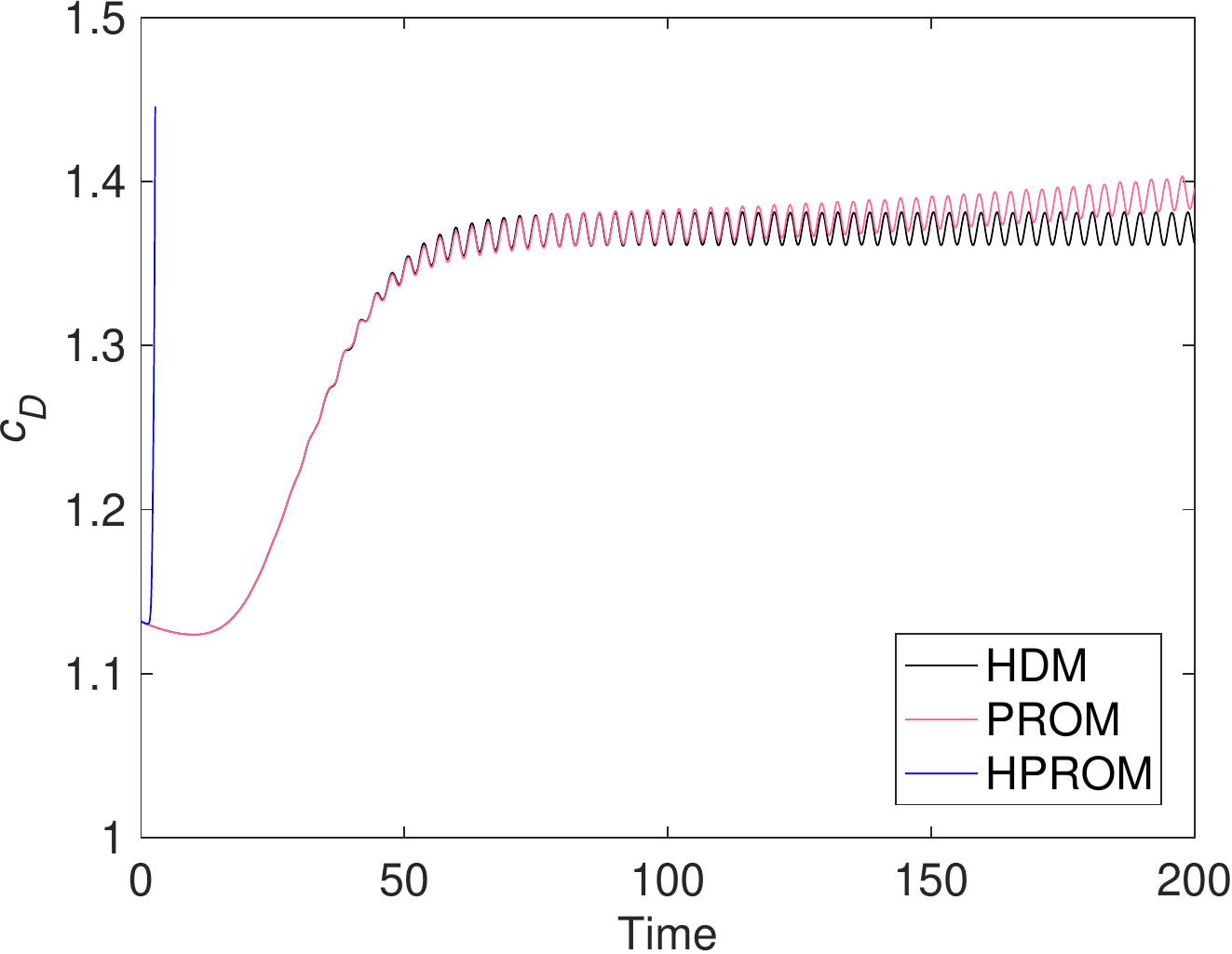}
	\caption{Galerkin, $n = 55$}
	\end{subfigure}%
	\caption{2D flow over a right circular cylinder: Time-histories of the drag coefficient computed using the HDM and Galerkin PROMs and HPROMs.}
	\label{fig:cylinderdraggal}
\end{figure}

\begin{figure}[h!]
	\centering
	\begin{subfigure}[c]{0.3\textwidth}%
	\centering
	\includegraphics[width=\linewidth]{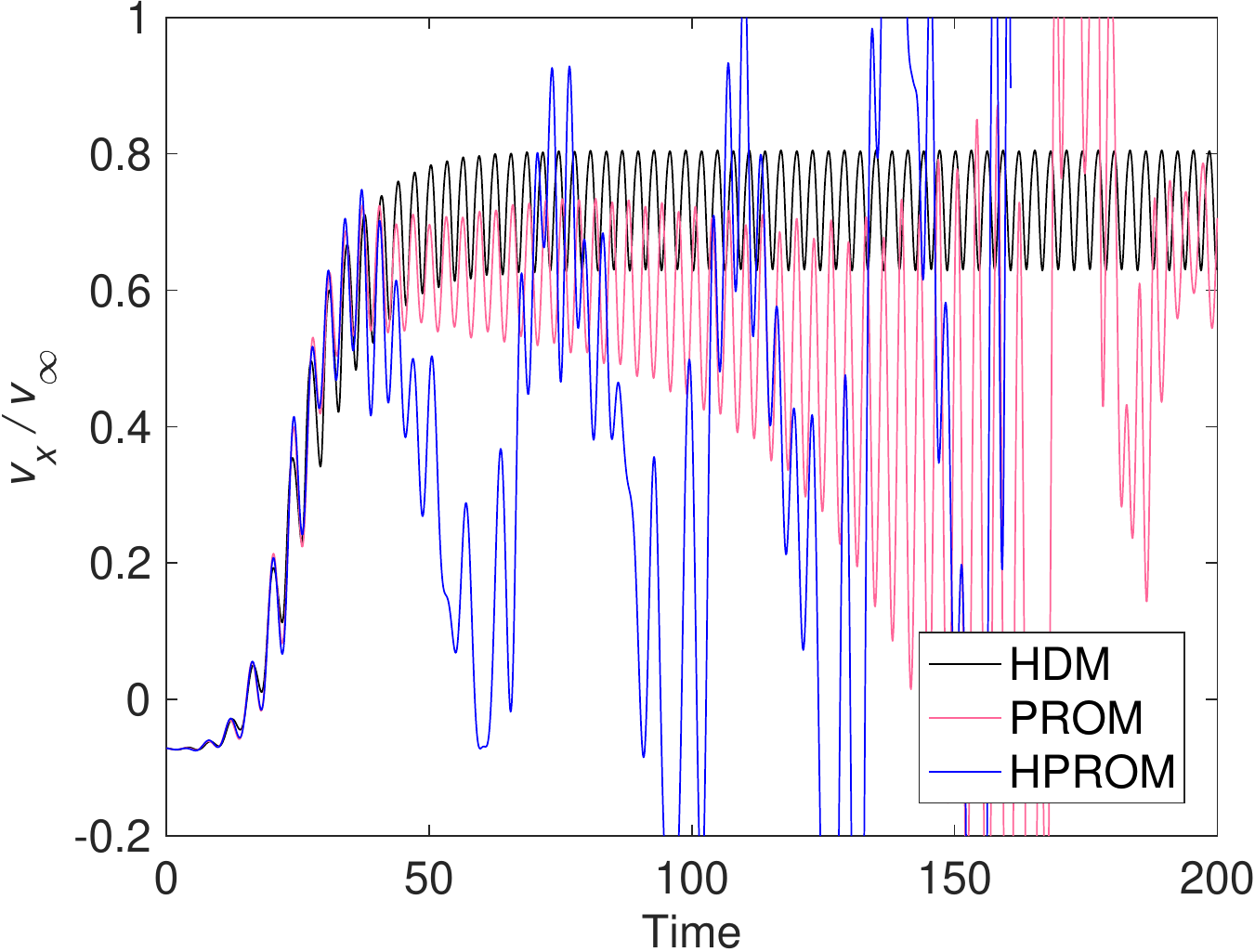}
	\caption{Galerkin, $n = 20$}
	\end{subfigure}%
	\hspace{1em}
	\begin{subfigure}[c]{0.3\textwidth}%
	\centering
	\includegraphics[width=\linewidth]{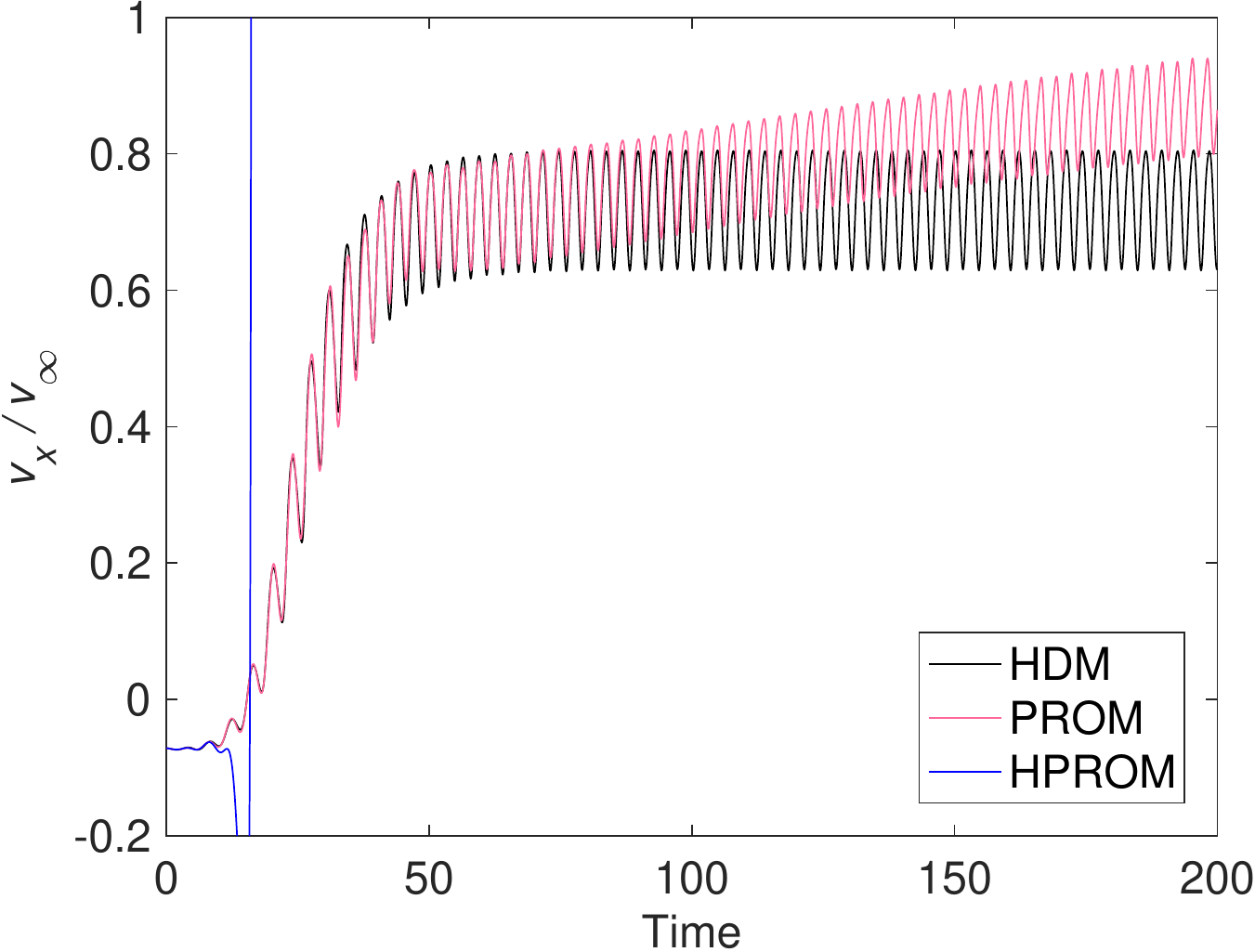}
	\caption{Galerkin, $n = 35$}
	\end{subfigure}%
	\hspace{1em}
	\begin{subfigure}[c]{0.3\textwidth}%
	\centering
	\includegraphics[width=\linewidth]{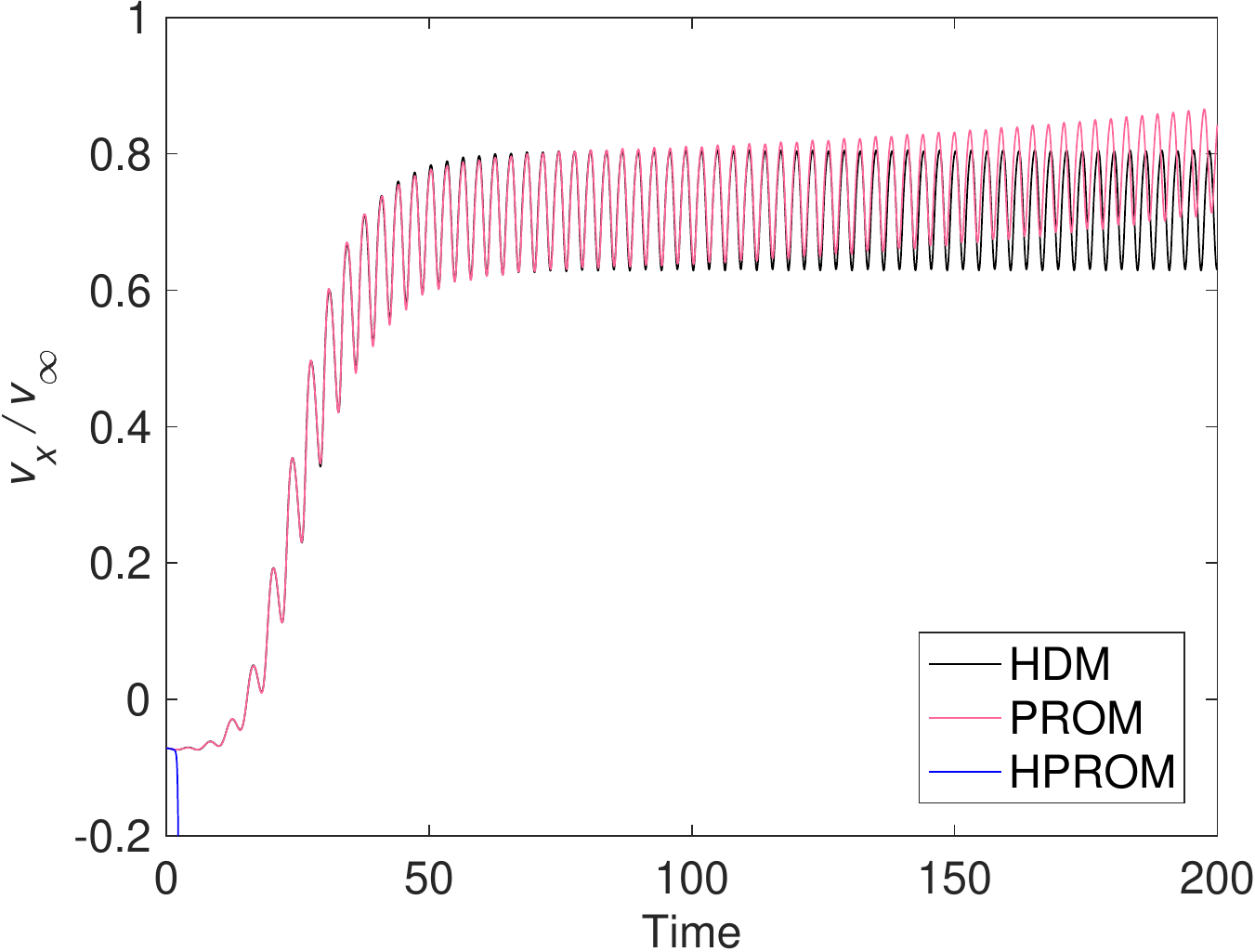}
	\caption{Galerkin, $n = 55$}
	\end{subfigure}%
	\caption{2D flow over a right circular cylinder: Time-histories of the streamwise velocity computed at a probe using the HDM and Galerkin PROMs and HPROMs.}
	\label{fig:cylinderprobevxgal}
\end{figure}

\begin{figure}[h!]
	\centering
	\begin{subfigure}[c]{0.3\textwidth}%
	\centering
	\includegraphics[width=\linewidth]{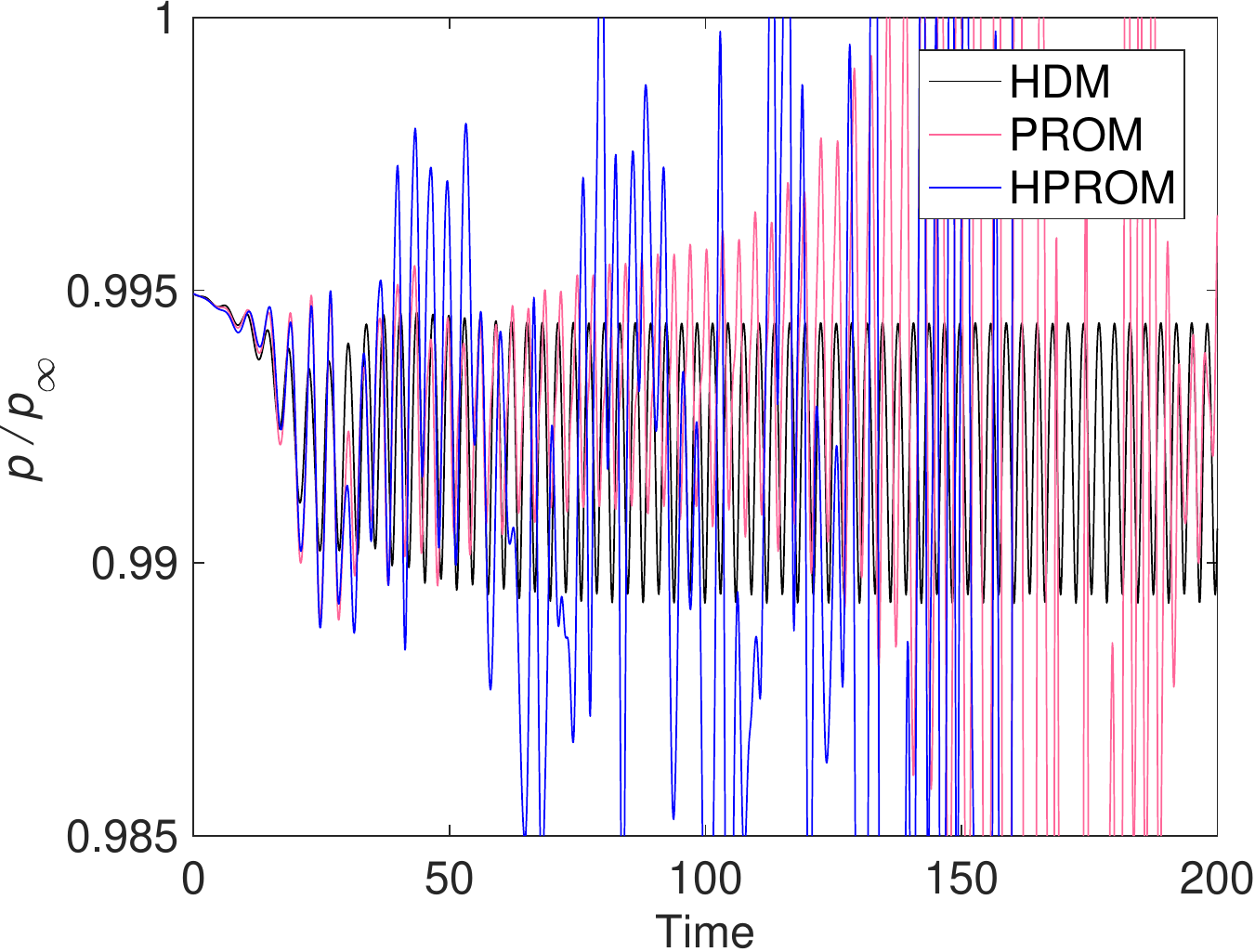}
	\caption{Galerkin, $n = 20$}
	\end{subfigure}%
	\hspace{1em}
	\begin{subfigure}[c]{0.3\textwidth}%
	\centering
	\includegraphics[width=\linewidth]{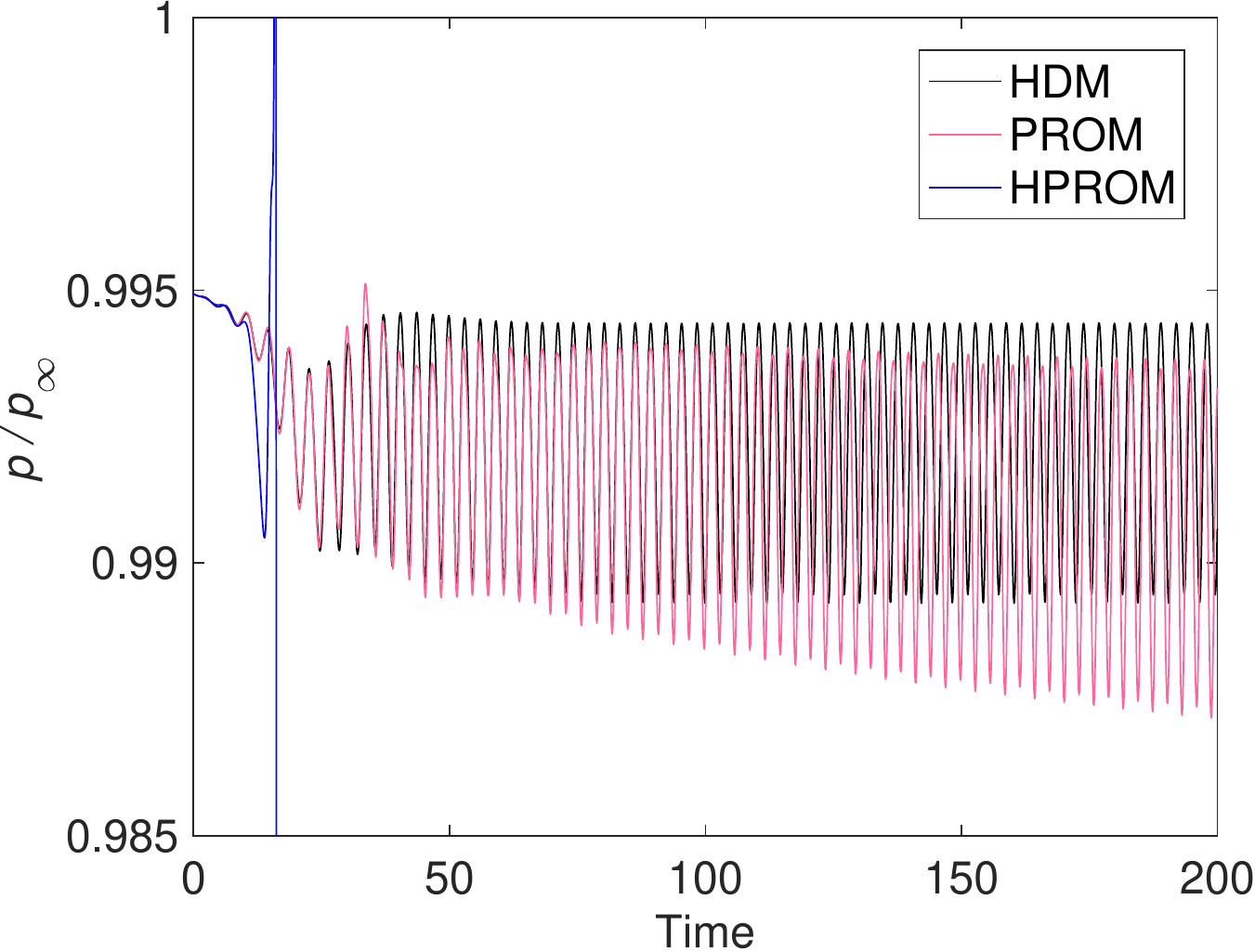}
	\caption{Galerkin, $n = 35$}
	\end{subfigure}%
	\hspace{1em}
	\begin{subfigure}[c]{0.3\textwidth}%
	\centering
	\includegraphics[width=\linewidth]{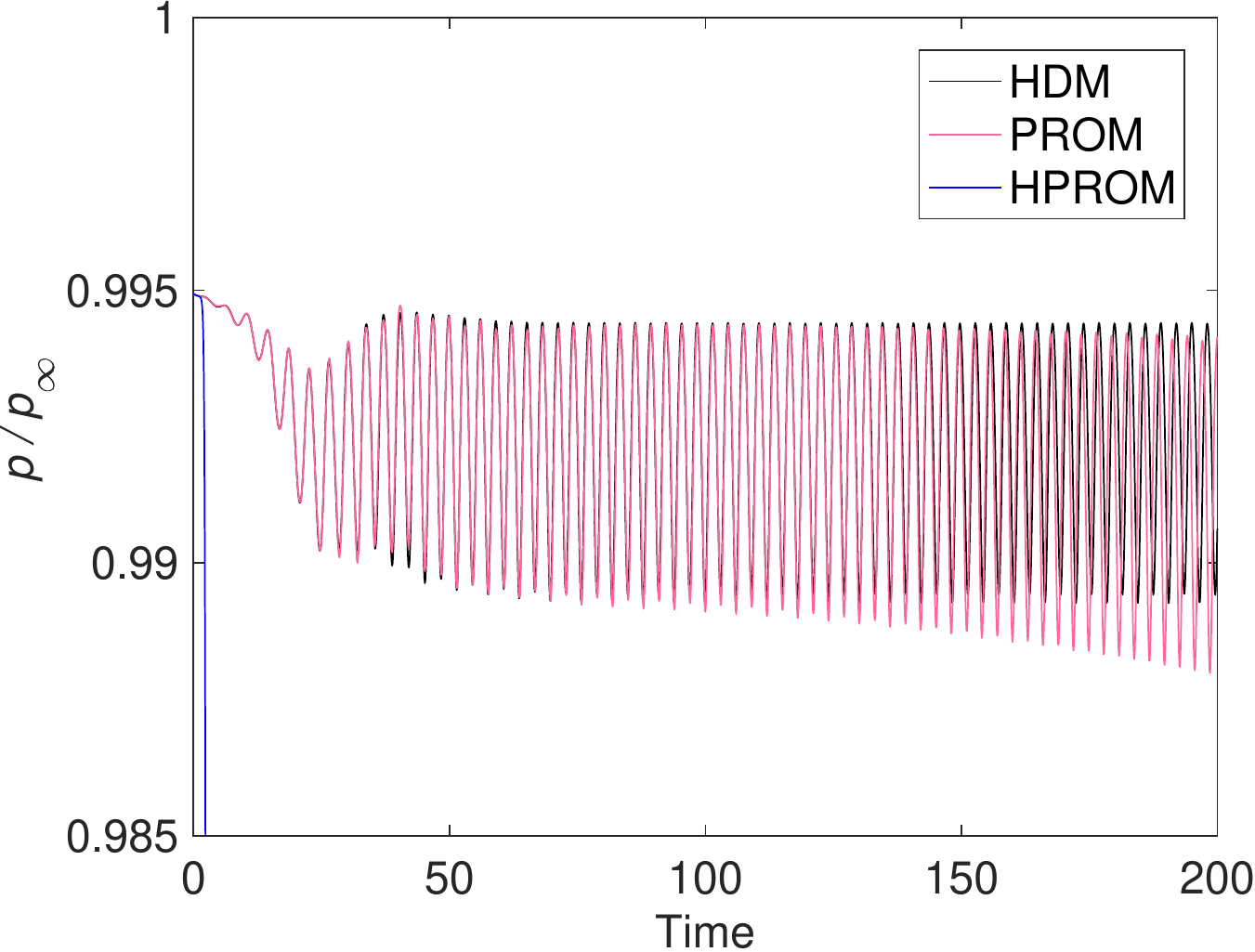}
	\caption{Galerkin, $n = 55$}
	\end{subfigure}%
	\caption{2D flow over a right circular cylinder: Time-histories of the pressure computed at a probe using the HDM and Galerkin PROMs and HPROMs.}
	\label{fig:cylinderprobepgal}
\end{figure}

Furthermore, all Galerkin HPROMs are found to be numerically unstable and to exhibit nonphysical pressures and densities that cause early termination of the simulations.
Interestingly, the numerical instability observed for a Galerkin HPROM is encountered at an earlier time when the dimension $n$ of the subspace approximation is increased.
These results demonstrate that -- at least for the chosen dimensions $n$ of the right ROBs -- the PROMs and HPROMs resulting from Galerkin projection do not deliver satisfactory performance for 
this problem. Given the presented trend and the knowledge of improved performance of Galerkin semi-discretizations under mesh refinement, one could expect that increasing further the dimension of
the right ROB could improve the numerical stability of the resulting PROM as well as its accuracy. However, this is not guaranteed, and if achieved, would sacrifice the desired performance of the
constructed PROM or HPROM.

\paragraph{Petrov-Galerkin reduced-order models}

The time-histories of the lift and drag coefficients computed using the HDM and Petrov-Galerkin PROMs and HPROMs are compared in Figures \ref{fig:cylinderliftlspg} and \ref{fig:cylinderdraglspg} for 
each chosen dimension $n$ of the right ROB. Figures \ref{fig:cylinderprobevxlspg} and \ref{fig:cylinderprobeplspg} compare the time-histories of the streamwise velocity and pressure computed at the 
selected probe location for the Petrov-Galerkin PROMs and HPROMs. The reader can observe that unlike the Galerkin PROMs and HPROMs, the Petrov-Galerkin counterparts do not exhibit a drift away from the 
HDM-based reference solution. For $n=35$, the observed mean value of the oscillations predicted for the QoIs $c_D$ and $v_x$ using the PROM is slightly larger than its counterpart predicted by
the HDM: However, it remains stationary after $t = 150$. In fact, the Petrov-Galerkin PROMs are found to be numerically stable at all considered values of $n$ and to deliver more than satisfactory 
accuracy for all QoIs. Hyperreduction is observed to impact neither the numerical stability nor the accuracy of the constructed PROMs: surprisingly however, the HPROMs constructed for this problem 
are occasionally found to improve the accuracy of their underlying PROMs.

\begin{figure}[h!]
	\centering
	\begin{subfigure}[c]{0.3\textwidth}%
	\centering
	\includegraphics[width=\linewidth]{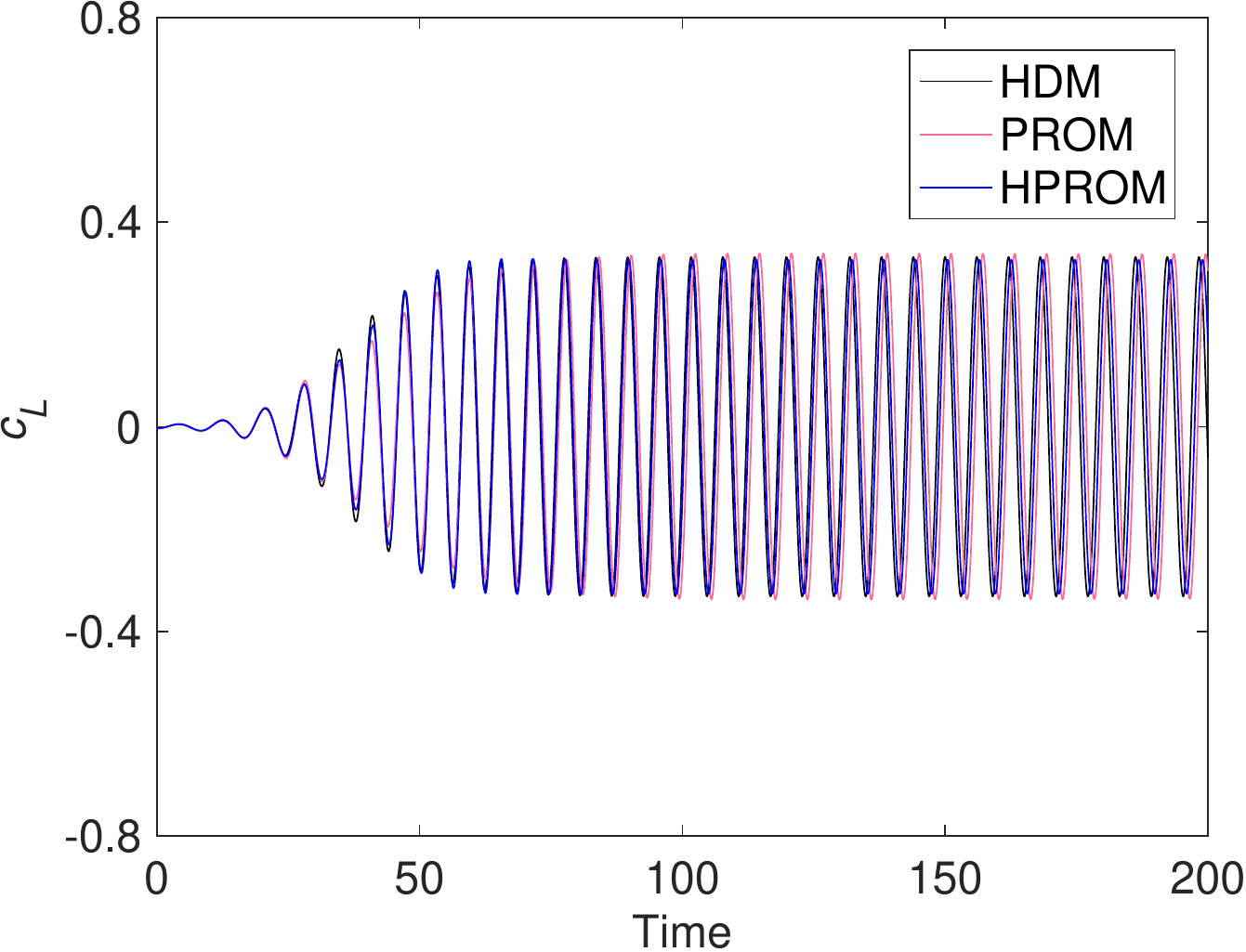}
	\caption{LSPG, $n = 20$}
	\end{subfigure}%
	\hspace{1em}
	\begin{subfigure}[c]{0.3\textwidth}%
	\centering
	\includegraphics[width=\linewidth]{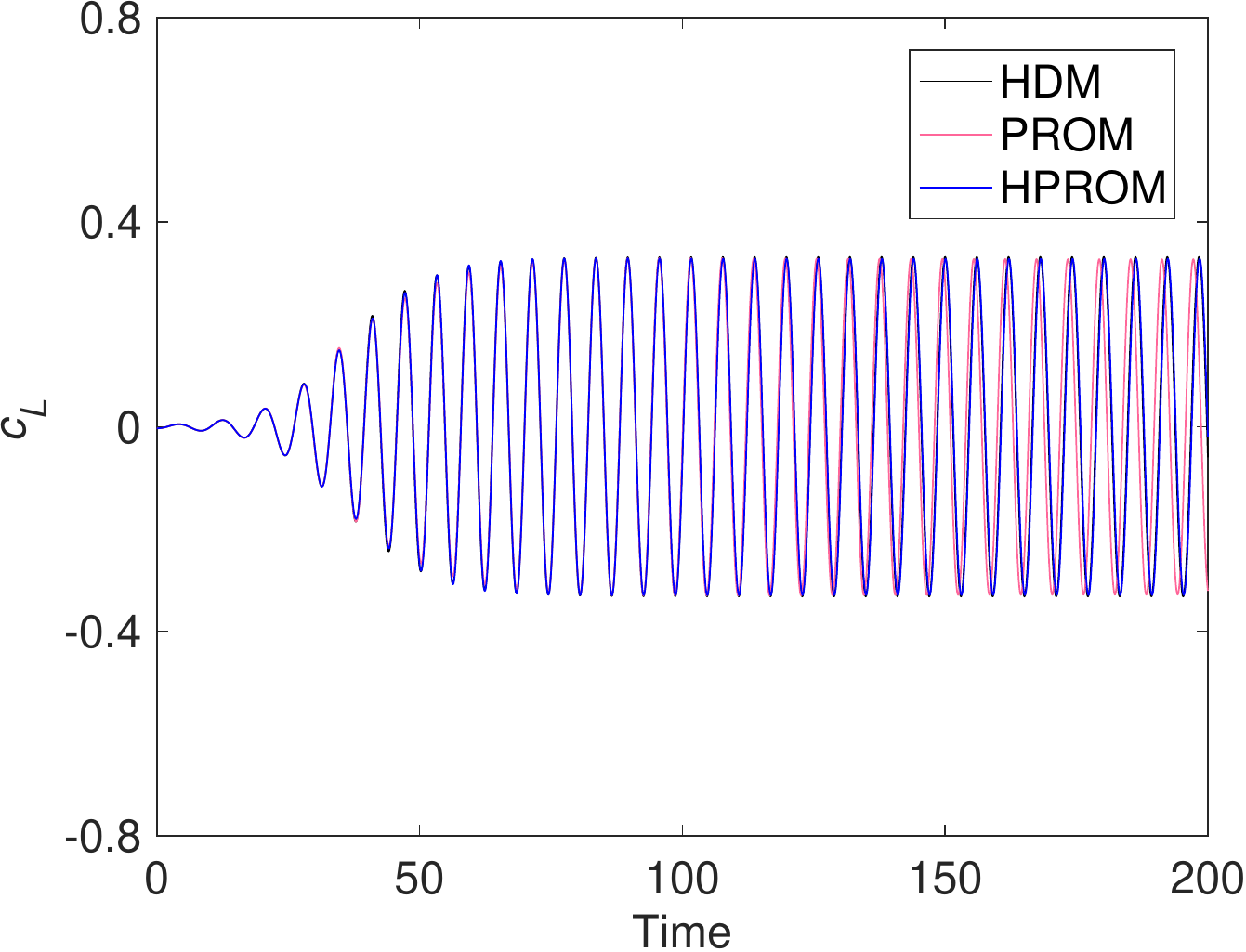}
	\caption{LSPG, $n = 35$}
	\end{subfigure}%
	\hspace{1em}
	\begin{subfigure}[c]{0.3\textwidth}%
	\centering
	\includegraphics[width=\linewidth]{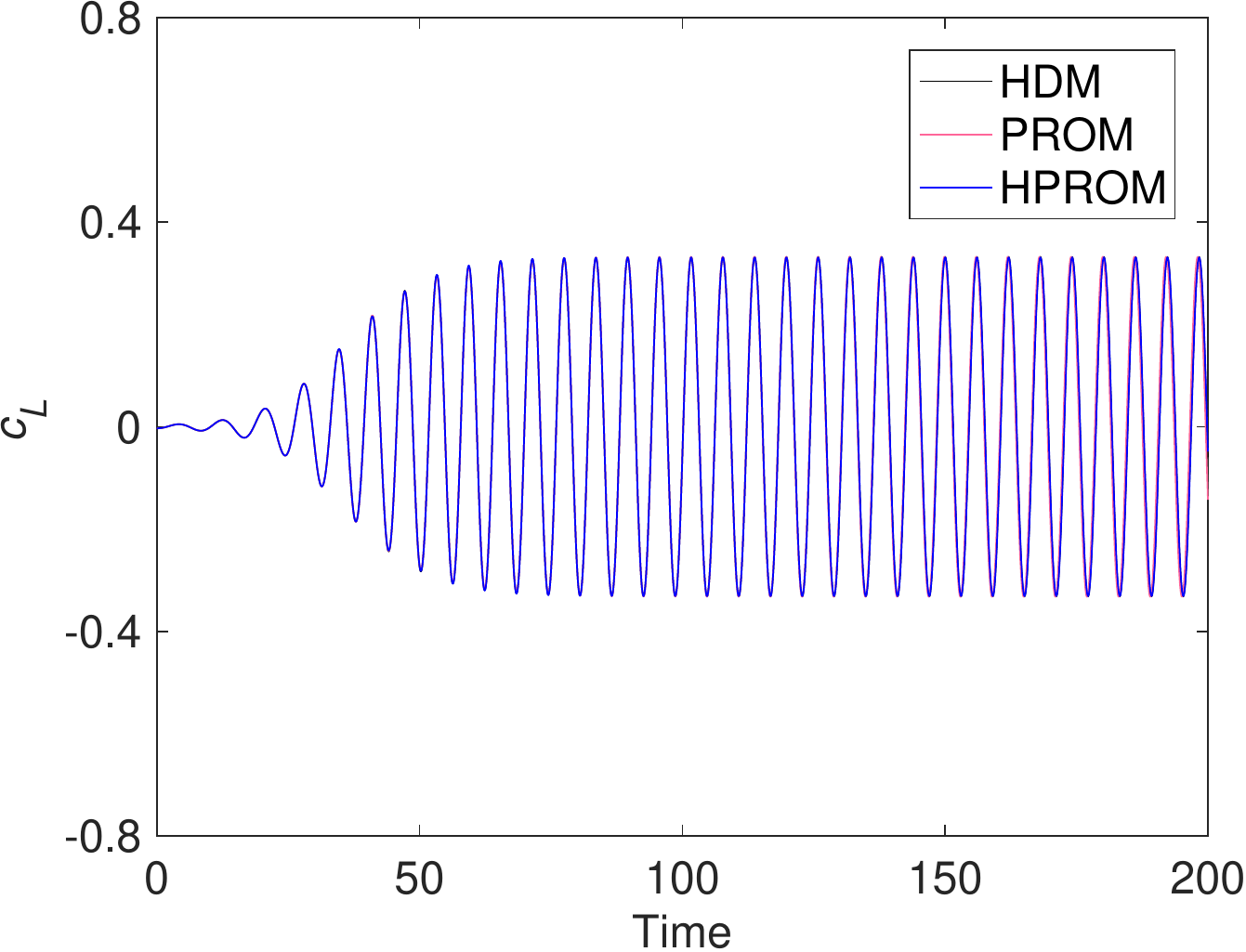}
	\caption{LSPG, $n = 55$}
	\end{subfigure}%
	\caption{2D flow over a right circular cylinder: Time-histories of the lift coefficient computed using the HDM and LSPG-based Petrov-Galerkin PROMs and HPROMs.}
	\label{fig:cylinderliftlspg}
\end{figure}

\begin{figure}[h!]
	\centering
	\begin{subfigure}[c]{0.3\textwidth}%
	\centering
	\includegraphics[width=\linewidth]{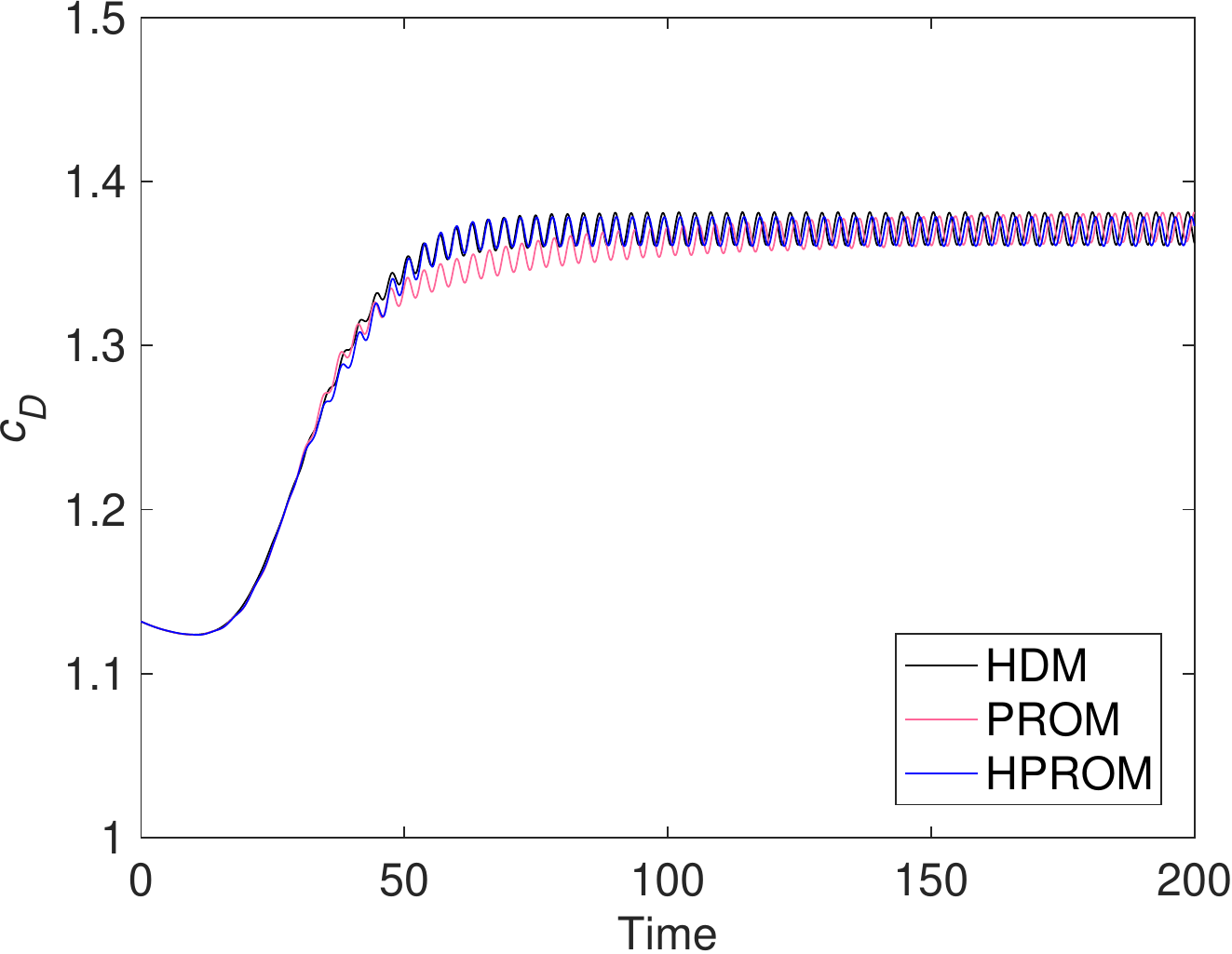}
	\caption{LSPG, $n = 20$}
	\end{subfigure}%
	\hspace{1em}
	\begin{subfigure}[c]{0.3\textwidth}%
	\centering
	\includegraphics[width=\linewidth]{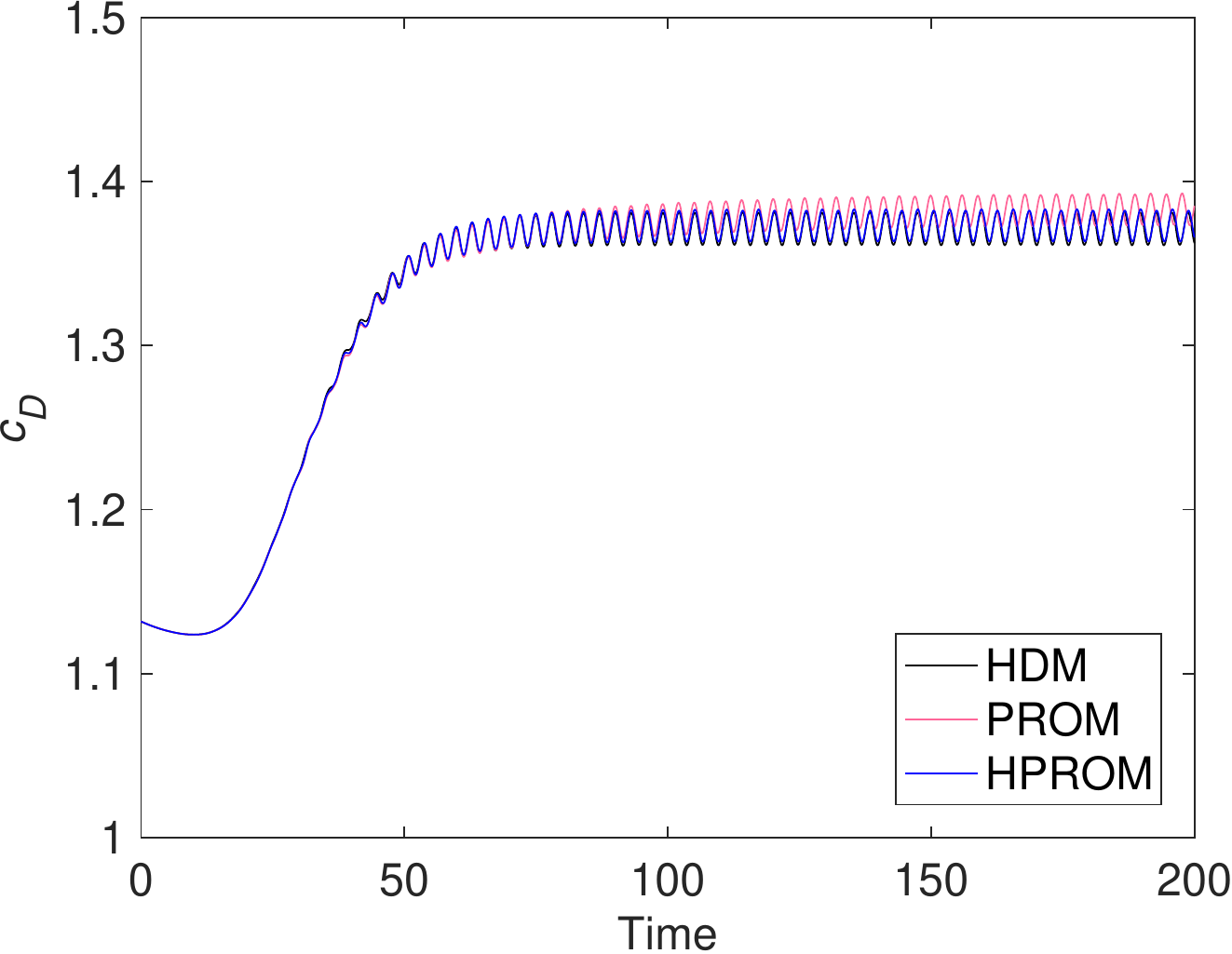}
	\caption{LSPG, $n = 35$}
	\end{subfigure}%
	\hspace{1em}
	\begin{subfigure}[c]{0.3\textwidth}%
	\centering
	\includegraphics[width=\linewidth]{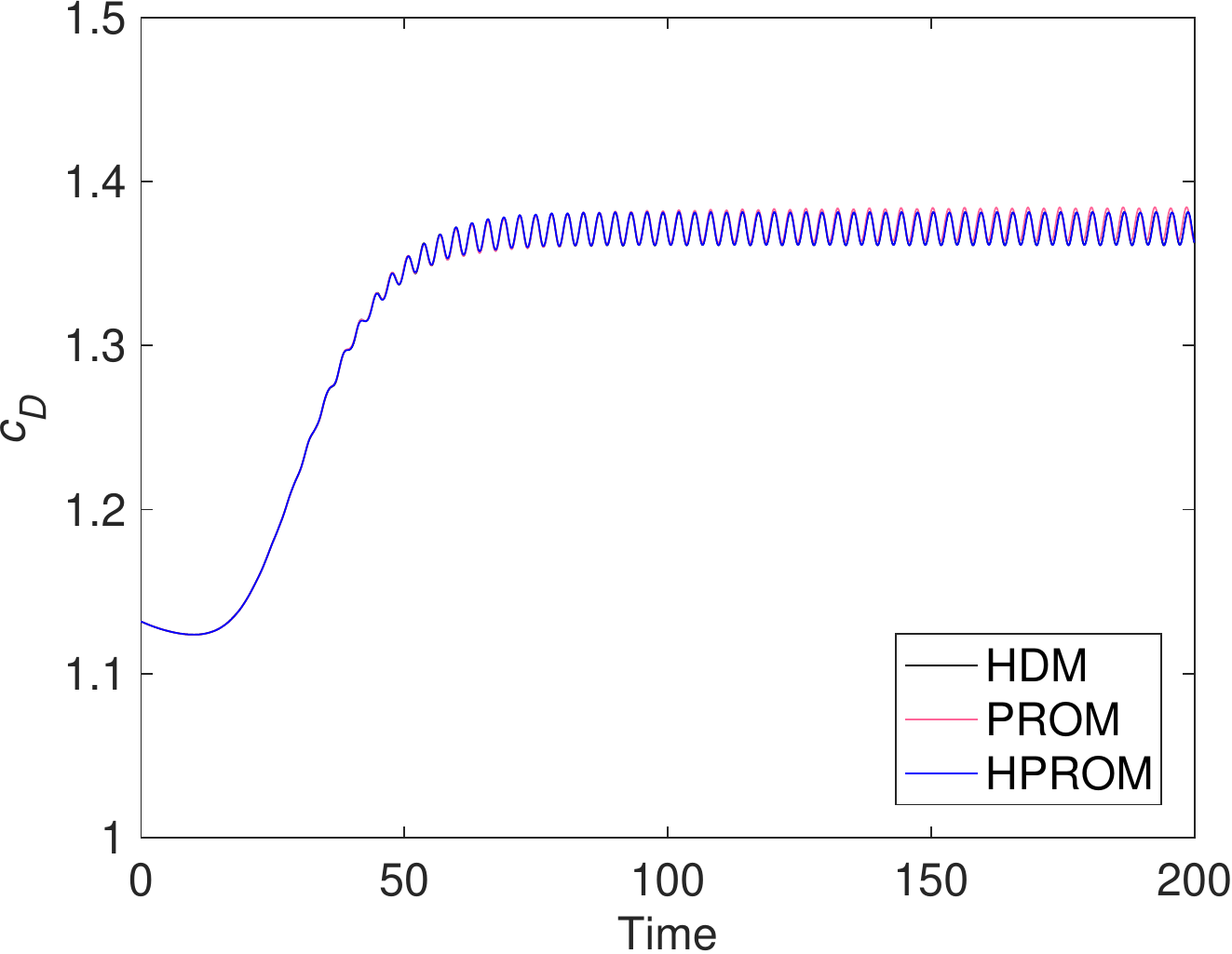}
	\caption{LSPG, $n = 55$}
	\end{subfigure}%
	\caption{2D flow over a right circular cylinder: Time-histories of the drag coefficient computed using the HDM and LSPG-based Petrov-Galerkin PROMs and HPROMs.}
	\label{fig:cylinderdraglspg}
\end{figure}

\begin{figure}[h!]
	\centering
	\begin{subfigure}[c]{0.3\textwidth}%
	\centering
	\includegraphics[width=\linewidth]{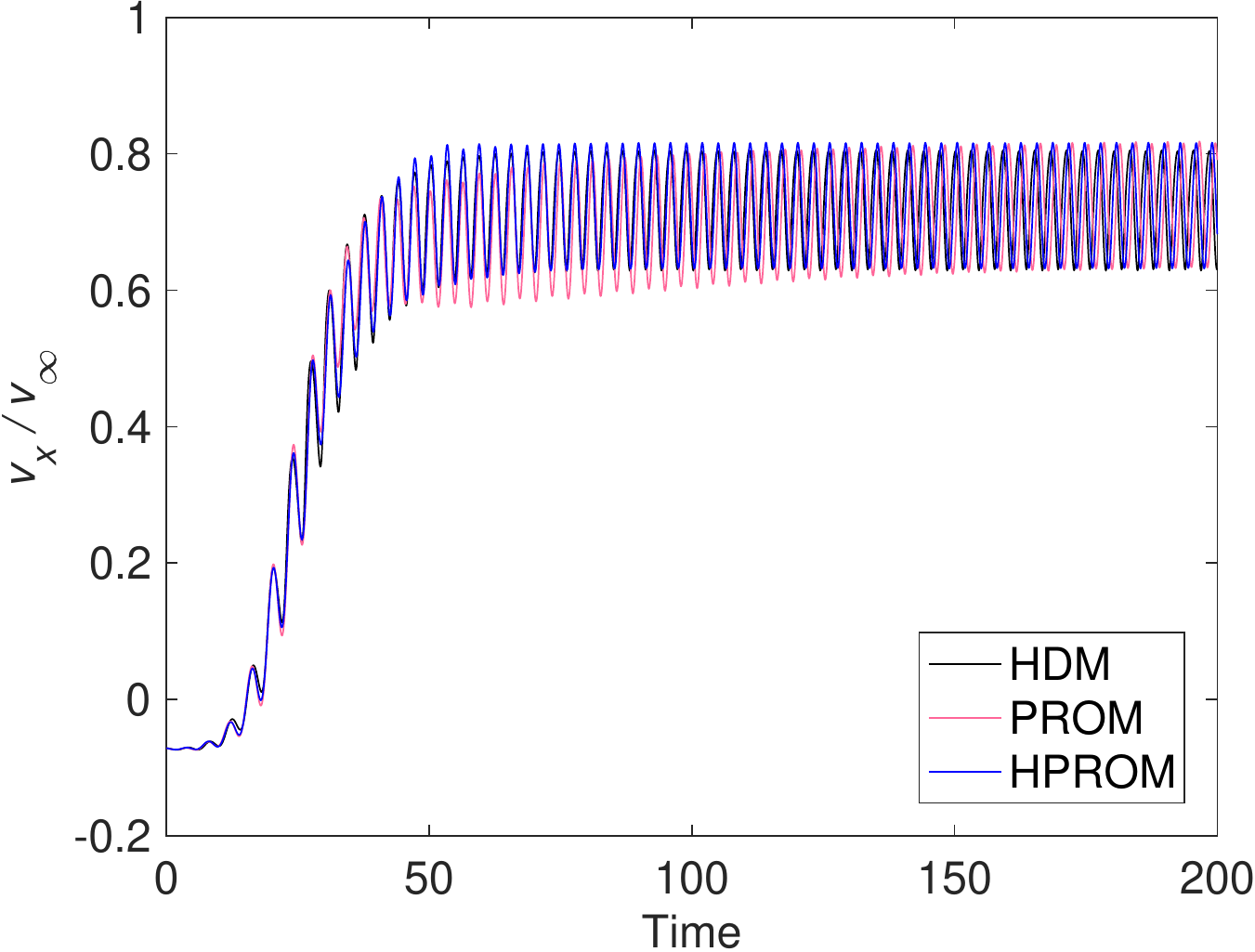}
	\caption{LSPG, $n = 20$}
	\end{subfigure}%
	\hspace{1em}
	\begin{subfigure}[c]{0.3\textwidth}%
	\centering
	\includegraphics[width=\linewidth]{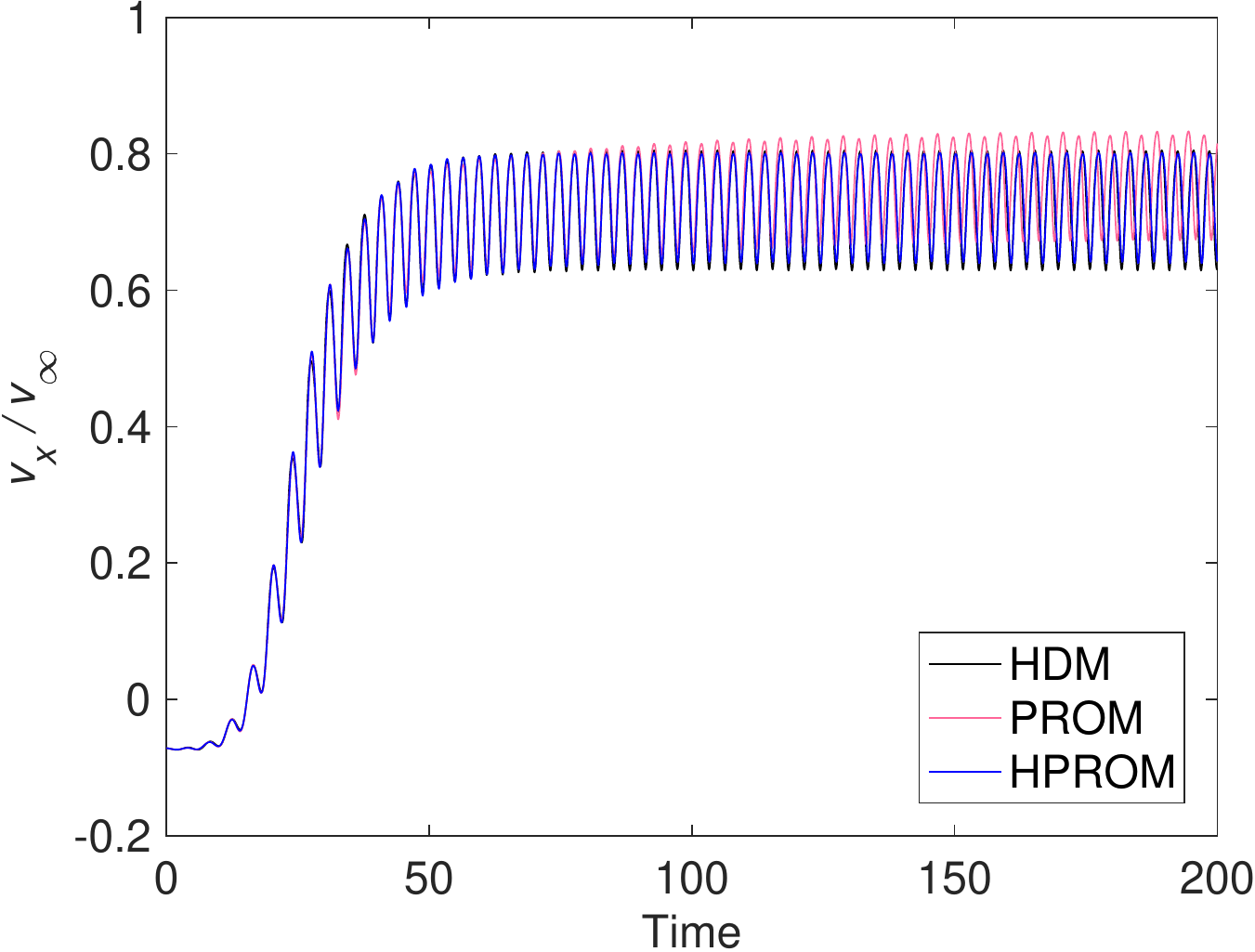}
	\caption{LSPG, $n = 35$}
	\end{subfigure}%
	\hspace{1em}
	\begin{subfigure}[c]{0.3\textwidth}%
	\centering
	\includegraphics[width=\linewidth]{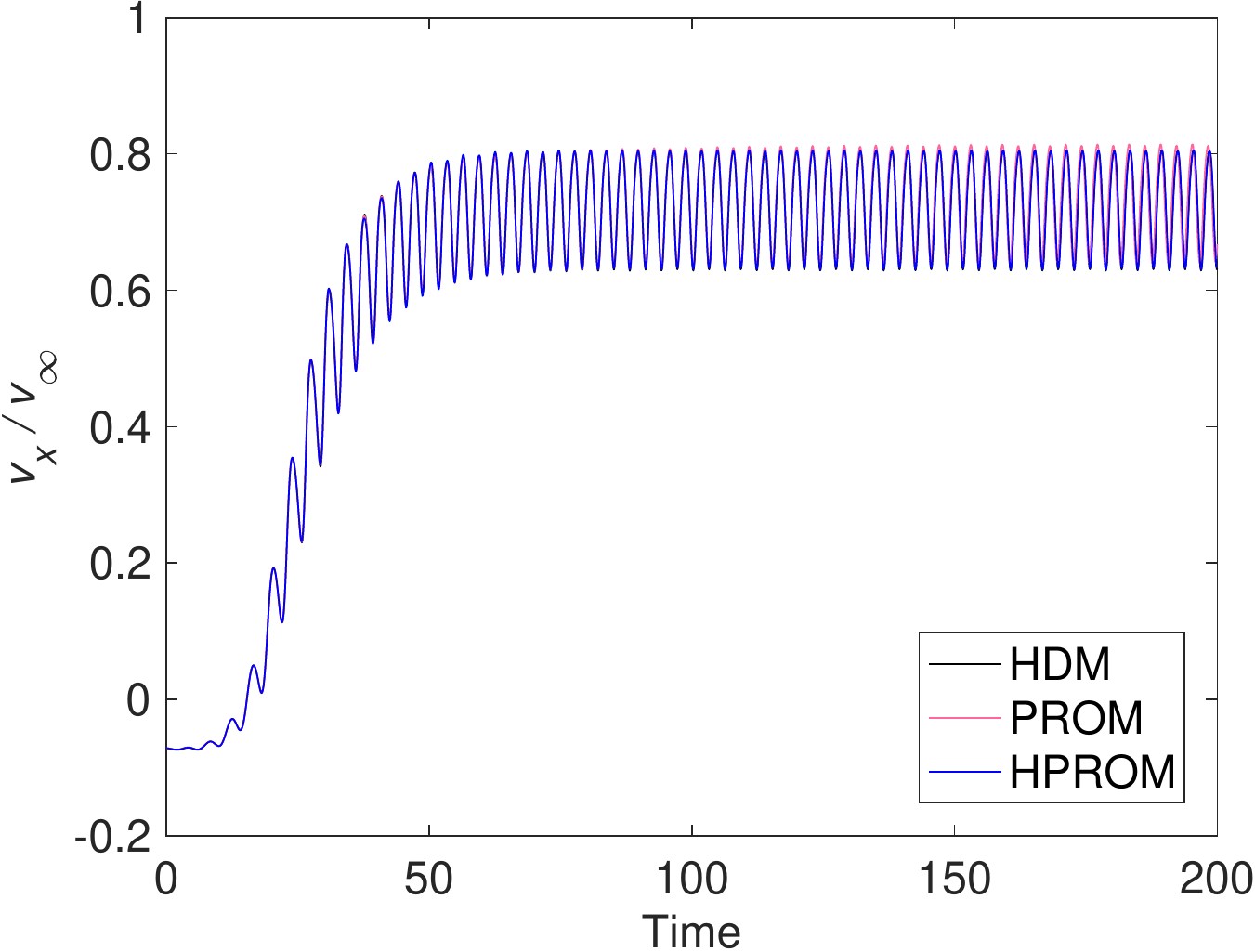}
	\caption{LSPG, $n = 55$}
	\end{subfigure}%
	\caption{2D flow over a right circular cylinder: Time-histories of the streamwise velocity computed at a probe using the HDM and LSPG-based Petrov-Galerkin PROMs and HPROMs.}
	\label{fig:cylinderprobevxlspg}
\end{figure}

\begin{figure}[h!]
	\centering
	\begin{subfigure}[c]{0.3\textwidth}%
	\centering
	\includegraphics[width=\linewidth]{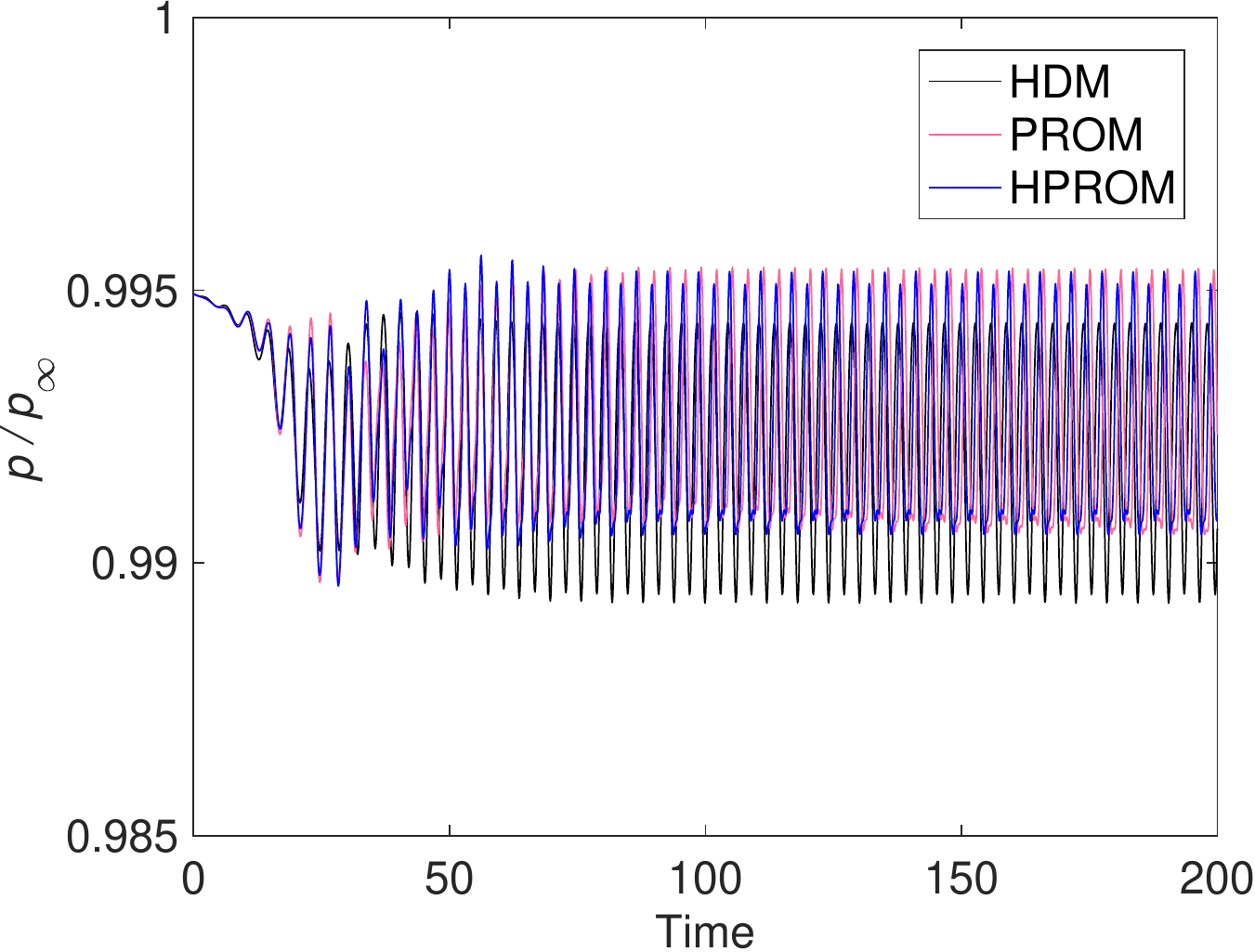}
	\caption{LSPG, $n = 20$}
	\end{subfigure}%
	\hspace{1em}
	\begin{subfigure}[c]{0.3\textwidth}%
	\centering
	\includegraphics[width=\linewidth]{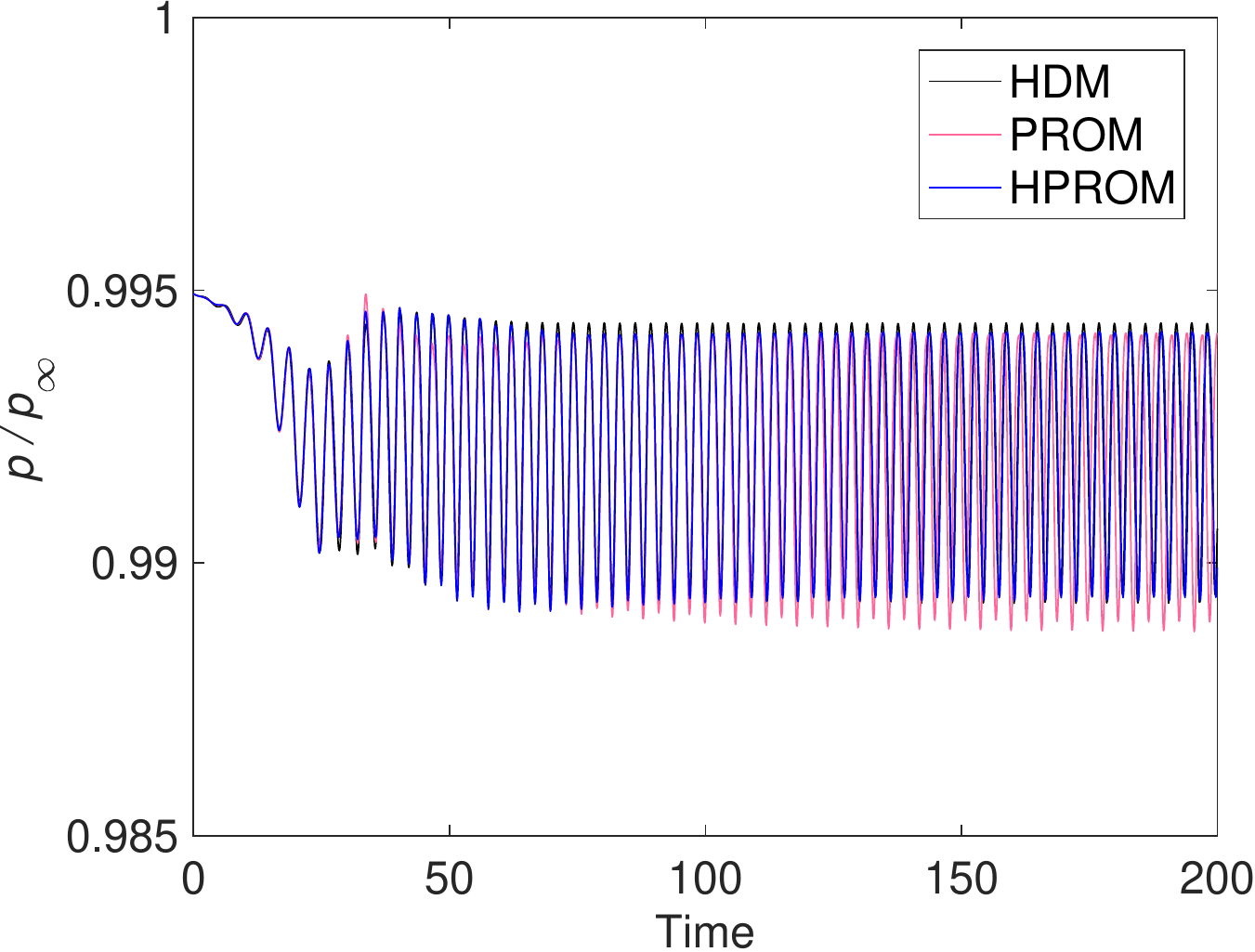}
	\caption{LSPG, $n = 35$}
	\end{subfigure}%
	\hspace{1em}
	\begin{subfigure}[c]{0.3\textwidth}%
	\centering
	\includegraphics[width=\linewidth]{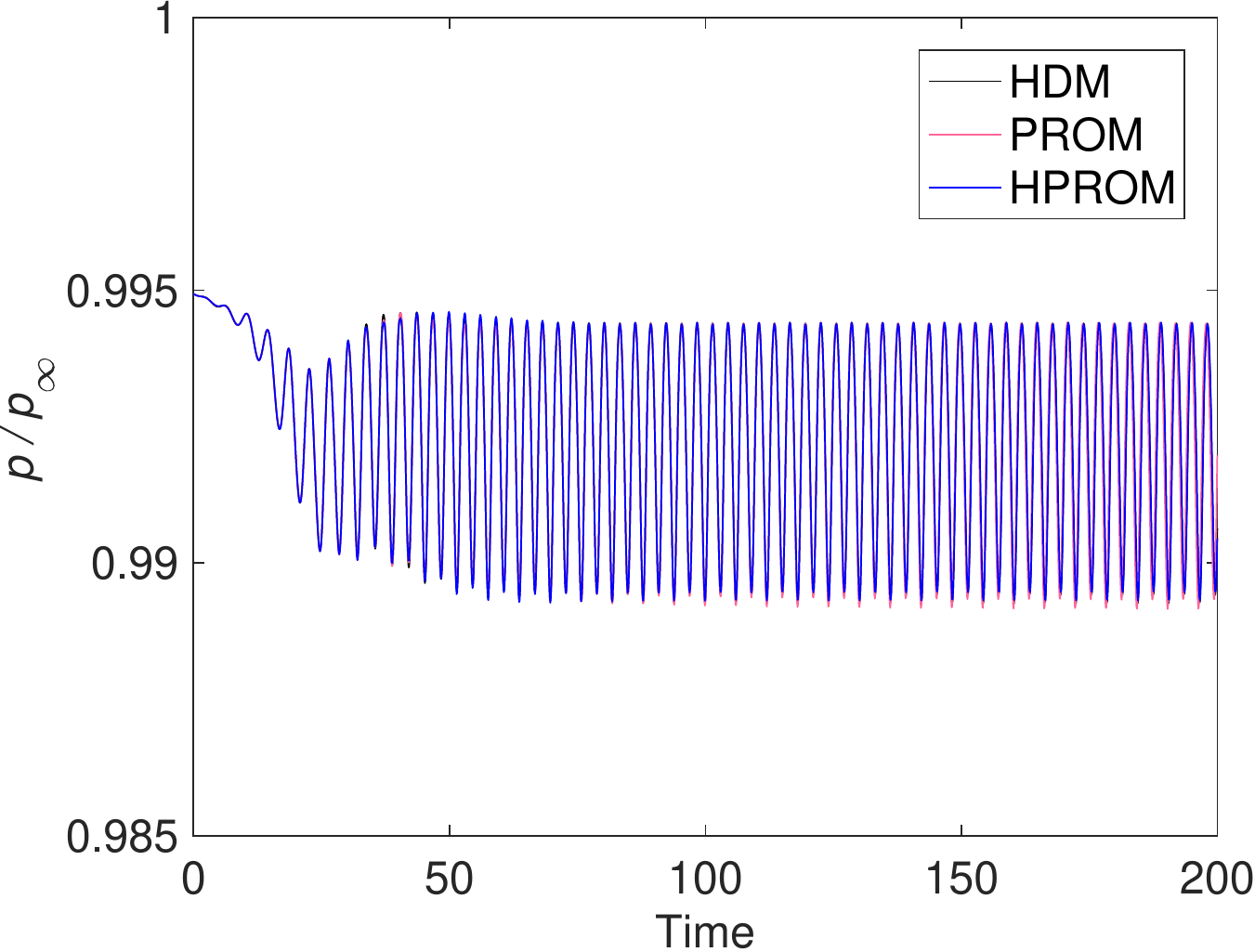}
	\caption{LSPG, $n = 55$}
	\end{subfigure}%
	\caption{2D flow over a right circular cylinder: Time-histories of the pressure computed at a probe using the HDM and LSPG-based Petrov-Galerkin PROMs and HPROMs.}
	\label{fig:cylinderprobeplspg}
\end{figure}

Finally, Table \ref{tab:cylindererr} reports for this problem on the relative errors incurred using each computational model, and Table \ref{tab:cylinderspeed} reports for each Galerkin and 
Petrov-Galerkin HPROM the size of the sample cell set computed by ECSW as well as the wall-clock time speed-up factor delivered by the HPROM. The errors and speed-up factors for the Galerkin HPROMs are 
not reported as these reduced-order models fail to complete the simulation due to nonlinear numerical instabilities encountered before reaching $t_{max}$. From these results, one can conclude 
that the LSPG-based Petrov-Galerkin PMOR approach is more appropriate for constructing PROMs for the class of flow problems represented by this example than its Galerkin counterpart, and offers improved 
numerical stability and accuracy properties. Furthermore, the Petrov-Galerkin HPROMs constructed using ECSW are found to deliver exceptional speed-up factors. Lastly, the results reported herein 
demonstrate that as long as the solution is periodic, a PROM or HPROM that is well-trained in a given time-interval can predict (as expected) the HDM-based solution outside of this time-interval with high accuracy.

\begin{table}[h!]
	\small
	\centering
	\caption{2D flow over a right circular cylinder: Variations with the dimension $n$ of the relative errors for the Galerkin and LSPG-based Petrov-Galerkin PROMs and HPROMs.}
	\begin{tabular}{lccccc} 
		\toprule
		Model & $n$ & $\mathbb{RE}_{c_D}$ ($\%$) & $\mathbb{RE}_{c_L}$ ($\%$) & $\mathbb{RE}_{v_x}$ ($\%$) & $\mathbb{RE}_{p}$ ($\%$) \\ \midrule
		Galerkin PROM & $20$ & $5.40$ & $156$ & $53.4$ & $0.836$ \\
		& $35$ & $1.85$ & $110.$ & $13.7$ & $0.206$ \\
		& $55$ & $0.711$ & $39.7$ & $6.52$ & $0.119$ \\ \midrule
		LSPG-based Petrov-Galerkin PROM & $20$ & $0.907$ & $92.1$ & $13.1$ & $0.244$ \\
		& $35$ & $0.685$ & $50.6$ & $7.55$ & $0.148$ \\
		& $55$ & $0.164$ & $11.4$ & $1.94$ & $0.034$ \\ \midrule
		LSPG-based Petrov-Galerkin HPROM & $20$ & $0.415$ & $41.1$ & $5.97$ & $0.130$ \\
		& $35$ & $0.130$ & $6.94$ & $1.65$ & $0.030$ \\
		& $55$ & $0.022$ & $1.53$ & $0.296$ & $0.007$ \\ \bottomrule
	\end{tabular}
	\label{tab:cylindererr}
\end{table}

\begin{table}[h!]
	\small
	\centering
	\caption{2D flow over a right circular cylinder: Variations with the dimension $n$ of the number of mesh cells sampled by ECSW and of the speed-up factor delivered by the 
	ECSW-based HPROM.}
	\begin{tabular}{lcccc}
		\toprule
		\multirow{2}{*}{Model} & \multirow{2}{*}{$n$} & \# of sampled & Fraction of HDM & Wall-clock time \\
		& & mesh cells & mesh cells ($\%$) & speed-up factor \\ \midrule
		Galerkin HPROM & $20$ & $149$ & $0.152$ & -- \\
		& $35$ & $278$ & $0.283$ & -- \\
		& $55$ & $423$ & $0.431$ & -- \\ \midrule
		LSPG-based Petrov-Galerkin HPROM & $20$ & $199$ & $0.203$ & $2.72 \times 10^3$ \\
		& $35$ & $321$ & $0.327$ & $1.41 \times 10^3$ \\
		& $55$ & $481$ & $0.490$ & $8.81 \times 10^2$ \\ \bottomrule
	\end{tabular}
	\label{tab:cylinderspeed}
\end{table}

\subsection{Direct numerical simulation of the Taylor-Green vortex}
\label{sec:tgv}

The Taylor-Green vortex is a canonical flow that is often used as a benchmark problem to test the ability of numerical methods to accurately compute in 3D the development and decay of 
incompressible homogeneous isotropic turbulence \cite{debonis2013, wang2013}. The problem geometry is chosen here to be a triply-periodic cube of side length $2\pi L$, and the 
initial velocity field is set to
\begin{align*}
v_x(x,y,z,t=0) &= V_0 \sin\left(\frac{x}{L}\right) \cos\left(\frac{y}{L}\right) \cos\left(\frac{z}{L}\right) \\
v_y(x,y,z,t=0) &= -V_0 \cos\left(\frac{x}{L}\right) \sin\left(\frac{y}{L}\right) \cos\left(\frac{z}{L}\right) \\
v_z(x,y,z,t=0) &= 0
\end{align*}
The Reynolds number, based on the length scale $L$ and velocity $V_0$, is set to $Re = 1,600$.
The resulting flow is computed in the nondimensional time-interval $t \in [0 , 20]$.

This problem involves resolving a large range of spatial scales as the flow transitions from its initial laminar condition to a fully turbulent regime. However, it features a zero
mean convection velocity. It is typically used to study the effects of severe basis truncation on PROM stability and accuracy.

\subsubsection{High-dimensional model}

For this problem, the nondimensional rotational form of the incompressible Navier-Stokes equations are semi-discretized using a pseudo-spectral Fourier-Galerkin method. The resulting DNS HDM
is compact and further described in \cite{mortensen2016}. The associated mesh has 512 equally-spaced vertices in each spatial direction, which provides adequate resolution for all turbulent length 
scales that develop at the considered Reynolds number. It leads to a very large-scale HDM of dimension $N = 402,653,184$.

For convenience (see below), time-discretization of the above HDM is performed using the fourth-order explicit Runge-Kutta scheme, and the fixed nondimensional time-step $\Delta t  = 1 \times 10^{-3}$.
Figure \ref{fig:tgv} illustrates the time-evolution of the HDM-based flow solution: it displays isosurfaces of the vorticity magnitude colored by the velocity magnitude at four different time-instances.
Using 128 cores of the Linux cluster, the HDM-based simulation is performed in 34.9 hours wall-clock time.

\begin{figure}[h!]
	\centering
	\begin{subfigure}[c]{0.35\textwidth}%
	\centering
	\includegraphics[width=\linewidth]{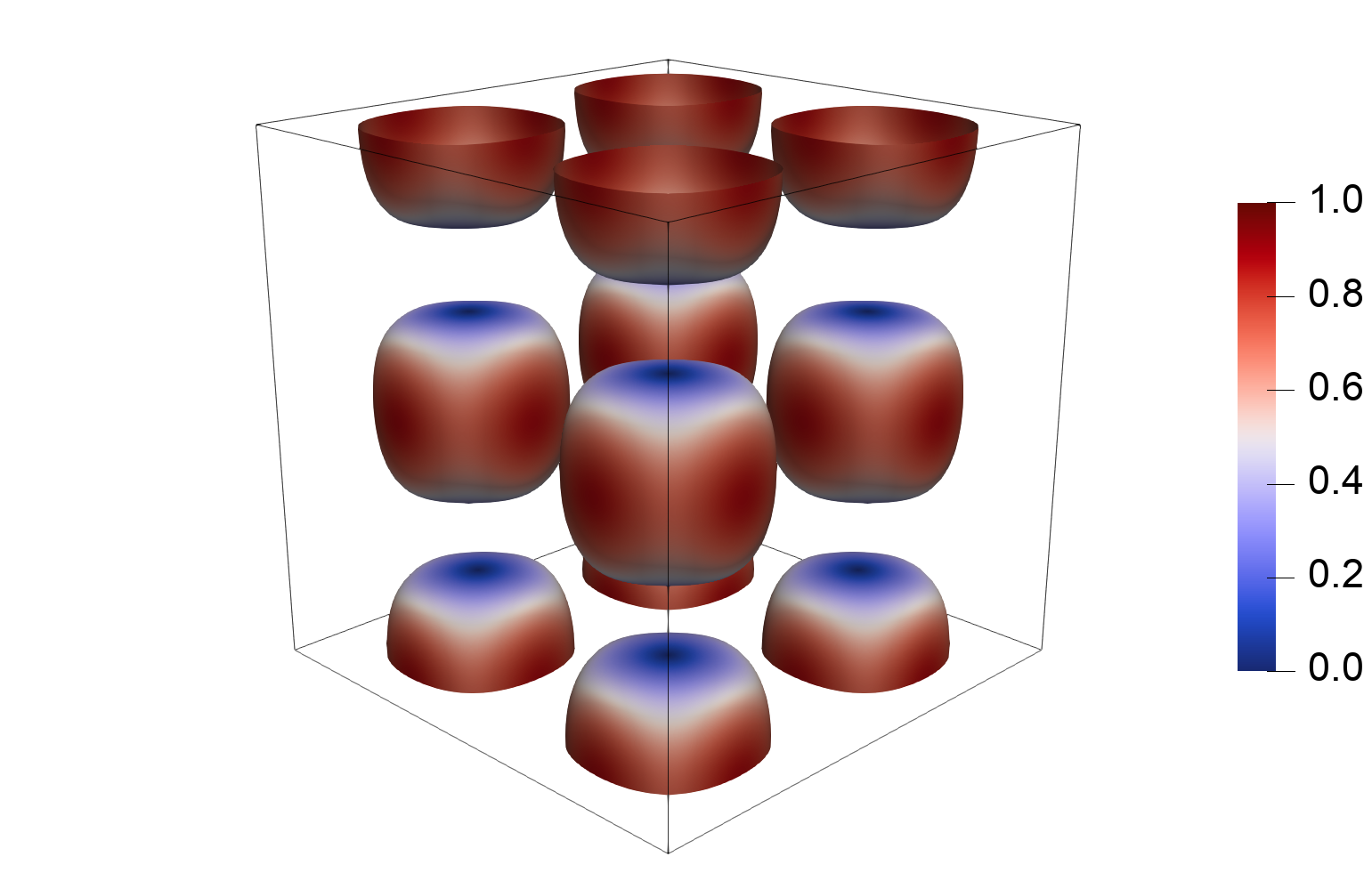}
	\caption{$t = 0$}
	\end{subfigure}%
	\begin{subfigure}[c]{0.35\textwidth}%
	\centering
	\includegraphics[width=\linewidth]{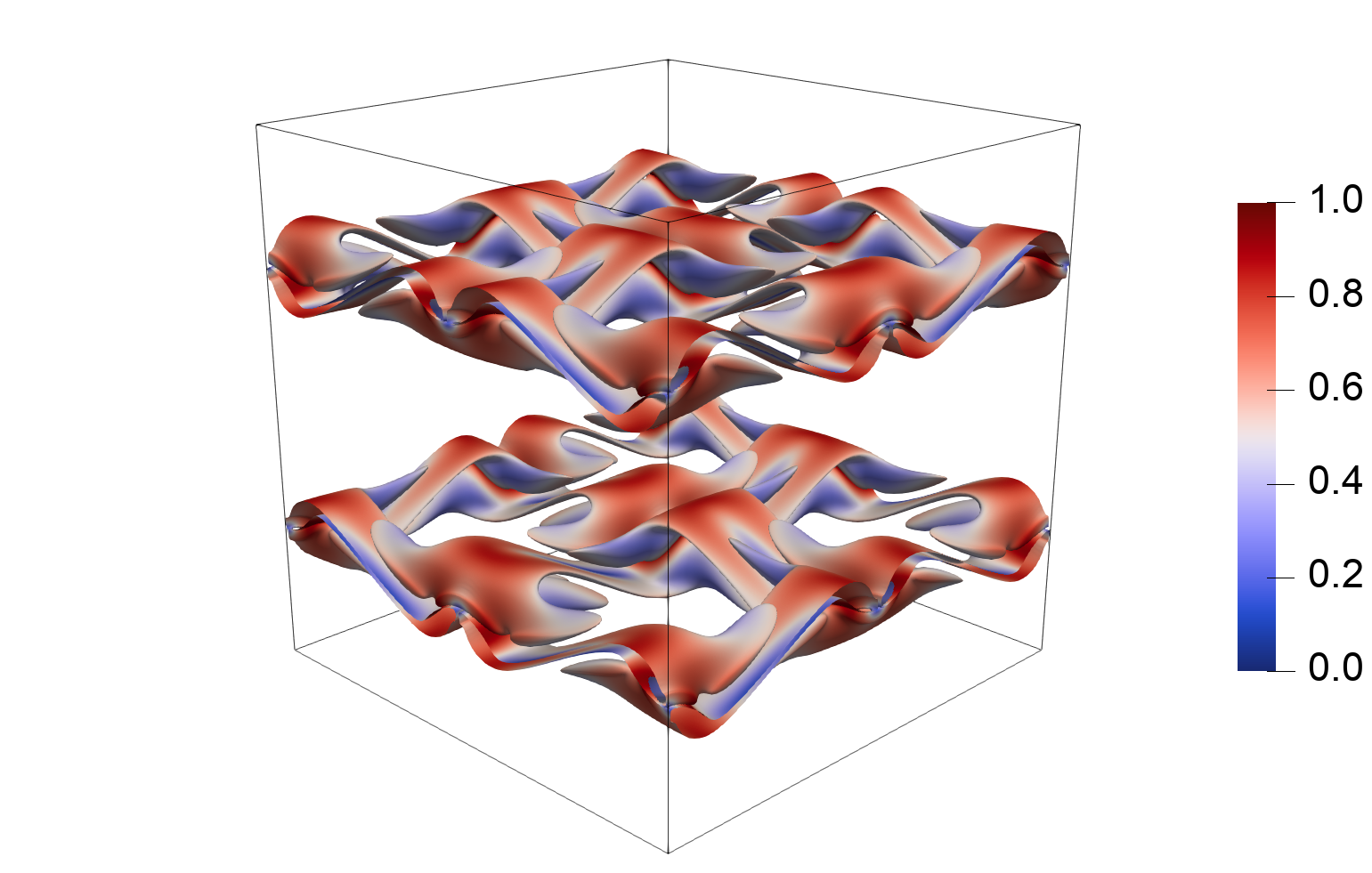}
	\caption{$t = 4$}
	\end{subfigure}%
	
	\medskip
	\begin{subfigure}[c]{0.35\textwidth}%
	\centering
	\includegraphics[width=\linewidth]{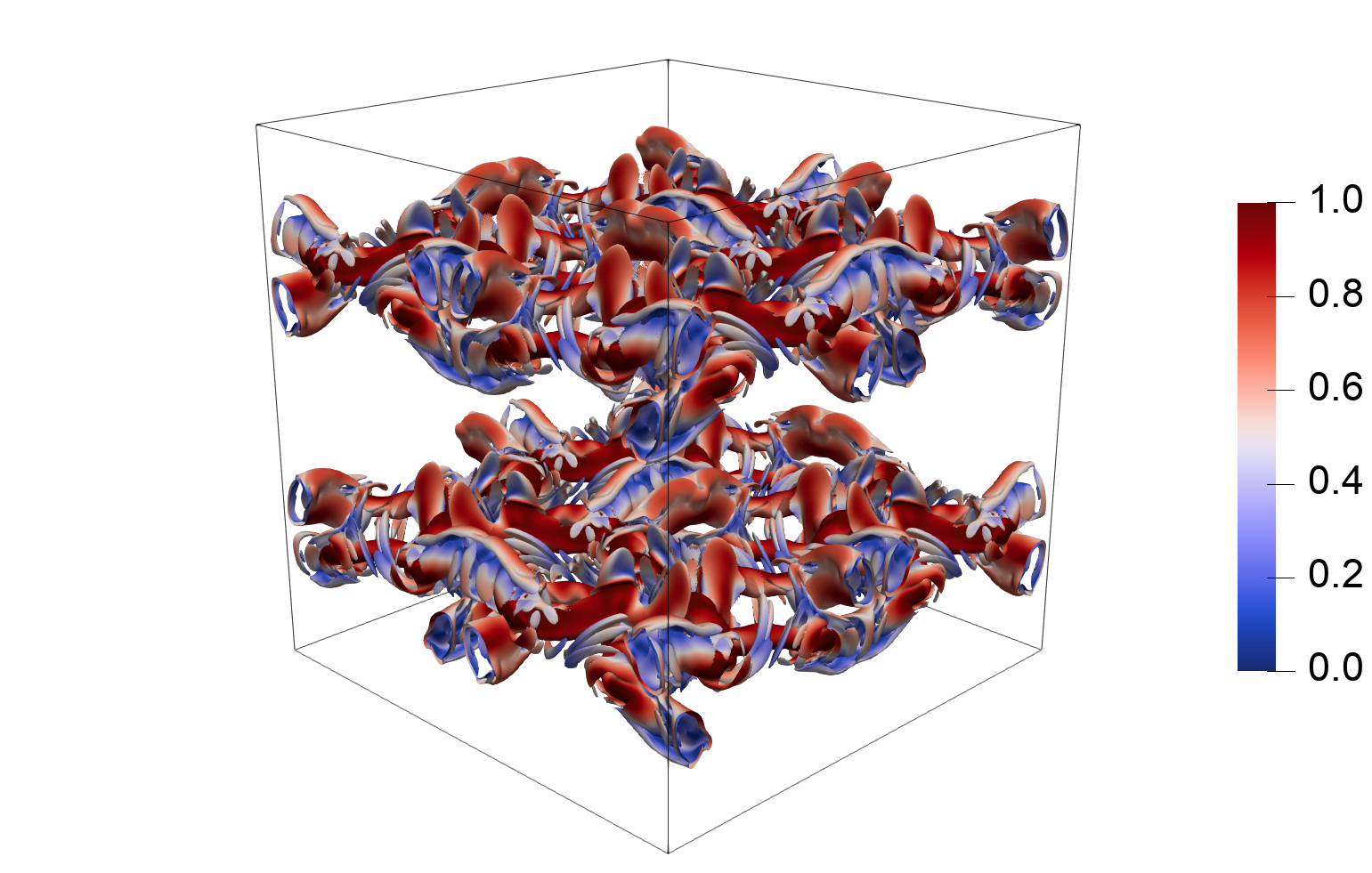}
	\caption{$t = 8$}
	\end{subfigure}%
	\begin{subfigure}[c]{0.35\textwidth}%
	\centering
	\includegraphics[width=\linewidth]{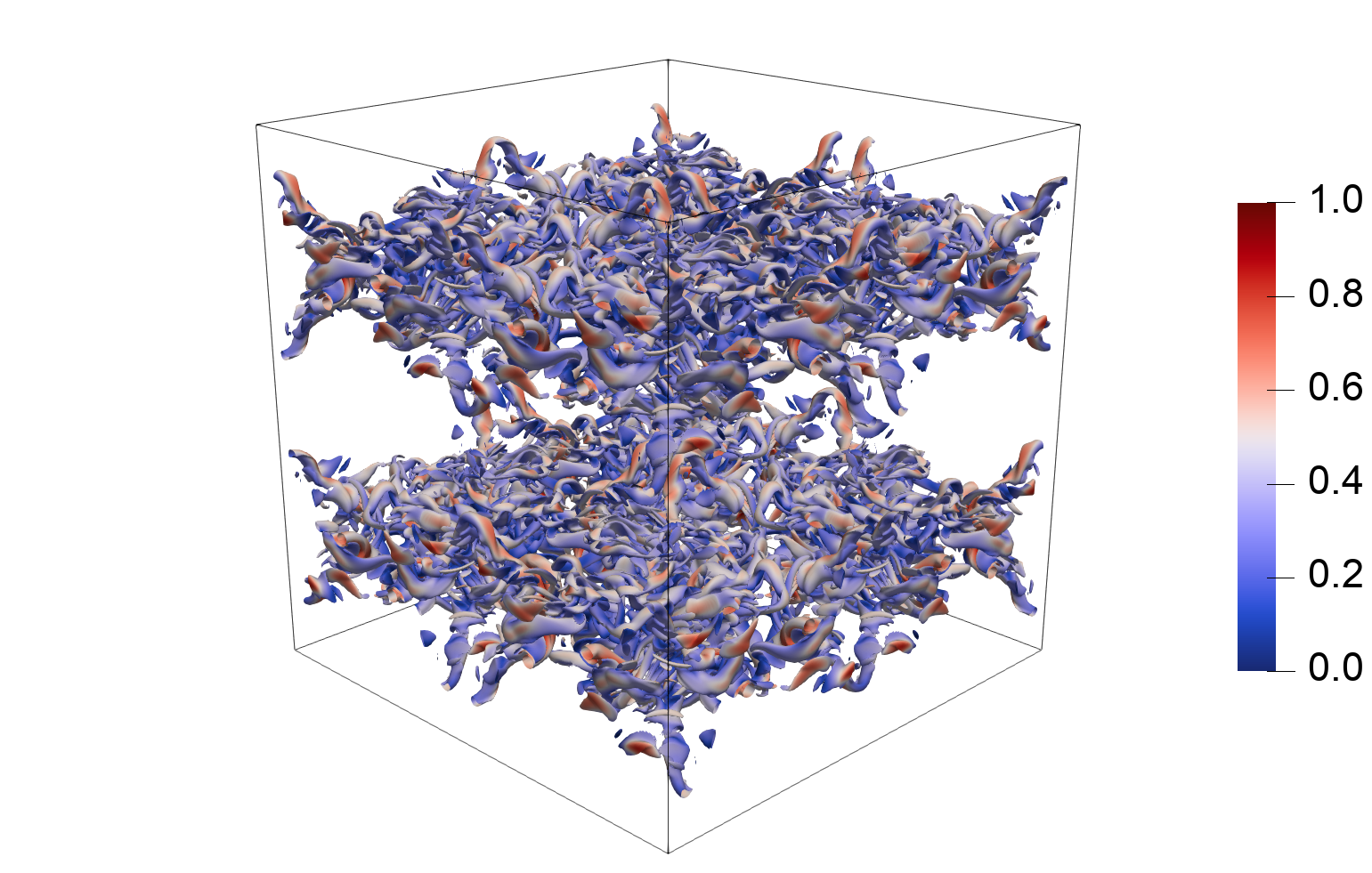}
	\caption{$t = 12$}
	\end{subfigure}%
	\caption{Taylor-Green vortex: Isosurfaces of the HDM-based solution vorticity magnitude colored by the velocity magnitude at four different time-instances.}
	\label{fig:tgv}
\end{figure}

\subsubsection{Projection-based reduced-order models with pre-computation}

Snapshots of the HDM-based solution in the wavenumber domain are collected at the sampling rate defined by $\Delta s = 1 \times 10^{-1}$, leading to a total number of 201 collected complex solution 
snapshots. These are transformed into the physical space, compressed using an SVD, and transformed back into the wavenumber space yielding four complex right ROBs of dimension $n = 6$, $n = 22$, $n = 47$,
and $n = 81$ that capture $90\%$, $99\%$, $99.9\%$, and $99.99\%$ of the energy of the singular values of the snapshot matrix, respectively.
The right ROBs could have been just as easily constructed by applying POD directly to the complex wavenumber space snapshots. Here, the affine offset $\mathbf{u}_0$ is set to zero.

The HDM for this problem includes only a quadratic nonlinearity in the state vector. Thus, the projected reduced-order quantities associated with Galerkin and Petrov-Galerkin PROMs can be pre-computed 
in this case during an offline phase, allowing for the complexity of the online computations to scale only with the dimension $n$ of the PROM. Specifically, for each constructed LSPG-based 
Petrov-Galerkin PROM, the online storage requirement and computational complexity is in this case $O(n^4)$; for a counterpart Galerkin PROM, they are in this case $O(n^3)$.

As mentioned above, the HDM constructed for this problem is time-discretized using the fourth-order explicit scheme. This, in order to avoid the burden associated with forming, storing, and solving 
dense, $N$-dimensional nonlinear systems of equations arising from implicit time-discretization. Given however that the counterpart nonlinear systems 
of equation associated with the PROMs are of dimension 
$n \ll N$ -- and therefore are significantly smaller -- the PROMs are time-discretized using the implicit, four-point, third-order BDF scheme. An additional advantage in this case for the discrete
PROMs is their increased maximum time-step which is not limited by the CFL condition associated with the mesh underlying the HDM.
Indeed, it is found that for all constructed PROMs, increasing the nondimensional time-step to $\Delta t = 2 \times 10^{-3}$ -- that is, a time-step that is twice as large as that afforded and used
by the HDM-based simulation -- yields stable and accurate numerical results.

For this example, all PROM simulations are performed on a single core of the Linux cluster, and four QoIs are considered for assessing the accuracy of the obtained numerical results: 
\begin{itemize}
	\item The time-dependent volume-averaged turbulent kinetic energy, 
		\begin{equation*} 
			E_k = \frac{1}{\lVert \Omega \rVert} \int_{\Omega} \frac{1}{2} (v_x^2 + v_y^2 +v_z^2) d\Omega 
		\end{equation*} 
		where $\lVert \Omega \rVert$ denotes the volume of the computational domain $\Omega$. 
	\item The time-dependent enstrophy-based dissipation rate, 
		\begin{equation*} \epsilon = \frac{2 \nu}{\lVert \Omega \rVert} \int_{\Omega} \frac{1}{2} (\omega_x^2 + \omega_y^2 + \omega_z^2) d\Omega 
		\end{equation*} 
		where $\nu$ is the kinematic viscosity and $\omega_x$, $\omega_y$, $\omega_z$ are the components of the flow vorticity.  
	\item The $y$ and $z$ components of the velocity, $v_y$ and $v_z$, computed at the probe location $x = \pi L$, $y = \pi L / 4$, $z = \pi L / 2$. At this location, $v_x$ is zero 
		during the entire simulation and therefore is not a particularly interesting QoI.
\end{itemize}
All relative errors associated with the above QoIs are computed using the nondimensional sampling rate defined by $\Delta s_{\mathbb{RE}} = 1 \times 10^{-1}$.

\paragraph{Galerkin reduced-order models}

The time-histories of the QoIs $E_k$, $\epsilon$, $v_y$, and $v_z$ computed using the HDM and Galerkin PROMs of increasing dimension $n$ are compared in Figures \ref{fig:tgvkgal}, \ref{fig:tgvepsgal}, 
\ref{fig:tgvprobevygal}, and \ref{fig:tgvprobevzgal}, respectively. For this problem, the Galerkin PROMs are found to be numerically stable for every value of $n$, and to exhibit a level of accuracy for 
each QoI that increases with $n$. Accuracy is achieved by the Galerkin PROMs for the integral QoIs as well as the pointwise QoIs, during the initial laminar regime as well as during the transition to 
turbulence. Even for the lowest-order Galerkin PROM of dimension $n = 6$, the computed kinetic energy still correctly decays with time, despite the fact that the physical length scales representing the 
dissipation range of the turbulent energy spectrum are not adequately resolved by this PROM. The reason why a Galerkin PROM is found to be numerically stable here, whereas it was found to be 
numerically unstable in the previous example may be due to the absence in this case of a mean convective velocity in the flow. However, it is certainly shown here that the inability of the Galerkin
PROM to resolve the smallest turbulent length scales captured by the DNS HDM does not lead to numerical instability, which supports the claim made in this paper that it is not the truncation error
suffered by a Galerkin PROM that necessarily causes its numerical instability.

\begin{figure}[h!]
	\centering
	\begin{subfigure}[c]{0.3\textwidth}%
	\centering
	\includegraphics[width=\linewidth]{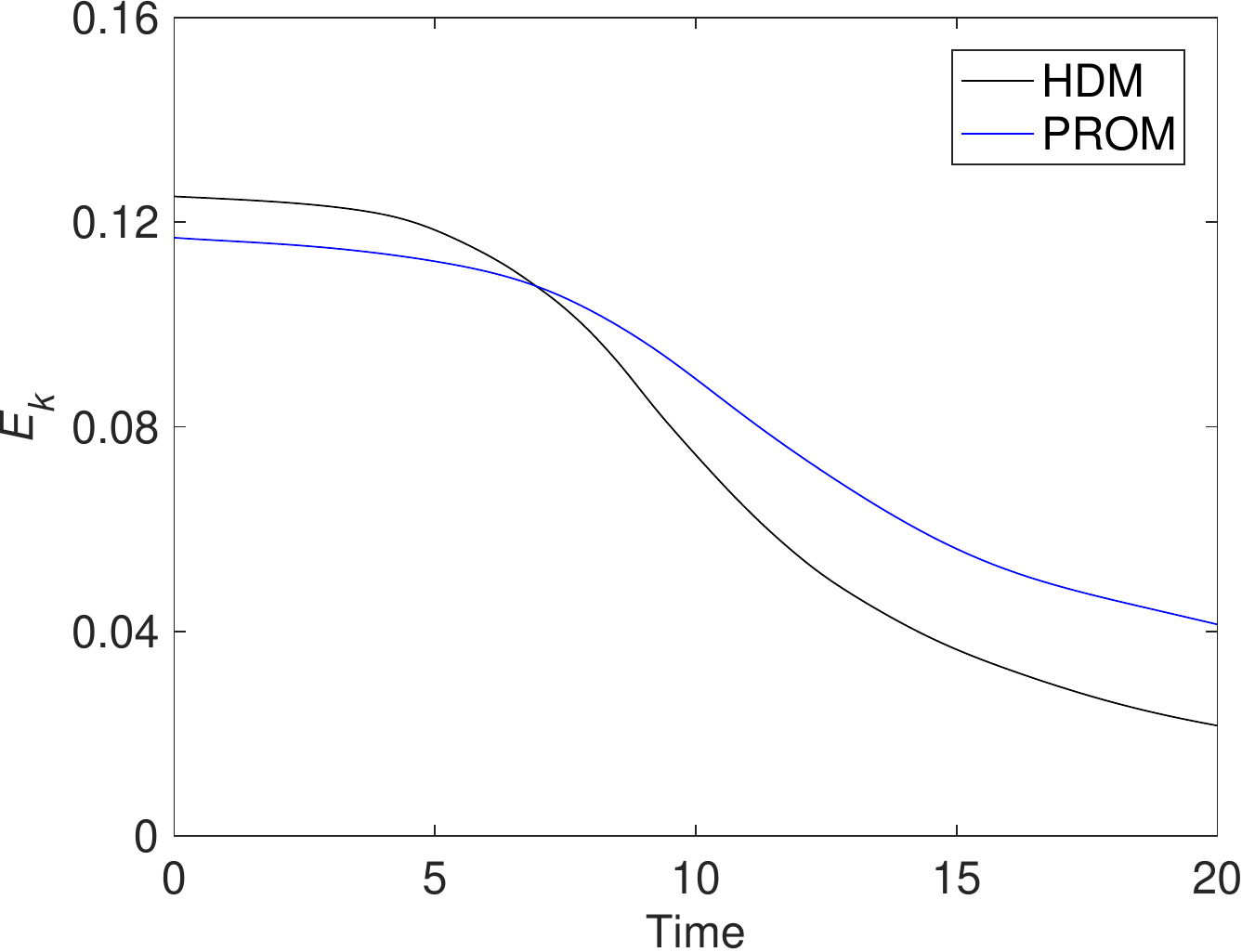}
	\caption{Galerkin, $n = 6$}
	\end{subfigure}%
	\hspace{1em}
	\begin{subfigure}[c]{0.3\textwidth}%
	\centering
	\includegraphics[width=\linewidth]{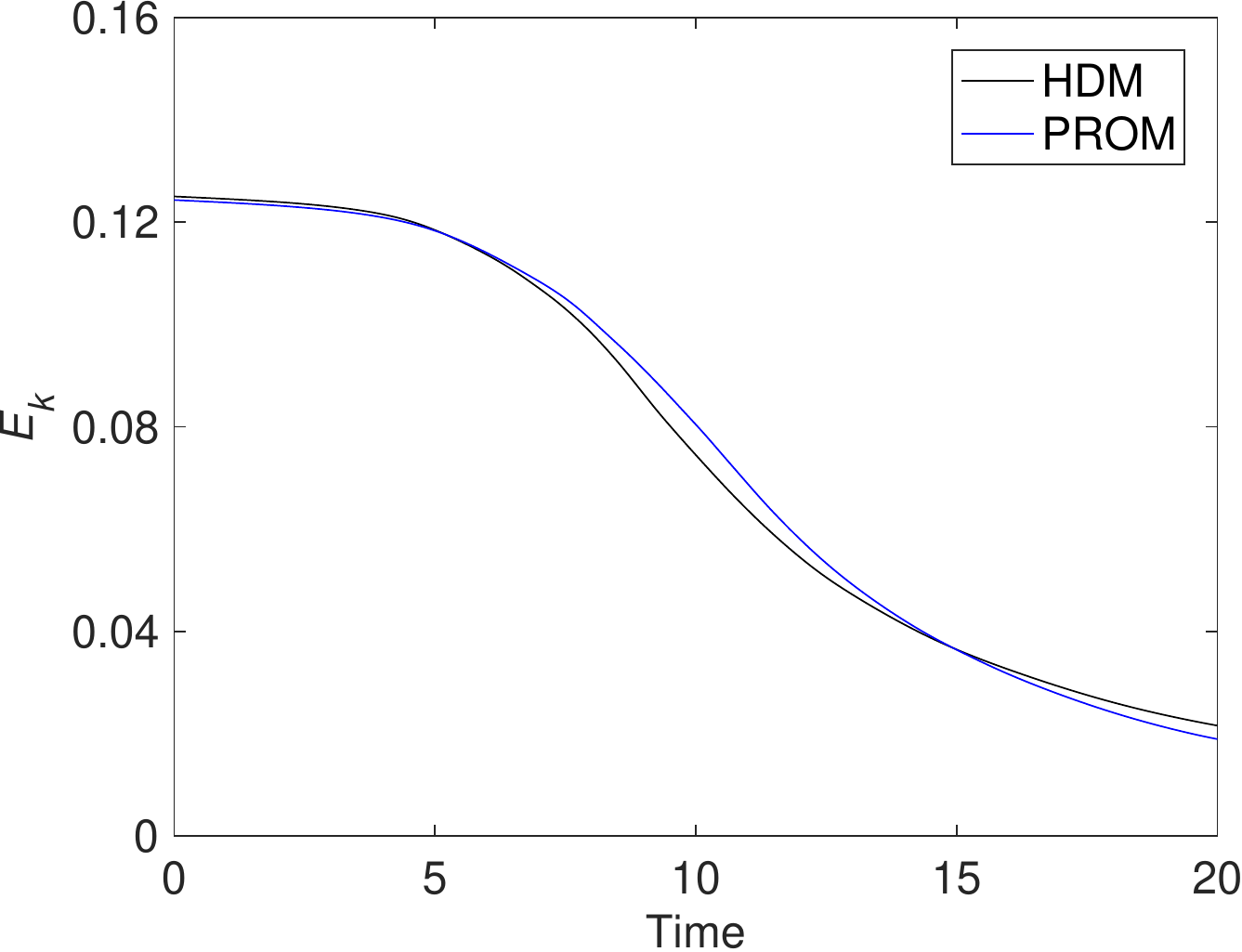}
	\caption{Galerkin, $n = 22$}
	\end{subfigure}%
	
	\medskip
	\begin{subfigure}[c]{0.3\textwidth}%
	\centering
	\includegraphics[width=\linewidth]{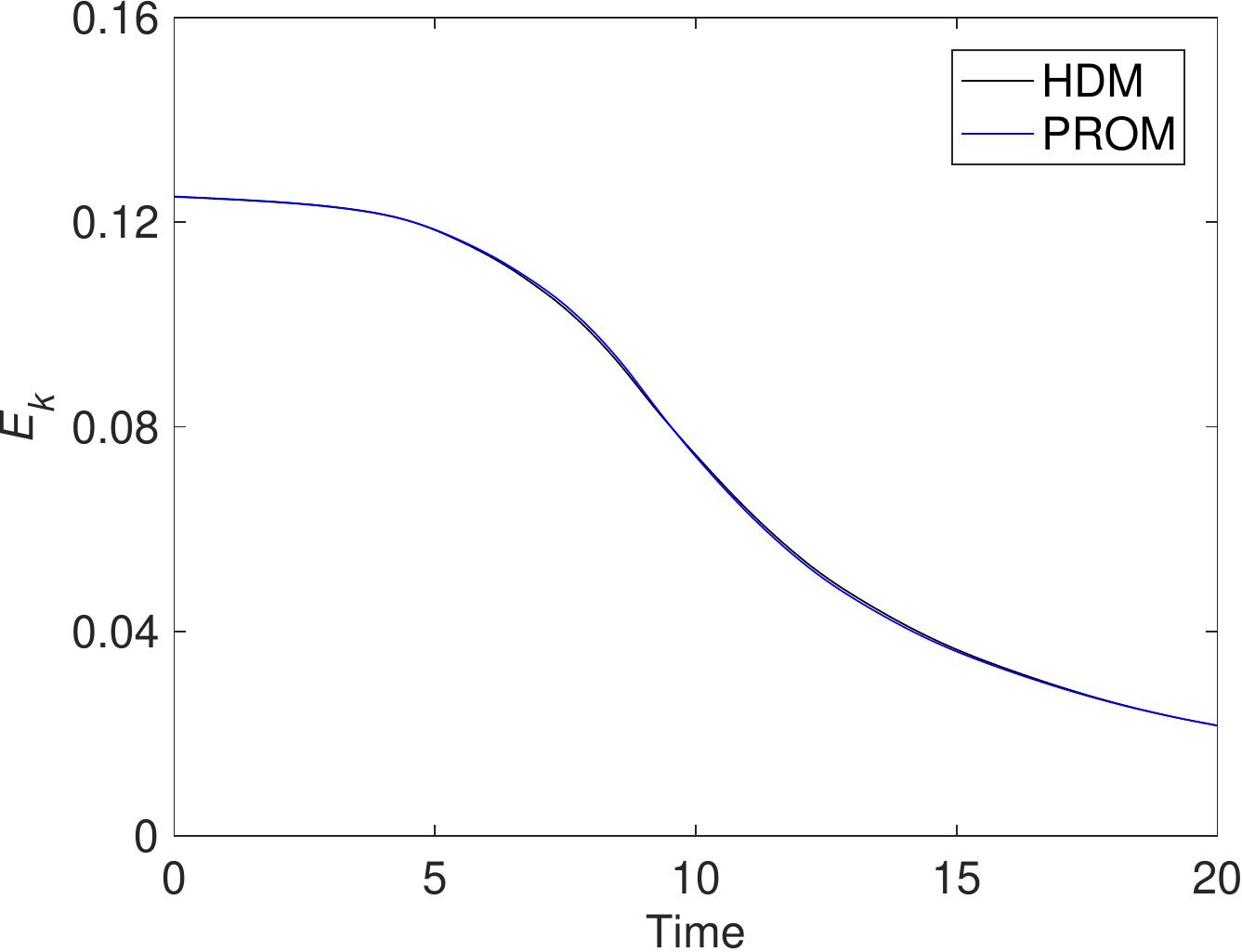}
	\caption{Galerkin, $n = 47$}
	\end{subfigure}%
	\hspace{1em}
	\begin{subfigure}[c]{0.3\textwidth}%
	\centering
	\includegraphics[width=\linewidth]{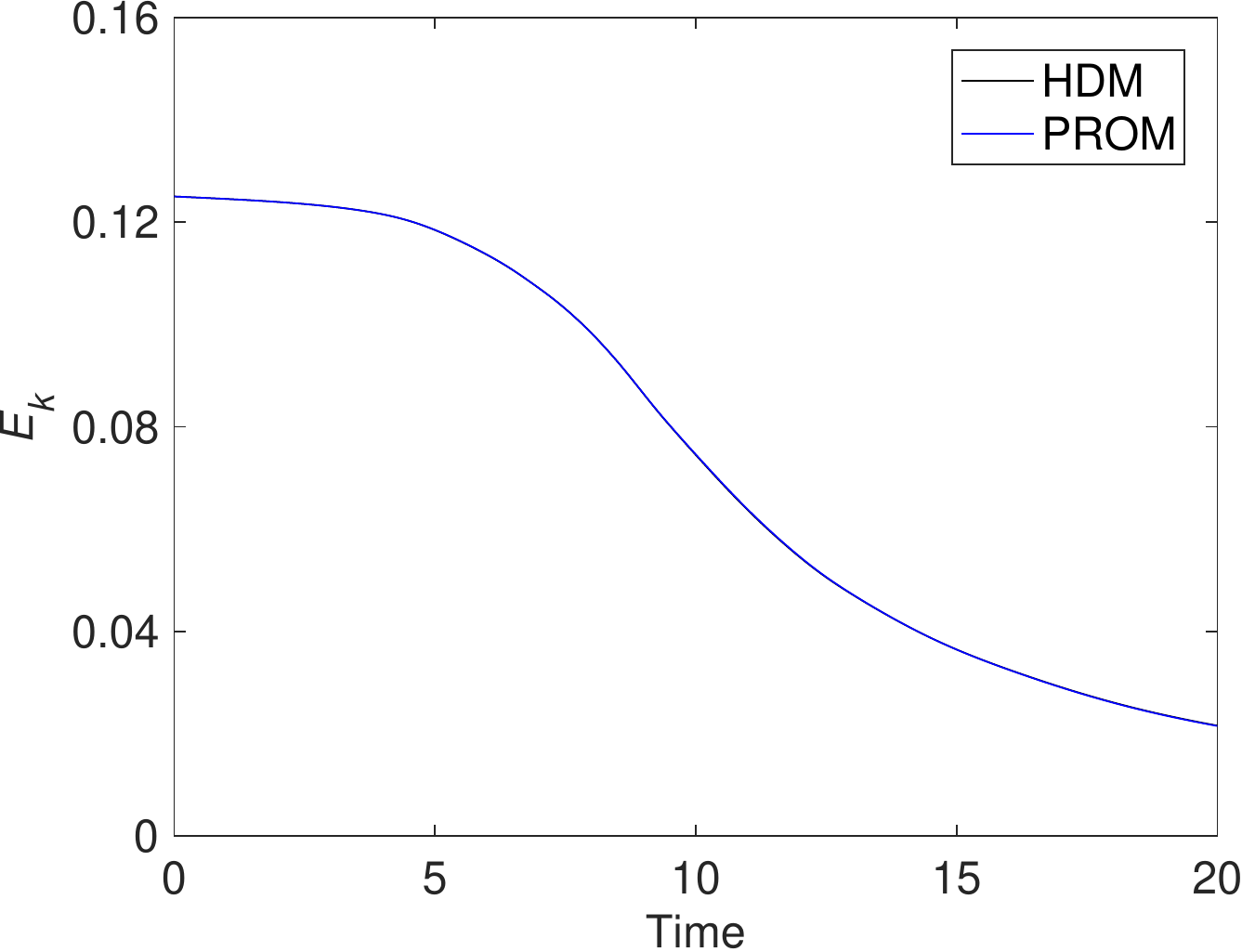}
	\caption{Galerkin, $n = 81$}
	\end{subfigure}%
	\caption{Taylor-Green vortex: Time-histories of the turbulent kinetic energy computed using the HDM and Galerkin PROMs.}
	\label{fig:tgvkgal}
\end{figure}

\begin{figure}[h!]
	\centering
	\begin{subfigure}[c]{0.3\textwidth}%
	\centering
	\includegraphics[width=\linewidth]{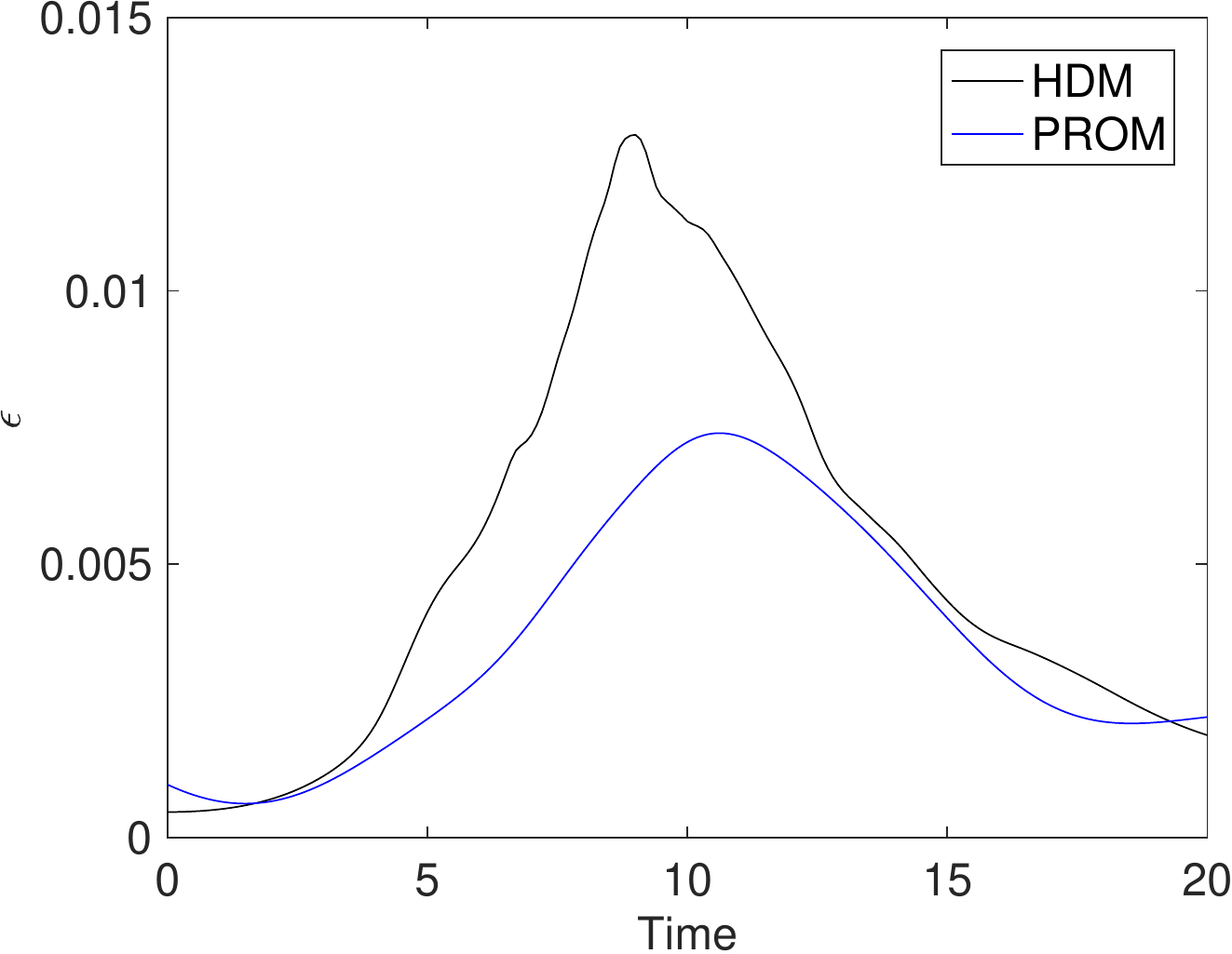}
	\caption{Galerkin, $n = 6$}
	\end{subfigure}%
	\hspace{1em}
	\begin{subfigure}[c]{0.3\textwidth}%
	\centering
	\includegraphics[width=\linewidth]{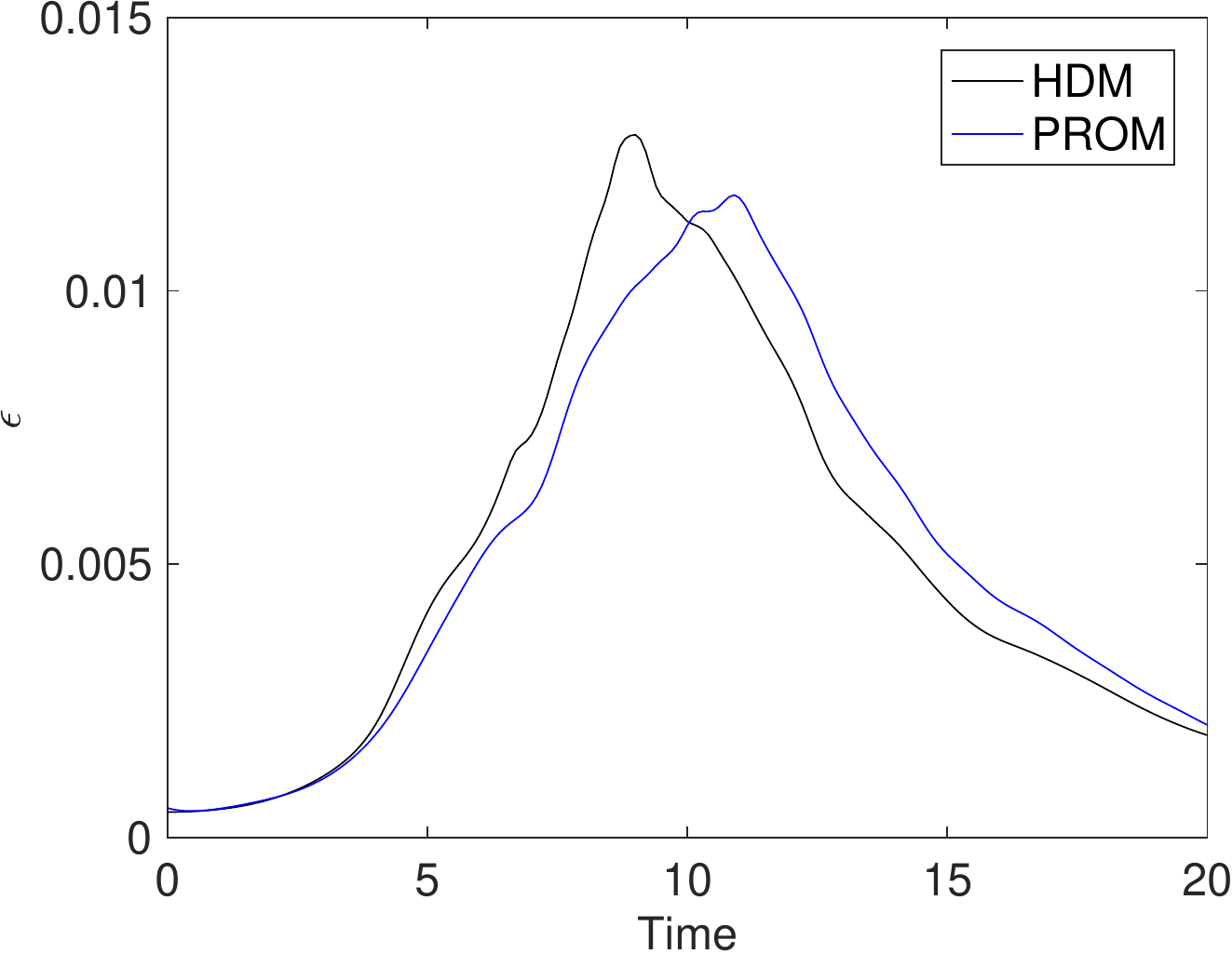}
	\caption{Galerkin, $n = 22$}
	\end{subfigure}%
	
	\medskip
	\begin{subfigure}[c]{0.3\textwidth}%
	\centering
	\includegraphics[width=\linewidth]{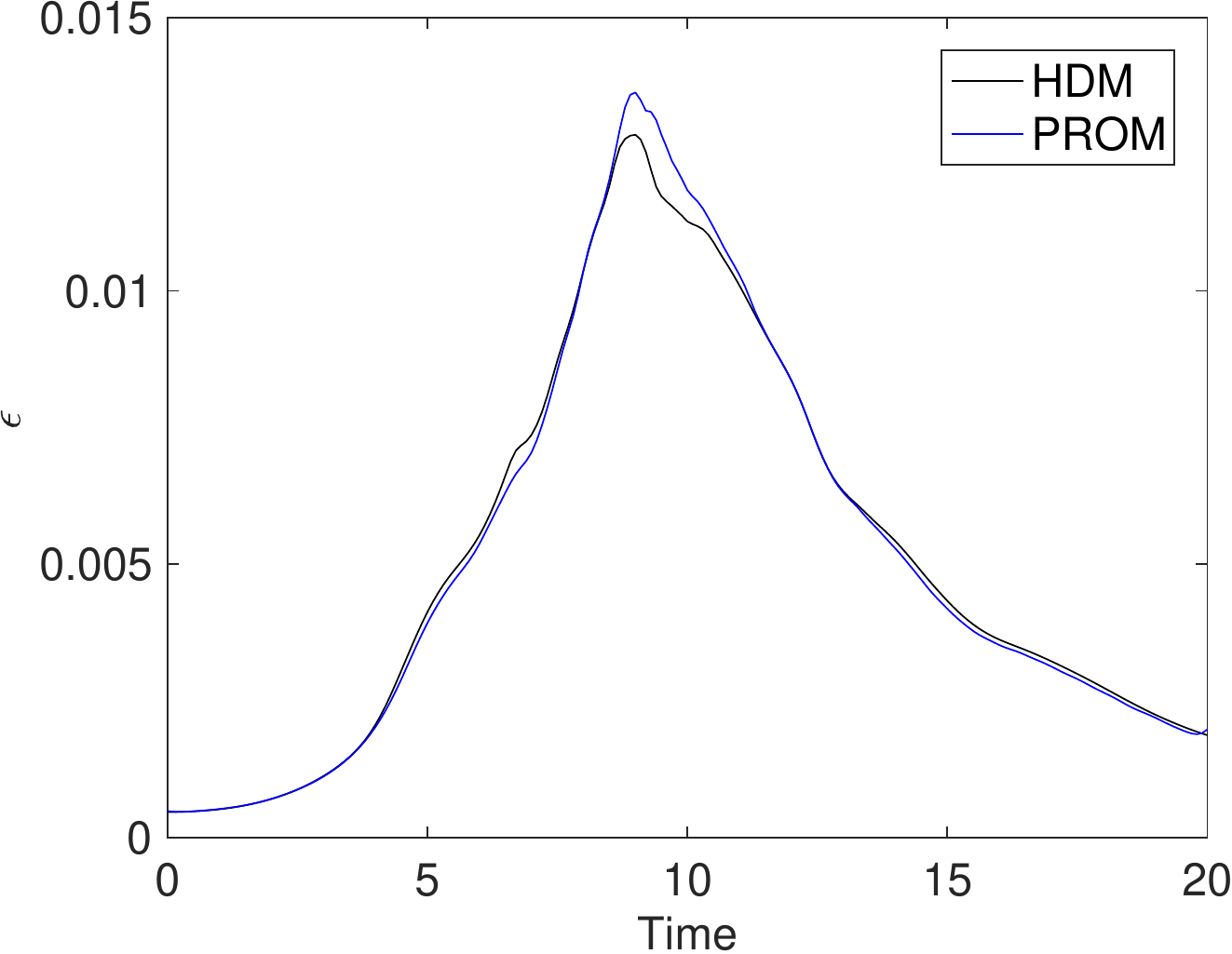}
	\caption{Galerkin, $n = 47$}
	\end{subfigure}%
	\hspace{1em}
	\begin{subfigure}[c]{0.3\textwidth}%
	\centering
	\includegraphics[width=\linewidth]{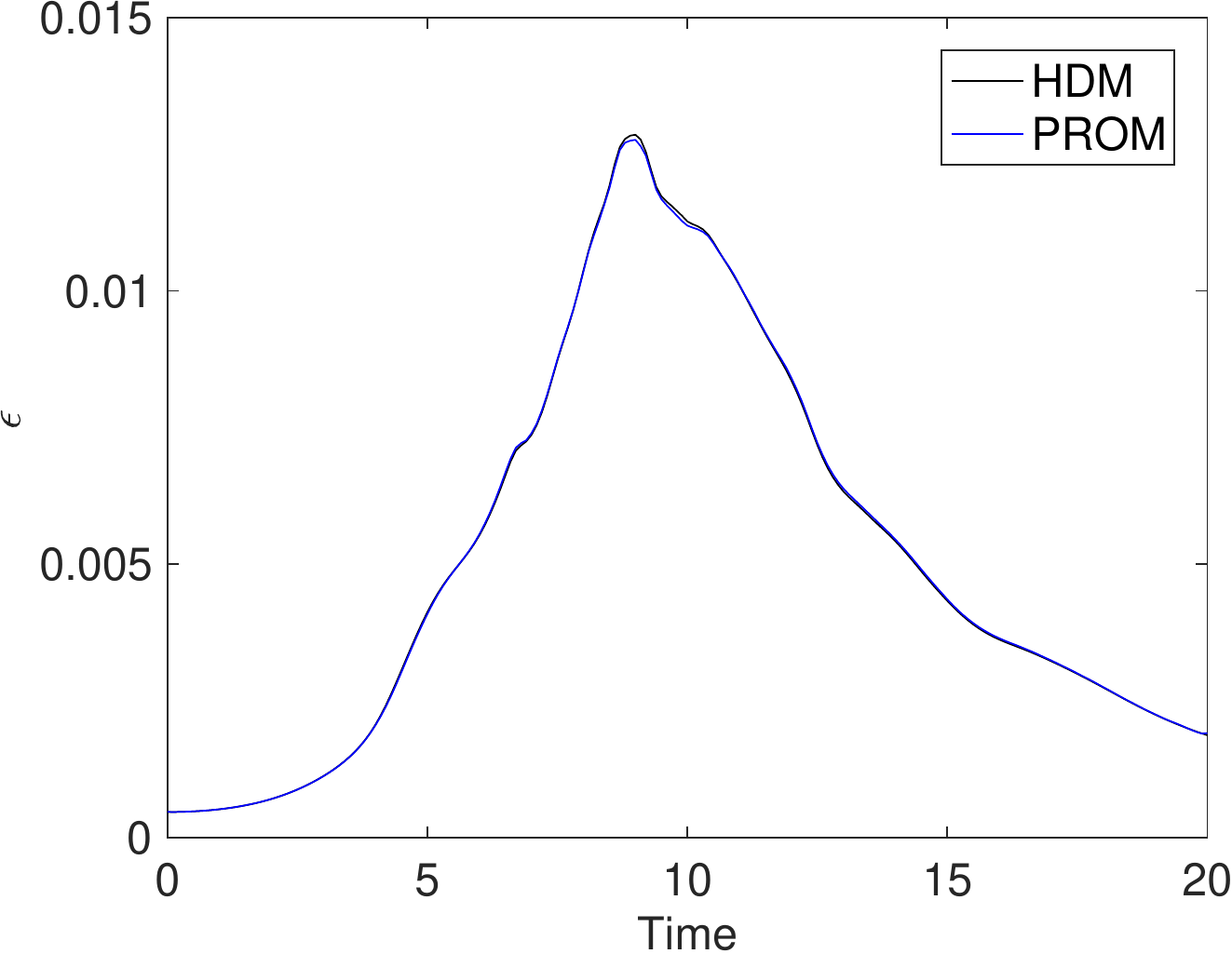}
	\caption{Galerkin, $n = 81$}
	\end{subfigure}%
	\caption{Taylor-Green vortex: Time-histories of the enstrophy-based dissipation rate computed using the HDM and Galerkin PROMs.}
	\label{fig:tgvepsgal}
\end{figure}

\begin{figure}[h!]
	\centering
	\begin{subfigure}[c]{0.3\textwidth}%
	\centering
	\includegraphics[width=\linewidth]{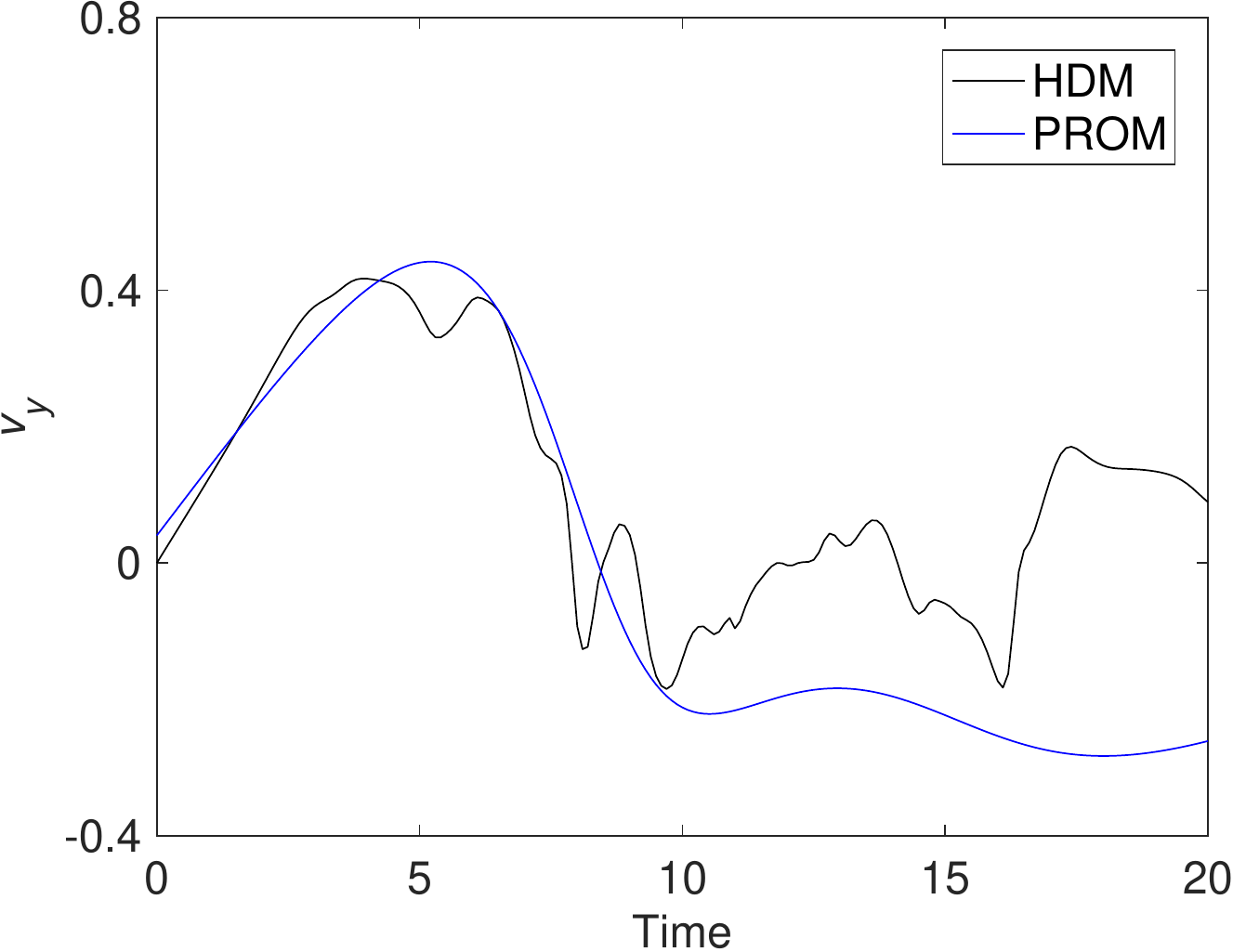}
	\caption{Galerkin, $n = 6$}
	\end{subfigure}%
	\hspace{1em}
	\begin{subfigure}[c]{0.3\textwidth}%
	\centering
	\includegraphics[width=\linewidth]{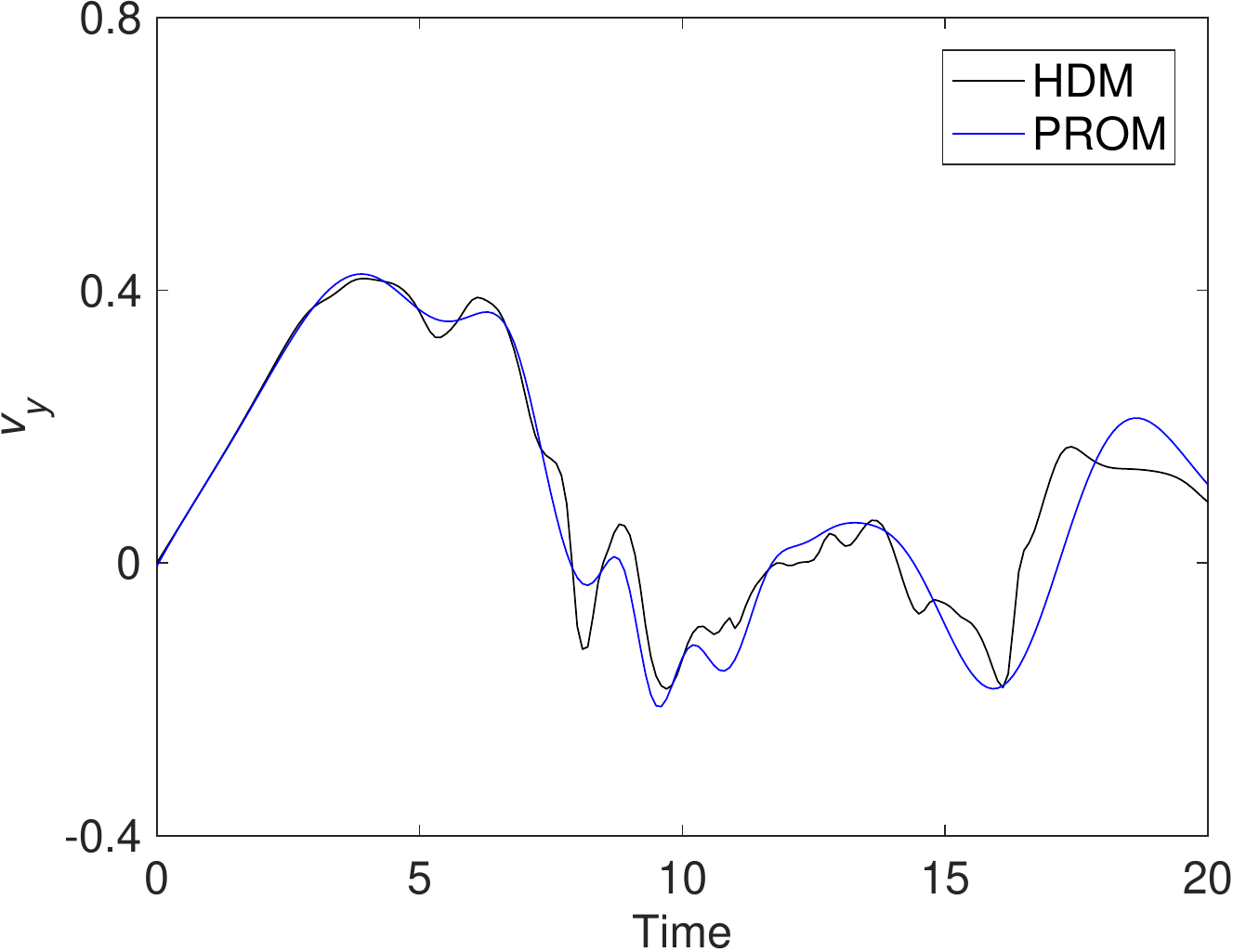}
	\caption{Galerkin, $n = 22$}
	\end{subfigure}%
	
	\medskip
	\begin{subfigure}[c]{0.3\textwidth}%
	\centering
	\includegraphics[width=\linewidth]{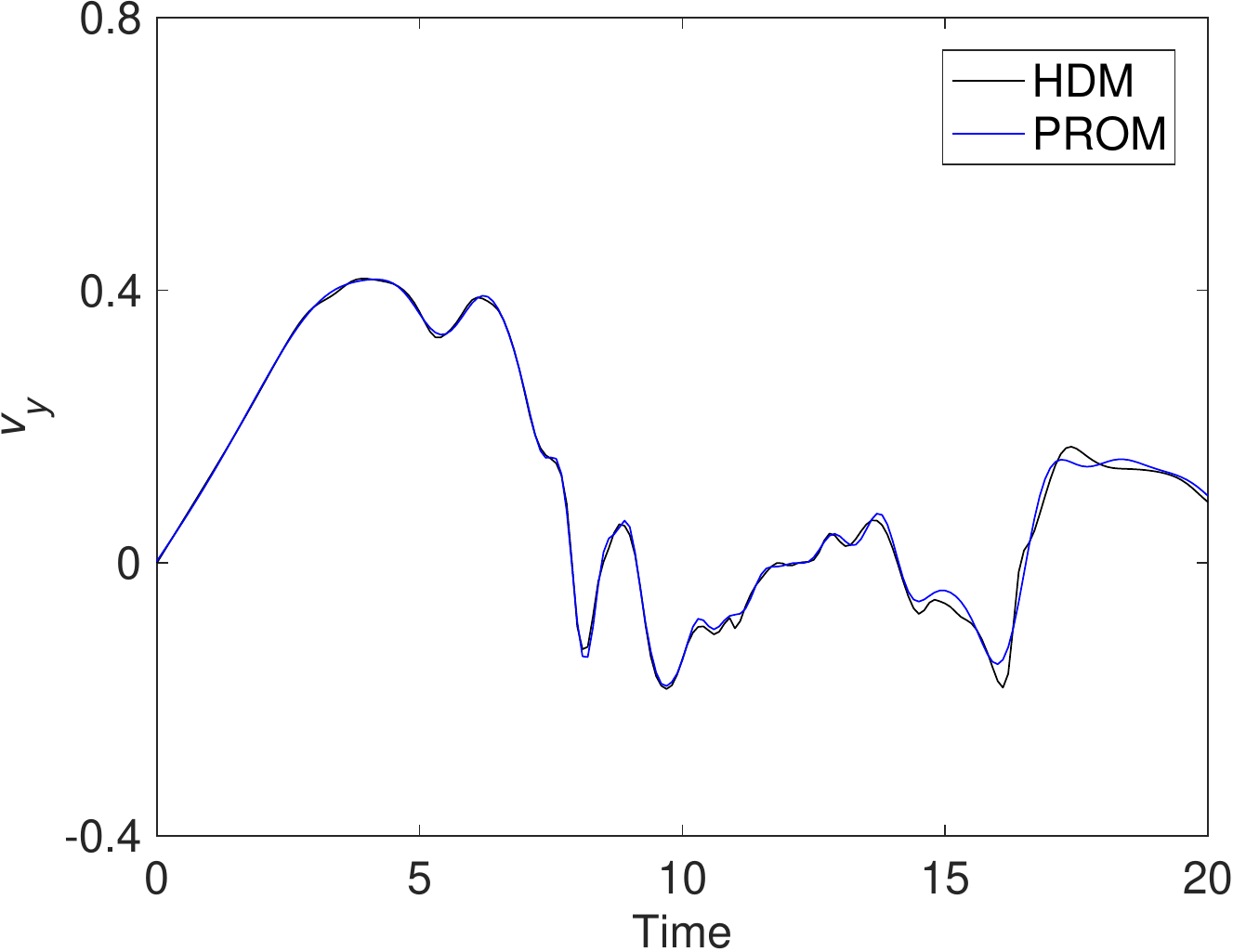}
	\caption{Galerkin, $n = 47$}
	\end{subfigure}%
	\hspace{1em}
	\begin{subfigure}[c]{0.3\textwidth}%
	\centering
	\includegraphics[width=\linewidth]{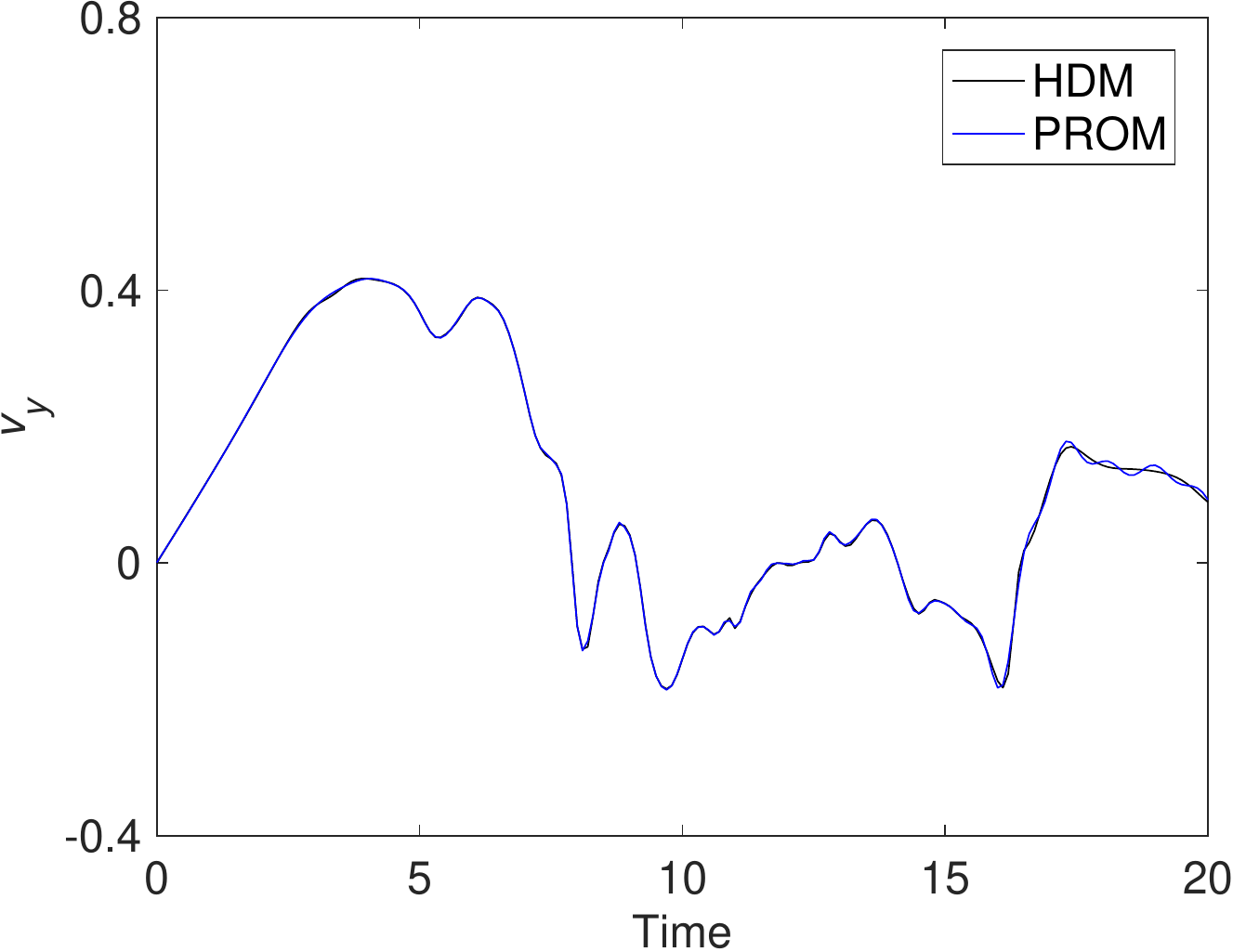}
	\caption{Galerkin, $n = 81$}
	\end{subfigure}%
	\caption{Taylor-Green vortex: Time-histories of the velocity component $v_y$ computed at a probe using the HDM and Galerkin PROMs.}
	\label{fig:tgvprobevygal}
\end{figure}

\begin{figure}[h!]
	\centering
	\begin{subfigure}[c]{0.3\textwidth}%
	\centering
	\includegraphics[width=\linewidth]{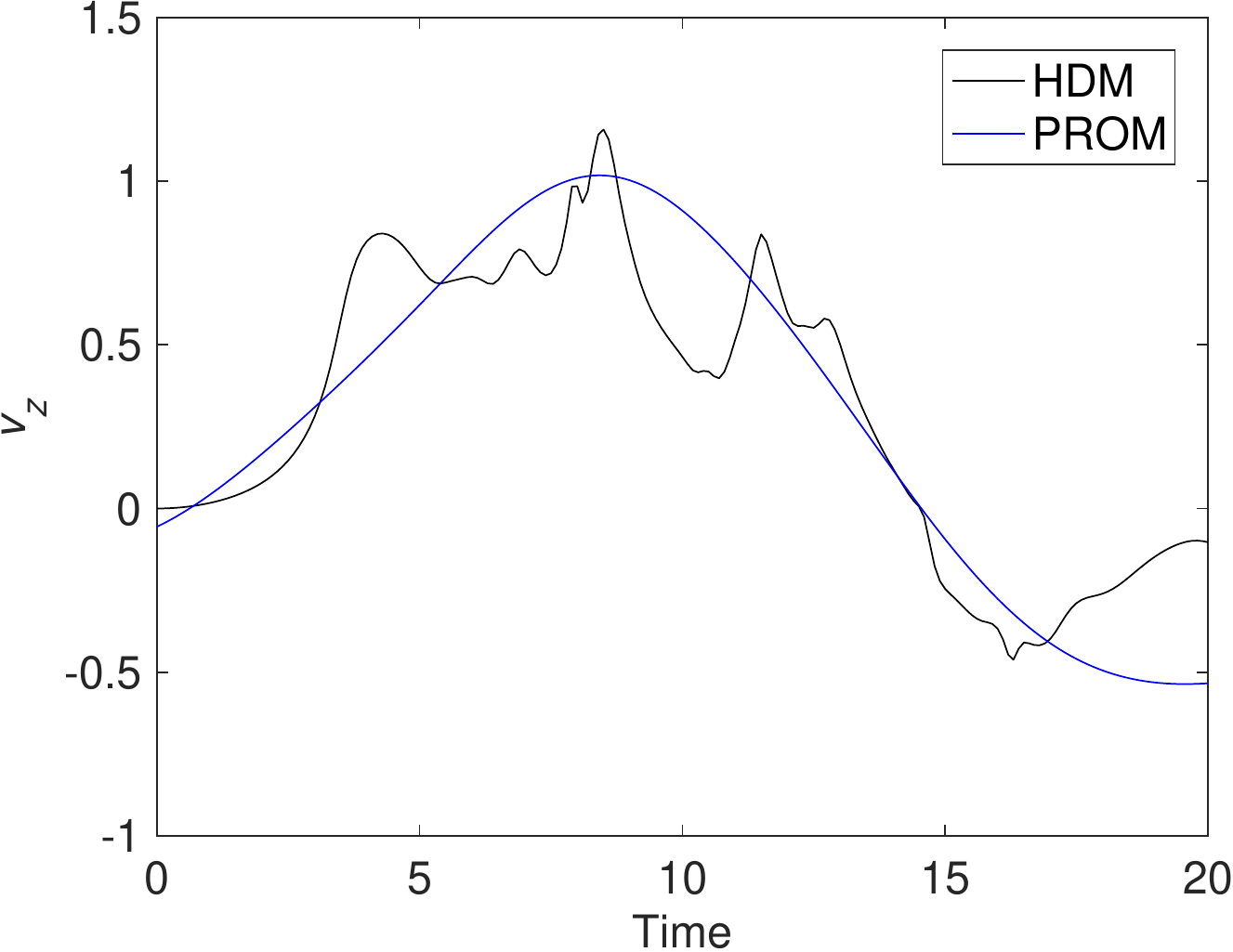}
	\caption{Galerkin, $n = 6$}
	\end{subfigure}%
	\hspace{1em}
	\begin{subfigure}[c]{0.3\textwidth}%
	\centering
	\includegraphics[width=\linewidth]{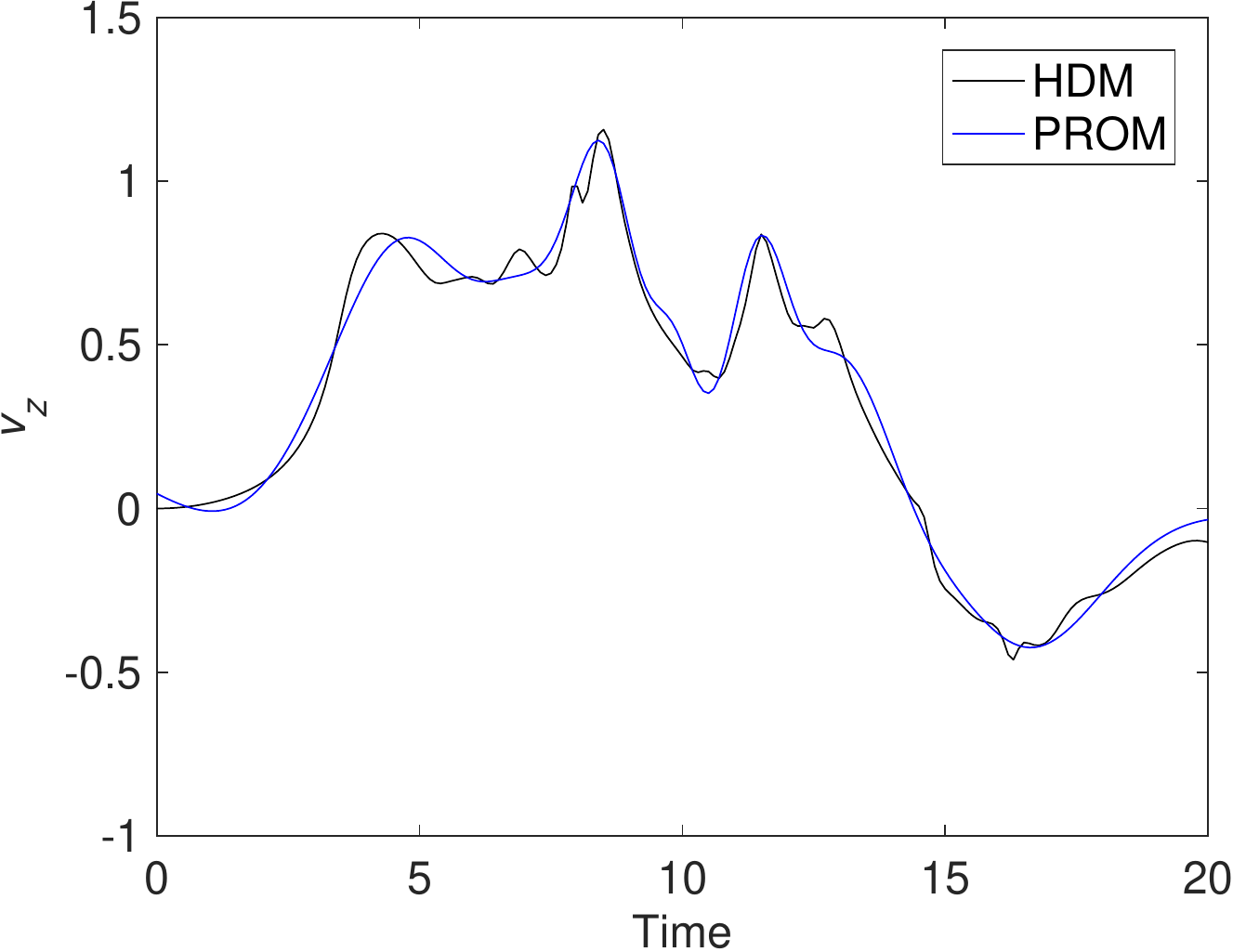}
	\caption{Galerkin, $n = 22$}
	\end{subfigure}%
	
	\medskip
	\begin{subfigure}[c]{0.3\textwidth}%
	\centering
	\includegraphics[width=\linewidth]{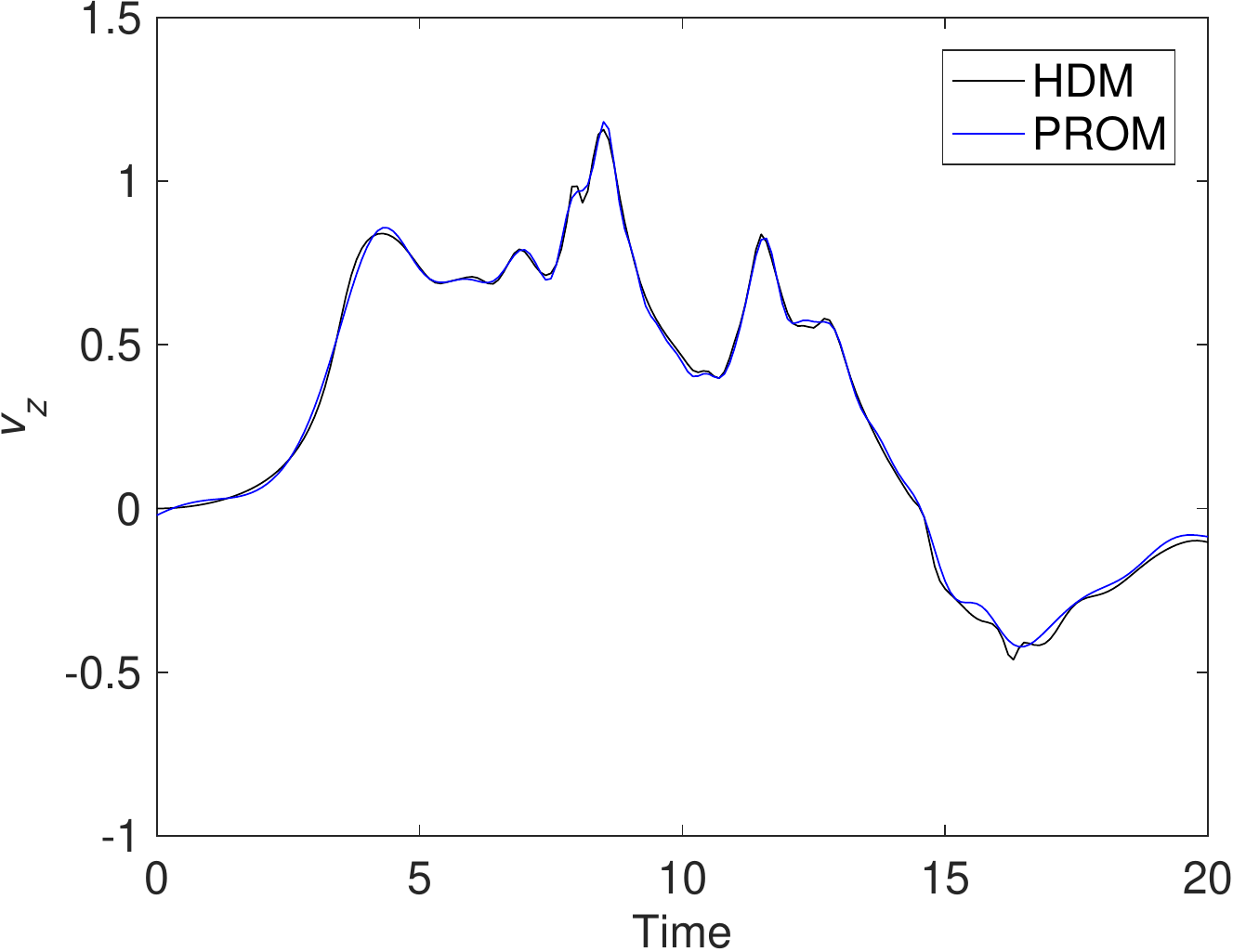}
	\caption{Galerkin, $n = 47$}
	\end{subfigure}%
	\hspace{1em}
	\begin{subfigure}[c]{0.3\textwidth}%
	\centering
	\includegraphics[width=\linewidth]{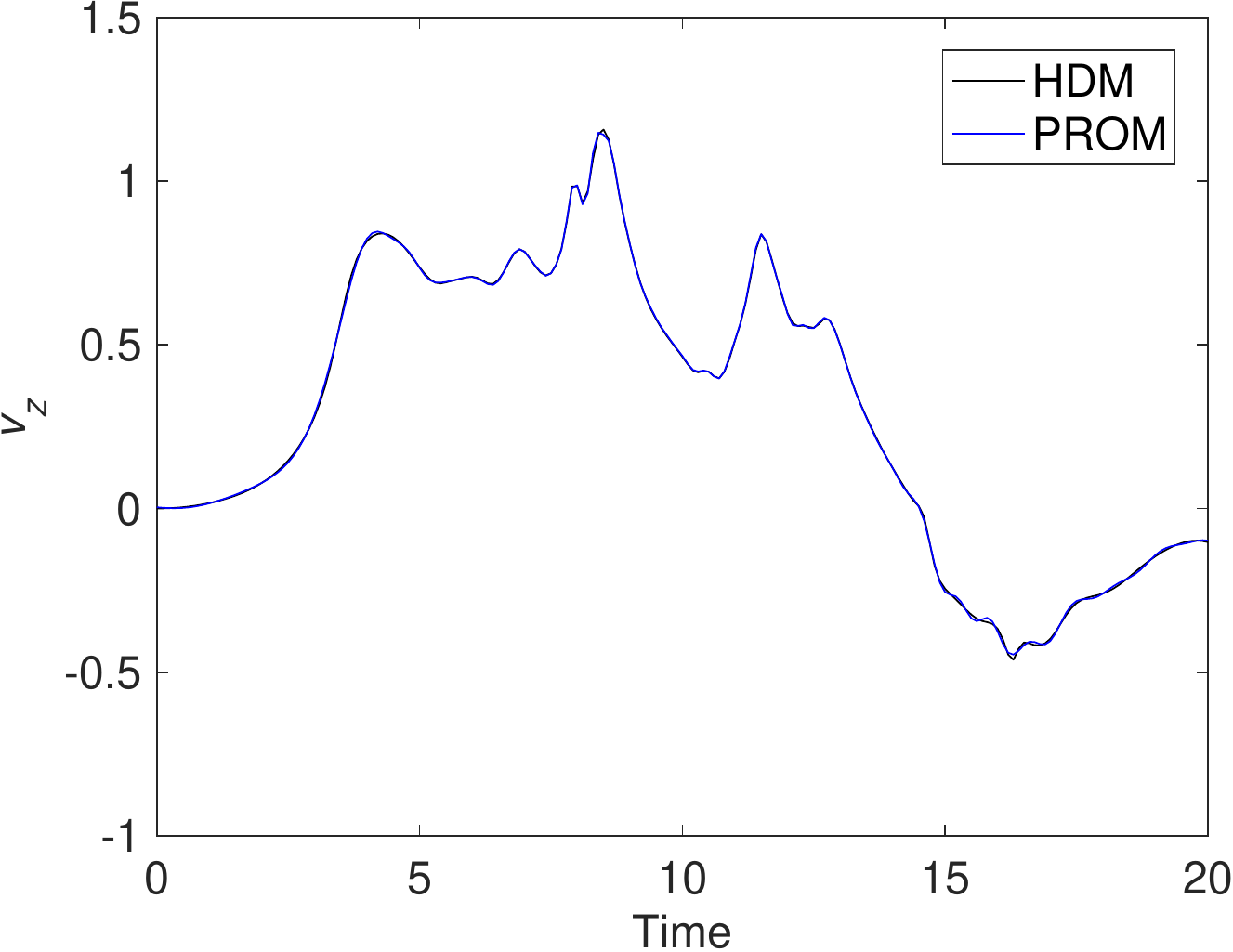}
	\caption{Galerkin, $n = 81$}
	\end{subfigure}%
	\caption{Taylor-Green vortex: Time-histories of the velocity component $v_z$ computed at a probe using the HDM and Galerkin PROMs.}
	\label{fig:tgvprobevzgal}
\end{figure}

\clearpage
\paragraph{Petrov-Galerkin reduced-order models}

Figures \ref{fig:tgvklspg}, \ref{fig:tgvepslspg}, \ref{fig:tgvprobevylspg}, and \ref{fig:tgvprobevzlspg} compare the time-histories of the QoIs $E_k$, $\epsilon$, $v_y$, and $v_z$ computed using the HDM 
and LSPG-based Petrov-Galerkin PROMs of various dimensions. The performance of each LSPG-based Petrov-Galerkin PROM is found to be very similar to that of the Galerkin PROM of the same dimension $n$ 
discussed above. 

\begin{figure}[h!]
	\centering
	\begin{subfigure}[c]{0.3\textwidth}%
	\centering
	\includegraphics[width=\linewidth]{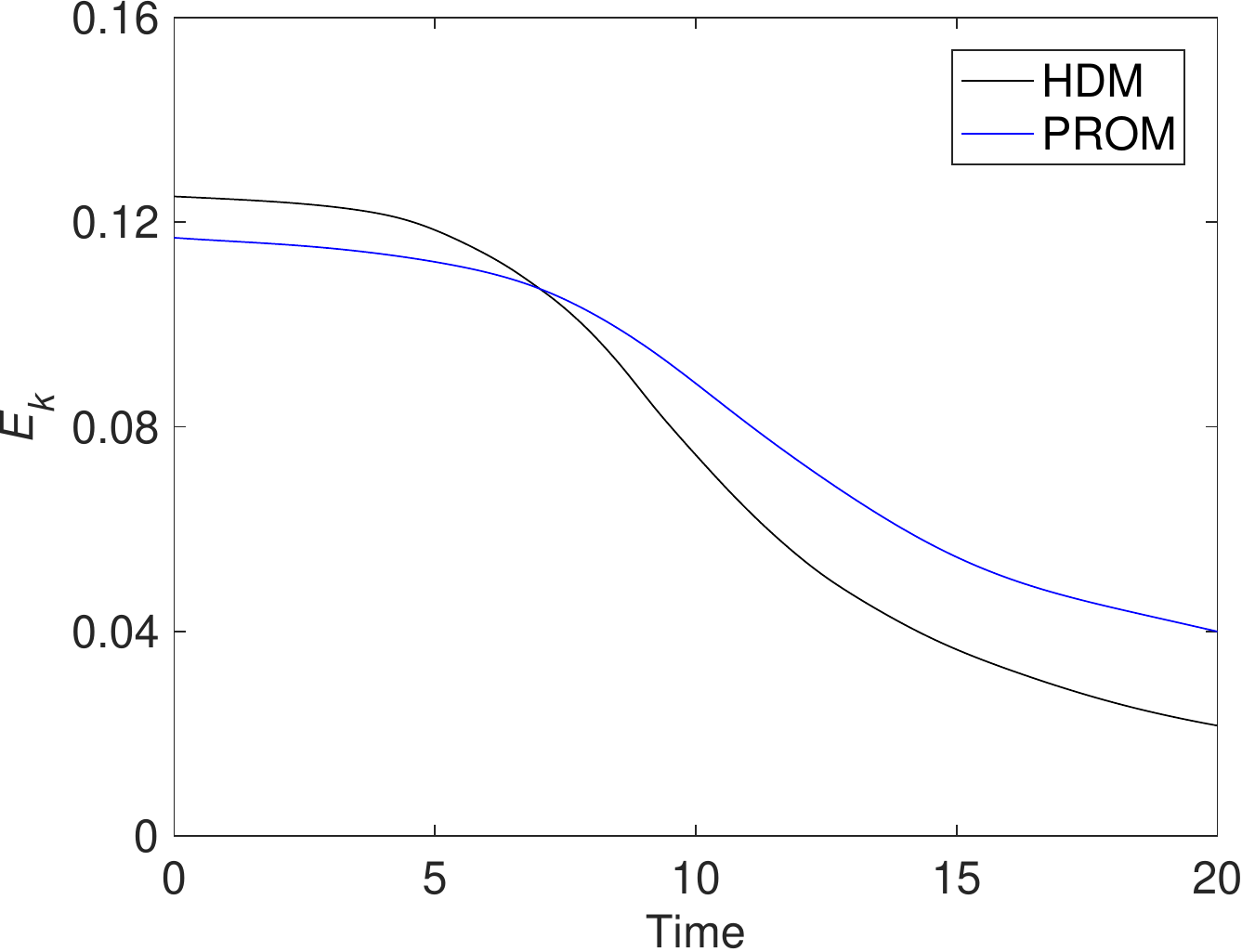}
	\caption{LSPG, $n = 6$}
	\end{subfigure}%
	\hspace{1em}
	\begin{subfigure}[c]{0.3\textwidth}%
	\centering
	\includegraphics[width=\linewidth]{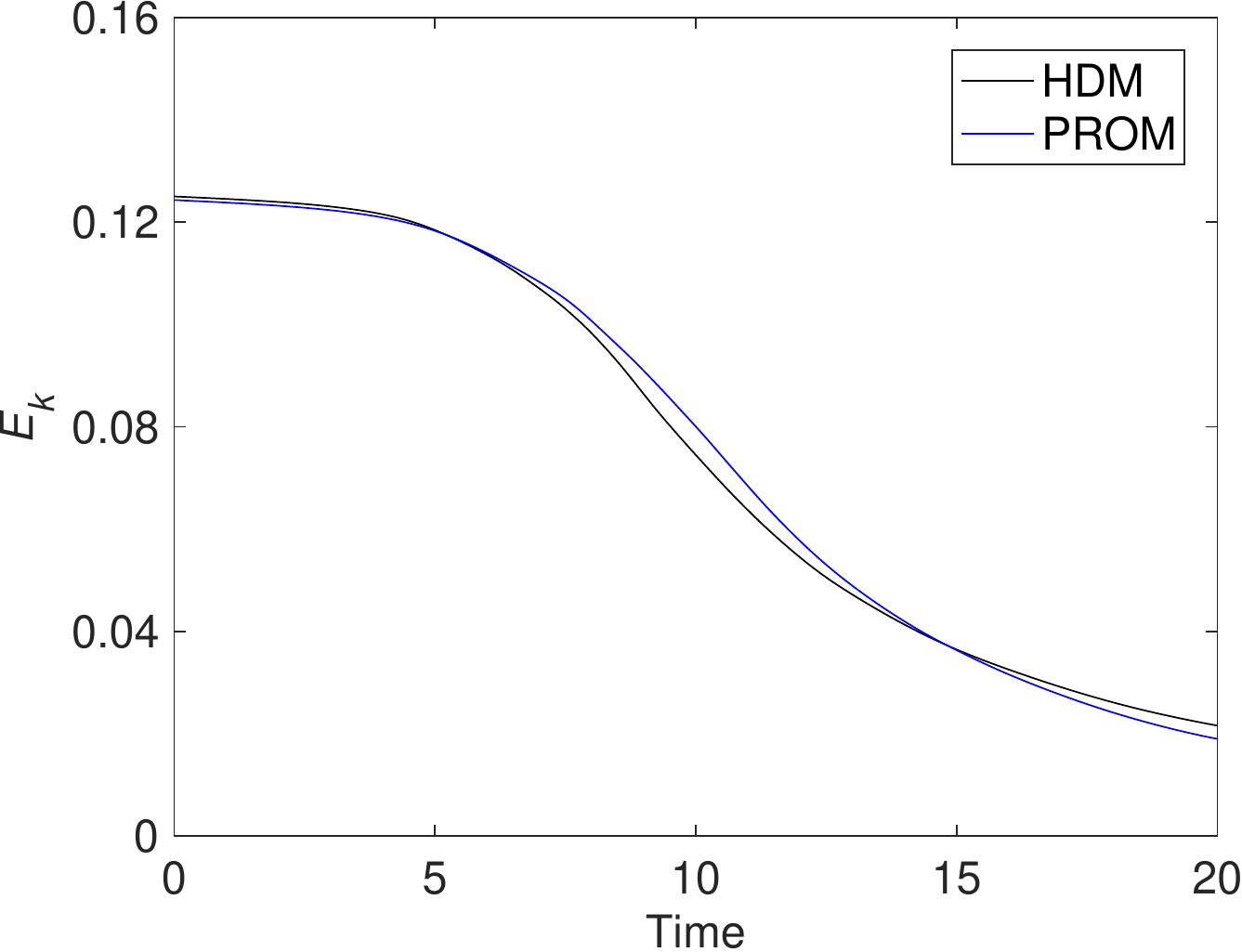}
	\caption{LSPG, $n = 22$}
	\end{subfigure}%
	
	\medskip
	\begin{subfigure}[c]{0.3\textwidth}%
	\centering
	\includegraphics[width=\linewidth]{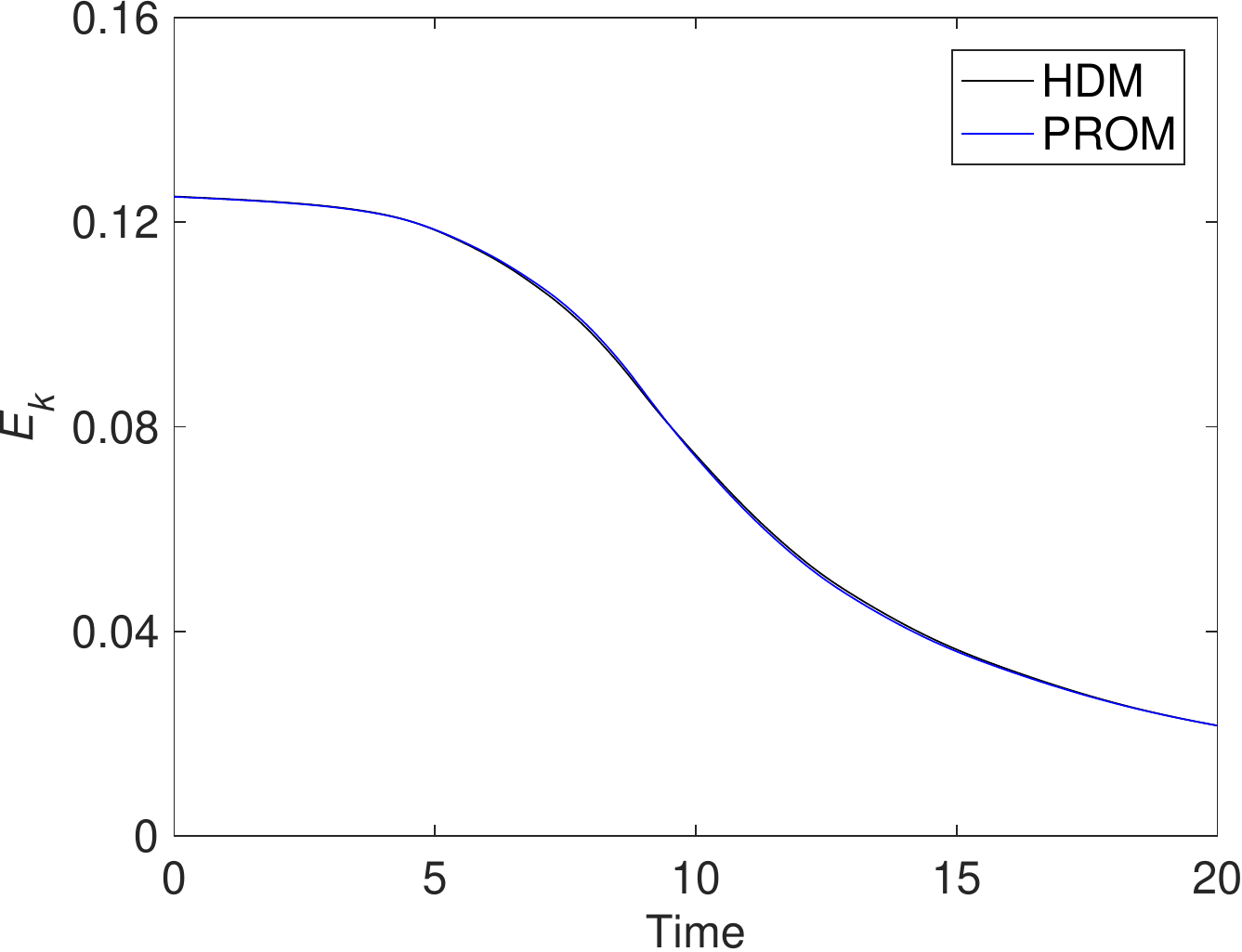}
	\caption{LSPG, $n = 47$}
	\end{subfigure}%
	\hspace{1em}
	\begin{subfigure}[c]{0.3\textwidth}%
	\centering
	\includegraphics[width=\linewidth]{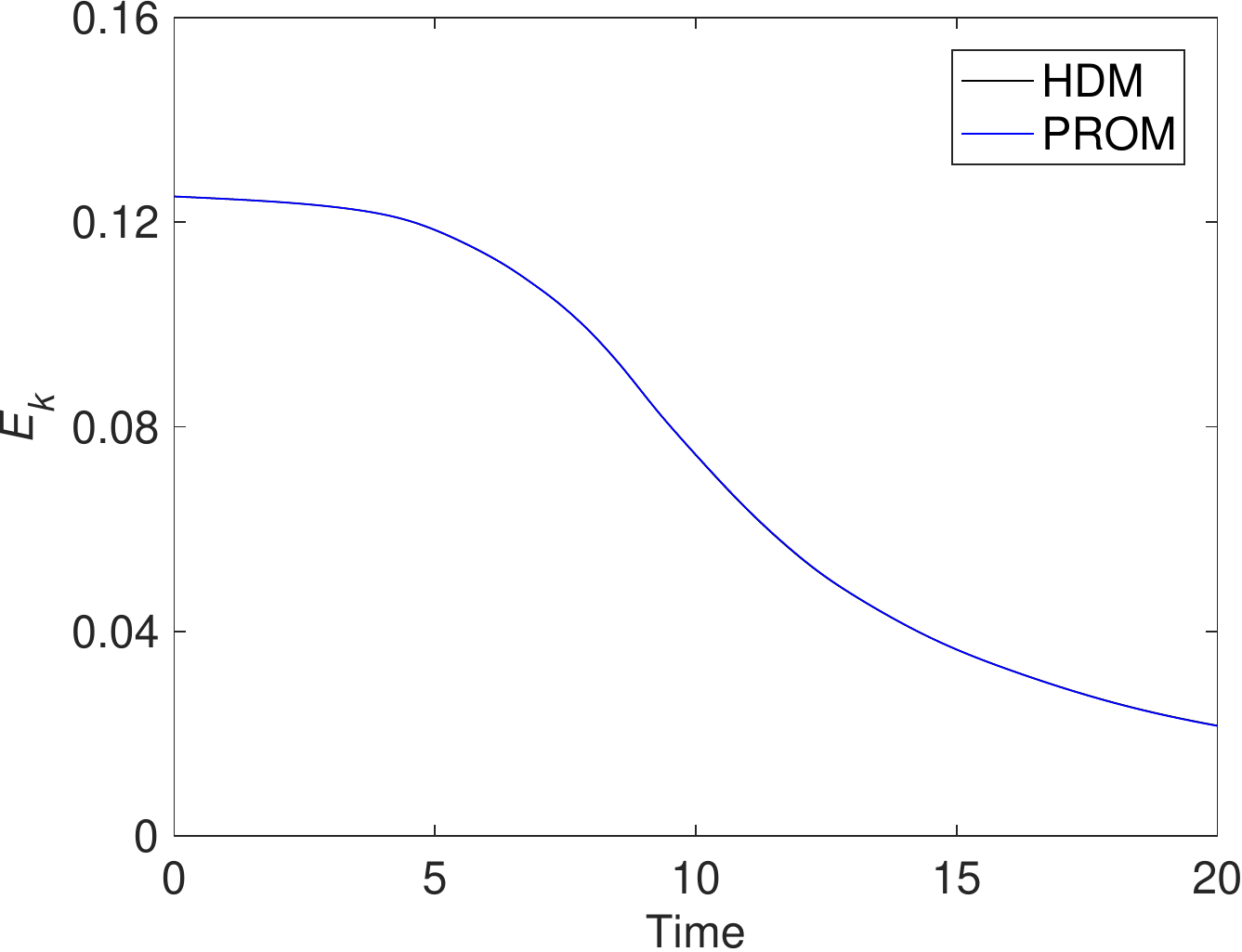}
	\caption{LSPG, $n = 81$}
	\end{subfigure}%
	\caption{Taylor-Green vortex: Time-histories of the turbulent kinetic energy computed using the HDM and LSPG-based Petrov-Galerkin PROMs.}
	\label{fig:tgvklspg}
\end{figure}

\begin{figure}[h!]
	\centering
	\begin{subfigure}[c]{0.3\textwidth}%
	\centering
	\includegraphics[width=\linewidth]{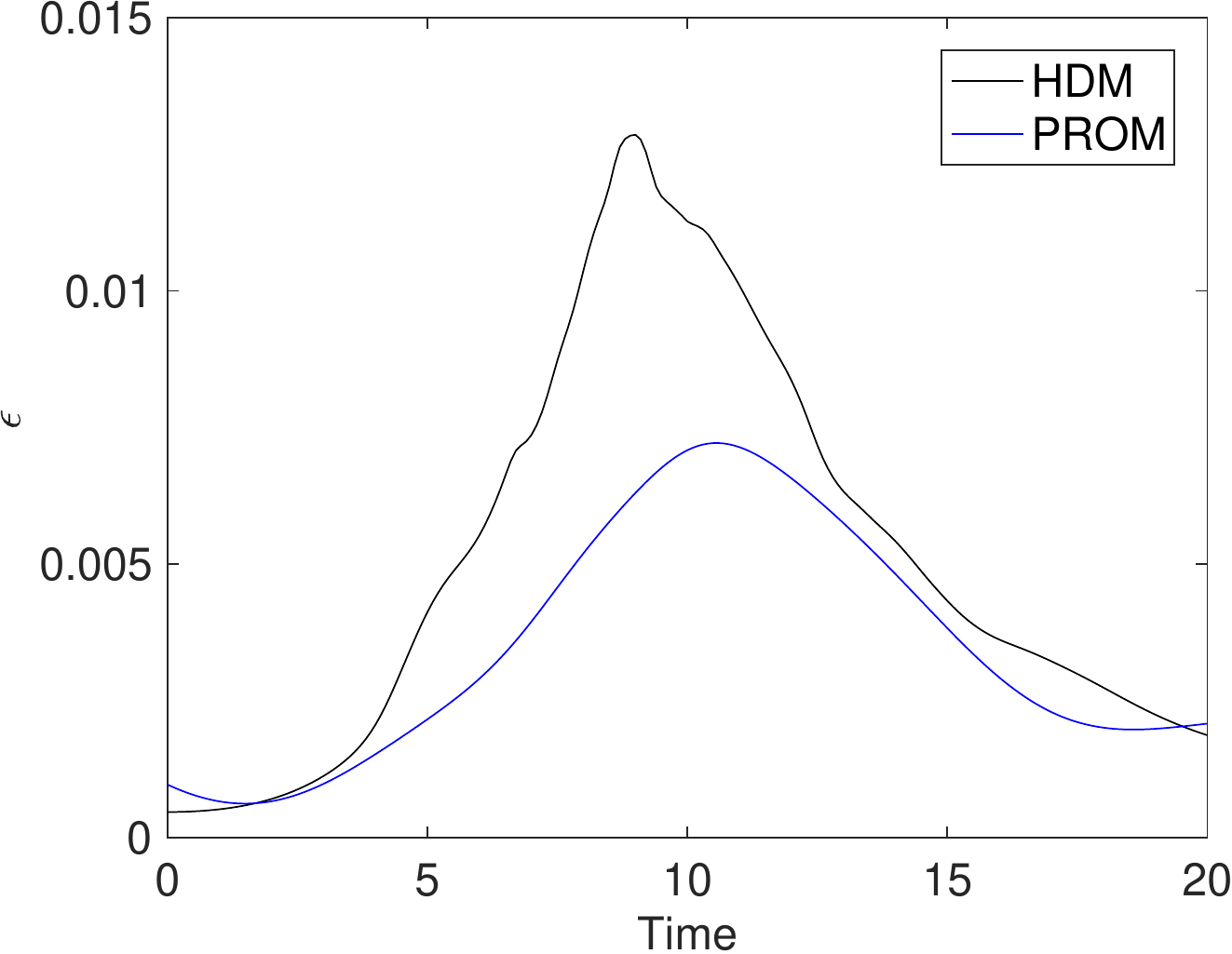}
	\caption{LSPG, $n = 6$}
	\end{subfigure}%
	\hspace{1em}
	\begin{subfigure}[c]{0.3\textwidth}%
	\centering
	\includegraphics[width=\linewidth]{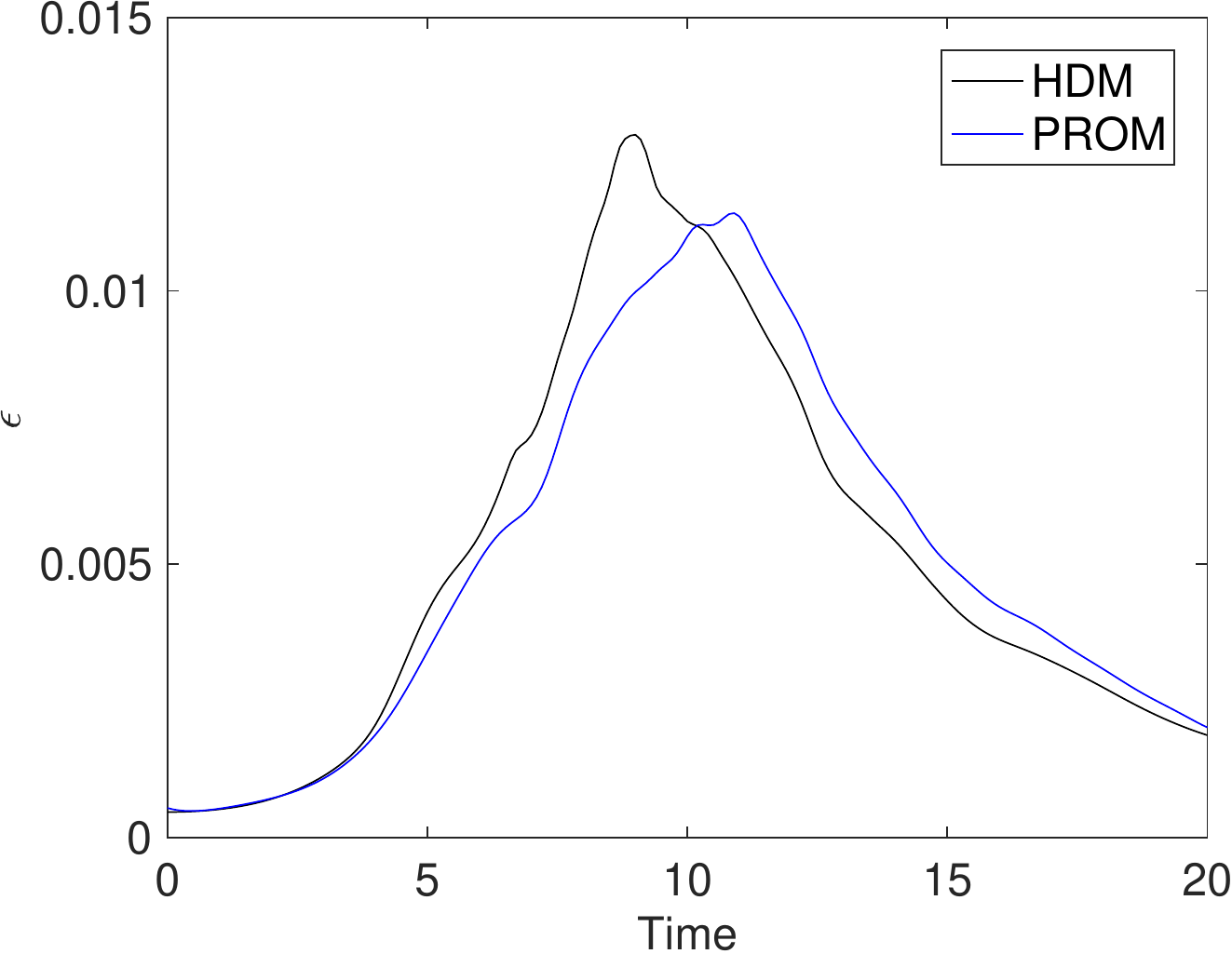}
	\caption{LSPG, $n = 22$}
	\end{subfigure}%
	
	\medskip
	\begin{subfigure}[c]{0.3\textwidth}%
	\centering
	\includegraphics[width=\linewidth]{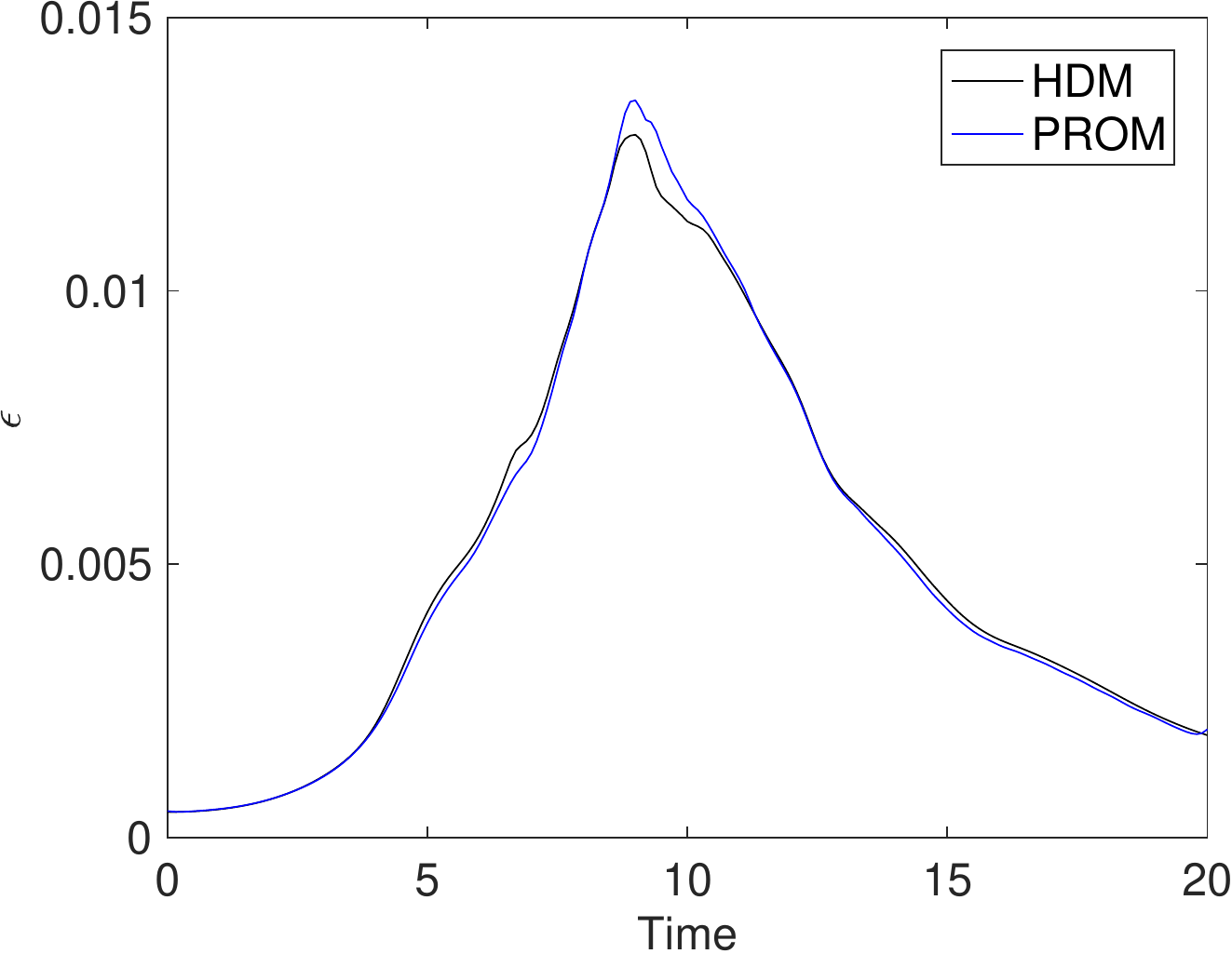}
	\caption{LSPG, $n = 47$}
	\end{subfigure}%
	\hspace{1em}
	\begin{subfigure}[c]{0.3\textwidth}%
	\centering
	\includegraphics[width=\linewidth]{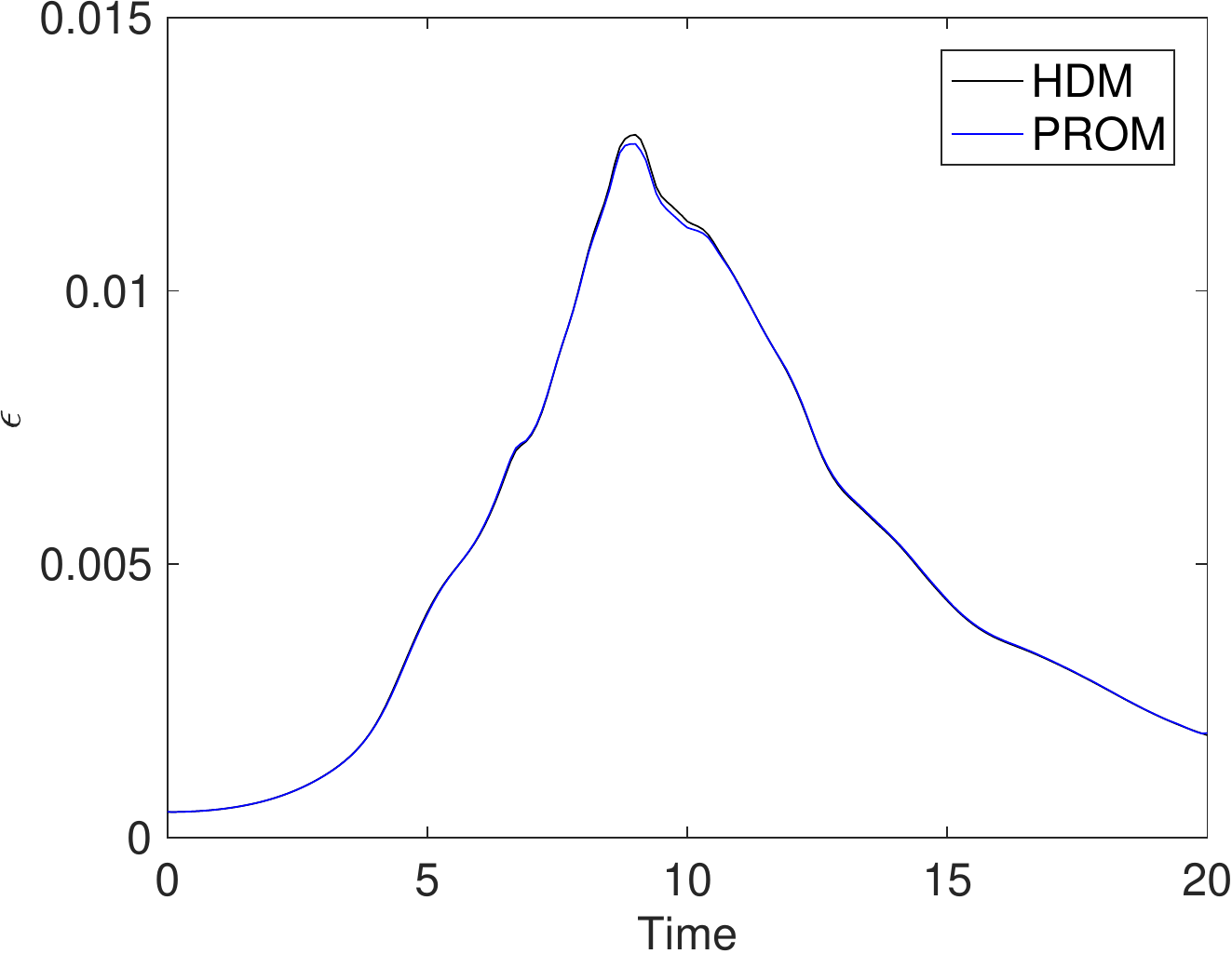}
	\caption{LSPG, $n = 81$}
	\end{subfigure}%
	\caption{Taylor-Green vortex: Time-histories of the enstrophy-based dissipation rate computed using the HDM and LSPG-based Petrov-Galerkin PROMs.}
	\label{fig:tgvepslspg}
\end{figure}

\begin{figure}[h!]
	\centering
	\begin{subfigure}[c]{0.3\textwidth}%
	\centering
	\includegraphics[width=\linewidth]{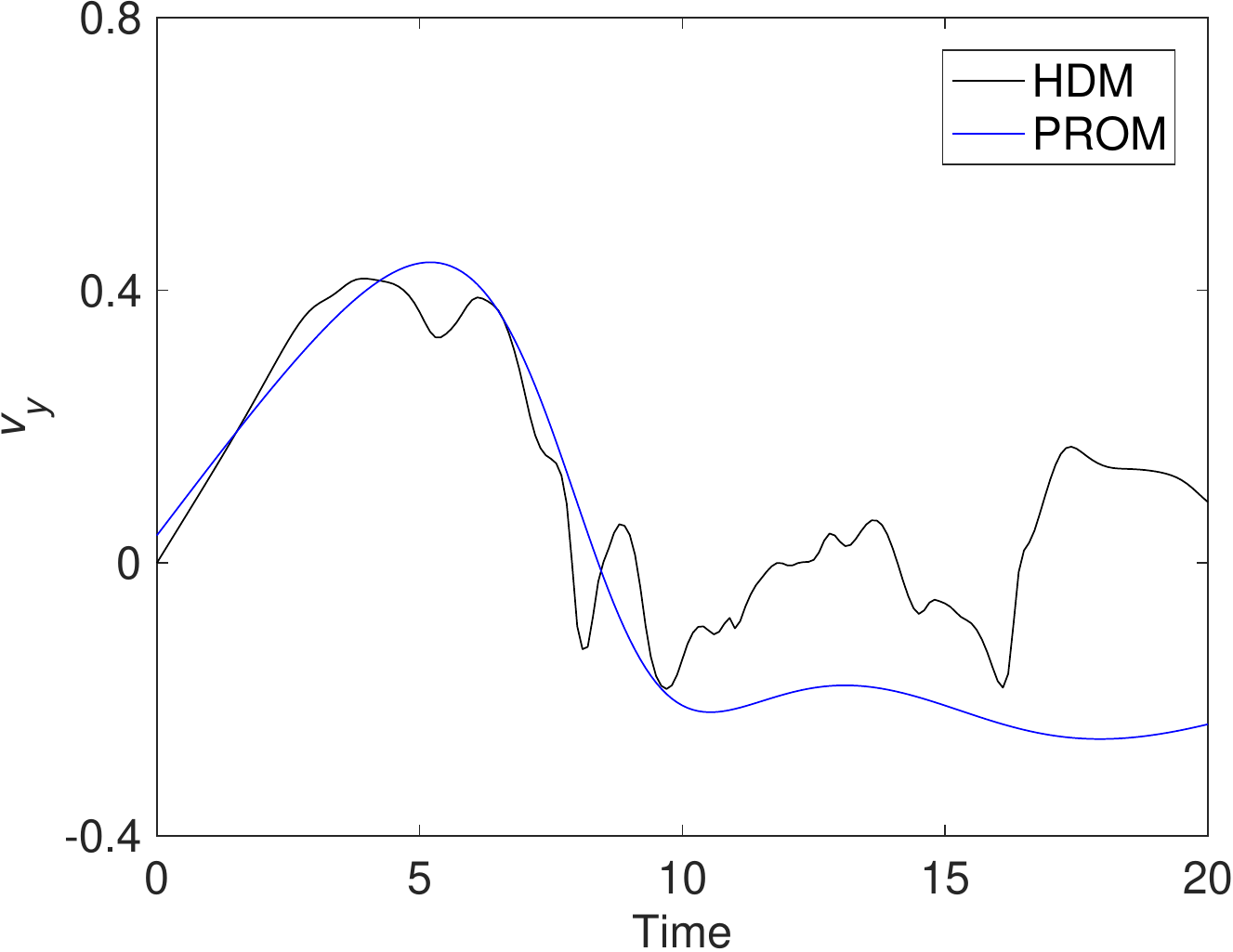}
	\caption{LSPG, $n = 6$}
	\end{subfigure}%
	\hspace{1em}
	\begin{subfigure}[c]{0.3\textwidth}%
	\centering
	\includegraphics[width=\linewidth]{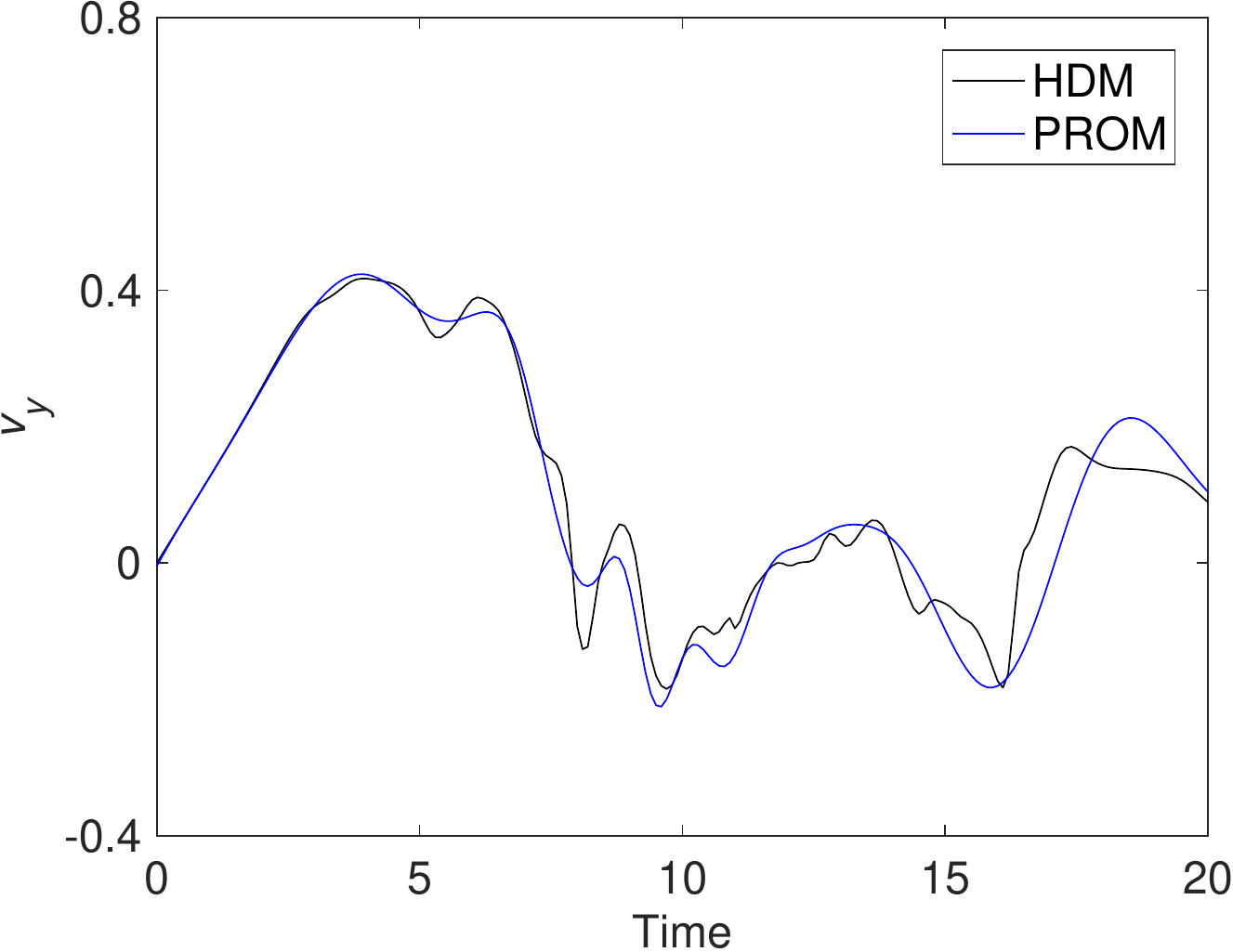}
	\caption{LSPG, $n = 22$}
	\end{subfigure}%
	
	\medskip
	\begin{subfigure}[c]{0.3\textwidth}%
	\centering
	\includegraphics[width=\linewidth]{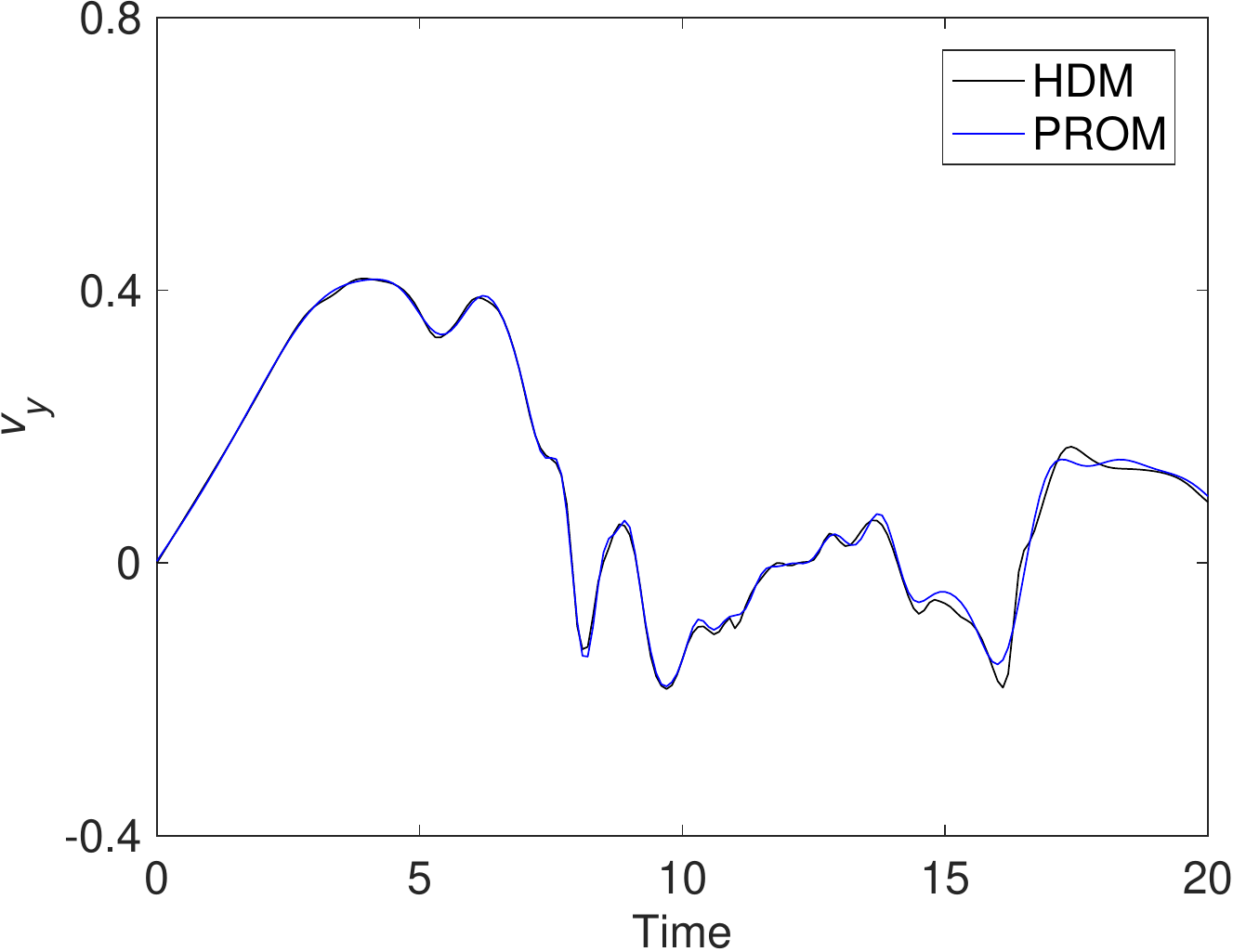}
	\caption{LSPG, $n = 47$}
	\end{subfigure}%
	\hspace{1em}
	\begin{subfigure}[c]{0.3\textwidth}%
	\centering
	\includegraphics[width=\linewidth]{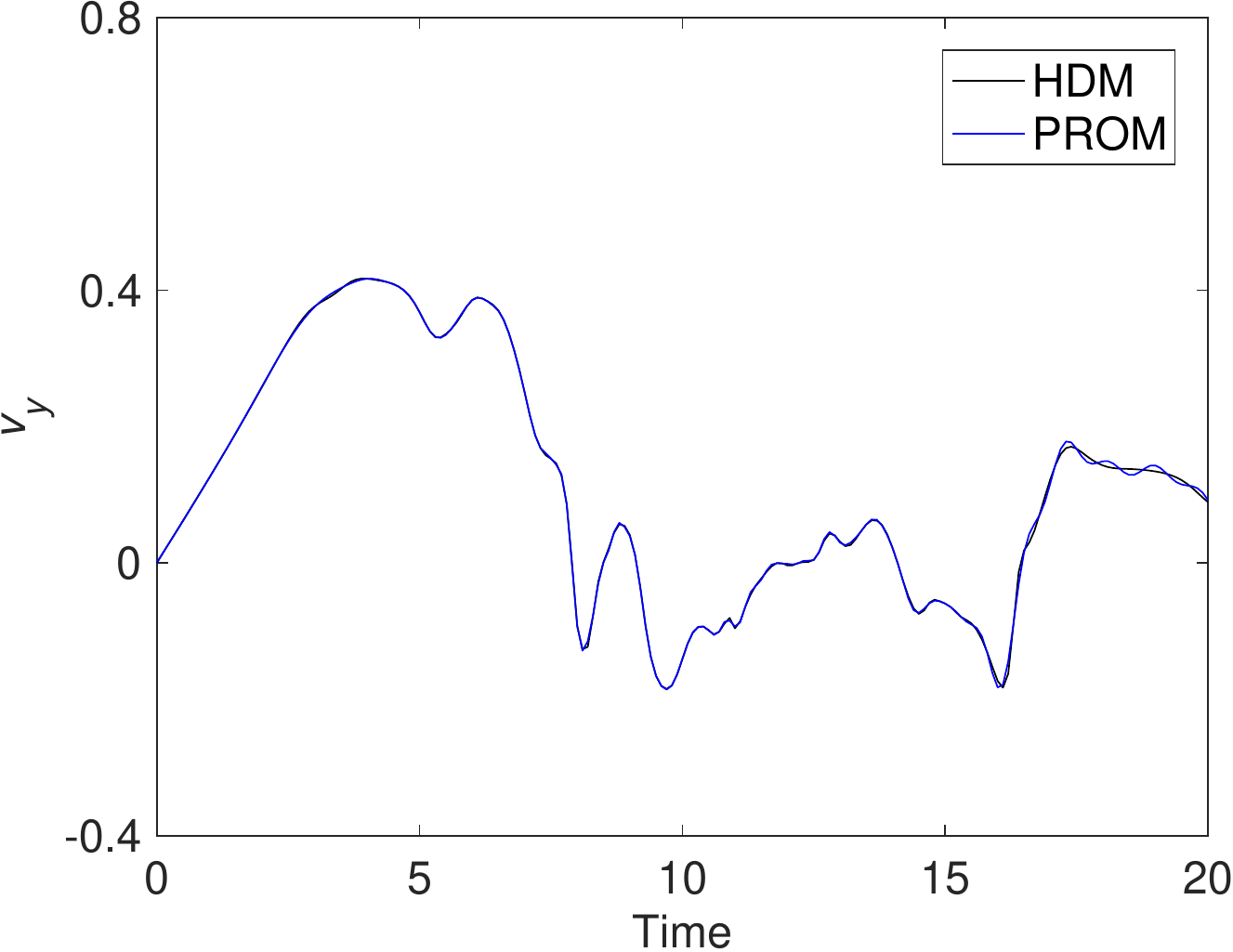}
	\caption{LSPG, $n = 81$}
	\end{subfigure}%
	\caption{Taylor-Green vortex: Time-histories of the velocity component $v_y$ computed at a probe using the HDM and LSPG-based Petrov-Galerkin PROMs.}
	\label{fig:tgvprobevylspg}
\end{figure}

\begin{figure}[h!]
	\centering
	\begin{subfigure}[c]{0.3\textwidth}%
	\centering
	\includegraphics[width=\linewidth]{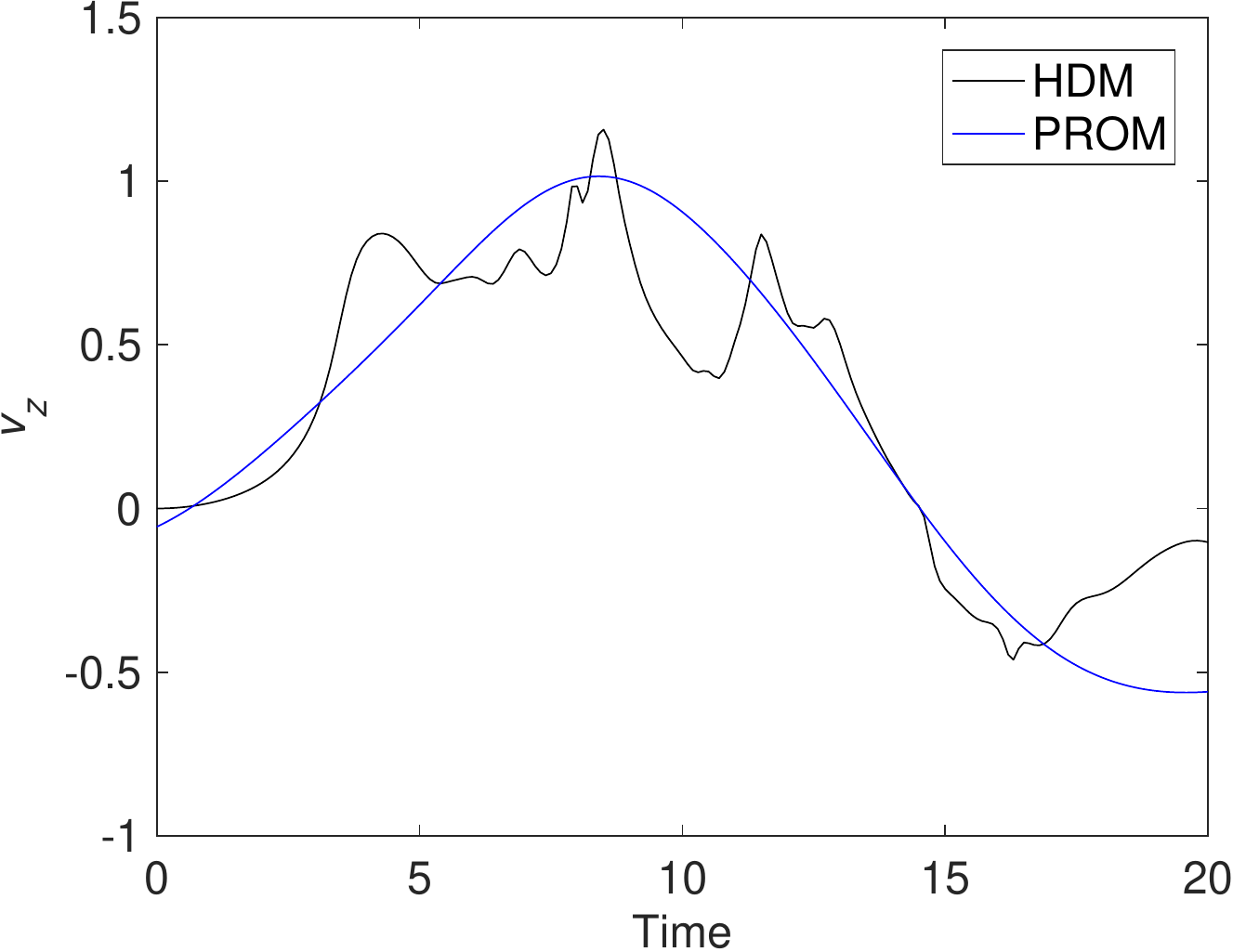}
	\caption{LSPG, $n = 6$}
	\end{subfigure}%
	\hspace{1em}
	\begin{subfigure}[c]{0.3\textwidth}%
	\centering
	\includegraphics[width=\linewidth]{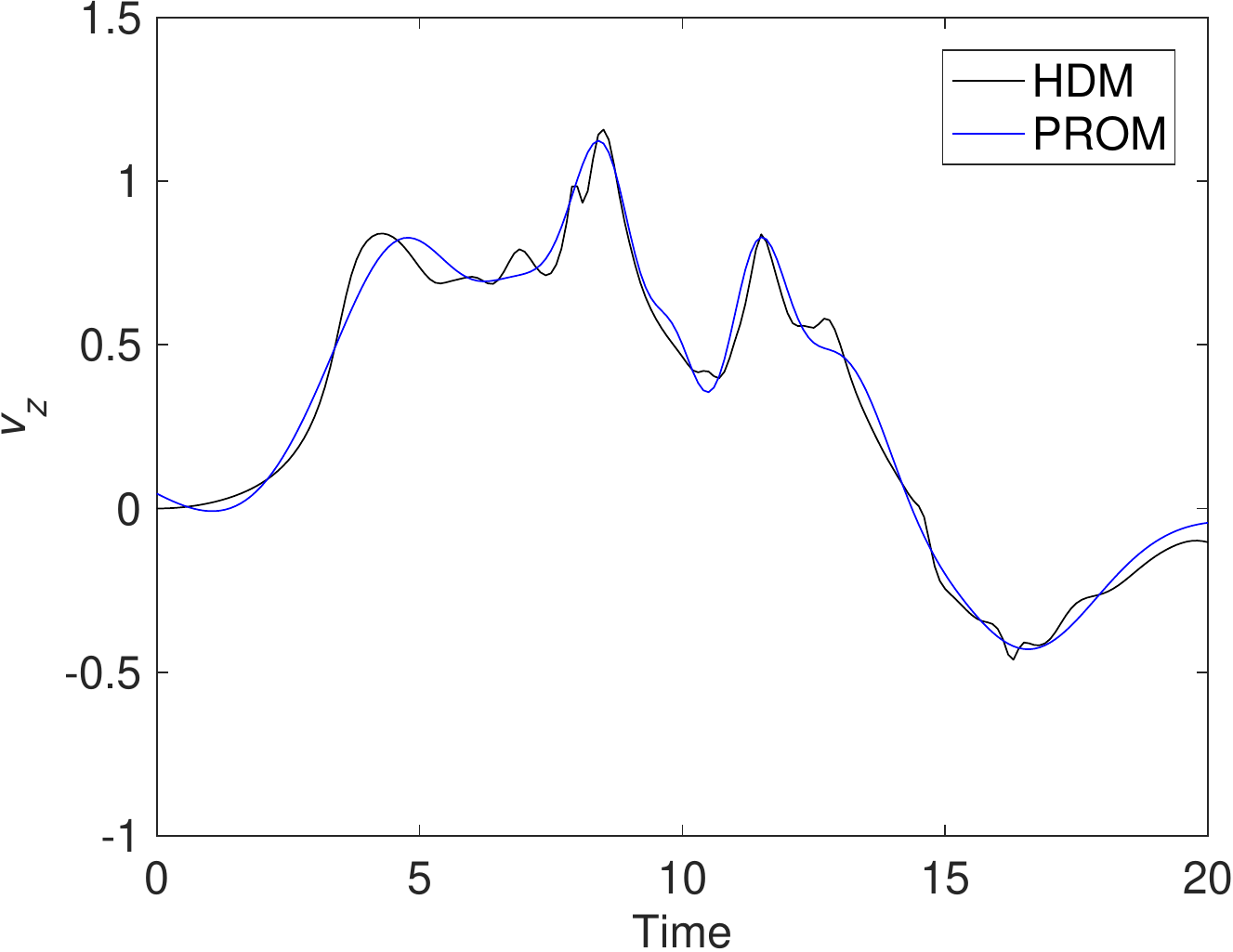}
	\caption{LSPG, $n = 22$}
	\end{subfigure}%
	
	\medskip
	\begin{subfigure}[c]{0.3\textwidth}%
	\centering
	\includegraphics[width=\linewidth]{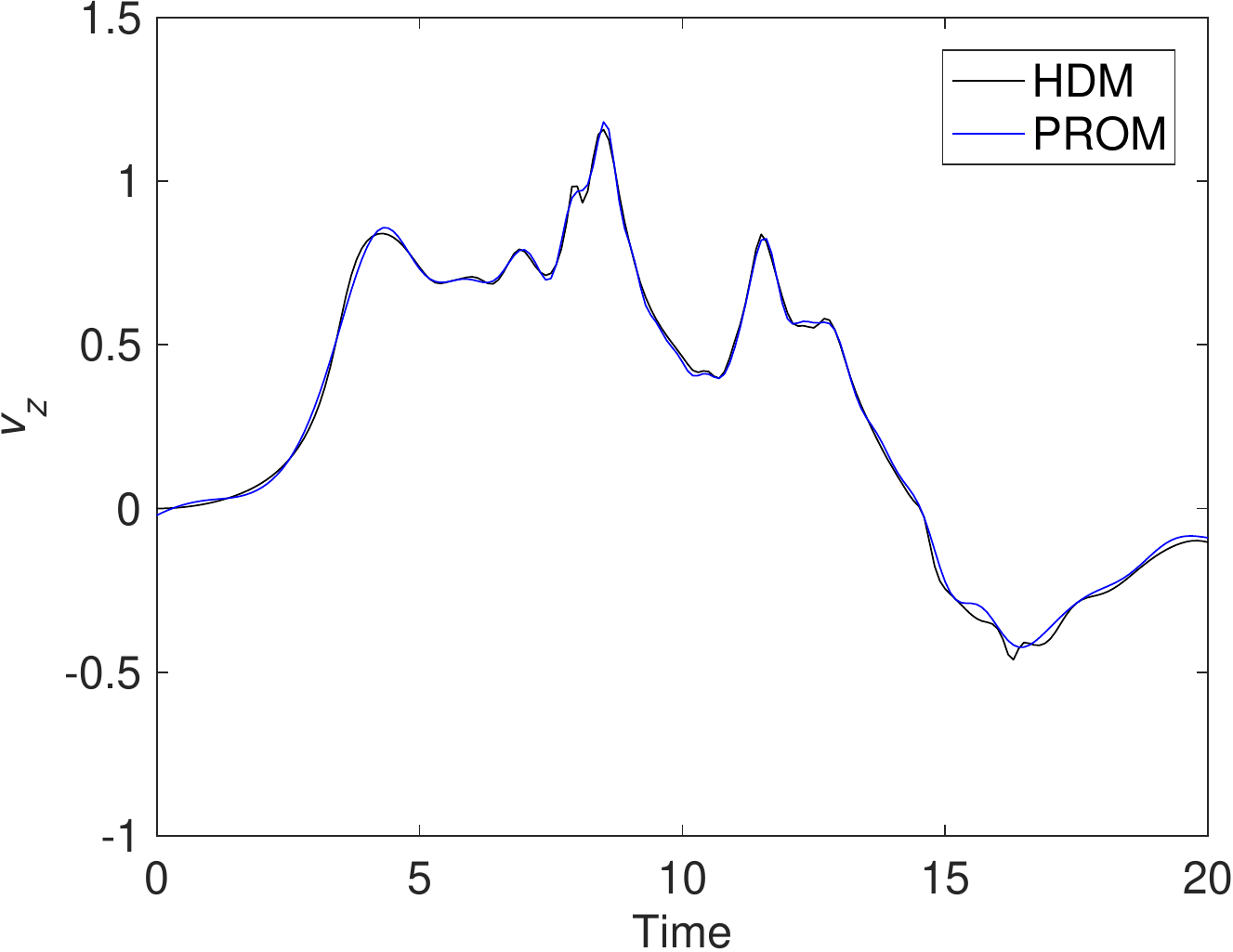}
	\caption{LSPG, $n = 47$}
	\end{subfigure}%
	\hspace{1em}
	\begin{subfigure}[c]{0.3\textwidth}%
	\centering
	\includegraphics[width=\linewidth]{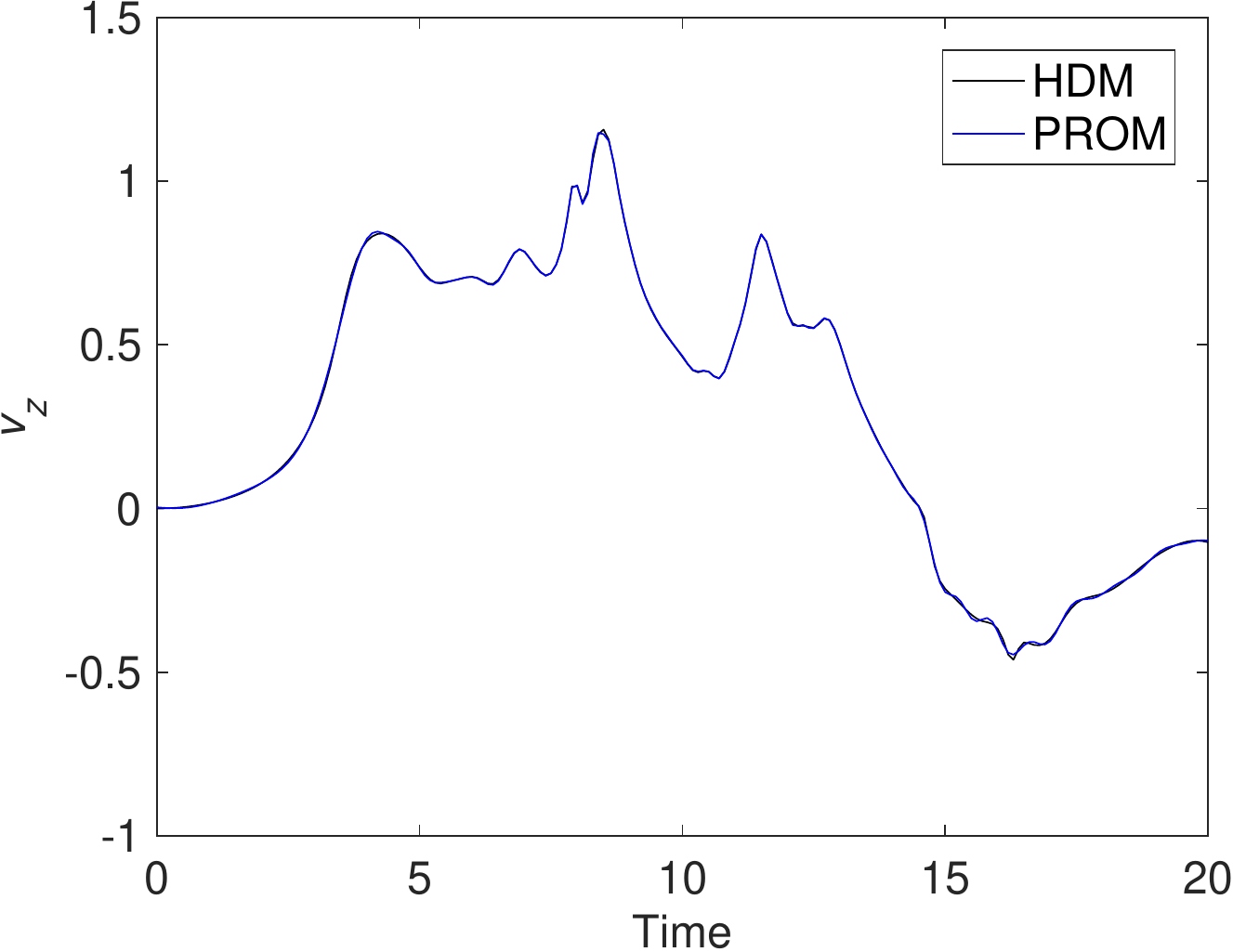}
	\caption{LSPG, $n = 81$}
	\end{subfigure}%
	\caption{Taylor-Green vortex: Time-histories of the velocity component $v_z$ computed at a probe using the HDM and LSPG-based Petrov-Galerkin PROMs.}
	\label{fig:tgvprobevzlspg}
\end{figure}

Tables \ref{tab:tgverr} and \ref{tab:tgvspeed} report on the relative errors and speed-up factors delivered by the various constructed PROMs. These tables show that all PROMs
deliver exceptional wall-clock time and CPU time speed-up factors that exceed in some cases six orders of magnitude. They also show that for a dimension as small as $n = 81$, both Galerkin and
LSPG-based Petrov-Galerkin PROMs achieve for all QoIs relative errors that are as low as $1\%$ (recall that for this problem, the dimension of the DNS HDM is $N = 402,653,184$).

\begin{table}[tb]
	\small
	\centering
	\caption{Taylor Green vortex: Variations with the dimension $n$ of the relative errors for the Galerkin and LSPG-based Petrov-Galerkin PROMs.}
	\begin{tabular}{lccccc}
		\toprule
		Model & $n$ & $\mathbb{RE}_{E_k}$ ($\%$) & $\mathbb{RE}_{\epsilon}$ ($\%$) & $\mathbb{RE}_{v_y}$ ($\%$) & $\mathbb{RE}_{v_z}$ ($\%$) \\ \midrule
		Galerkin PROM & $6$ & $17.3$ & $38.2$ & $96.2$ & $39.3$ \\
		& $22$ & $2.97$ & $17.0$ & $24.4$ & $9.44$ \\
		& $47$ & $0.441$ & $4.06$ & $5.04$ & $3.30$ \\
		& $81$ & $0.067$ & $0.520$ & $2.00$ & $0.892$ \\ \midrule
		LSPG-based Petrov-Galerkin PROM & $6$ & $16.1$ & $39.2$ & $91.2$ & $40.0$ \\
		& $22$ & $2.78$ & $15.8$ & $22.6$ & $9.16$ \\
		& $47$ & $0.437$ & $3.40$ & $4.90$ & $3.15$ \\
		& $81$ & $0.045$ & $0.706$ & $1.97$ & $0.875$ \\ \bottomrule
	\end{tabular}
	\label{tab:tgverr}
\end{table}

\begin{table}[tb]
	\small
	\centering
	\caption{Taylor Green vortex: Variations with the dimension $n$ of the speed-up factors delivered by the Galerkin and LSPG-based Petrov-Galerkin PROMs.}
	\begin{tabular}{lccc}
		\toprule
		\multirow{2}{*}{Model} & \multirow{2}{*}{$n$} & Wall-clock time & CPU time \\ 
		& & speed-up factor & speed-up factor \\ \midrule
		Galerkin PROM & $6$ & $2.50 \times 10^4$ & $3.19 \times 10^6$ \\
		& $22$ & $1.08 \times 10^4$ & $1.38 \times 10^6$ \\
		& $47$ & $4.09 \times 10^3$ & $5.24 \times 10^5$ \\
		& $81$ & $1.80 \times 10^3$ & $2.31 \times 10^5$ \\ \midrule
		LSPG-based Petrov-Galerkin PROM & $6$ & $2.11 \times 10^4$ & $2.70 \times 10^6$ \\
		& $22$ & $8.48 \times 10^3$ & $1.09 \times 10^6$ \\
		& $47$ & $1.82 \times 10^3$ & $2.33 \times 10^5$ \\
		& $81$ & $2.58 \times 10^2$ & $3.31 \times 10^4$ \\ \bottomrule
	\end{tabular}
	\label{tab:tgvspeed}
\end{table}

\subsection{Large eddy simulation of a turbulent compressible flow over a NACA airfoil}
\label{sec:naca}

The final example considered here demonstrates the PMOR of a compressible LES model for a flow over a NACA 0012 airfoil at $M_{\infty} = 0.2$, $30^\circ$ angle of attack, and $Re = 10,000$.
At these flow conditions, massive flow separation and highly turbulent wake dynamics occur, which poses a formidable challenge for PMOR.

\subsubsection{High-dimensional model}

As in the case of the example problem discussed in Section \ref{sec:cylinder}, the HDM is constructed here using a mixed finite volume/finite element semi-discretization of the nondimensional form
of the 3D compressible Navier-Stokes equations. However, the semi-discretization of the convective fluxes is performed here using a low-diffusion, fifth-order, vertex-based finite volume scheme 
\cite{debiez2000, camarri2004} that has more desirable dissipation properties for LES than lower-order counterpart schemes. The Vreman subgrid-scale model \cite{vreman2004} is chosen due to its 
improved treatment of laminar and transitional regions and its demonstrated ability to perform nearly as well as more complex and computationally intensive dynamic models. The semi-discretization of 
the LES subgrid-scale model and that of the standard Navier-Stokes diffusive fluxes are performed using a piecewise linear Galerkin FE method.

For this problem, the computational domain is constructed by extruding the NACA 0012 airfoil profile one chord length in the spanwise direction and surrounding it by a right circular cylinder
(see Figure \ref{fig:naca}(a)). Periodic boundary conditions are applied on the spanwise faces of this domain which are flushed with the lateral faces of the extruded airfoil
in order to allow for sufficient flow three-dimensionality and not inhibit the development 
of turbulent structures, and a no-slip adiabatic wall boundary condition is applied on the surface of the airfoil. The computational domain is discretized by an unstructured mesh with $2,079,546$ 
vertices and $11,909,406$ tetrahedral elements, which leads to an LES HDM of dimension $N = 10,397,730$. Figure \ref{fig:naca}(a) shows a cross-section of the discretized computational domain.
For the fully developed, statistically stationary flow, the mean value of $y^+$ computed for the first nearest layer of cells to the airfoil surface is approximately $0.68$, and its worst case value
is $1.92$. This suggests that the unstructured mesh has an appropriate resolution in the wall-normal direction within the boundary layer. The first layer of elements in the boundary layer 
has a worst case aspect ratio of 5, which indicates that the unstructured mesh has also the appropriate mesh resolution in the tangential direction to the wall surface.

Time-discretization of the HDM described above is performed using the $L$-stable third-order DIRK scheme defined by the Butcher tableau given in \ref{app:dirk}, and the fixed time-step
$\Delta t = 2 \times 10^{-2}$ (CFL $\approx 900$ for the given mesh). This time-integration scheme was shown in \cite{pazner2017} to perform favorably for a variety of fluid flow problems.
The nonlinear system of equations resulting at each Runge-Kutta stage is solved in the same manner as in the example problem discussed in Section \ref{sec:cylinder}, except that an ILU(0) factorization 
is used in this case for constructing the RAS preconditioner because this setting turns out in this case to be more computationally efficient. 

For all simulations discussed below, the initial flow condition is computed by impulsively starting the flow from a uniform state then time-integrating its dynamics until the flow becomes fully 
developed and reaches a statistical steady state. Using this initial condition, all simulations reported below are performed in the nondimensional time-interval $t \in [0, 30]$.
Figure \ref{fig:naca}(b) shows instantaneous isosurfaces of the HDM-based vorticity solution magnitude colored by the velocity magnitude at the end of the simulation time-interval $t = 30$.
The HDM-based solution of this problem is completed in 39.1 hours wall-clock time on 240 cores of the Linux cluster.

\begin{figure}[h!]
	\centering
	\begin{subfigure}[c]{0.275\textwidth}%
	\centering
	\includegraphics[width=\linewidth]{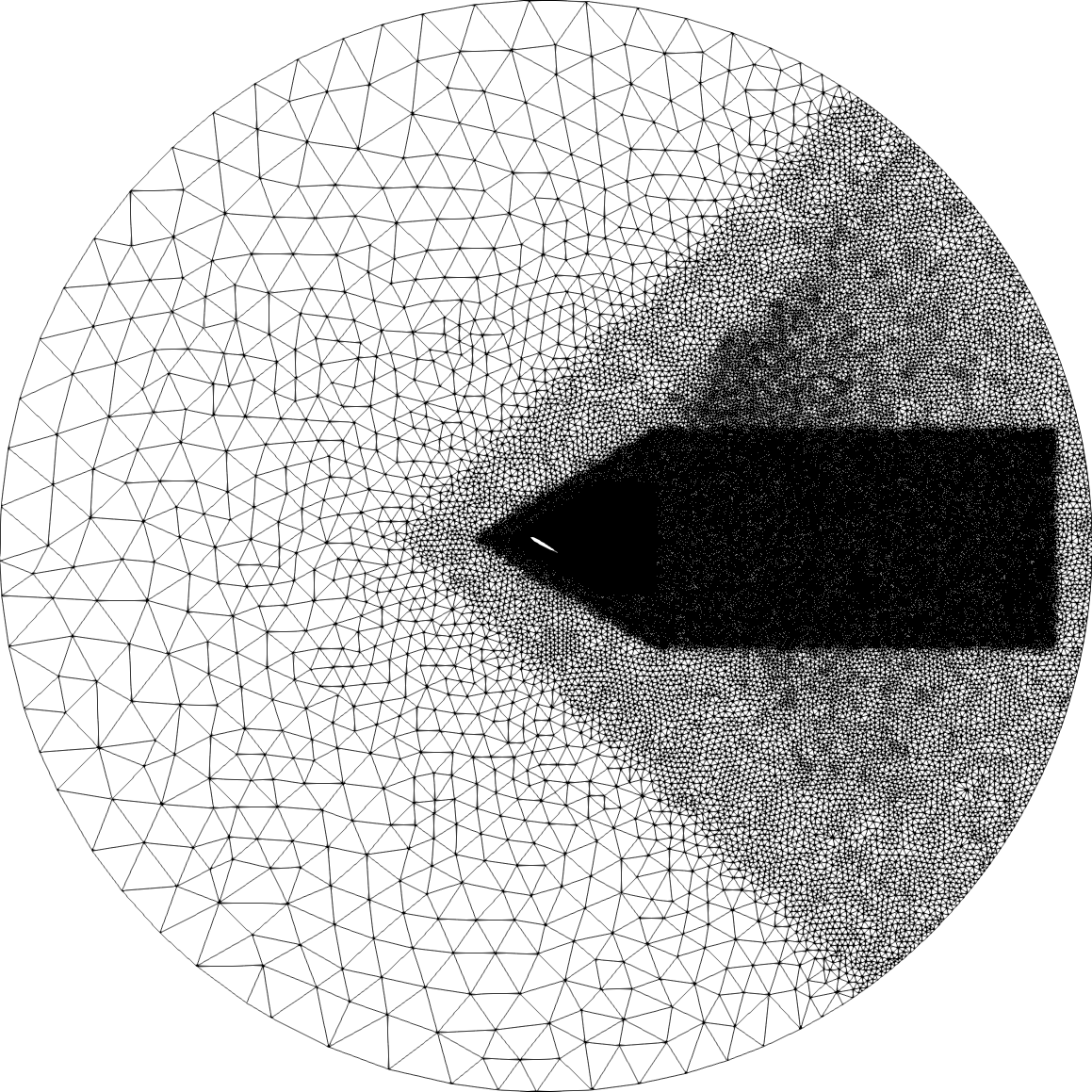}
	\caption{}
	\end{subfigure}%
	\hspace{2em}
	\begin{subfigure}[c]{0.60\textwidth}%
	\centering
	\includegraphics[width=\linewidth]{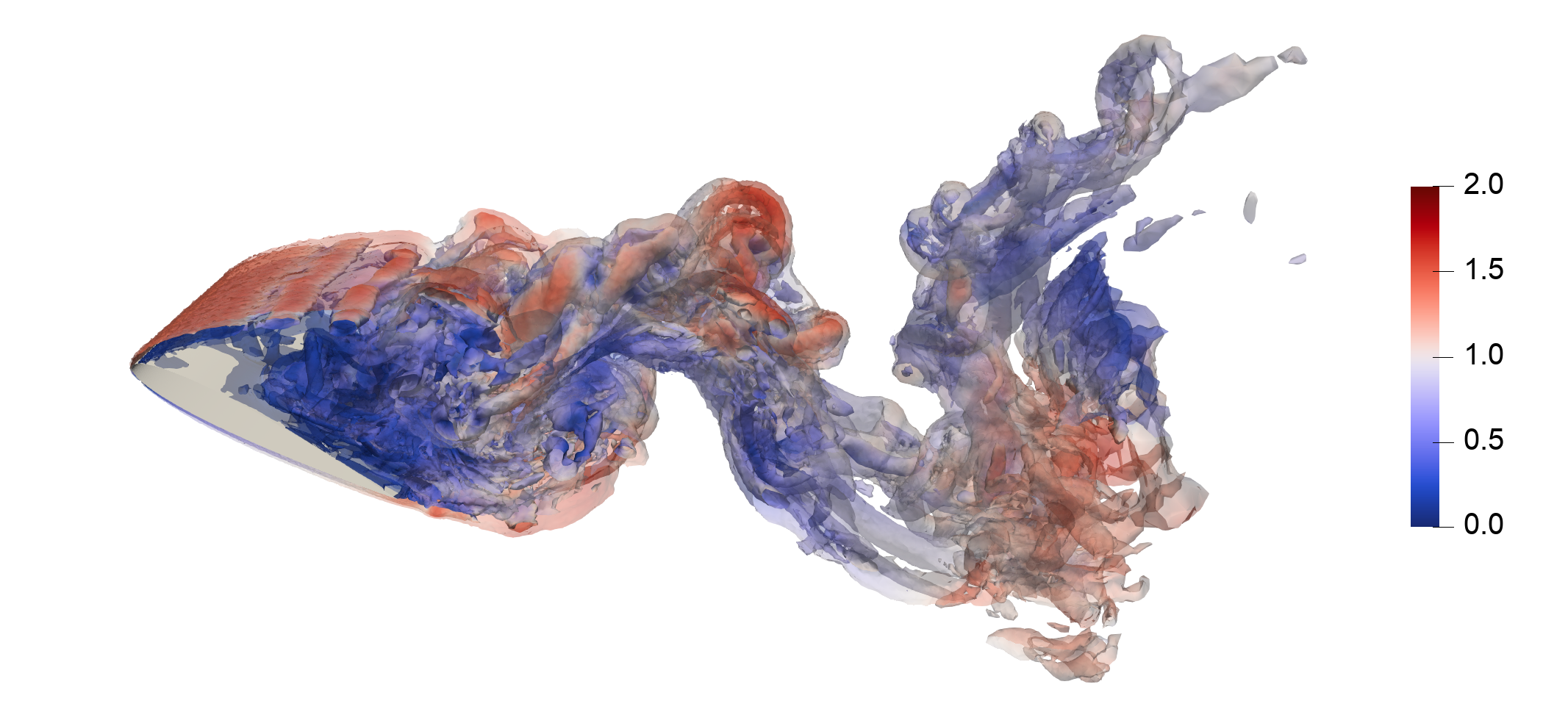}
	\caption{}
	\end{subfigure}%
	\caption{LES of a flow over a NACA airfoil: (a) Cross-section of the discretized computational domain; (b) HDM-based isosurfaces of the vorticity magnitude colored by the velocity magnitude 
	at $t = 30$.}
	\label{fig:naca}
\end{figure}

\subsubsection{Projection-based reduced-order models with hyperreduction}

Using the nondimensional sampling rate defined by $\Delta s = 6 \times 10^{-2}$, 501 solution snapshots are collected during the HDM-based simulation and compressed by SVD.
The offset of the affine subspace approximation $\mathbf{u}_0$ is chosen as the initial flow condition. Three right ROBs of dimension $n = 41$, $n = 83$, and $n = 213$ corresponding
to capturing $90\%$, $95\%$, and $99\%$ of the energy of the singular values of the snapshot matrix, respectively, and corresponding Galerkin and Petrov-Galerkin PROMs are constructed. 
As in the case of the example discussed in Section \ref{sec:cylinder}, these PROMs must be hyperreduced to achieve computational efficiency. Again, ECSW is employed for this purpose
equipped with every 10-$th$ collected solution snapshot for training the mesh sampling, and with the training tolerance $\varepsilon = 1\times10^{-2}$.

All constructed PROMs and HPROMs are time-discretized using the same third-order DIRK scheme used for time-discretizing the HDM, and the same fixed nondimensional time-step $\Delta t = 2 \times 10^{-2}$.
The PROM-based simulations are performed on 240 cores across 20 compute nodes of the Linux cluster -- as in the case of the HDM-based simulation -- but the HPROM-based simulations are 
performed on just 12 cores of a single compute node.

For this example application, the accuracy of each constructed PROM and HPROM is assessed using the time-dependent lift, drag, and side force coefficients $c_L$, $c_D$, and $c_Y$, respectively, as well
as the streamwise velocity $v_x$ and pressure $p$ computed at a probe in the wake of the flow. The probe is located 1.5 chord lengths downstream of the airfoil half-chord, in the spanwise center of the 
computational domain. The relative errors associated with the predictions of each of the aforementioned QoIs using the constructed PROMs and HPROMs are computed using
the nondimensional sampling rate defined by $\Delta s_{\mathbb{RE}} = \Delta t = 2 \times 10^{-2}$.

\paragraph{Galerkin reduced-order models}

In Figures \ref{fig:nacaliftgal}, \ref{fig:nacadraggal}, and \ref{fig:nacacrossgal}, the time-histories of the lift, drag, and side force coefficients computed using the HDM and the various 
constructed Galerkin PROMs and HPROMs are compared. Figures \ref{fig:nacaprobevxgal} and \ref{fig:nacaprobepgal} report on similar comparisons for the predictions of the streamwise velocity and pressure 
computed at the probe location. These figures show that every constructed Galerkin PROM and HPROM fails to complete the simulation over the entire specified nondimensional time-interval, due to
encounters with nonlinear numerical instabilities in the form of a negative pressure or density before reaching $t_{max}$. Increasing the size of the PROM dimension appears to improve numerical
stability: However, even the largest constructed Galerkin PROM ($n = 213$) fails to maintain numerical stability across the entirety of the specified simulation time-interval.

\begin{figure}[h!]
	\centering
	\begin{subfigure}[c]{0.3\textwidth}%
	\centering
	\includegraphics[width=\linewidth]{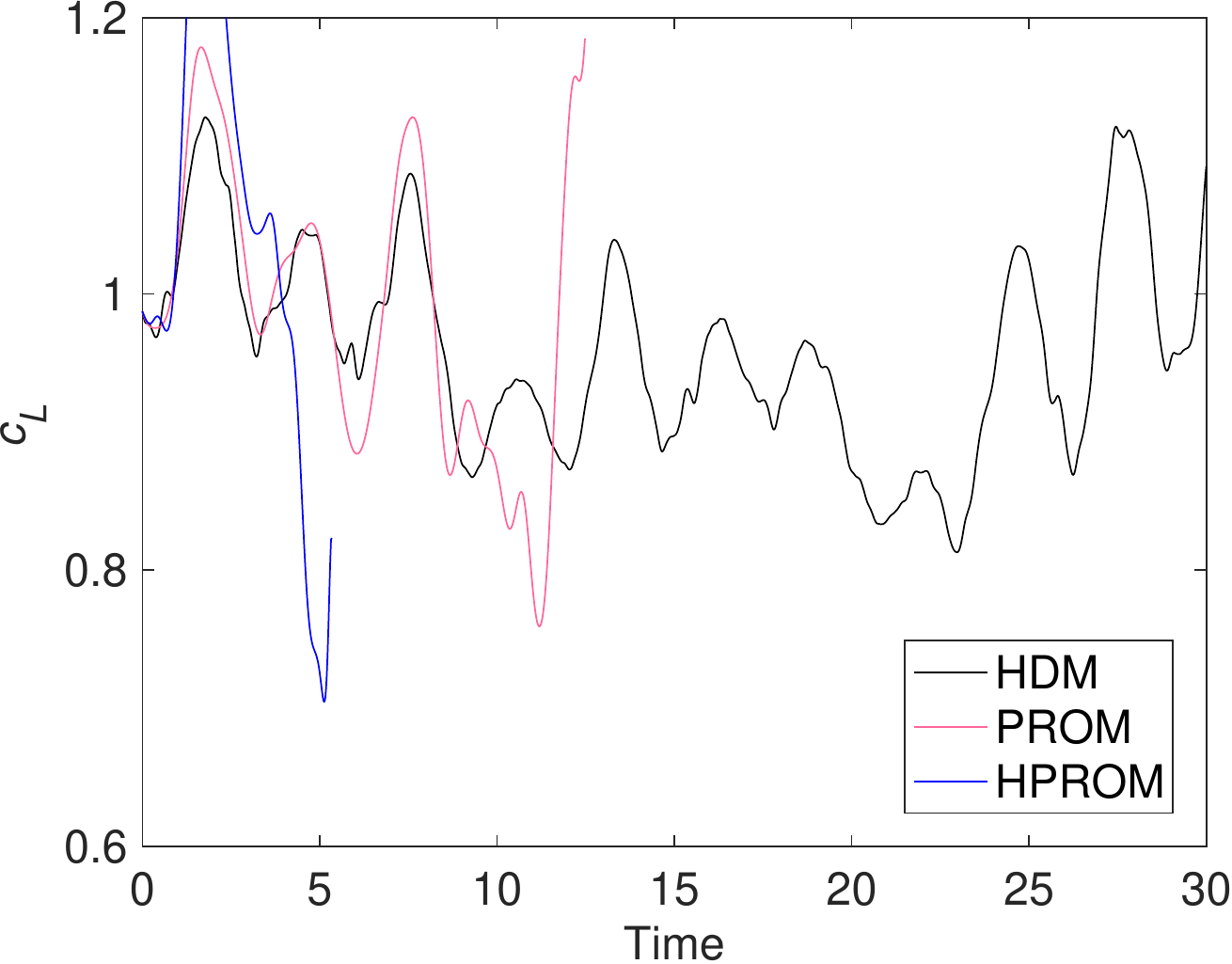}
	\caption{Galerkin, $n = 41$}
	\end{subfigure}%
	\hspace{1em}
	\begin{subfigure}[c]{0.3\textwidth}%
	\centering
	\includegraphics[width=\linewidth]{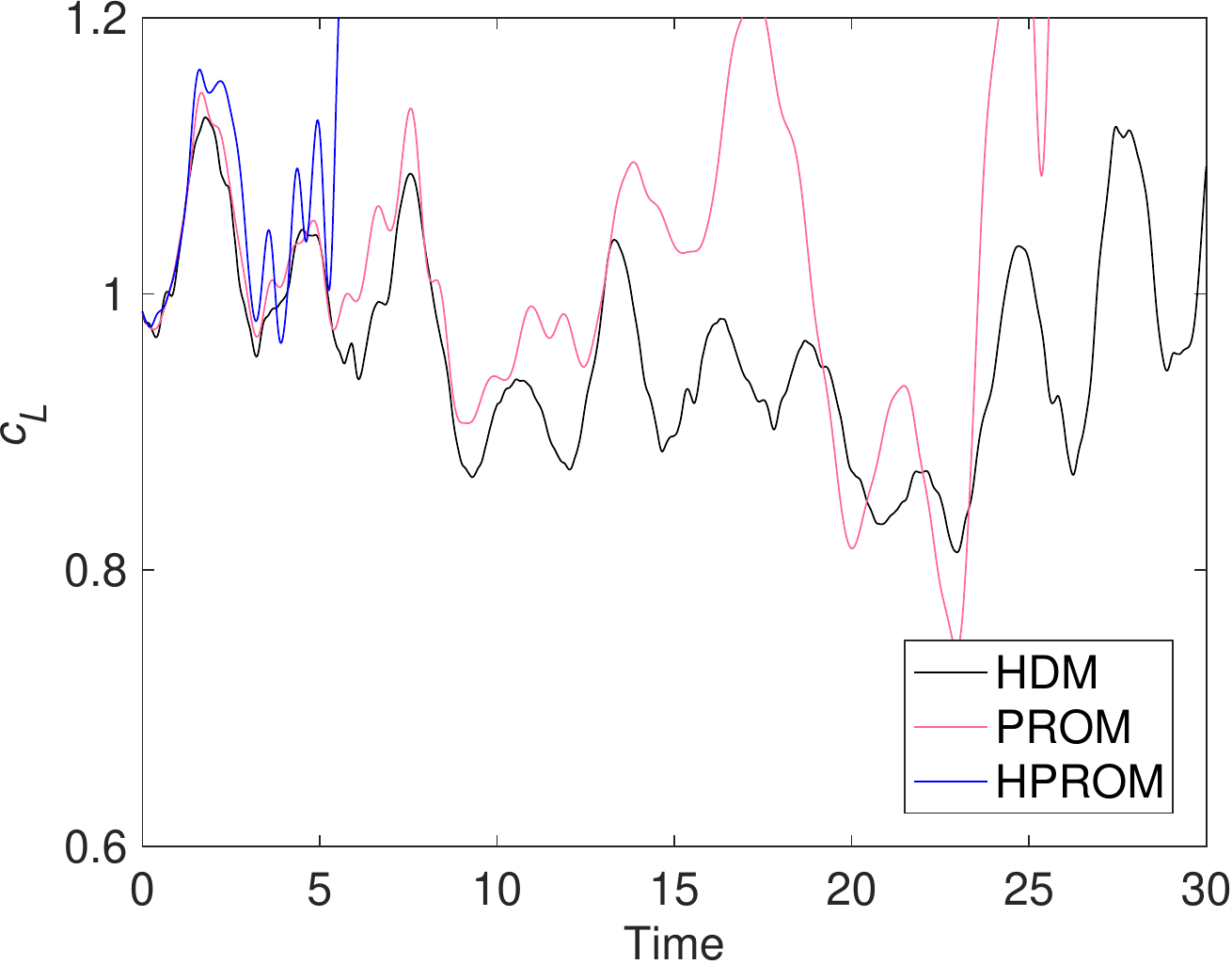}
	\caption{Galerkin, $n = 83$}
	\end{subfigure}%
	\hspace{1em}
	\begin{subfigure}[c]{0.3\textwidth}%
	\centering
	\includegraphics[width=\linewidth]{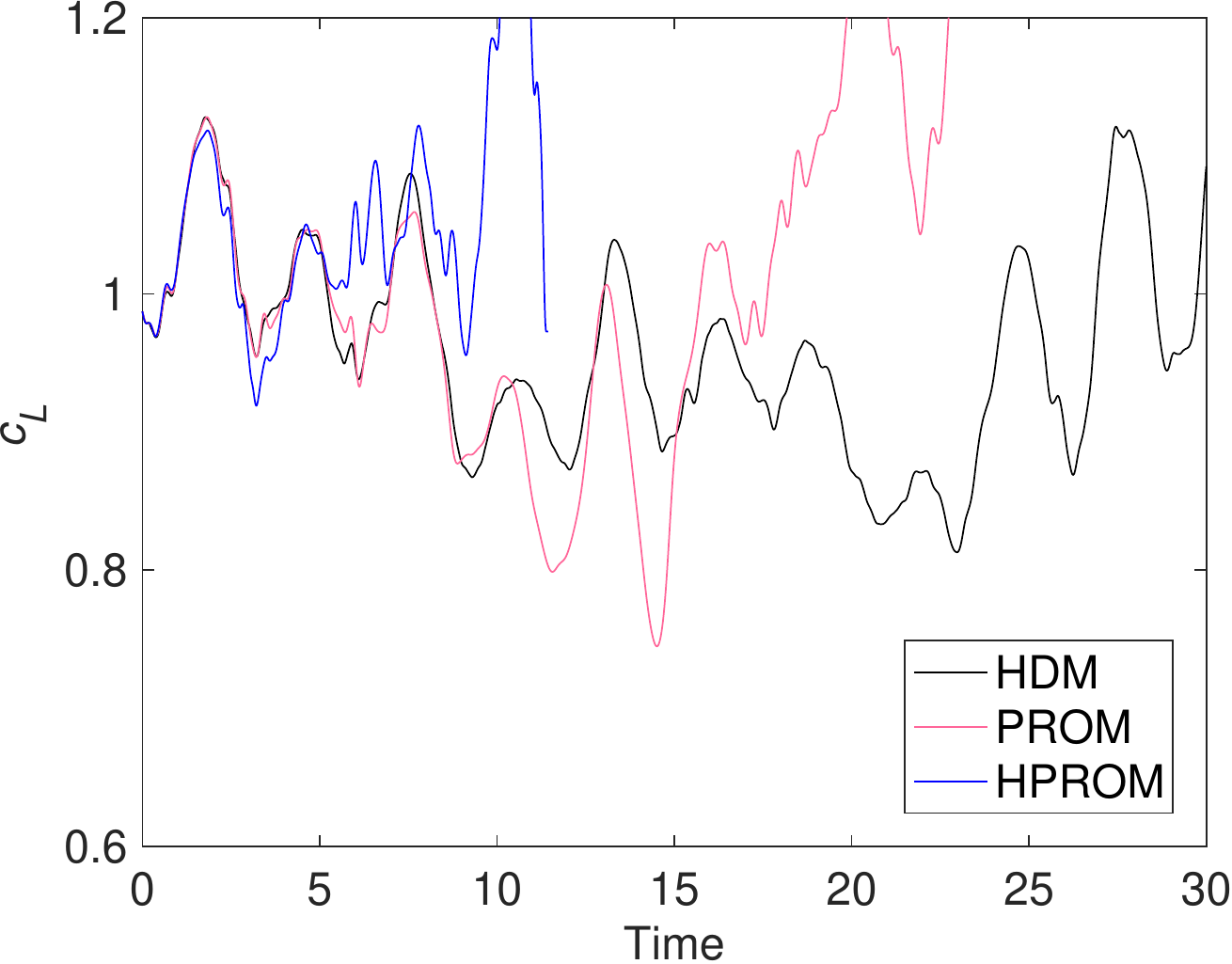}
	\caption{Galerkin, $n = 213$}
	\end{subfigure}%
	\caption{LES of a flow over a NACA airfoil: Time-histories of the lift coefficient computed using the HDM and Galerkin PROMs and HPROMs.}
	\label{fig:nacaliftgal}
\end{figure}

\begin{figure}[h!]
	\centering
	\begin{subfigure}[c]{0.3\textwidth}%
	\centering
	\includegraphics[width=\linewidth]{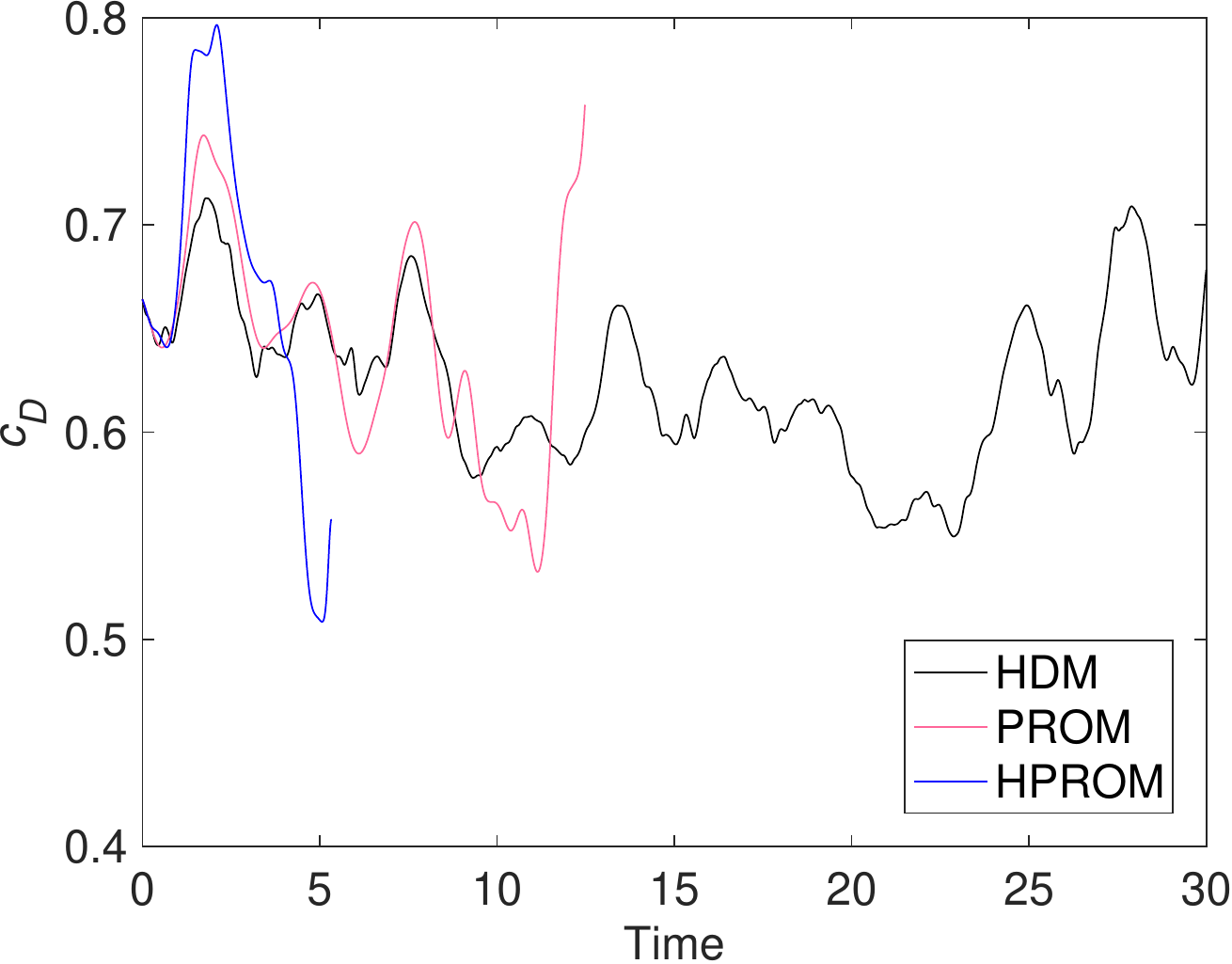}
	\caption{Galerkin, $n = 41$}
	\end{subfigure}%
	\hspace{1em}
	\begin{subfigure}[c]{0.3\textwidth}%
	\centering
	\includegraphics[width=\linewidth]{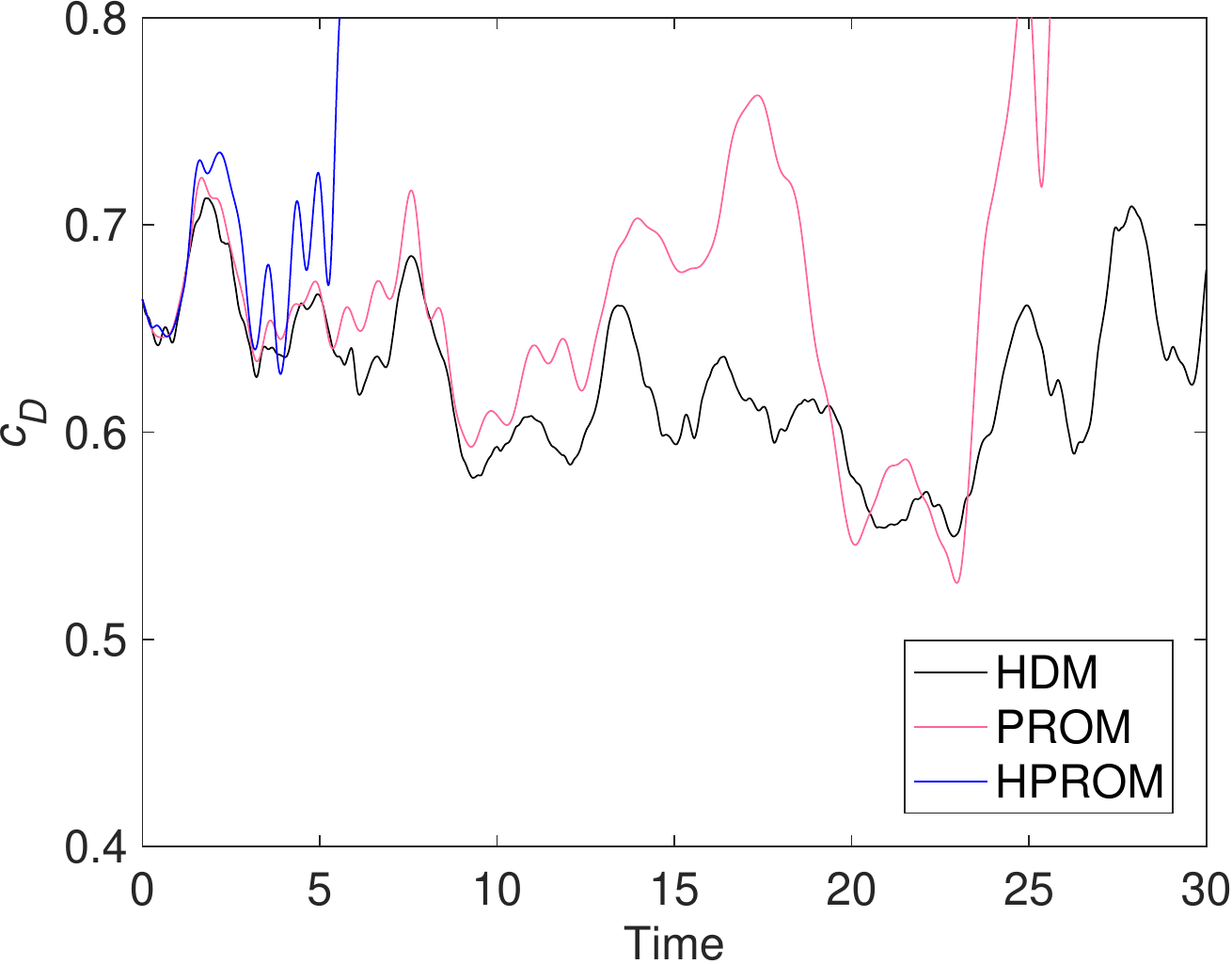}
	\caption{Galerkin, $n = 83$}
	\end{subfigure}%
	\hspace{1em}
	\begin{subfigure}[c]{0.3\textwidth}%
	\centering
	\includegraphics[width=\linewidth]{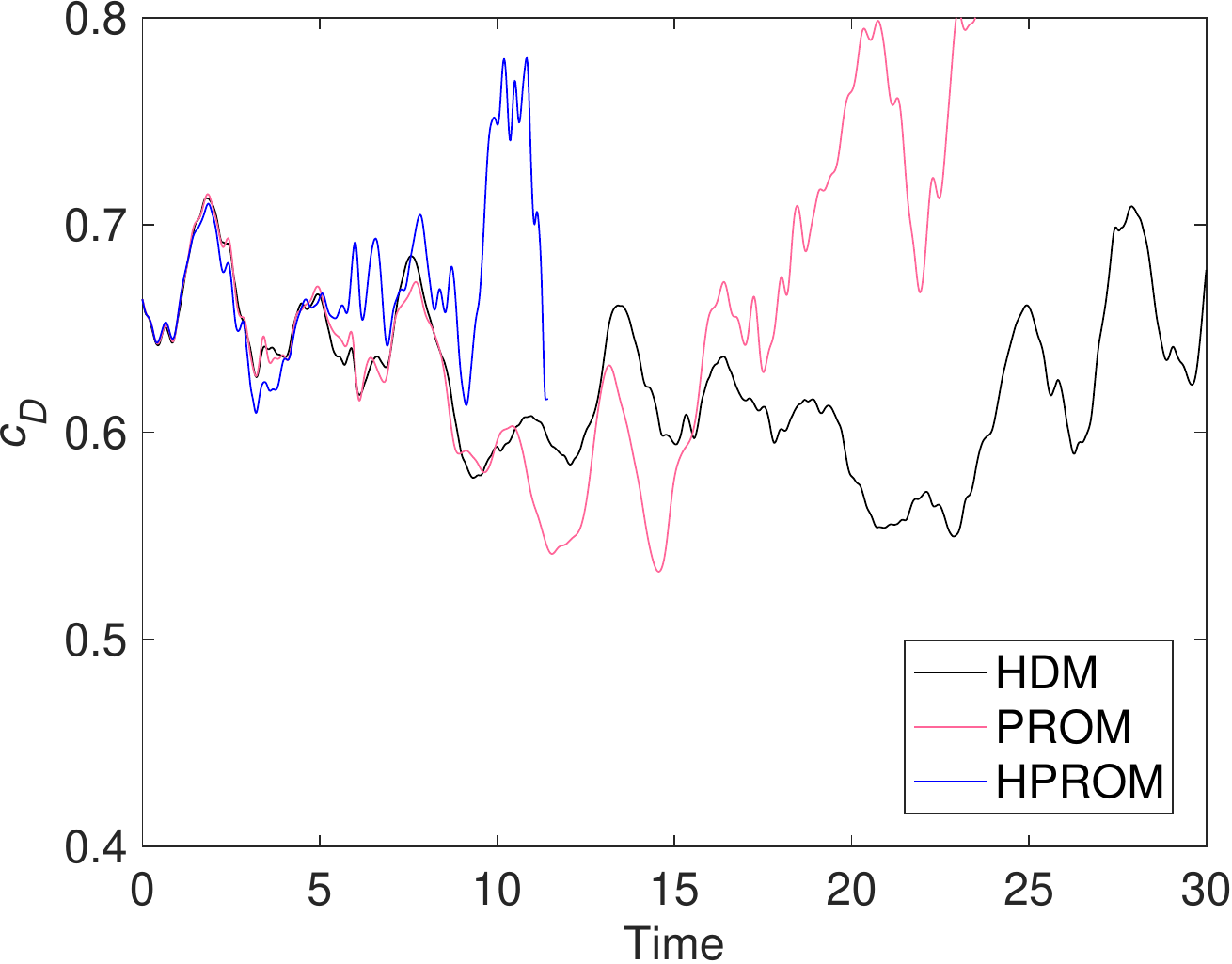}
	\caption{Galerkin, $n = 213$}
	\end{subfigure}%
	\caption{LES of a flow over a NACA airfoil: Time-histories of the drag coefficient computed using the HDM and Galerkin PROMs and HPROMs.}
	\label{fig:nacadraggal}
\end{figure}

\begin{figure}[h!]
	\centering
	\begin{subfigure}[c]{0.3\textwidth}%
	\centering
	\includegraphics[width=\linewidth]{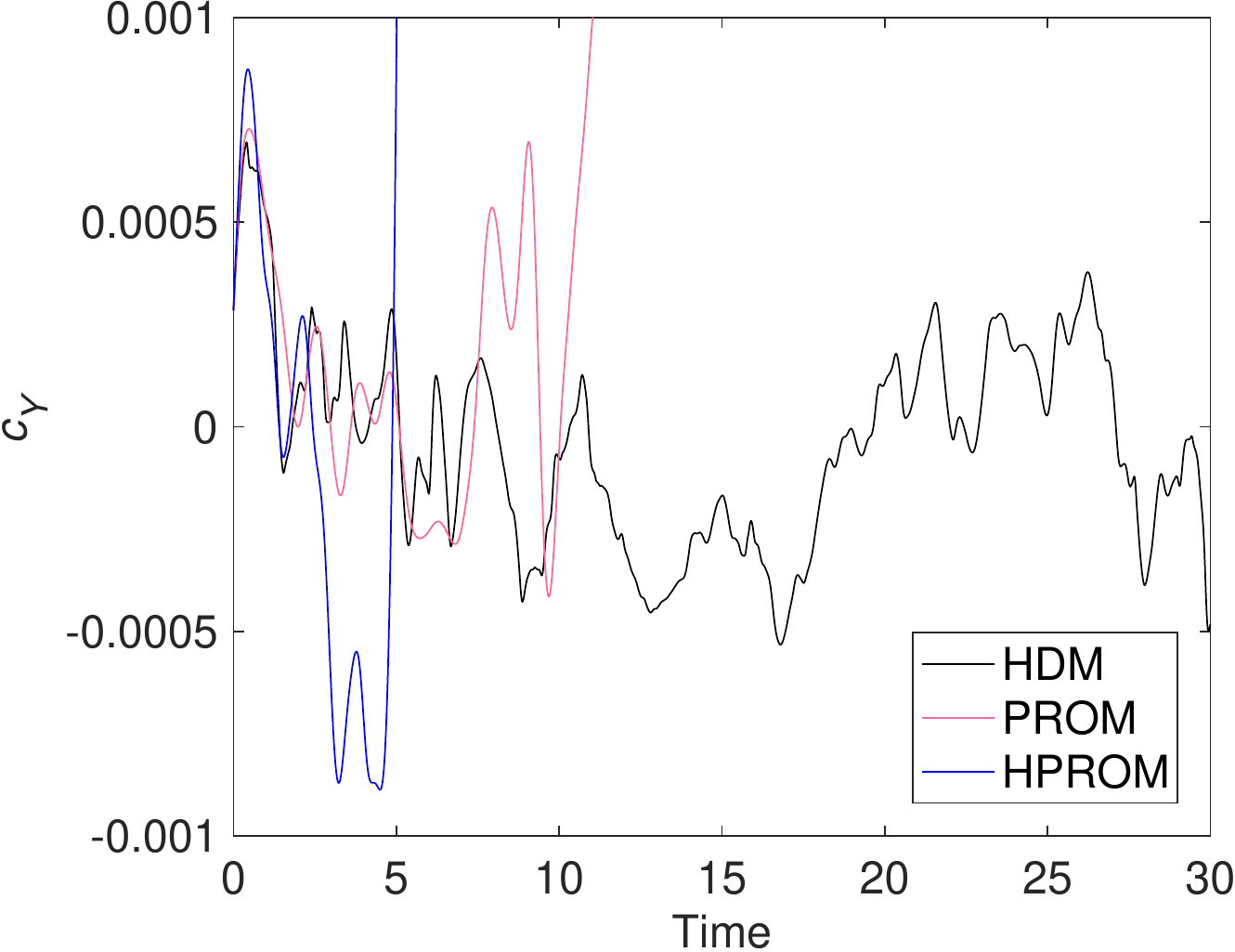}
	\caption{Galerkin, $n = 41$}
	\end{subfigure}%
	\hspace{1em}
	\begin{subfigure}[c]{0.3\textwidth}%
	\centering
	\includegraphics[width=\linewidth]{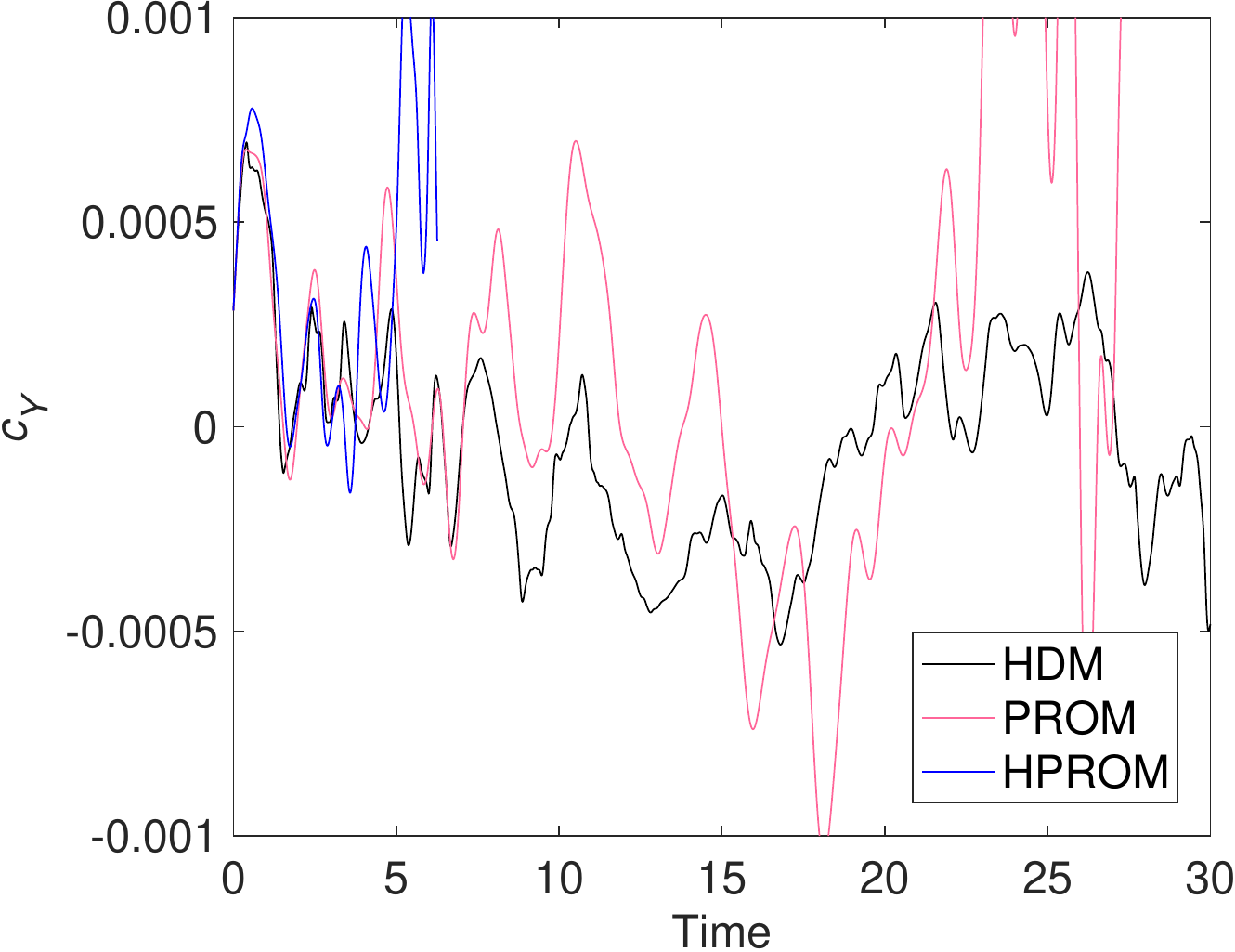}
	\caption{Galerkin, $n = 83$}
	\end{subfigure}%
	\hspace{1em}
	\begin{subfigure}[c]{0.3\textwidth}%
	\centering
	\includegraphics[width=\linewidth]{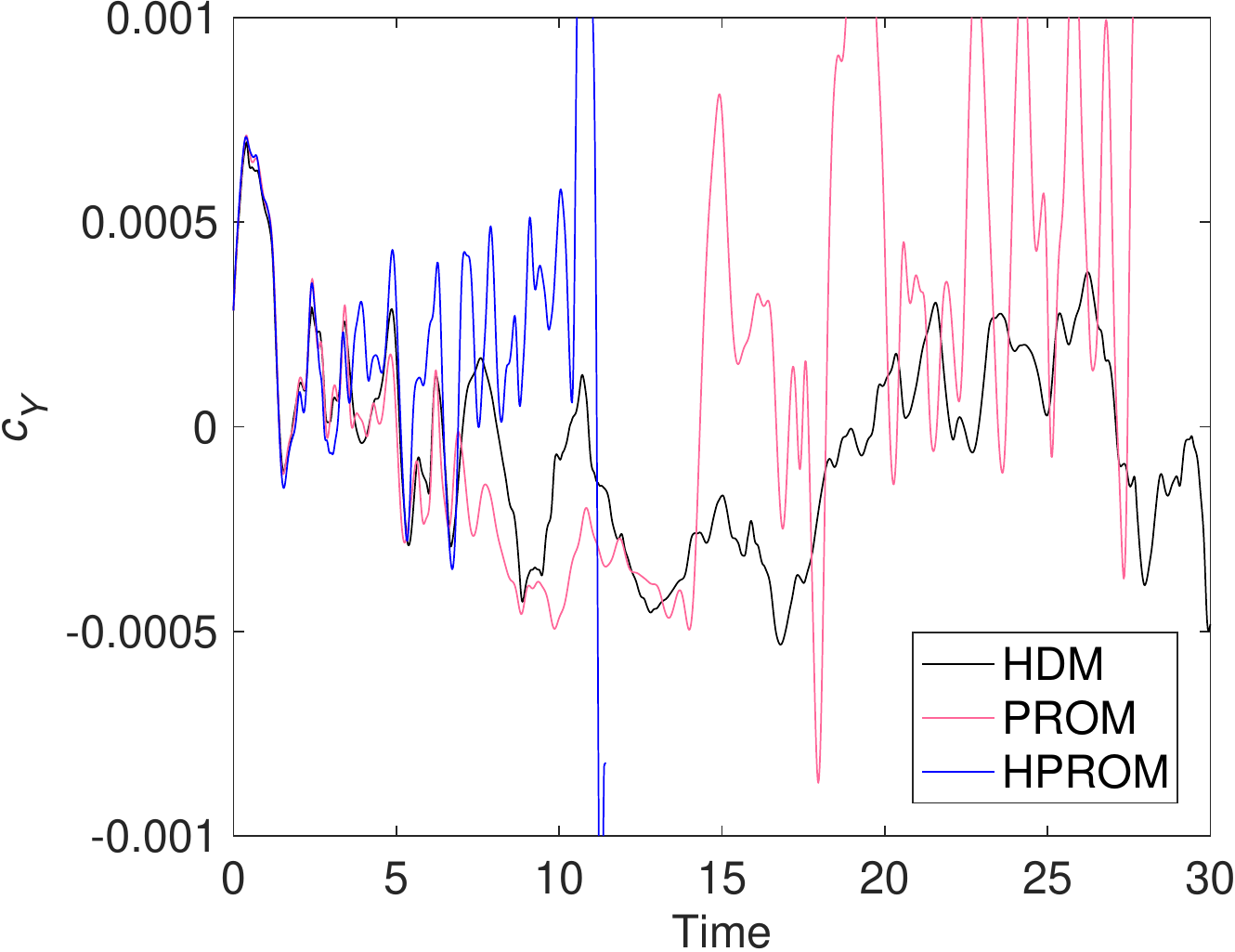}
	\caption{Galerkin, $n = 213$}
	\end{subfigure}%
	\caption{LES of a flow over a NACA airfoil: Time-histories of the side force coefficient computed using the HDM and Galerkin PROMs and HPROMs.}
	\label{fig:nacacrossgal}
\end{figure}

\begin{figure}[h!]
	\centering
	\begin{subfigure}[c]{0.3\textwidth}%
	\centering
	\includegraphics[width=\linewidth]{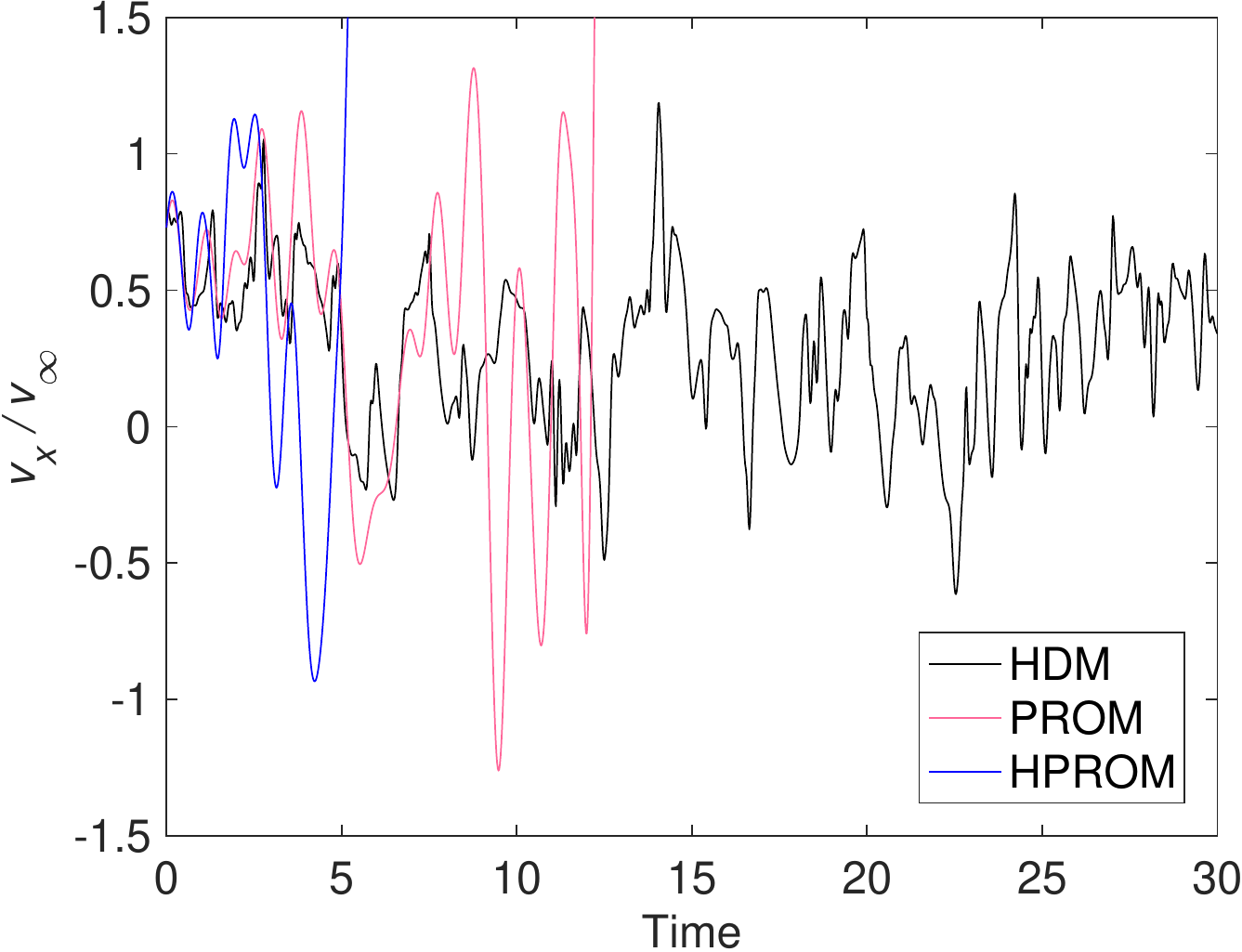}
	\caption{Galerkin, $n = 41$}
	\end{subfigure}%
	\hspace{1em}
	\begin{subfigure}[c]{0.3\textwidth}%
	\centering
	\includegraphics[width=\linewidth]{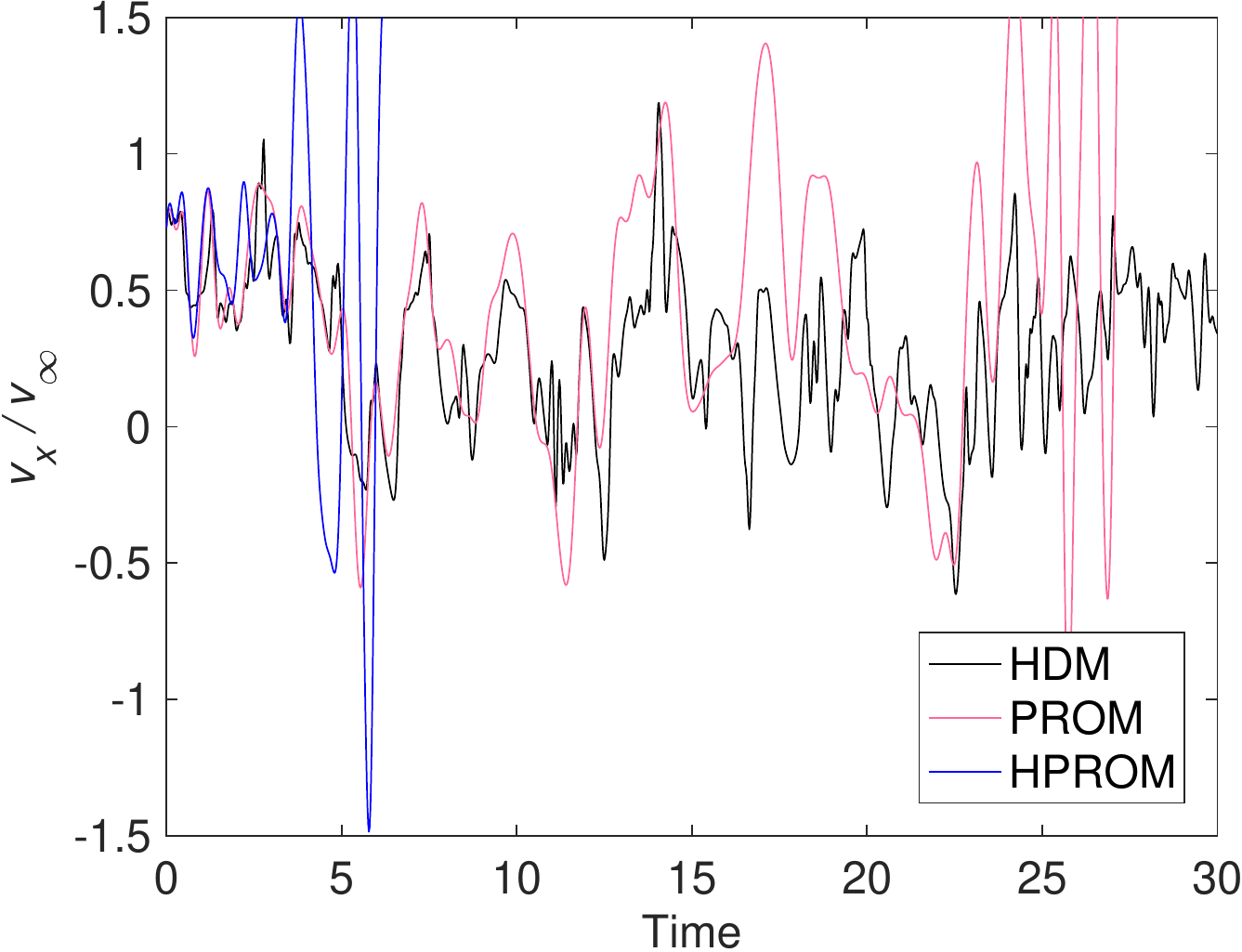}
	\caption{Galerkin, $n = 83$}
	\end{subfigure}%
	\hspace{1em}
	\begin{subfigure}[c]{0.3\textwidth}%
	\centering
	\includegraphics[width=\linewidth]{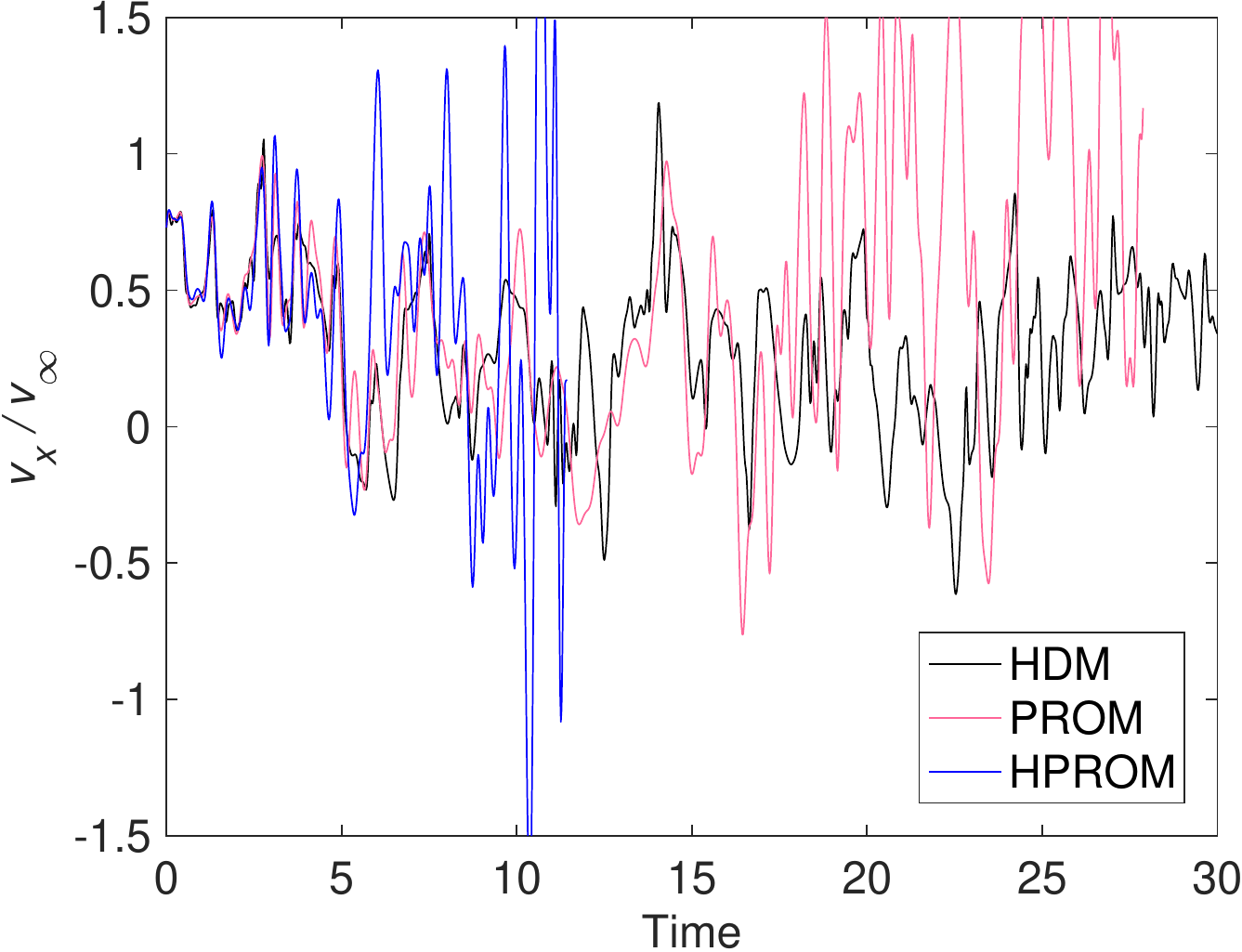}
	\caption{Galerkin, $n = 213$}
	\end{subfigure}%
	\caption{LES of a flow over a NACA airfoil: Time-histories of the streamwise velocity computed using the HDM and Galerkin PROMs and HPROMs.}
	\label{fig:nacaprobevxgal}
\end{figure}

\begin{figure}[h!]
	\centering
	\begin{subfigure}[c]{0.3\textwidth}%
	\centering
	\includegraphics[width=\linewidth]{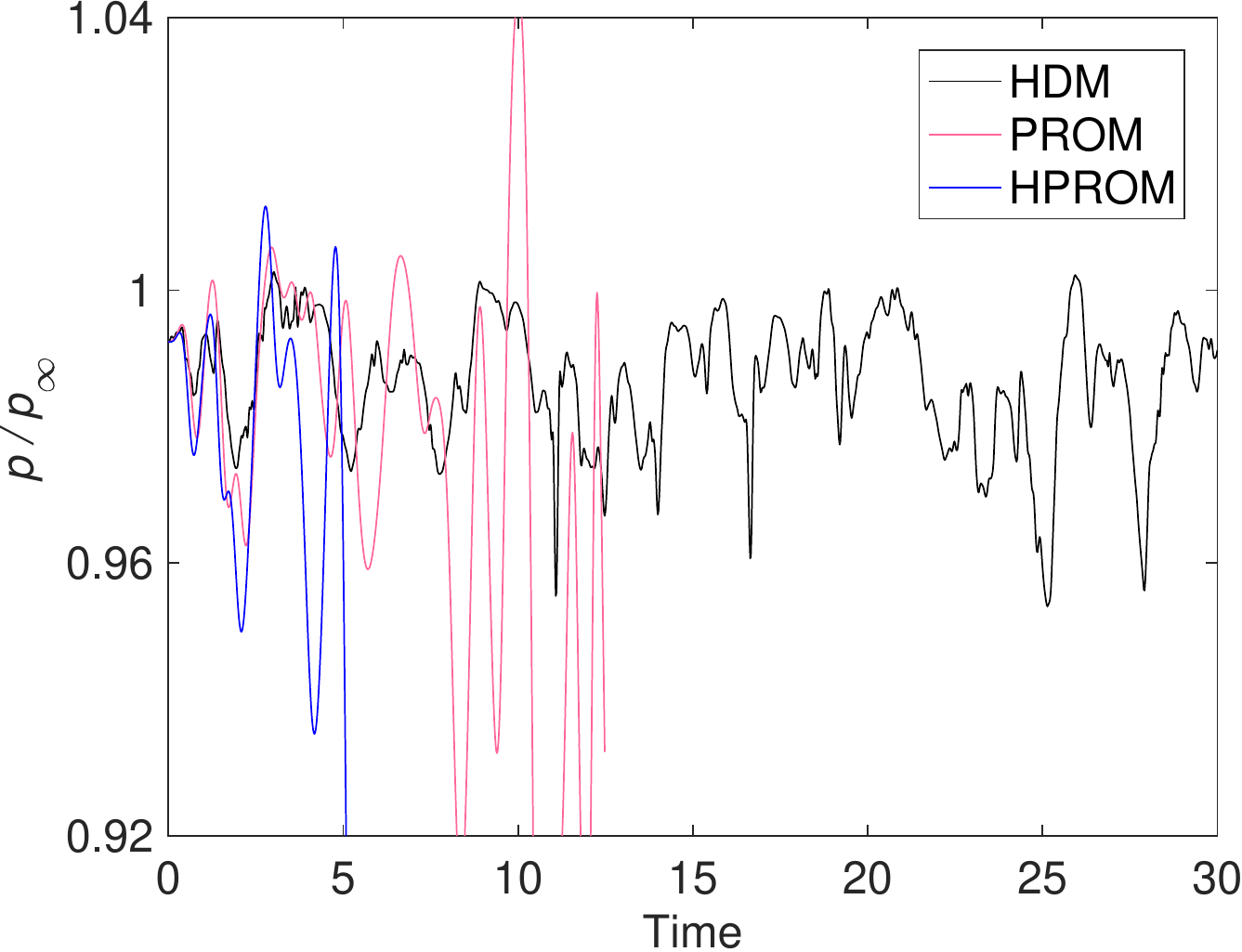}
	\caption{Galerkin, $n = 41$}
	\end{subfigure}%
	\hspace{1em}
	\begin{subfigure}[c]{0.3\textwidth}%
	\centering
	\includegraphics[width=\linewidth]{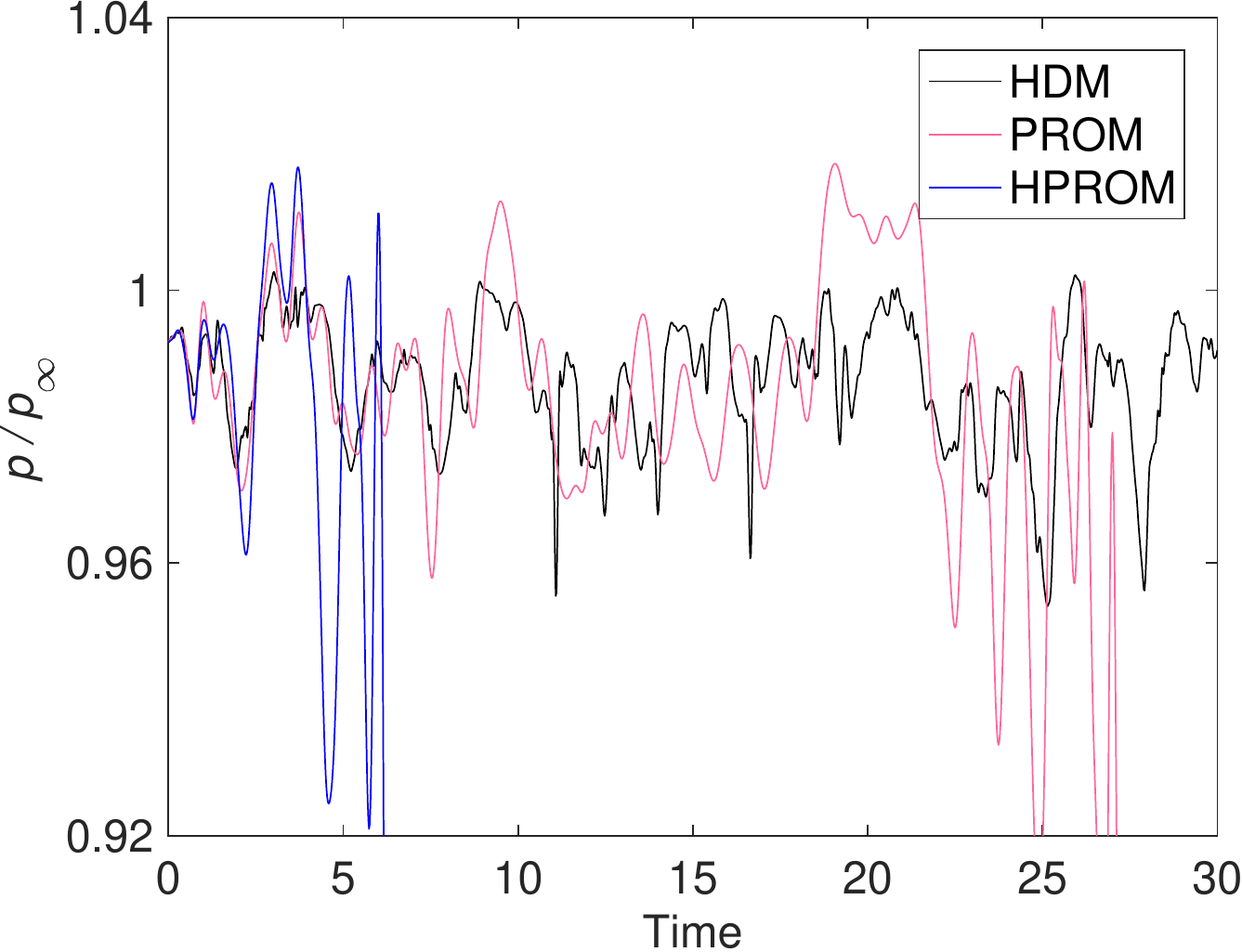}
	\caption{Galerkin, $n = 83$}
	\end{subfigure}%
	\hspace{1em}
	\begin{subfigure}[c]{0.3\textwidth}%
	\centering
	\includegraphics[width=\linewidth]{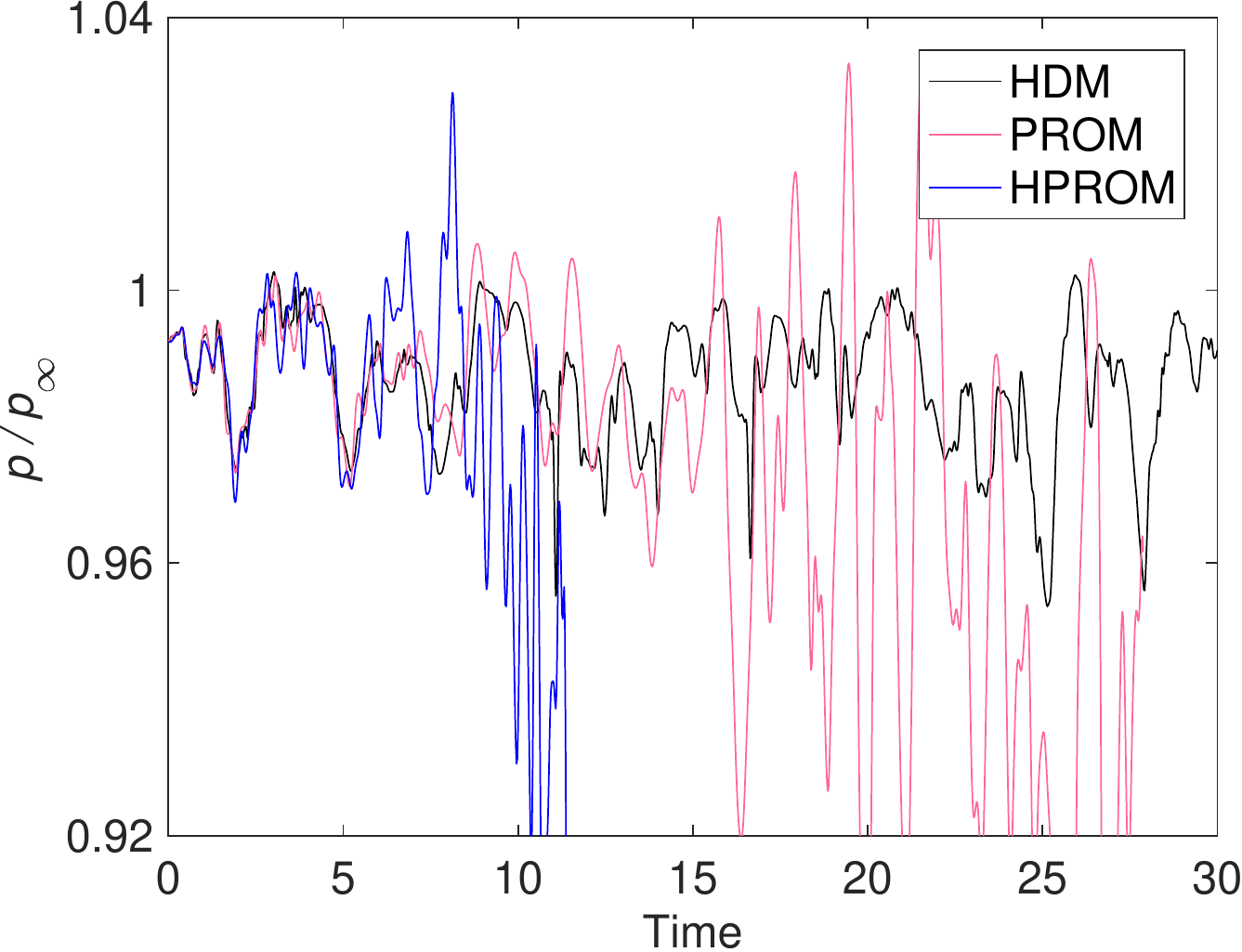}
	\caption{Galerkin, $n = 213$}
	\end{subfigure}%
	\caption{LES of a flow over a NACA airfoil: Time-histories of the pressure computed at a probe using the HDM and Galerkin PROMs and HPROMs.}
	\label{fig:nacaprobepgal}
\end{figure}

\clearpage
\paragraph{Petrov-Galerkin reduced-order models}

The time-histories of each QoI computed using the HDM and LSPG-based PROMs and HPROMs are compared in Figures \ref{fig:nacaliftlspg}, \ref{fig:nacadraglspg}, and \ref{fig:nacacrosslspg} for $c_L$, $c_D$, and $c_Y$, and in Figures \ref{fig:nacaprobevxlspg} and \ref{fig:nacaprobeplspg} for $v_x$ and $p$ computed at the probe location. These time-histories show that all constructed LSPG-based PROMs and 
HPROMs are numerically stable in the entirety of the specified simulation time-interval. Increasing the dimension $n$ of the subspace approximation is also shown to increase the level of accuracy of 
these Petrov-Galerkin PROMs and HPROMs, as expected.

\begin{figure}[h!]
	\centering
	\begin{subfigure}[c]{0.3\textwidth}%
	\centering
	\includegraphics[width=\linewidth]{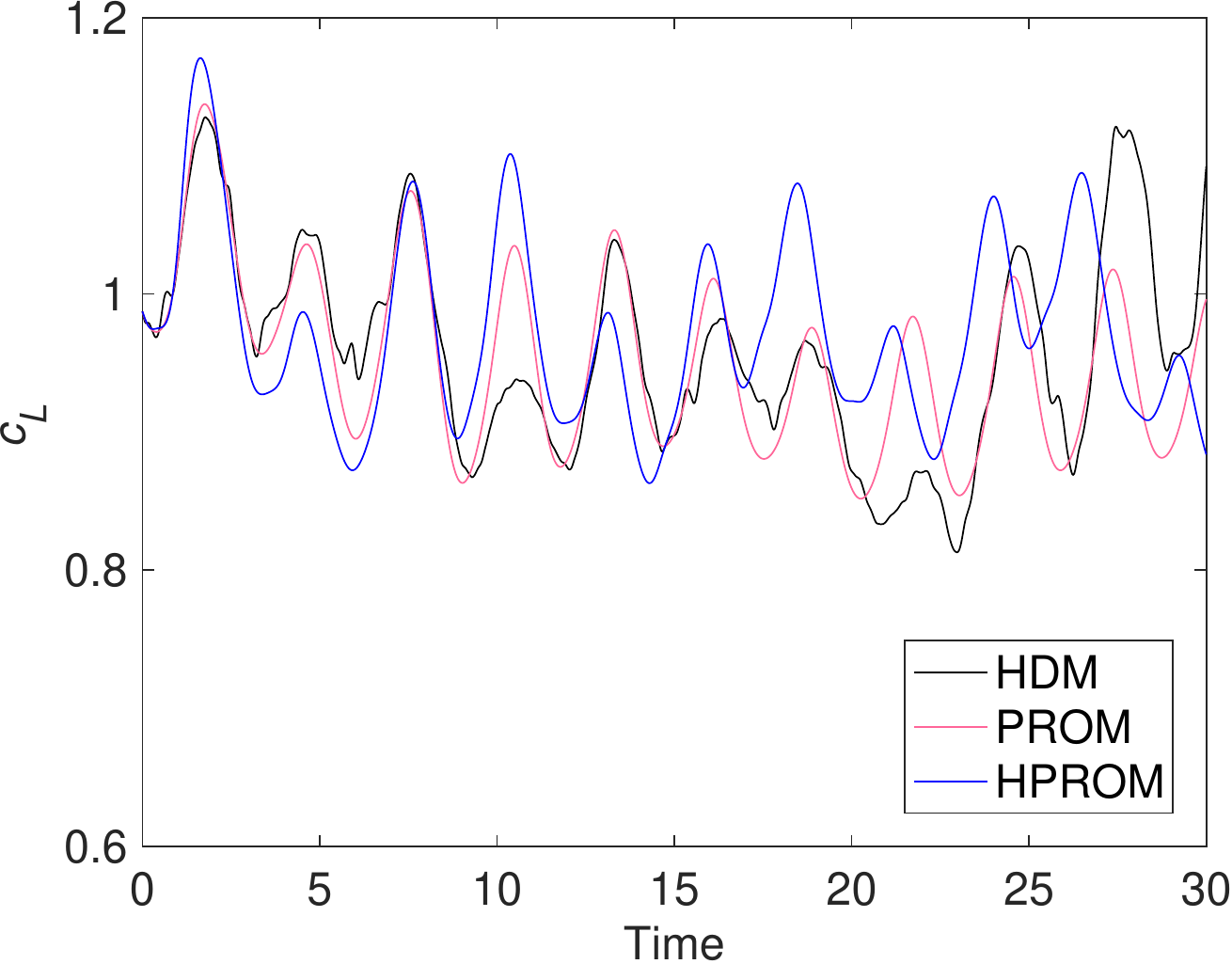}
	\caption{LSPG, $n = 41$}
	\end{subfigure}%
	\hspace{1em}
	\begin{subfigure}[c]{0.3\textwidth}%
	\centering
	\includegraphics[width=\linewidth]{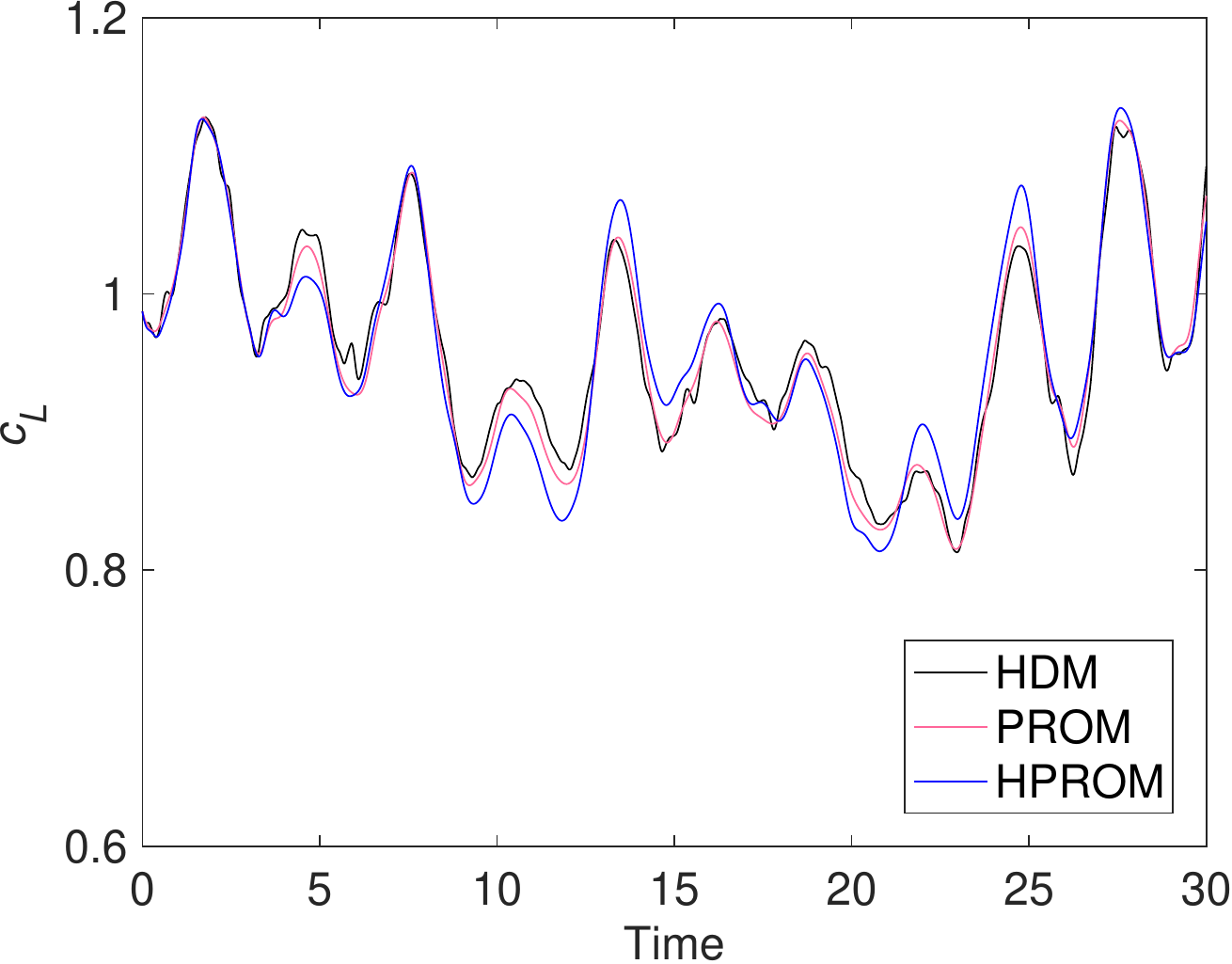}
	\caption{LSPG, $n = 83$}
	\end{subfigure}%
	\hspace{1em}
	\begin{subfigure}[c]{0.3\textwidth}%
	\centering
	\includegraphics[width=\linewidth]{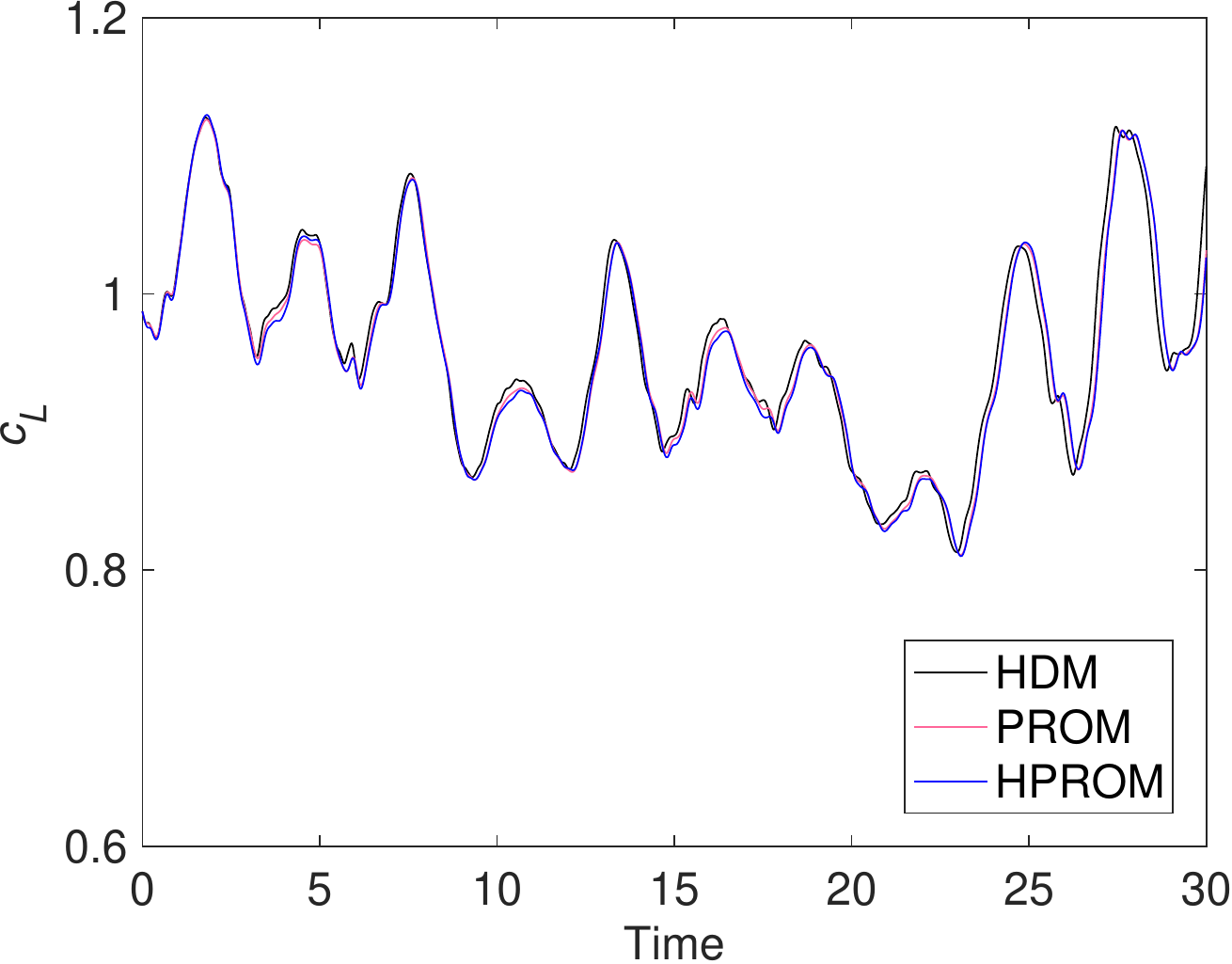}
	\caption{LSPG, $n = 213$}
	\end{subfigure}%
	\caption{LES of a flow over a NACA airfoil: Time-histories of the lift coefficient computed using the HDM and LSPG-based Petrov-Galerkin PROMs and HPROMs.}
	\label{fig:nacaliftlspg}
\end{figure}

\begin{figure}[h!]
	\centering
	\begin{subfigure}[c]{0.3\textwidth}%
	\centering
	\includegraphics[width=\linewidth]{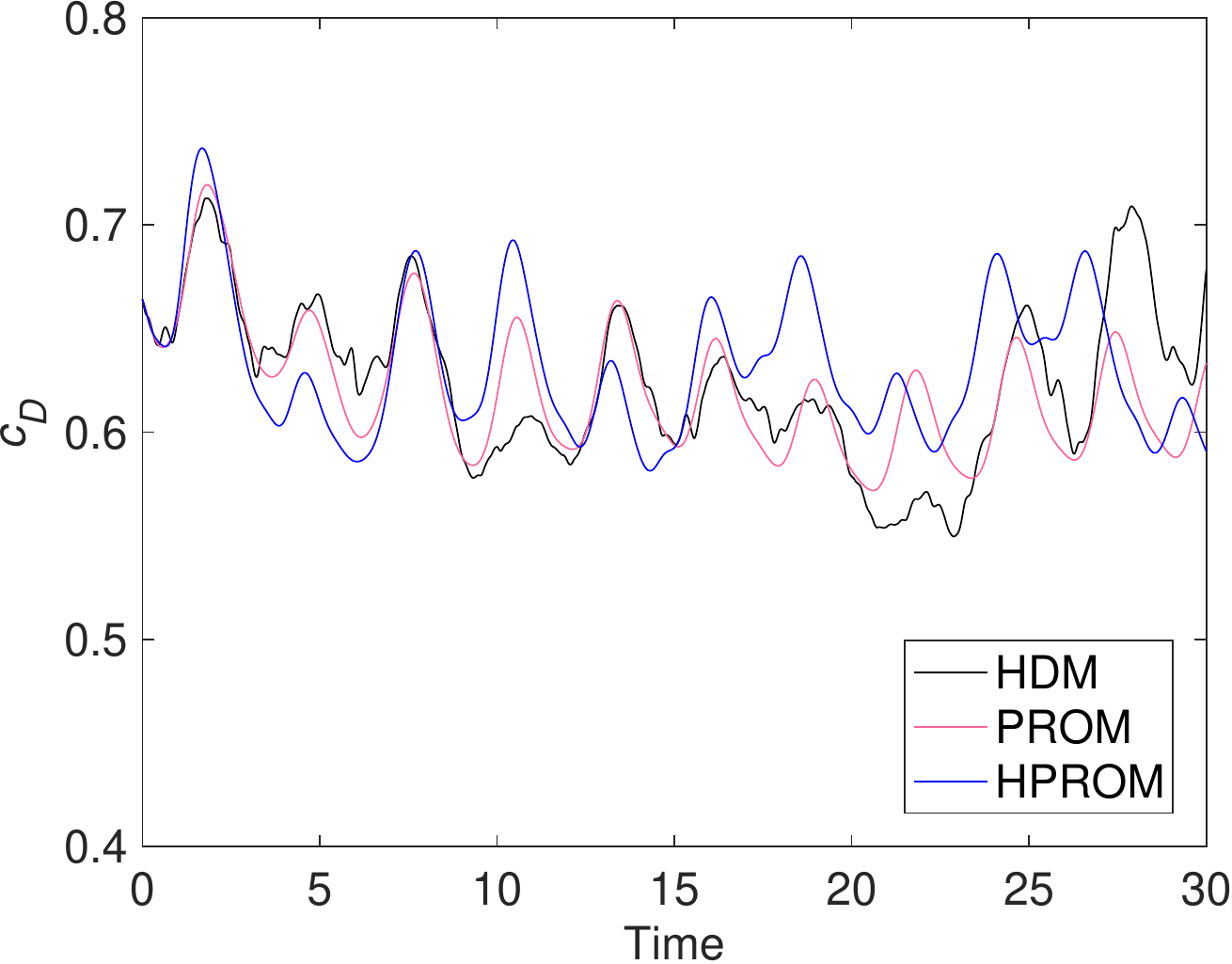}
	\caption{LSPG, $n = 41$}
	\end{subfigure}%
	\hspace{1em}
	\begin{subfigure}[c]{0.3\textwidth}%
	\centering
	\includegraphics[width=\linewidth]{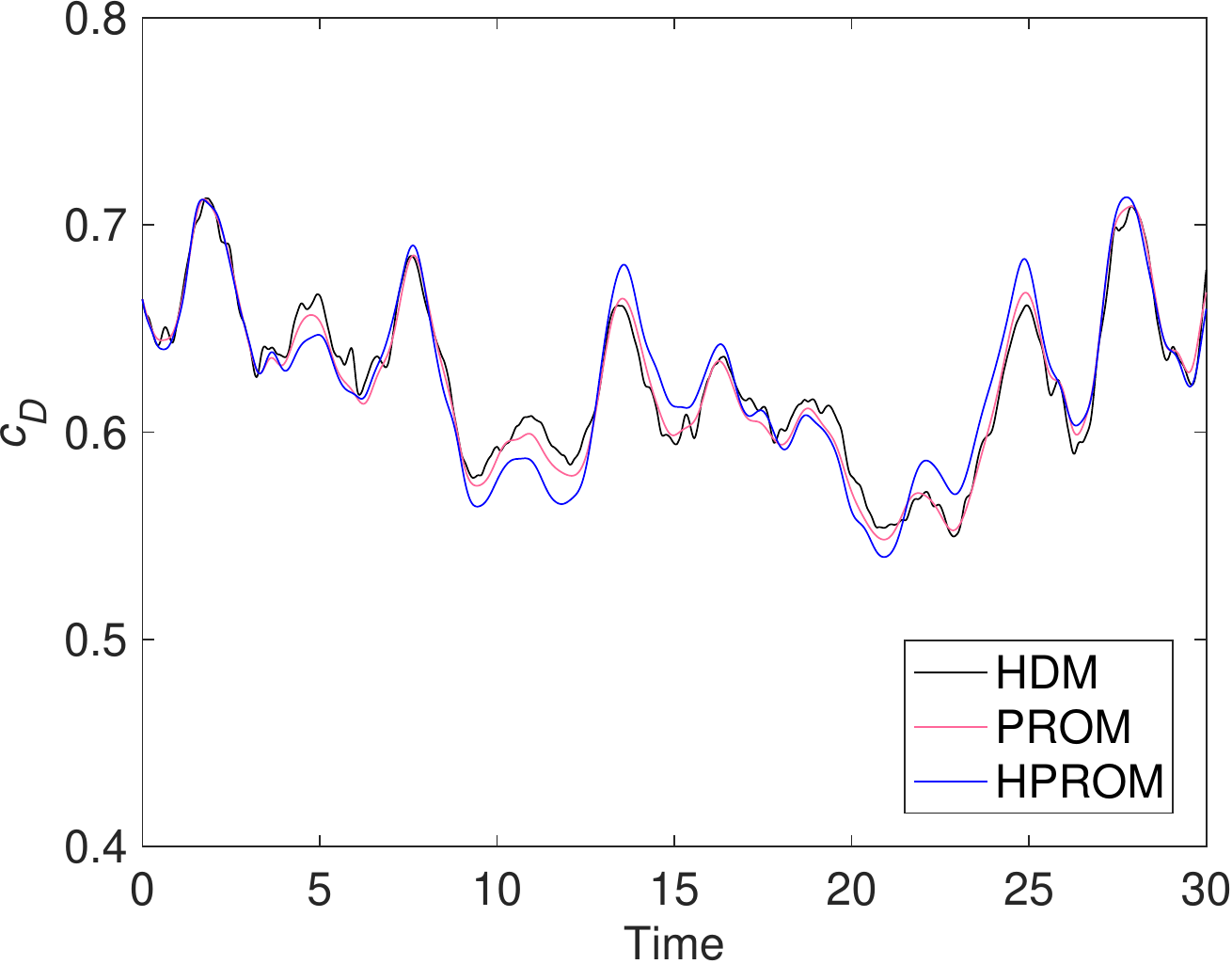}
	\caption{LSPG, $n = 83$}
	\end{subfigure}%
	\hspace{1em}
	\begin{subfigure}[c]{0.3\textwidth}%
	\centering
	\includegraphics[width=\linewidth]{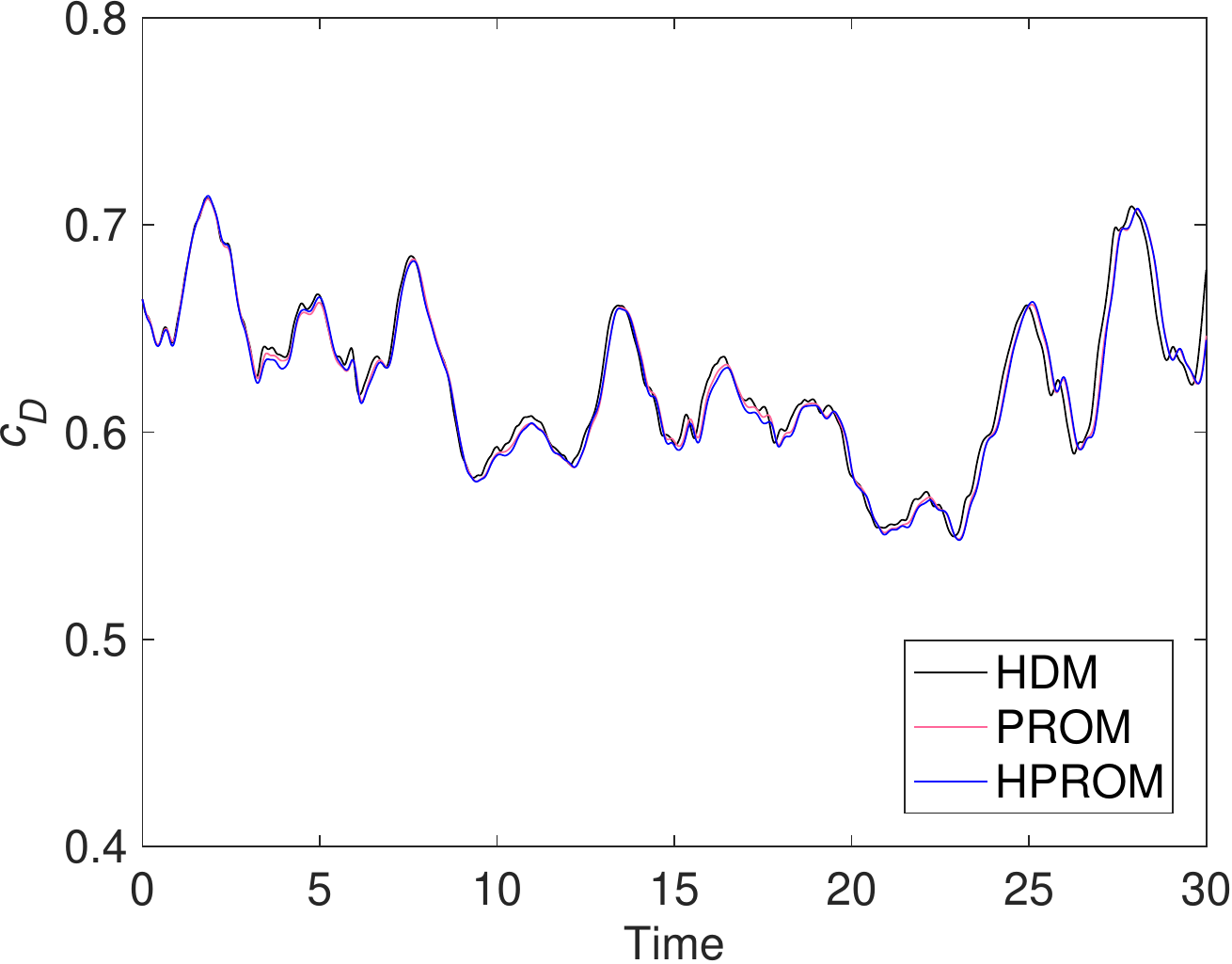}
	\caption{LSPG, $n = 213$}
	\end{subfigure}%
	\caption{LES of a flow over a NACA airfoil: Time-histories of the drag coefficient computed using the HDM and LSPG-based Petrov-Galerkin PROMs and HPROMs.}
	\label{fig:nacadraglspg}
\end{figure}

\begin{figure}[h!]
	\centering
	\begin{subfigure}[c]{0.3\textwidth}%
	\centering
	\includegraphics[width=\linewidth]{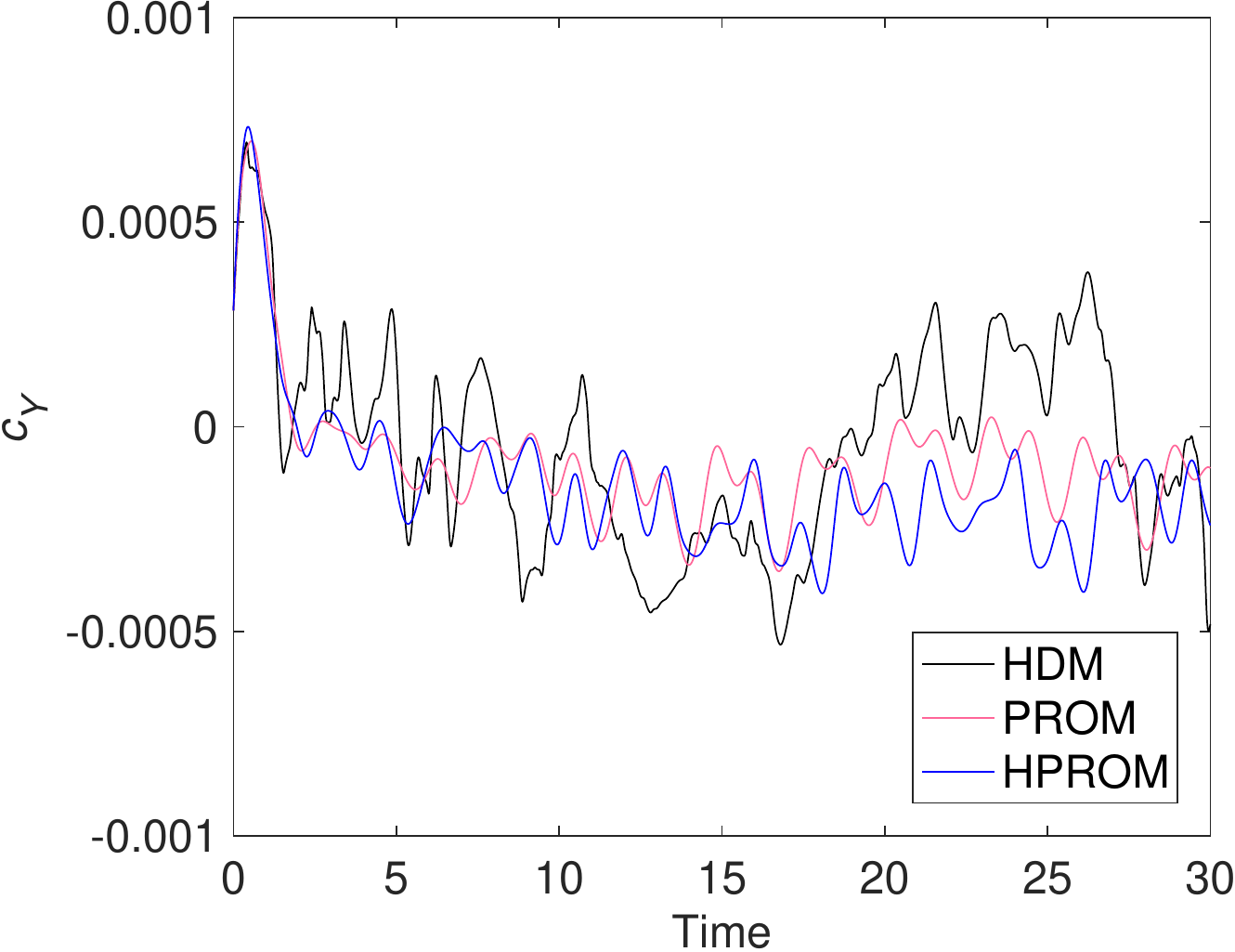}
	\caption{LSPG, $n = 41$}
	\end{subfigure}%
	\hspace{1em}
	\begin{subfigure}[c]{0.3\textwidth}%
	\centering
	\includegraphics[width=\linewidth]{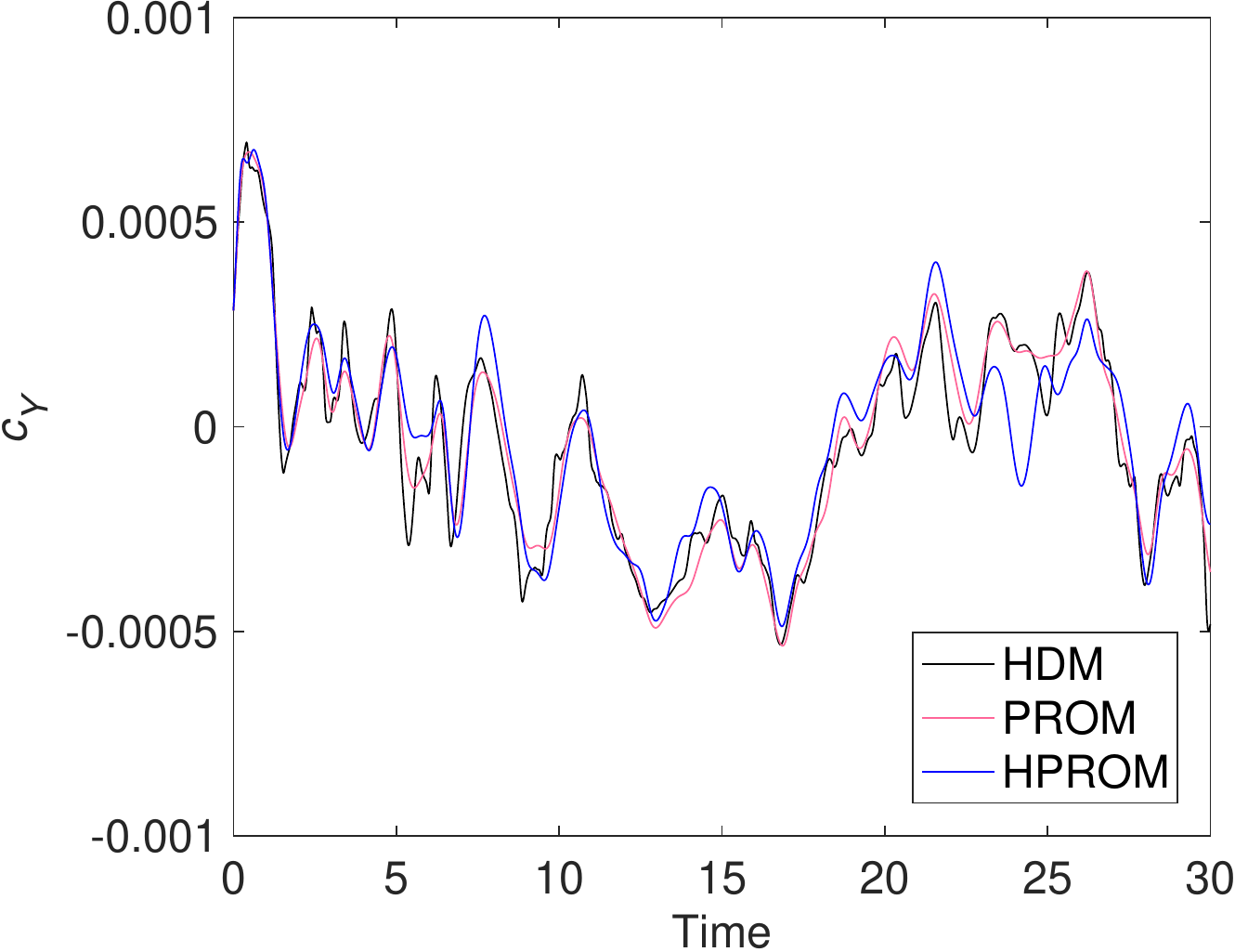}
	\caption{LSPG, $n = 83$}
	\end{subfigure}%
	\hspace{1em}
	\begin{subfigure}[c]{0.3\textwidth}%
	\centering
	\includegraphics[width=\linewidth]{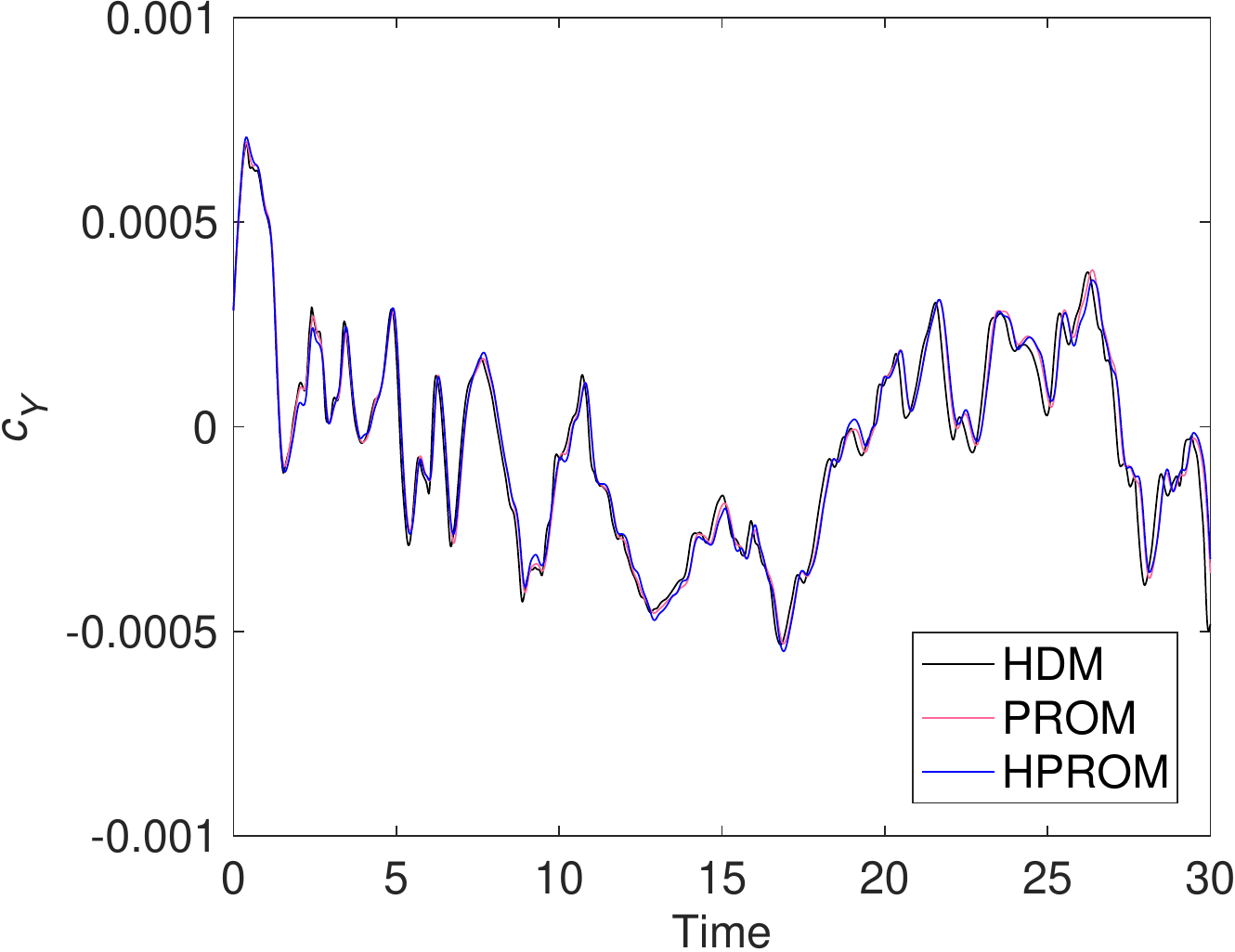}
	\caption{LSPG, $n = 213$}
	\end{subfigure}%
	\caption{LES of a flow over a NACA airfoil: Time-histories of the side force coefficient computed using the HDM and LSPG-based Petrov-Galerkin PROMs and HPROMs.}
	\label{fig:nacacrosslspg}
\end{figure}

\begin{figure}[h!]
	\centering
	\begin{subfigure}[c]{0.3\textwidth}%
	\centering
	\includegraphics[width=\linewidth]{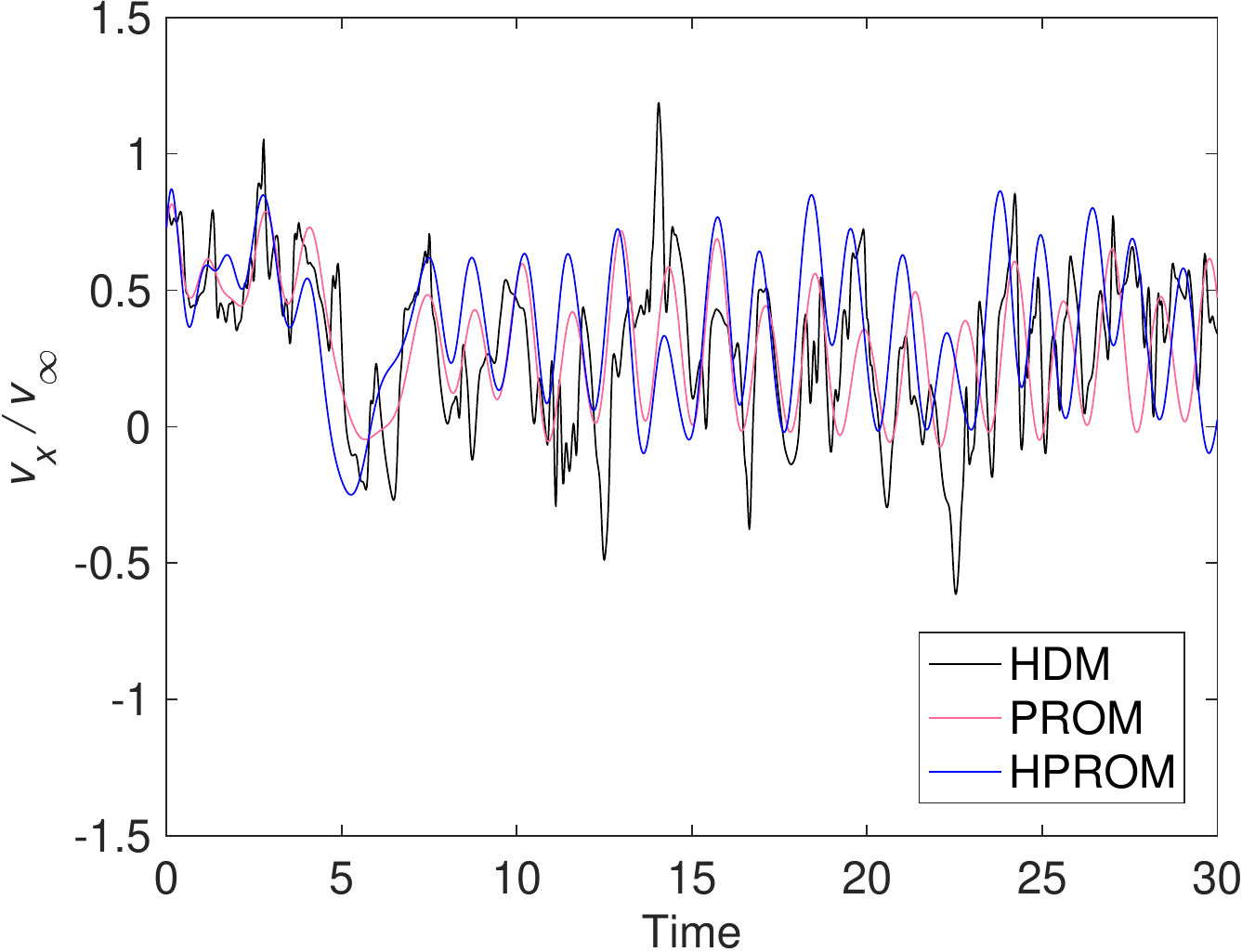}
	\caption{LSPG, $n = 41$}
	\end{subfigure}%
	\hspace{1em}
	\begin{subfigure}[c]{0.3\textwidth}%
	\centering
	\includegraphics[width=\linewidth]{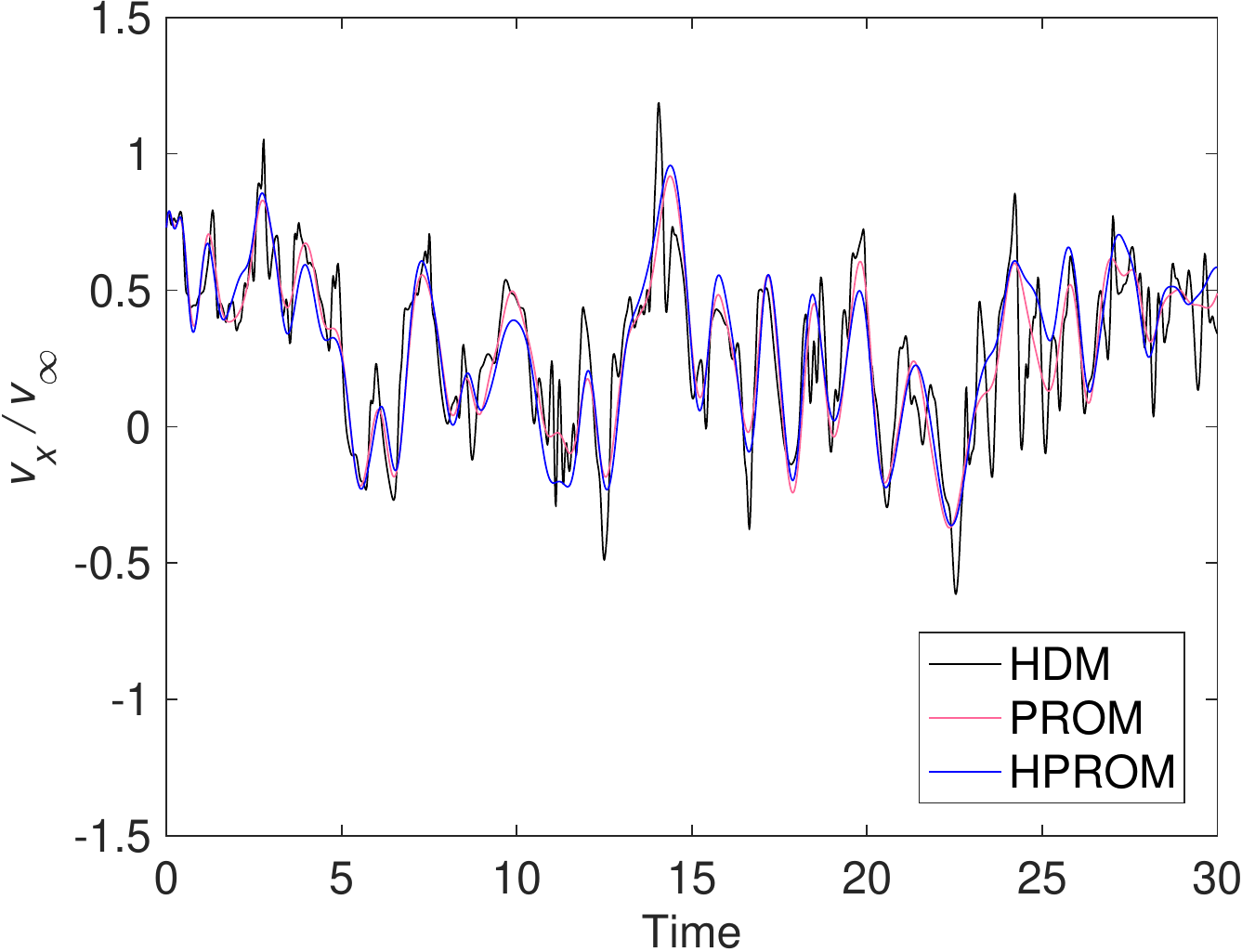}
	\caption{LSPG, $n = 83$}
	\end{subfigure}%
	\hspace{1em}
	\begin{subfigure}[c]{0.3\textwidth}%
	\centering
	\includegraphics[width=\linewidth]{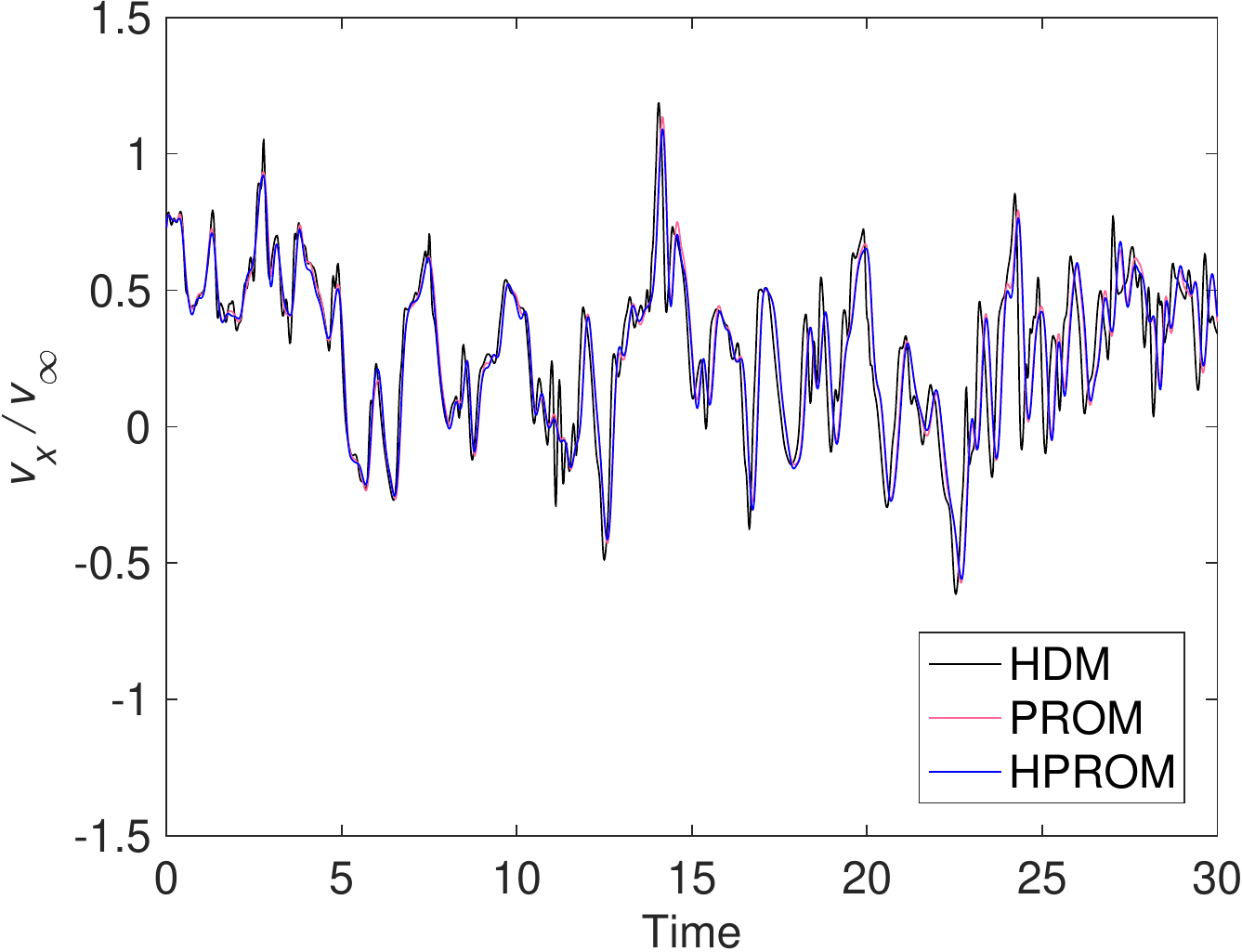}
	\caption{LSPG, $n = 213$}
	\end{subfigure}%
	\caption{LES of a flow over a NACA airfoil: Time-histories of the streamwise velocity computed at a probe using the HDM and LSPG-based Petrov-Galerkin PROMs and HPROMs.}
	\label{fig:nacaprobevxlspg}
\end{figure}

\begin{figure}[h!]
	\centering
	\begin{subfigure}[c]{0.3\textwidth}%
	\centering
	\includegraphics[width=\linewidth]{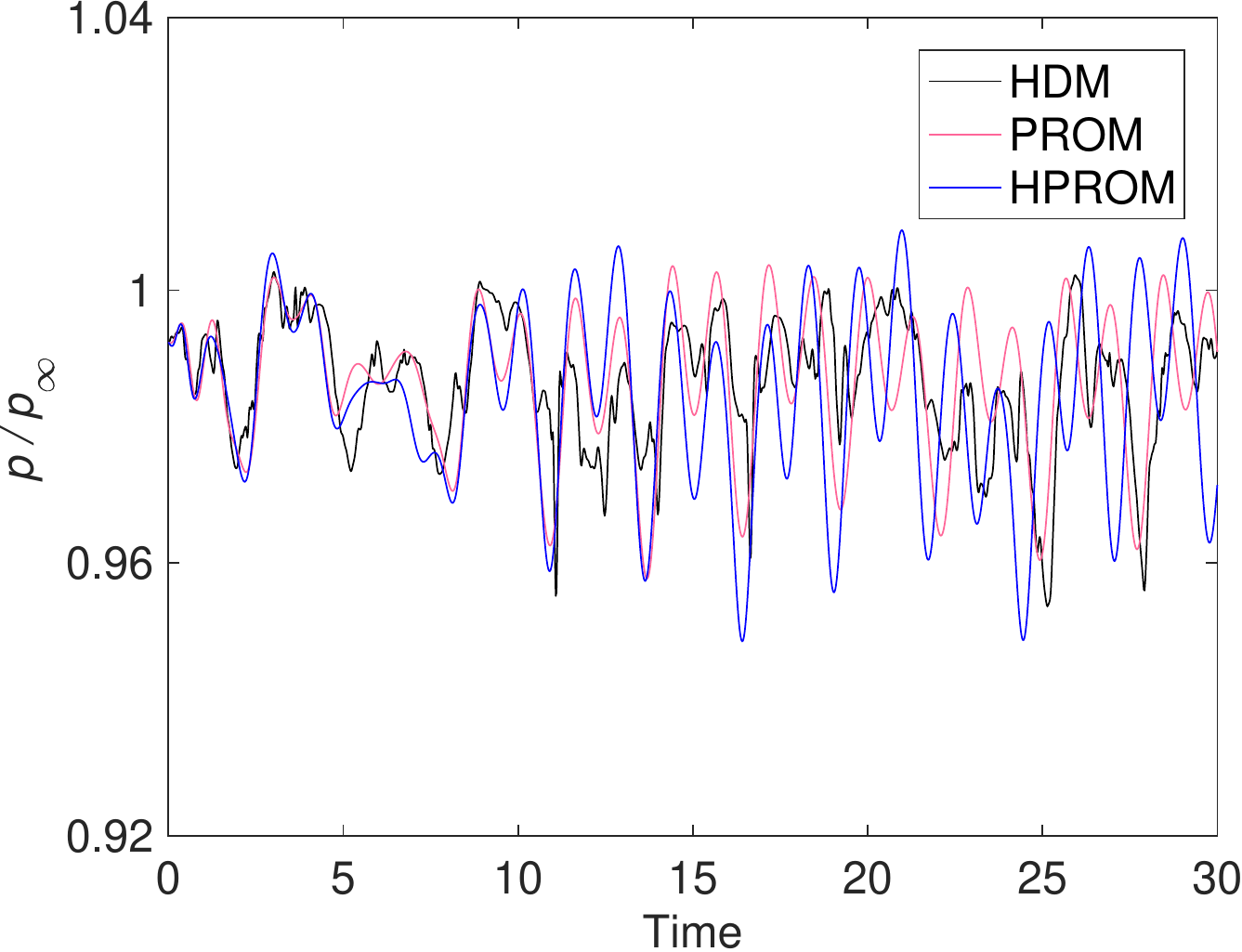}
	\caption{LSPG, $n = 41$}
	\end{subfigure}%
	\hspace{1em}
	\begin{subfigure}[c]{0.3\textwidth}%
	\centering
	\includegraphics[width=\linewidth]{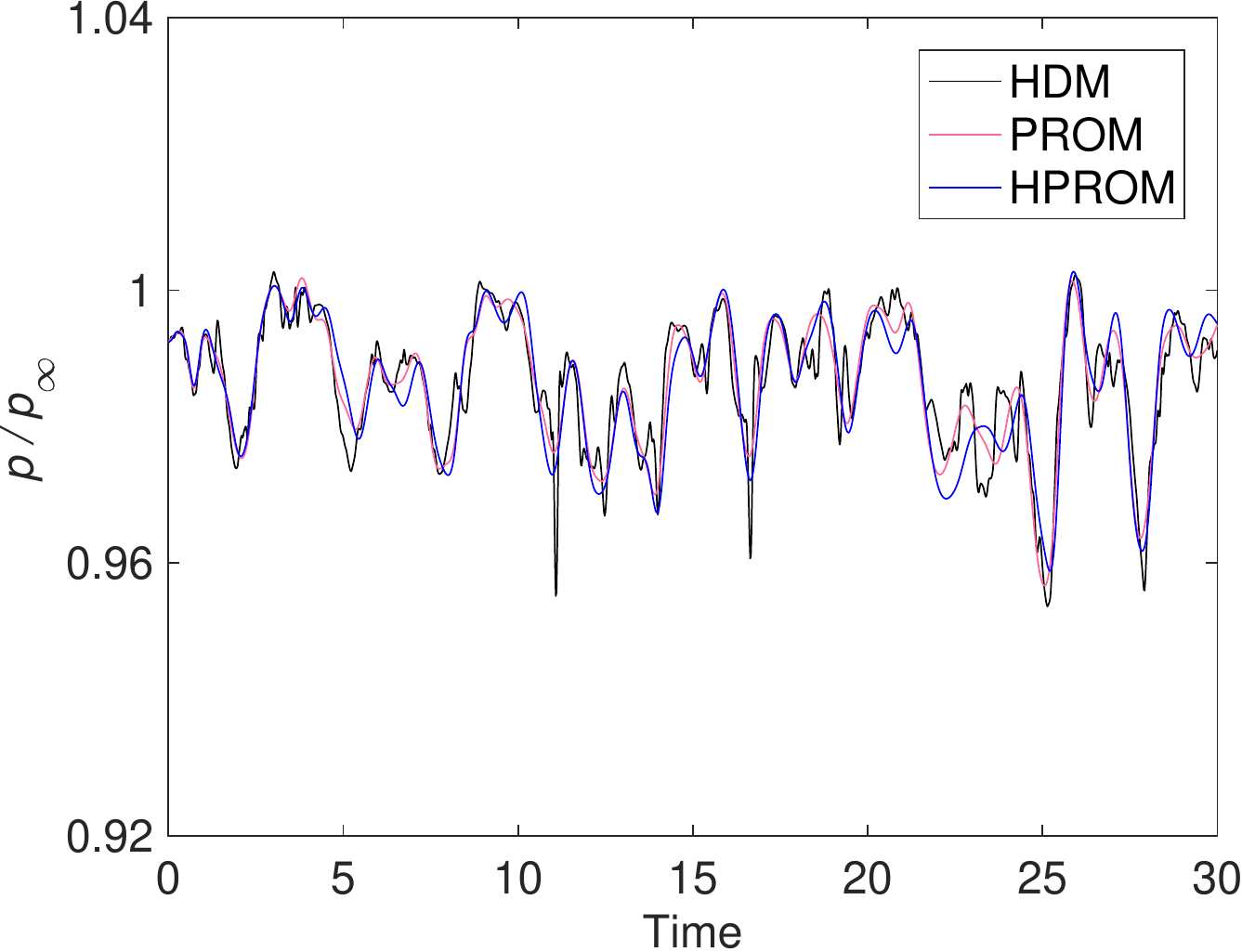}
	\caption{LSPG, $n = 83$}
	\end{subfigure}%
	\hspace{1em}
	\begin{subfigure}[c]{0.3\textwidth}%
	\centering
	\includegraphics[width=\linewidth]{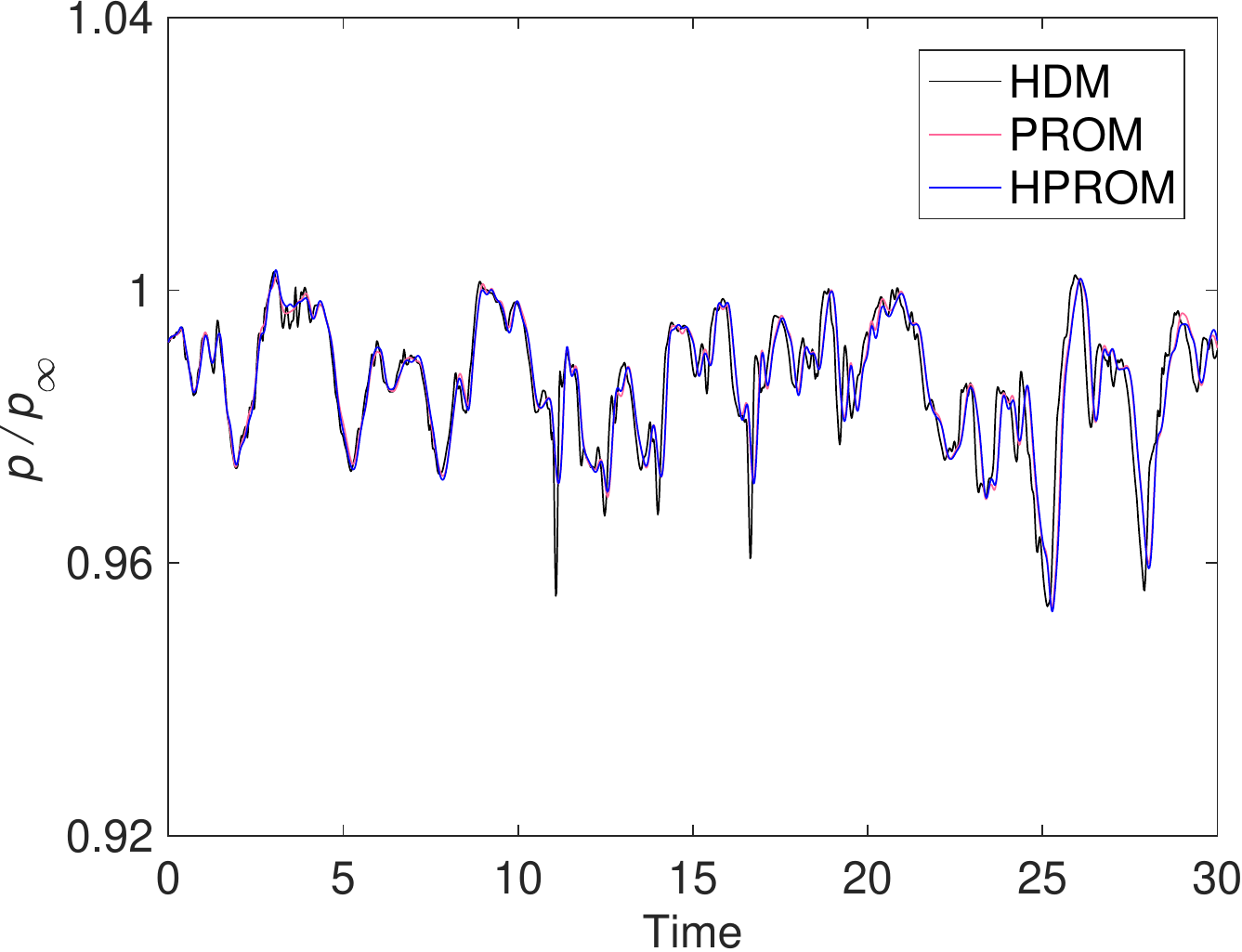}
	\caption{LSPG, $n = 213$}
	\end{subfigure}%
	\caption{LES of a flow over a NACA airfoil: Time-histories of the pressure computed at a probe using the HDM and LSPG-based Petrov-Galerkin PROMs and HPROMs.}
	\label{fig:nacaprobeplspg}
\end{figure}

\clearpage

Table \ref{tab:nacaerr} reports on the relative errors incurred using each LSPG-based Petrov-Galerkin PROM and HPROM constructed for this problem (the counterpart relative errors for the constructed
Galerkin PROMs and HPROMs are not reported due to their failure to complete the target simulations). Note that for some of the QoIs, the accuracy delivered by the LSPG-based Petrov-Galerkin PROM
of dimension $n = 213$ is slightly lower than that delivered by its counterpart of dimension $n = 83$. This seemingly odd observation can be attributed to the fact that for these QoIs,
Figures \ref{fig:nacaliftlspg}, \ref{fig:nacaprobevxlspg}, and \ref{fig:nacaprobeplspg} show that the PROM of dimension $n = 213$ exhibits a slightly larger phase error than its counterpart of dimension $n = 83$; on the other hand it reproduces more accurately the
peaks of the time-histories of these QoIs.

Finally, Table \ref{tab:nacaspeed} reports for each LSPG-based Petrov-Galerkin HPROM the size of the reduced mesh computed by ECSW, as well as the achieved wall-clock time and CPU time speed-up 
factors.  Even for this complex flow problem, the HPROM-based simulations are found to deliver excellent speed-up factors while achieving numerical stability and accuracy,
including for small values of the dimension $n$.

\begin{table}[h!]
	\small
	\centering
	\caption{LES of a flow over a NACA airfoil: Variations with the dimension $n$ of the relative errors for the LSPG-based Petrov-Galerkin PROMs and HPROMs.}
	\begin{tabular}{lcccccc} 
		\toprule
		Model & $n$ & $\mathbb{RE}_{c_D}$ & $\mathbb{RE}_{c_L}$ & $\mathbb{RE}_{c_Y}$ & $\mathbb{RE}_{v_x}$ & $\mathbb{RE}_{p}$ \\ \midrule
		LSPG-based Petrov-Galerkin PROM & $41$ & $4.07$ & $4.90$ & $75.9$ & $61.8$ & $0.878$ \\
		& $83$ & $0.995$ & $1.15$ & $24.4$ & $33.8$ & $0.391$ \\
		& $213$ & $0.979$ & $1.32$ & $18.3$ & $36.4$ & $0.427$ \\ \midrule
		LSPG-based Petrov-Galerkin HPROM & $41$ & $6.82$ & $8.44$ & $96.8$ & $80.1$ & $1.43$ \\
		& $83$ & $2.04$ & $2.35$ & $40.1$ & $38.4$ & $0.484$ \\
		& $213$ & $1.11$ & $1.48$ & $20.2$ & $37.8$ & $0.451$ \\ \bottomrule
	\end{tabular}
	\label{tab:nacaerr}
\end{table}

\begin{table}[h!]
	\small
	\centering
	\caption{LES of a flow over a NACA airfoil: Variations with the dimension $n$ of the number of mesh cells sampled by ECSW and of the speed-up factor delivered by the ECSW-based HPROM.}
	\begin{tabular}{lccccc}
		\toprule
		\multirow{2}{*}{Model} & \multirow{2}{*}{$n$} & \# of sampled & Fraction of HDM & Wall-clock time & CPU time \\
		& & mesh cells & mesh cells ($\%$) & speed-up factor & speed-up factor \\ \midrule
		LSPG-based Petrov-Galerkin HPROM & $41$ & $1,167$ & $0.056$ & $2.72 \times 10^2$ & $5.44 \times 10^3$ \\
		& $83$ & $2,390$ & $0.115$ & $7.12 \times 10^1$ & $1.42 \times 10^3$ \\
		& $213$ & $6,199$ & $0.298$ & $1.31 \times 10^1$ & $2.62 \times 10^2$ \\ \bottomrule
	\end{tabular}
	\label{tab:nacaspeed}
\end{table}

\section{Conclusions}
\label{sec:conclude}

Projection-based model order reduction for the fast numerical simulation of turbulence and turbulent flows can be challenging from an accuracy perspective due to the large Kolmogorov $n$-width of the high-dimensional solution manifold 
for such problems, which leads to the truncation of non-negligible higher-order modes when constructing a low-dimensional subspace approximation. In this paper, it is argued that modal truncation, which certainly influences the 
accuracy of a projection-based reduced-order model (PROM), does not inherently lead to the numerical instability of a PROM-based simulation. Specifically, it is not the often stated inability of a PROM to resolve the dissipative regime 
of the turbulent energy cascade that leads to its numerical instability observed in the context of convection-dominated turbulent flows. Instead, it is the reliance on the Galerkin projection framework for constructing PROMs for such 
applications.

The above statements are supported in this paper by numerical examples which reveal that even in the absence of turbulence, PROMs constructed using the Galerkin projection framework fail to preserve the numerical stability of the
underlying high-dimensional model (HDM) for convection-dominated problems. This is consistent with decades of literature studying the development stable and accurate spatial discretization schemes for such problems.
On the other hand, Galerkin PROMs are shown to be numerically stable and accurate, even in the presence of highly turbulent phenomena and severe modal truncation, for an example problem involving vortex decay without mean convection: 
In other words, it is shown through a benchmark problem that the inability of a PROM to resolve the small-scale dissipative modes of the underlying HDM does not necessarily lead to excessive energy levels.

Using other supporting numerical examples, it is also shown that alternatively, the numerical stability of an HDM for a convection-dominated laminar or turbulent flow problem can be preserved by a PROM constructed using the 
Petrov-Galerkin projection framework by appropriately constructing the left reduced-order basis, without resorting to any additional explicit modeling. 

Finally, it is also shown that when properly hyperreduced, Petrov-Galerkin PROMs can not only achieve numerical stability for the simulation of turbulence and convection-dominated turbulent flows, but also reduce the computational
resources associated with such simulations by up to six orders of magnitude.

\section{Acknowledgments}

Sebastian Grimberg and Charbel Farhat acknowledge partial support by the Air Force Office of Scientific Research under grant FA9550-17-1-0182,
partial support by the Office of Naval Research under Grant N00014-17-1-2749, partial support by a research grant from the King
Abdulaziz City for Science and Technology (KACST), and partial support by The Boeing Company under Contract Sponsor Ref. 45047.
Noah Youkilis acknowledges the support by an NSF GRFP fellowship under Grant number 2018260970.
This document does not necessarily reflect the position of these institutions, and no official endorsement should be inferred.

\appendix
\section{Equivalence between nonlinear residual minimization and Petrov-Galerkin projection}
\label{app:proofnl}

\begin{proposition}
\label{prop:resmin}
Given an SPD matrix $\boldsymbol{\Theta} \in \mathbb{R}^{N\times N}$, computing the solution $\mathbf{x}^*$ of the unconstrained residual minimization problem
\begin{equation}
\label{appeqn:resmin}
\min_{\mathbf{x} \, \in \, \mathbb{R}^n} \big \lVert \mathbf{r}(\mathbf{u}_0 + \mathbf{V}\mathbf{x}; \boldsymbol{\mu})\big\rVert_{\boldsymbol{\Theta}}^2
\end{equation}
is equivalent to solving the Petrov-Galerkin reduced-order problem
\begin{equation}
\label{appeqn:respg}
\mathbf{W}^T \mathbf{r}(\mathbf{u}_0 + \mathbf{V}\mathbf{x}^*; \boldsymbol{\mu}) = 0
\end{equation}
with
\begin{equation}
\label{appeqn:w}
\mathbf{W} = \boldsymbol{\Theta} \mathbf{J}(\mathbf{u}_0 + \mathbf{V}\mathbf{x}; \boldsymbol{\mu}) \mathbf{V}
\end{equation}
provided that the methods and initial guesses for solving the generally nonconvex problem (\ref{appeqn:resmin}) and nonlinear system of equations (\ref{appeqn:respg}) are chosen to ensure convergence to 
the same local minimum.
\end{proposition}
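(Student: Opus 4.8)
\emph{Proof strategy.} The plan is to recognize the Petrov-Galerkin equation \eqref{appeqn:respg}, with the left ROB \eqref{appeqn:w}, as precisely the first-order stationarity condition of the residual-minimization problem \eqref{appeqn:resmin}; the stated proviso then identifies the stationary points produced by the two solvers. First I would introduce the scalar objective $g(\mathbf{x}) = \mathbf{r}^T \boldsymbol{\Theta}\, \mathbf{r}$, where $\mathbf{r} \equiv \mathbf{r}(\mathbf{u}_0 + \mathbf{V}\mathbf{x};\boldsymbol{\mu})$ and $\mathbf{J} \equiv \mathbf{J}(\mathbf{u}_0 + \mathbf{V}\mathbf{x};\boldsymbol{\mu})$ abbreviate the HDM residual and its Jacobian \eqref{eqn:jac} evaluated along the trial subspace. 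The function $g$ is continuously differentiable because $\mathbf{r}$ is (its Jacobian $\mathbf{J}$ exists), and in this proposition $\boldsymbol{\Theta}$ is a \emph{fixed} SPD matrix, so it contributes no additional terms upon differentiation --- in contrast to the solution-dependent weighting metrics discussed later in Section \ref{sec:pg}.

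Next I would apply the chain rule. The affine map $\mathbf{x}\mapsto\mathbf{u}_0+\mathbf{V}\mathbf{x}$ has constant Jacobian $\mathbf{V}$, and $\mathbf{r}(\,\cdot\,;\boldsymbol{\mu})$ has Jacobian $\mathbf{J}$ with respect to its state argument, so the composition $\mathbf{x}\mapsto\mathbf{r}(\mathbf{u}_0+\mathbf{V}\mathbf{x};\boldsymbol{\mu})$ has Jacobian $\mathbf{J}\mathbf{V}\in\mathbb{R}^{N\times n}$. Differentiating the quadratic form gives $\nabla_{\mathbf{x}}\, g(\mathbf{x}) = 2(\mathbf{J}\mathbf{V})^T\boldsymbol{\Theta}\,\mathbf{r} = 2\,\mathbf{V}^T\mathbf{J}^T\boldsymbol{\Theta}\,\mathbf{r}$. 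Since $\boldsymbol{\Theta}^T=\boldsymbol{\Theta}$, the definition \eqref{appeqn:w} yields $\mathbf{W}^T = \mathbf{V}^T\mathbf{J}^T\boldsymbol{\Theta}$, and therefore
\[
\nabla_{\mathbf{x}}\, g(\mathbf{x}) \;=\; 2\,\mathbf{W}^T \mathbf{r}(\mathbf{u}_0+\mathbf{V}\mathbf{x};\boldsymbol{\mu}),
\]
with the iteration-dependent left ROB $\mathbf{W}=\boldsymbol{\Theta}\mathbf{J}\mathbf{V}$ understood to be evaluated at the same $\mathbf{x}$.

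From this identity both implications follow immediately. If $\mathbf{x}^*$ is a local minimizer of \eqref{appeqn:resmin}, then $\nabla_{\mathbf{x}}\, g(\mathbf{x}^*)=0$, which is exactly \eqref{appeqn:respg} with $\mathbf{W}=\boldsymbol{\Theta}\mathbf{J}(\mathbf{u}_0+\mathbf{V}\mathbf{x}^*;\boldsymbol{\mu})\mathbf{V}$ as in \eqref{appeqn:w}. Conversely, any $\mathbf{x}^*$ solving \eqref{appeqn:respg} is a stationary point of $g$. Because $\mathbf{r}$ is in general a nonlinear function of the state, $g$ is generally nonconvex and may possess several stationary points --- minima, maxima, and saddles --- so the equivalence can only be asserted under the hypothesis that the methods and initial iterates for \eqref{appeqn:resmin} and \eqref{appeqn:respg} are chosen so that both converge to the same stationary point, which is then simultaneously a local minimizer of \eqref{appeqn:resmin} and a root of \eqref{appeqn:respg}.

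I do not anticipate a genuine obstacle here: the heart of the argument is a one-line gradient evaluation. The point requiring the most care is purely notational, namely tracking the argument at which the iteration-dependent left ROB \eqref{appeqn:w} is evaluated: stationarity of $g$ at a candidate $\mathbf{x}^*$ produces Petrov-Galerkin residual orthogonality with $\mathbf{W}$ frozen at that same $\mathbf{x}^*$, which is precisely how \eqref{appeqn:respg}--\eqref{appeqn:w} are to be read, so the two formulations coincide as claimed. The only differentiability requirement invoked, that $\mathbf{r}$ be $C^1$ so that $\mathbf{J}$ is well defined, is already built into the setup via \eqref{eqn:jac}.
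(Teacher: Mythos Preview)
Your proposal is correct and follows essentially the same approach as the paper: both compute the gradient of the objective $f(\mathbf{x})=\mathbf{r}^T\boldsymbol{\Theta}\mathbf{r}$ via the chain rule to obtain $\nabla f(\mathbf{x})=2\mathbf{V}^T\mathbf{J}^T\boldsymbol{\Theta}\,\mathbf{r}$, and then identify the stationarity condition with the Petrov-Galerkin equation using $\mathbf{W}=\boldsymbol{\Theta}\mathbf{J}\mathbf{V}$. Your version is slightly more explicit about the symmetry of $\boldsymbol{\Theta}$, the evaluation point of $\mathbf{W}$, and the converse direction, but the substance is identical.
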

\begin{proof}
The necessary and sufficient first-order condition characterizing a local minimum of the generally nonconvex objective function
\begin{equation*}
f(\mathbf{x}) = \big\lVert \mathbf{r}(\mathbf{u}_0 + \mathbf{V}\mathbf{x}; \boldsymbol{\mu}) \big\rVert_{\boldsymbol{\Theta}}^2
\end{equation*}
of (\ref{appeqn:resmin}) is $\nabla f(\mathbf{x}^*) = 0$.
Here,
\begin{equation*}
\begin{split}
\nabla f(\mathbf{x}) &= \nabla (\mathbf{r}(\mathbf{u}_0 + \mathbf{V}\mathbf{x}; \boldsymbol{\mu})^T \boldsymbol{\Theta} \mathbf{r}(\mathbf{u}_0 + \mathbf{V}\mathbf{x}; \boldsymbol{\mu})) \\
&= 2 \mathbf{V}^T \mathbf{J}^T(\mathbf{u}_0 + \mathbf{V}\mathbf{x}; \boldsymbol{\mu}) \boldsymbol{\Theta} \mathbf{r}(\mathbf{u}_0 + \mathbf{V}\mathbf{x}; \boldsymbol{\mu})
\end{split}
\end{equation*}
which yields
\begin{equation*}
\mathbf{V}^T \mathbf{J}^T(\mathbf{u}_0 + \mathbf{V}\mathbf{x}^*; \boldsymbol{\mu}) \boldsymbol{\Theta} \mathbf{r}(\mathbf{u}_0 + \mathbf{V}\mathbf{x}^*; \boldsymbol{\mu}) = 0
\end{equation*}
This expression is equivalent to (\ref{appeqn:respg}) with the definition of $\mathbf{W}$ given in (\ref{appeqn:w}).
\end{proof}

\begin{corollary}
If $\mathbf{J}(\mathbf{u}_0 + \mathbf{V}\mathbf{x}; \boldsymbol{\mu})$ and thus $\mathbf{J}^{-1}(\mathbf{u}_0 + \mathbf{V}\mathbf{x}; \boldsymbol{\mu})$ are SPD and therefore define a norm, the solution $\mathbf{x}^*$ of (\ref{appeqn:resmin}) with
\begin{equation*}
\boldsymbol{\Theta} = \mathbf{J}^{-1}(\mathbf{u}_0 + \mathbf{V}\mathbf{x}; \boldsymbol{\mu})
\end{equation*}
is given by solving the Galerkin projection-based system of equations
\begin{equation}
\label{appeqn:resg}
\mathbf{V}^T \mathbf{r}(\mathbf{u}_0 + \mathbf{V}\mathbf{x}^*; \boldsymbol{\mu}) = 0
\end{equation}
\end{corollary}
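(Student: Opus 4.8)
The plan is to obtain this corollary as an immediate specialization of Proposition~\ref{prop:resmin}, choosing the weighting matrix to be the inverse Jacobian. First I would note that if $\mathbf{J}(\mathbf{u}_0 + \mathbf{V}\mathbf{x}; \boldsymbol{\mu})$ is SPD then so is $\mathbf{J}^{-1}(\mathbf{u}_0 + \mathbf{V}\mathbf{x}; \boldsymbol{\mu})$, so that $\boldsymbol{\Theta} = \mathbf{J}^{-1}(\mathbf{u}_0 + \mathbf{V}\mathbf{x}; \boldsymbol{\mu})$ is an admissible choice in Proposition~\ref{prop:resmin} and indeed defines a genuine norm $\|\cdot\|_{\boldsymbol{\Theta}}$.

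Next I would invoke the first-order optimality characterization derived in the proof of Proposition~\ref{prop:resmin}: a local minimizer $\mathbf{x}^*$ of the weighted residual norm satisfies $\mathbf{V}^T \mathbf{J}^T(\mathbf{u}_0 + \mathbf{V}\mathbf{x}^*; \boldsymbol{\mu})\,\boldsymbol{\Theta}\,\mathbf{r}(\mathbf{u}_0 + \mathbf{V}\mathbf{x}^*; \boldsymbol{\mu}) = 0$, equivalently $\mathbf{W}^T \mathbf{r}(\mathbf{u}_0 + \mathbf{V}\mathbf{x}^*; \boldsymbol{\mu}) = 0$ with $\mathbf{W} = \boldsymbol{\Theta}\,\mathbf{J}(\mathbf{u}_0 + \mathbf{V}\mathbf{x}^*; \boldsymbol{\mu})\,\mathbf{V}$. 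Substituting $\boldsymbol{\Theta} = \mathbf{J}^{-1}(\mathbf{u}_0 + \mathbf{V}\mathbf{x}^*; \boldsymbol{\mu})$ collapses the left ROB to $\mathbf{W} = \mathbf{J}^{-1}(\mathbf{u}_0 + \mathbf{V}\mathbf{x}^*; \boldsymbol{\mu})\,\mathbf{J}(\mathbf{u}_0 + \mathbf{V}\mathbf{x}^*; \boldsymbol{\mu})\,\mathbf{V} = \mathbf{V}$, which is exactly the Galerkin choice, so the optimality condition becomes $\mathbf{V}^T \mathbf{r}(\mathbf{u}_0 + \mathbf{V}\mathbf{x}^*; \boldsymbol{\mu}) = 0$, which is precisely \eqref{appeqn:resg}. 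Alternatively, one can bypass $\mathbf{W}$ altogether and cancel $\mathbf{J}^T\mathbf{J}^{-1} = \mathbf{J}\,\mathbf{J}^{-1} = \mathbf{I}$ directly in the first-order condition, using that SPD-ness supplies both invertibility and the symmetry $\mathbf{J}^T = \mathbf{J}$.

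The one point requiring care is that, unlike in Proposition~\ref{prop:resmin} where $\boldsymbol{\Theta}$ is a fixed SPD matrix, here $\boldsymbol{\Theta} = \mathbf{J}^{-1}(\mathbf{u}_0 + \mathbf{V}\mathbf{x}; \boldsymbol{\mu})$ depends on $\mathbf{x}$. I would resolve this by applying Proposition~\ref{prop:resmin} with the \emph{frozen} weighting $\boldsymbol{\Theta}^\star = \mathbf{J}^{-1}(\mathbf{u}_0 + \mathbf{V}\mathbf{x}^*; \boldsymbol{\mu})$ evaluated at the stationary point; since the gradient computation in that proof differentiates only the residual (through $\mathbf{V}$), the characterization of $\mathbf{x}^*$ is unaffected, and the equivalence holds in the same local sense as Proposition~\ref{prop:resmin}, provided the solver and initial guess are chosen so that both formulations converge to the same stationary point. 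This frozen-weighting bookkeeping is the main obstacle; the remaining algebra — the collapse $\mathbf{J}^{-1}\mathbf{J} = \mathbf{I}$ — is immediate.
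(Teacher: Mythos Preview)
Your proposal is correct and follows essentially the same route as the paper: substitute $\boldsymbol{\Theta} = \mathbf{J}^{-1}$ into the expression $\mathbf{W} = \boldsymbol{\Theta}\mathbf{J}\mathbf{V}$ from Proposition~\ref{prop:resmin} to obtain $\mathbf{W} = \mathbf{V}$, hence the Galerkin system. Your additional remark on freezing the $\mathbf{x}$-dependent weighting at the stationary point is a point of rigor that the paper's one-line proof leaves implicit.
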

\begin{proof}
Following Proposition \ref{prop:resmin}, substituting this definition of $\boldsymbol{\Theta}$ into the expression (\ref{appeqn:w}) for $\mathbf{W}$ yields $\mathbf{W} = \mathbf{V}$ and results in the Galerkin projection-based system of equations (\ref{appeqn:resg})
\end{proof}

\section{Equivalence between iterate step-direction error minimization and Petrov-Galerkin projection}
\label{app:prooflin}

\begin{proposition}
\label{prop:resminl}
Computing the solution $\mathbf{x}^*$ of the unconstrained minimization problem
\begin{equation}
\label{appeqn:resminl}
\min_{\mathbf{x} \, \in \, \mathbb{R}^n} \big \lVert \mathbf{V}\mathbf{x} -\boldsymbol{\Delta}\mathbf{u}^{(p)} \big \rVert_{{\mathbf{J}^{(p)}}^T \boldsymbol{\Theta} \mathbf{J}^{(p)}}^2
\end{equation}
where $\mathbf{J}^{(p)}\boldsymbol{\Delta}\mathbf{u}^{(p)} = -\mathbf{r}^{(p)}$ is equivalent to computing the solution of the Petrov-Galerkin reduced-order problem
\begin{equation}
\label{appeqn:respgl}
\mathbf{W}^T \mathbf{J}^{(p)} \mathbf{V} \mathbf{x}^* = -\mathbf{W}^T \mathbf{r}^{(p)}
\end{equation}
with
\begin{equation}
\label{appeqn:wl}
\mathbf{W} = \boldsymbol{\Theta} \mathbf{J}^{(p)} \mathbf{V}
\end{equation}
\end{proposition}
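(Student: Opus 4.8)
The plan is to recognize \eqref{appeqn:resminl} as an unconstrained convex quadratic program in $\mathbf{x}$ and to identify its first-order optimality condition with the Petrov--Galerkin system \eqref{appeqn:respgl}. A useful preliminary observation is that, using $\mathbf{r}^{(p)} = -\mathbf{J}^{(p)}\boldsymbol{\Delta}\mathbf{u}^{(p)}$, the objective can be rewritten as the $\boldsymbol{\Theta}$-weighted least-squares functional of the \emph{affine} (linearized) residual, since $\mathbf{J}^{(p)}\big(\mathbf{V}\mathbf{x}-\boldsymbol{\Delta}\mathbf{u}^{(p)}\big) = \mathbf{r}^{(p)} + \mathbf{J}^{(p)}\mathbf{V}\mathbf{x}$ and hence $\big\lVert \mathbf{V}\mathbf{x}-\boldsymbol{\Delta}\mathbf{u}^{(p)}\big\rVert_{{\mathbf{J}^{(p)}}^T\boldsymbol{\Theta}\mathbf{J}^{(p)}}^2 = \big\lVert \mathbf{r}^{(p)}+\mathbf{J}^{(p)}\mathbf{V}\mathbf{x}\big\rVert_{\boldsymbol{\Theta}}^2$; so the statement is really Proposition~\ref{prop:resmin} applied to the affine residual $\mathbf{u}\mapsto \mathbf{r}^{(p)}+\mathbf{J}^{(p)}(\mathbf{u}-\mathbf{u}_0-\mathbf{V}\mathbf{y}^{(p)})$, whose Jacobian is the constant $\mathbf{J}^{(p)}$. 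I would nonetheless carry out the short direct proof. Assuming, as in Proposition~\ref{prop:resmin}, that $\boldsymbol{\Theta}$ is SPD and $\mathbf{J}^{(p)}$ is invertible (so that $\mathbf{J}^{(p)}\boldsymbol{\Delta}\mathbf{u}^{(p)}=-\mathbf{r}^{(p)}$ is well posed), the weighting matrix ${\mathbf{J}^{(p)}}^T\boldsymbol{\Theta}\,\mathbf{J}^{(p)}$ is itself SPD and genuinely defines the norm in \eqref{appeqn:resminl}; hence the objective $f$ is strictly convex, its minimizer $\mathbf{x}^*$ is unique, and $\mathbf{x}^*$ is optimal if and only if $\nabla f(\mathbf{x}^*)=0$. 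Unlike in Proposition~\ref{prop:resmin}, no ``same local minimum'' proviso is needed because the problem is quadratic.

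The computational core is routine. I would evaluate
\[
\nabla f(\mathbf{x}) = 2\,\mathbf{V}^T {\mathbf{J}^{(p)}}^T \boldsymbol{\Theta}\, \mathbf{J}^{(p)} \big(\mathbf{V}\mathbf{x} - \boldsymbol{\Delta}\mathbf{u}^{(p)}\big),
\]
so that the stationarity condition $\nabla f(\mathbf{x}^*)=0$ becomes the normal equations
\[
\mathbf{V}^T {\mathbf{J}^{(p)}}^T \boldsymbol{\Theta}\, \mathbf{J}^{(p)}\mathbf{V}\mathbf{x}^* = \mathbf{V}^T {\mathbf{J}^{(p)}}^T \boldsymbol{\Theta}\, \mathbf{J}^{(p)}\boldsymbol{\Delta}\mathbf{u}^{(p)}.
\]
The one substantive step is to eliminate $\boldsymbol{\Delta}\mathbf{u}^{(p)}$ using $\mathbf{J}^{(p)}\boldsymbol{\Delta}\mathbf{u}^{(p)}=-\mathbf{r}^{(p)}$, which turns the right-hand side into $-\mathbf{V}^T {\mathbf{J}^{(p)}}^T \boldsymbol{\Theta}\,\mathbf{r}^{(p)}$; this is precisely what makes the reduced-order step computable without ever forming $\boldsymbol{\Delta}\mathbf{u}^{(p)}$. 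Substituting $\mathbf{W} = \boldsymbol{\Theta}\,\mathbf{J}^{(p)}\mathbf{V}$ from \eqref{appeqn:wl} and using $\boldsymbol{\Theta}^T=\boldsymbol{\Theta}$ gives $\mathbf{W}^T = \mathbf{V}^T {\mathbf{J}^{(p)}}^T \boldsymbol{\Theta}$, whereupon both sides collapse to $\mathbf{W}^T\mathbf{J}^{(p)}\mathbf{V}\mathbf{x}^* = -\mathbf{W}^T\mathbf{r}^{(p)}$, which is exactly \eqref{appeqn:respgl}. Conversely, any $\mathbf{x}^*$ solving \eqref{appeqn:respgl} satisfies these normal equations and, by strict convexity, is the minimizer of $f$, which closes the equivalence.

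There is no genuine obstacle here: the argument is the linearized counterpart of the computation in Proposition~\ref{prop:resmin}, and the only points deserving care are bookkeeping ones --- verifying that ${\mathbf{J}^{(p)}}^T \boldsymbol{\Theta}\, \mathbf{J}^{(p)}$ is positive definite so that \eqref{appeqn:resminl} is well posed, and keeping the transposes straight so that the $\mathbf{W}^T$ factors on the two sides of \eqref{appeqn:respgl} match. For completeness I would also record the immediate specialization, mirroring the corollary to Proposition~\ref{prop:resmin}: when $\mathbf{J}^{(p)}$ is SPD, the choice $\boldsymbol{\Theta} = {\mathbf{J}^{(p)}}^{-1}$ yields $\mathbf{W}=\mathbf{V}$ and recovers the Galerkin step $\mathbf{V}^T\mathbf{J}^{(p)}\mathbf{V}\mathbf{x}^*=-\mathbf{V}^T\mathbf{r}^{(p)}$, with the step-direction error measured in the $\mathbf{J}^{(p)}$-norm.
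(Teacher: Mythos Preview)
Your proposal is correct and follows essentially the same approach as the paper: write down the first-order optimality condition for the convex quadratic objective, compute the gradient, substitute $\mathbf{J}^{(p)}\boldsymbol{\Delta}\mathbf{u}^{(p)}=-\mathbf{r}^{(p)}$ on the right-hand side, and recognize the resulting normal equations as \eqref{appeqn:respgl} with $\mathbf{W}$ given by \eqref{appeqn:wl}. Your additional remarks --- that strict convexity obviates the ``same local minimum'' proviso, that the result is Proposition~\ref{prop:resmin} specialized to the affine residual, and the explicit converse direction --- are correct refinements that the paper does not spell out.
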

\begin{proof}
The necessary and sufficient first-order condition characterizing a global minimum of the convex objective function
\begin{equation*}
f(\mathbf{x}) = \big \lVert \mathbf{V}\mathbf{x} -\boldsymbol{\Delta}\mathbf{u}^{(p)} \big \rVert_{{\mathbf{J}^{(p)}}^T \boldsymbol{\Theta} \mathbf{J}^{(p)}}^2
\end{equation*}
of (\ref{appeqn:resminl}) is $\nabla f(\mathbf{x}^*) = 0$.
Here,
\begin{equation*}
\begin{split}
\nabla f(\mathbf{x}) &= \nabla \left(\left(\mathbf{V}\mathbf{x} -\boldsymbol{\Delta}\mathbf{u}^{(p)}\right)^T {\mathbf{J}^{(p)}}^T \boldsymbol{\Theta} \mathbf{J}^{(p)} \left(\mathbf{V}\mathbf{x} -\boldsymbol{\Delta}\mathbf{u}^{(p)}\right)\right) \\
&= 2 \mathbf{V}^T {\mathbf{J}^{(p)}}^T \boldsymbol{\Theta} \mathbf{J}^{(p)} \mathbf{V} \mathbf{x} - 2 \mathbf{V}^T {\mathbf{J}^{(p)}}^T \boldsymbol{\Theta} \mathbf{J}^{(p)} \boldsymbol{\Delta}\mathbf{u}^{(p)} \\
&= 2 \mathbf{V}^T {\mathbf{J}^{(p)}}^T \boldsymbol{\Theta} \mathbf{J}^{(p)} \mathbf{V} \mathbf{x} + 2 \mathbf{V}^T {\mathbf{J}^{(p)}}^T \boldsymbol{\Theta} \mathbf{r}^{(p)}\\
\end{split}
\end{equation*}
which yields
\begin{equation*}
\mathbf{V}^T {\mathbf{J}^{(p)}}^T \boldsymbol{\Theta} \mathbf{J}^{(p)} \mathbf{V} \mathbf{x}^* = -\mathbf{V}^T {\mathbf{J}^{(p)}}^T \boldsymbol{\Theta} \mathbf{r}^{(p)}
\end{equation*}
The above expression is equivalent to (\ref{appeqn:respgl}) with the definition of $\mathbf{W}$ given in (\ref{appeqn:wl}).
\end{proof}

\begin{corollary}
If $\mathbf{J}^{(p)}$ and thus ${\mathbf{J}^{(p)}}^{-1}$ are SPD and therefore define a norm, computing the solution $\mathbf{x}^*$ of \textrm{(\ref{appeqn:resminl})} with
\begin{equation*}
\boldsymbol{\Theta} = {\mathbf{J}^{(p)}}^{-1}
\end{equation*}
is equivalent to solving the Galerkin reduced-order problem
\begin{equation}
\label{appeqn:resgl}
\mathbf{V}^T \mathbf{J}^{(p)} \mathbf{V} \mathbf{x}^* = -\mathbf{V}^T \mathbf{r}^{(p)}
\end{equation}
\end{corollary}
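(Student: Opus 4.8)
The plan is to obtain this corollary as an immediate specialization of Proposition \ref{prop:resminl}, exactly mirroring the treatment of the nonlinear case in \ref{app:proofnl}. First I would check that the proposed weighting $\boldsymbol{\Theta} = {\mathbf{J}^{(p)}}^{-1}$ is admissible: since $\mathbf{J}^{(p)}$ is assumed SPD, so is its inverse, and hence Proposition \ref{prop:resminl} may be invoked with this choice of $\boldsymbol{\Theta}$. Using in addition the symmetry ${\mathbf{J}^{(p)}}^T = \mathbf{J}^{(p)}$, the weighting matrix appearing in the objective of (\ref{appeqn:resminl}) collapses to ${\mathbf{J}^{(p)}}^T \boldsymbol{\Theta} \mathbf{J}^{(p)} = \mathbf{J}^{(p)} {\mathbf{J}^{(p)}}^{-1} \mathbf{J}^{(p)} = \mathbf{J}^{(p)}$, which is SPD and therefore genuinely defines a norm; in particular the minimization problem (\ref{appeqn:resminl}) is well posed and strictly convex in this case.

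Next I would substitute $\boldsymbol{\Theta} = {\mathbf{J}^{(p)}}^{-1}$ into the expression (\ref{appeqn:wl}) for the left ROB, which gives $\mathbf{W} = \boldsymbol{\Theta} \mathbf{J}^{(p)} \mathbf{V} = {\mathbf{J}^{(p)}}^{-1} \mathbf{J}^{(p)} \mathbf{V} = \mathbf{V}$. Inserting $\mathbf{W} = \mathbf{V}$ into the Petrov-Galerkin reduced-order problem (\ref{appeqn:respgl}) produces precisely $\mathbf{V}^T \mathbf{J}^{(p)} \mathbf{V} \mathbf{x}^* = -\mathbf{V}^T \mathbf{r}^{(p)}$, i.e.\ the Galerkin system (\ref{appeqn:resgl}). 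The equivalence asserted by Proposition \ref{prop:resminl} between the minimizer of the weighted least-squares problem and the solution of the projected system then carries over verbatim, yielding the claim.

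There is no genuinely hard step here; the single point that warrants a line of justification is verifying that $\boldsymbol{\Theta} = {\mathbf{J}^{(p)}}^{-1}$ is SPD, so that the hypothesis of Proposition \ref{prop:resminl} is satisfied, and that the induced metric ${\mathbf{J}^{(p)}}^T \boldsymbol{\Theta} \mathbf{J}^{(p)}$ is indeed a norm rather than merely a seminorm — both of which follow at once from the SPD assumption on $\mathbf{J}^{(p)}$ together with its symmetry. Everything else is the algebraic cancellation ${\mathbf{J}^{(p)}}^{-1}\mathbf{J}^{(p)} = \mathbf{I}$ applied inside (\ref{appeqn:wl}).
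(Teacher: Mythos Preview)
Your proposal is correct and follows essentially the same approach as the paper: substitute $\boldsymbol{\Theta} = {\mathbf{J}^{(p)}}^{-1}$ into (\ref{appeqn:wl}) to obtain $\mathbf{W} = \mathbf{V}$, which reduces (\ref{appeqn:respgl}) to the Galerkin system (\ref{appeqn:resgl}). Your additional remarks on the admissibility of $\boldsymbol{\Theta}$ and the well-posedness of the norm are more explicit than the paper's one-line proof, but the argument is the same.
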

\begin{proof}
From Proposition \ref{prop:resminl} and the substitution of the above choice for $\boldsymbol{\Theta}$ in the expression (\ref{appeqn:wl}) for $\mathbf{W}$ yields $\mathbf{W} = \mathbf{V}$ and results 
in the Galerkin reduced-order problem (\ref{appeqn:resgl}).
\end{proof}

\section{Butcher tableaux for second- and third-order DIRK schemes}
\label{app:dirk}

\noindent The Butcher tableau for the two-stage, second-order DIRK scheme is
\begin{equation*}
	\begin{aligned}[c]
	\renewcommand\arraystretch{1.2}
	\begin{array}{c|cc}
		\alpha & \alpha \\ 
		1 & 1 - \alpha & \alpha \\ \hline
		& 1 - \alpha & \alpha
	\end{array}
	\end{aligned}
	\qquad \qquad
	\begin{aligned}[c]
	\alpha = 1 - \frac{\sqrt{2}}{2}
	\end{aligned}
\end{equation*} \\

\noindent The Butcher tableau for the three-stage, third-order DIRK scheme is
\begin{equation*}
	\begin{aligned}[c]
	\renewcommand\arraystretch{1.2}
	\begin{array}{c|ccc}
		\alpha & \alpha \\ 
		\tau_2 & \tau_2 - \alpha & \alpha \\
		1 & b_1 & b_2 & \alpha \\ \hline
		& b_1 & b_2 & \alpha
	\end{array}
	\end{aligned}
	\qquad \qquad
	\begin{aligned}[c]
	&\alpha = 1 + \frac{\sqrt{6}}{2} \sin\left(\frac{1}{3}\arctan\left(\frac{\sqrt{2}}{4}\right)\right) - \frac{\sqrt{2}}{2} \cos\left(\frac{1}{3}\arctan\left(\frac{\sqrt{2}}{4}\right)\right) \\
	&\tau_2 = \frac{1+\alpha}{2} \\
	&b_1 = -\frac{6\alpha^2 -16\alpha +1}{4} \\
	&b_2 = \frac{6\alpha^2 - 20\alpha + 5}{4}
	\end{aligned}
\end{equation*}


\bibliographystyle{elsarticle-num-names}
\bibliography{main}

\end{document}